\newtheorem{thm}{Theorem}[section]
\newtheorem{pro}[thm]{Proposition}
\newtheorem{lem}[thm]{Lemma}
\newtheorem{cor}[thm]{Corollary}
\newtheorem{exa}[thm]{Example}
\newtheorem{con}[thm]{Conjecture}
\newcommand{\im}{\operatorname{im}}
\begin{document}

\title{Characteristics of graph braid groups}
\author{Ki Hyoung Ko}
\author{Hyo Won Park}
\address{Department of Mathematics, Korea Advanced Institute of Science and Technology, Daejeon, 307-701, Korea}
\email{\{knot, h.w.park\}@kaist.ac.kr}
\begin{abstract}
We give formulae for the first homology of the $n$-braid group and the pure 2-braid group over a finite graph in terms of graph theoretic invariants. As immediate consequences, a graph is planar if and only if the first homology of the $n$-braid group over the graph is torsion-free and the conjectures about the first homology of the pure 2-braid groups over graphs in \cite{FH} can be verified. We discover more characteristics of graph braid groups: the $n$-braid group over a planar graph and the pure 2-braid group over any graph have a presentation whose relators are words of commutators, and the 2-braid group and the pure 2-braid group over a planar graph have a presentation whose relators are commutators. The latter was a conjecture in \cite{FS2} and so we propose a similar conjecture for higher braid indices.
\end{abstract}
\subjclass[2010]{Primary 20F36, 20F65, 57M15}
\keywords{braid group, configuration space, graph, homology, presentation}

\maketitle

\section{Introduction}

Given a topological space $\Gamma$, let $C_n\Gamma$ and $UC_n\Gamma$, respectively, denote the ordered and unordered configuration spaces of $n$-points in $\Gamma$. That is,
$$C_n\Gamma=\{(x_1,\ldots,x_n)\in
        \Gamma^n \mid x_i\ne x_j\mbox{ if } i\neq j \}$$
and
$$UC_n\Gamma=\{\{x_1,\ldots,x_n\}\subset
        \Gamma \mid x_i\ne x_j\mbox{ if } i\neq j \}.$$
By considering the symmetric group $S_n$ permuting $n$ coordinates in $\Gamma^n$, $UC_n\Gamma$ is identified with the quotient space
$C_n\Gamma/S_n$.

In this article, we assume $\Gamma$ is a finite connected graph regarded as an Euclidean subspace and we study topological characteristics, in particular their homologies and fundamental groups, of $C_n\Gamma$ and $UC_n\Gamma$ via graph theoretical characteristics of $\Gamma$.

Instead of the configuration spaces $C_n\Gamma$ and $UC_n\Gamma$ that have open boundaries, it is convenient to use their cubical complex alternatives--the ordered discrete configuration space $D_n$ and the unordered discrete configuration space $UD_n$. After regarding $\Gamma$ as an 1-dimensional CW complex, we define
$$D_n\Gamma=\{ (c_1,\cdots,c_n)\in\Gamma^n\mid\partial c_i\cap\partial c_j=\empty\mbox{ if }i\ne j\}$$
and
$$UD_n\Gamma=\{ \{c_1,\cdots,c_n\}\subset\Gamma\mid\partial c_i\cap\partial c_j=\empty\mbox{ if }i\ne j\}$$
where $c_i$ is either a 0-cell (or vertex) or an open 1-cell (or edge) in $\Gamma$ and $\partial c_i$ denotes either $c_i$ itself if $c_i$ is a 0-cell or its ends if $c_i$ is an open 1-cell.

If $\Gamma$ is \emph{suitably subdivided} in the sense that each path between two vertices of valency $\ne 2$ contains at least $n-1$ edges and each simple loop at a vertex contains at least $n+1$ edges, then according to \cite{Ab}, \cite{KKP} and \cite{PS}, the discrete configuration space $D_n\Gamma$($UD_n\Gamma$, respectively) is deformation retract of the usual configuration space $C_n\Gamma$($UC_n\Gamma$, respectively). Under the assumption of suitable subdivision, the \emph{pure graph braid group} $P_n\Gamma$ and the \emph{graph braid group} $B_n\Gamma$ of $\Gamma$ are the fundamental groups of the ordered and the unordered configuration spaces of $\Gamma$, that is,
$$P_n\Gamma=\pi_1(C_n\Gamma)\cong\pi_1(D_n\Gamma)\quad\mbox{and} \quad B_n\Gamma=\pi_1(UC_n\Gamma)\cong\pi_1(UD_n\Gamma).$$
Abrams showed in \cite{Ab} that discrete configuration spaces
$D_n\Gamma$ and $UD_n\Gamma$ are cubical complexes of non-positive
curvature and so locally CAT(0) spaces. In particular, $D_n\Gamma$
and $UD_n\Gamma$ are Eilenberg-MacLane spaces, and $P_n\Gamma$ and
$B_n\Gamma$ are torsion-free. Furthermore,
$$H_i(P_n\Gamma)\cong H_i(C_n\Gamma)\cong H_i(D_n\Gamma)\quad\mbox{and} \quad H_i(B_n\Gamma)\cong H_i(UC_n\Gamma)\cong H_i(UD_n\Gamma).$$

Conceiving applications to robotics, Abrams and Ghrist \cite{AR} began to study configuration spaces over graphs and graph braid groups around 2000 in the topological point of view. Research on graph braid groups has mainly been concentrated on characteristics of their presentations. An outstanding question was which graph braid group is a right-angled Artin group. The precise characterization of such graphs was given in \cite{KKP} for $n\ge 5$ by extending the result in \cite{FS1} for trees and $n\ge 4$. So it is natural to consider two other classes of groups defined by relaxing the requirement of right-angled Artin groups that have a presentation whose relators are commutators of generators. A {\em simple-commutator-related group} has a presentation whose relators are commutators, and a {\em commutator-related group} has a presentation whose relators are words of commutators. Farley and Sabalka proved in \cite{FS2} that $B_2\Gamma$ is simple-commutator-related if every pair of cycles in $\Gamma$ are disjoint and they conjectured that $B_2\Gamma$ is simple-commutator-related whose relators are related to two disjoints cycles if $\Gamma$ is planar.

On the other hand, Farley showed in \cite{Far} that the homology groups of the unordered configuration space $UC_nT$ for a tree $T$ are torsion free and computed their ranks. Kim, Ko and Park proved that if $\Gamma$ is non-planar, $H_1(UC_n\Gamma)$ has a 2-torsion and the converse holds for $n=2$ and they conjectured that $H_1(B_n\Gamma)$ is torsion free iff $\Gamma$ is planar \cite{KKP}. Barnett and Farber show in \cite{BF} that for a planar graph $\Gamma$ satisfying a certain condition (which implies that $\Gamma$ is either the $\Theta$-shape graph or a simple and triconnected graph),  $\beta_1(C_2\Gamma)=2\beta_1(\Gamma)+1$. Furthermore, Farber and Hanbury showed in \cite{FH} that for a non-planar graph $\Gamma$ satisfying a certain condition (which also implies that $\Gamma$ is a simple and triconnected graph), $\beta_1(C_2\Gamma)=2\beta_1(\Gamma)$. They also conjectured that $H_1(C_2\Gamma)$ is always torsion free and that $\beta_1(C_2\Gamma)=2\beta_1(\Gamma)$ iff $\Gamma$ is non-planar, simple and triconnected (this is equivalent to their hypothesis).

In this article, we express $H_1(UC_n\Gamma)$ and $H_1(C_2\Gamma)$ for an finite connected graph $\Gamma$ in terms of graph theoretic invariants (see Theorem~\ref{thm:H1Bn} and Theorem~\ref{thm:H1PB2}). All the results and the conjectures, mentioned above, on the first homologies of configuration spaces over graphs are immediate consequences of these expressions. In addition, we prove that $B_n\Gamma$ is commutator-related for a planar graph $\Gamma$ and $P_2\Gamma$ is always commutator-related (see Theorem~\ref{thm:crg}). By combining with a result of \cite{BF}, we finally prove that for a planar graph $\Gamma$, $B_2\Gamma$ and $P_2\Gamma$ are simple-commutator-related whose relators are commutators of words corresponding to two disjoint cycles on $\Gamma$ (see Theorem~\ref{thm:presentationforn2}).

The major tool for computing $H_1(UC_n\Gamma)$ is to use a Morse complex of $UD_n\Gamma$ obtained via discrete Morse theory. In \S\ref{s:two}, we first give an example that illustrates how to use the Morse complex to compute $H_1(UC_n\Gamma)$. Then we choose a nice maximal tree of $\Gamma$ and its planar embedding, the second boundary map of the Morse complex induced from these choices becomes so manageable that a description of the second boundary map can be given.

In \S\ref{s:three}, the matrix for the second boundary map is systematically simplified (see Theorem~\ref{thm:matrix}) via row operations after giving certain orders on generating 1-cells and 2-cells (called critical cells) of the Morse complex. Then we decompose $\Gamma$ into biconnected graphs and further decompose each biconnected graph into triconnected graphs and compute the contribution from critical 1-cells that disappear under these decompositions. Then we show all critical 1-cells except those coming from deleted edges are homologous up to signs for a given triconnected graph and generate a summand $\mathbb Z$ or $\mathbb Z_2$ depending on whether the graph is planar or not. Finally we collect results from all decompositions to have a formula for $H_1(UC_n\Gamma)$. For $n=2$, the second boundary map of the Morse complex of $D_n\Gamma$ is not any harder than the Morse complex of $UD_n\Gamma$. Thus the formula for $H_1(C_2\Gamma)$ is obtained by a similar argument.

In \S\ref{s:four}, we develop noncommutative versions of some of technique in the previous section to obtain optimized presentations of (pure) graph braid groups so that they have certain desired properties via Tietze transformation. In fact, the orders on critical 1-cells and 2-cells play crucial roles in systematic eliminations of canceling pairs of a 2-cell and an 1-cell.  And we show that (pure) graph braid groups have presentations with special characteristics mentioned above. We finish the paper with the conjecture about a graph $\Gamma$ such that $B_n\Gamma$ and $P_3\Gamma$ are simple-commutator-related groups.

\section{Discrete configuration spaces and discrete Morse theory}\label{s:two}

Given a finite graph $\Gamma$, the unordered discrete configuration space $UD_n\Gamma$ is collapsed to a complex called a \emph{Morse complex} by using discrete Morse theory developed by Forman \cite{For}. In \S\ref{ss21:Bn}, we briefly review this technology following \cite{FS,KKP} and use it to compute $H_1(UD_2K_{3,3})$ as a warm-up that demonstrates what is ahead of us. In \S\ref{ss22:PBn}, we extend the technique to the discrete configuration space $D_n\Gamma$ and compute $H_1(D_2K_{3,3})$ as an example. In \S\ref{ss23:images}, we show how to choose a nice maximal tree and its embedding so that the second boundary map of the induced Morse complex can be described in the fewest possible cases. Then we list up all of these cases in a few lemmas.

\subsection{Discrete Morse theory on $UD_n\Gamma$}\label{ss21:Bn}
Let $\Gamma$ be a suitably subdivided graph.
In order to collapse the unordered discrete configuration space $UD_n\Gamma$ via discrete Morse theory, we first choose a maximal tree $T$ of $\Gamma$. Edges in $\Gamma-T$ are called \emph{deleted edges}. Pick a vertex of valency 1 in $T$ as a basepoint and assign 0 to this vertex. We assume that the path between the base vertex 0 and the nearest vertex of valency $\ge 3$ in $T$ contains at least $n-1$ edges for the purpose that will be revealed later. Next we give an order on vertices as follows : Fix an embedding of $T$ on the plane. Let $R$ be a regular neighborhood of $T$. Starting from the base vertex 0, we number unnumbered vertices of $T$ as we travel along $\partial R$ clockwise. Figure~\ref{fig:K33n2} illustrates this procedure for the complete bipartite graph $K_{3,3}$ and for $n=2$. There are four deleted edges to form a maximal tree. All vertices in $\Gamma$ are numbered and so are referred by the nonnegative integers.
\begin{figure}[ht]
\psfrag{0}{\small0}
\psfrag{1}{\small1}
\psfrag{2}{\small2}
\psfrag{3}{\small3}
\psfrag{4}{\small4}
\psfrag{5}{\small5}
\centering
\includegraphics[height=2cm]{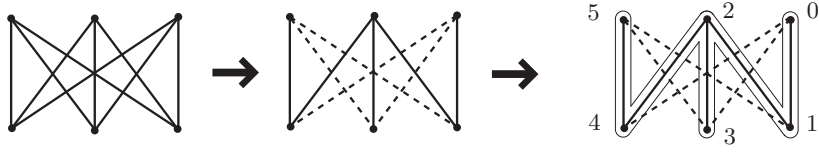}
\caption{Choose a maximal tree and give an order}
\label{fig:K33n2}
\end{figure}

Each edge $e$ in $\Gamma$ is oriented so that the initial vertex $\iota(e)$ is larger than the terminal vertex $\tau(e)$. The edge $e$ is denoted by $\tau(e)\mbox{-}\iota(e)$. A (open, cubical) cell $c$ in the unordered discrete configuration space $UD_n\Gamma$ can be written as an unordered $n$-tuple $\{c_1,\cdots,c_n\}$ where each $c_j$ is either a vertex or an edge in $\Gamma$. The cell $c$ is an $i$-cell if the number of edges among $c_j$'s is $i$. For example, $\{0\mbox{-}1,3\mbox{-}5\}$ represents a 2-cell in $U\kern-2pt D_2K_{3,3}$ under the order on vertices of $K_{3,3}$ given by Figure~\ref{fig:K33n2}. In fact, $UD_2K_{3,3}$ has fifteen 0-cells, thirty-six 1-cells and eighteen 2-cells as given on the left in Figure~\ref{fig:UD2}.

A vertex $v$ in an $i$-cell $c$ is said to be \emph{blocked} if for the edge $e$ in $T$ such that $\iota(e)=v$, $\tau(e)$ is in $c$ or is an end vertex of another edge in $c$.
Let $K_i$ denote the set of all $i$-cells of $UD_n\Gamma$ and $K_{-1}=\emptyset$. Define $W_i:K_i\to K_{i+1}\cup\{\mbox{\rm void}\}$ for $i\ge -1$ by induction on $i$. Let $c=\{c_1,c_2,\cdots,c_n\}$ be an $i$-cell. If $c\notin\im(W_{i-1})$ and there are unblocked vertices in $c$ and, say, $c_1$ is the smallest unblocked vertex then $W_i(c)=\{v\mbox{-}c_1,c_2,\cdots,c_n\}$ where the edge $v\mbox{-}c_1$ is in $T$. Otherwise, $W_i(c)=\mbox{\rm void}$. Let $K_* =\bigcup K_i$. Define $W:K_* \to K_* \cup\{\mbox{\rm void}\}$ by $W(c)= W_i(c)$ for a $i$-cell $c$. Then it is not hard to see that $W$ is well-defined, and each cell in $W(K_*)-\{\mbox{\rm void}\}$ has the unique preimage under $W$, and there is no cell in $K_*$ that is both an image and a preimage of other cells under $W$. For example, each arrow on the right of Figure~\ref{fig:UD2} points from $c$ to $W(c)$ in $UD_2K_{3,3}$ and the dashed lines represent 1-cells sent to {\rm void} under $W$.

\begin{figure}[ht]
\psfrag{{0,1}}{\footnotesize$\{0,1\}$}
\psfrag{{0,2}}{\footnotesize$\{0,2\}$}
\psfrag{{0,3}}{\footnotesize$\{0,3\}$}
\psfrag{{0,4}}{\footnotesize$\{0,4\}$}
\psfrag{{0,5}}{\footnotesize$\{0,5\}$}
\psfrag{{1,2}}{\footnotesize$\{1,2\}$}
\psfrag{{1,3}}{\footnotesize$\{1,3\}$}
\psfrag{{1,4}}{\footnotesize$\{1,4\}$}
\psfrag{{1,5}}{\footnotesize$\{1,5\}$}
\psfrag{{2,3}}{\footnotesize$\{2,3\}$}
\psfrag{{2,4}}{\footnotesize$\{2,4\}$}
\psfrag{{2,5}}{\footnotesize$\{2,5\}$}
\psfrag{{3,4}}{\footnotesize$\{3,4\}$}
\psfrag{{3,5}}{\footnotesize$\{3,5\}$}
\psfrag{{4,5}}{\footnotesize$\{4,5\}$}
\centering
\includegraphics[height=11cm]{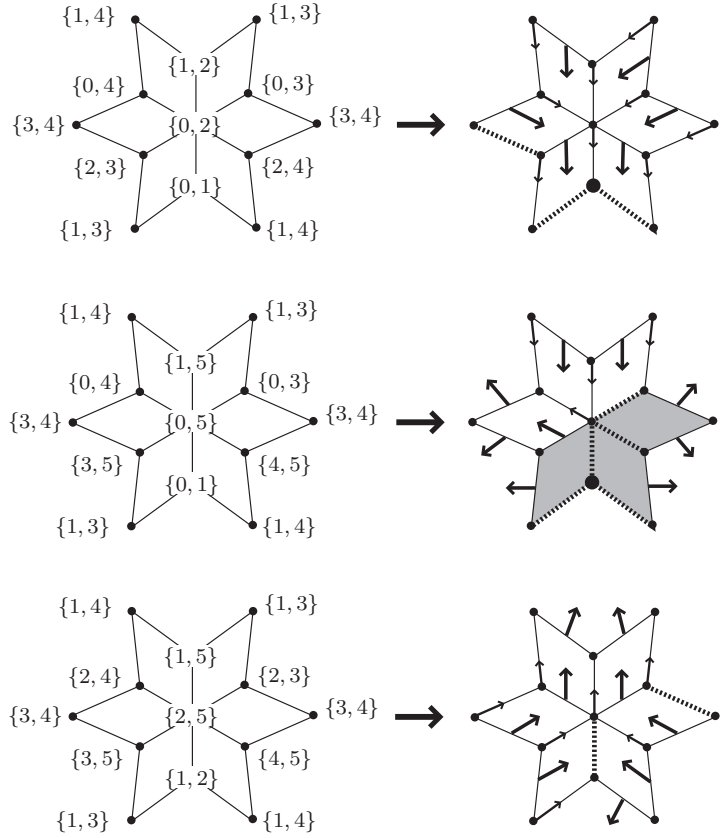}
\caption{$UD_2K_{3,3}$ and the map $W$}
\label{fig:UD2}
\end{figure}

For each pair $(c,W(c))\in K_*\times(W(K_*)-\{\mbox{\rm void}\})$, we homotopically collapse the closure $\overline{W(c)}$ onto $\overline{W(c)}-(W(c)\cup c)$ to obtain a Morse complex $UM_n\Gamma$ of $UD_n\Gamma$. Then cells $c$ and $W(c)$ are said to be \emph{redundant} and \emph{collapsible}, respectively. Redundant or collapsible cells disappear in a Morse complex. Cells in $W^{-1}(\mbox{void})-W(K_*)$ survive in a Morse complex and are said to be \emph{critical}. For example, the 0-cell $\{1,4\}$ is redundant and the 1-cell $\{0\mbox{-}1,4\}$ is collapsible in Figure~\ref{fig:UD2}. In fact, there are one critical 0-cell $\{0,1\}$, seven critical 1-cells and three critical 2-cells in the Morse complex $M_2K_{3,3}$ as shown in Figure~\ref{fig:MUD2}.

\begin{figure}[ht]
\psfrag{{0-3,1}}{\footnotesize$\{0\mbox{-}3,1\}$}
\psfrag{{0-4,1}}{\footnotesize$\{0\mbox{-}4,1\}$}
\psfrag{{0-4,5}}{\footnotesize$\{0\mbox{-}4,5\}$}
\psfrag{{1-5,0}}{\footnotesize$\{1\mbox{-}5,0\}$}
\psfrag{{1-5,2}}{\footnotesize$\{1\mbox{-}5,2\}$}
\psfrag{{2-4,3}}{\footnotesize$\{2\mbox{-}4,3\}$}
\psfrag{{3-5,0}}{\footnotesize$\{3\mbox{-}5,0\}$}
\psfrag{{0-3,1-5}}{\footnotesize$\{0\mbox{-}3,1\mbox{-}5\}$}
\psfrag{{0-4,1-5}}{\footnotesize$\{0\mbox{-}4,1\mbox{-}5\}$}
\psfrag{{0-4,3-5}}{\footnotesize$\{0\mbox{-}4,3\mbox{-}5\}$}
\centering
\includegraphics[height=4cm]{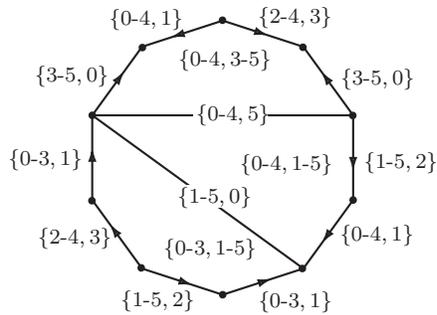}
\caption{Morse complex $UM_2K_{3,3}$ of $UD_2K_{3,3}$}
\label{fig:MUD2}
\end{figure}

Farley and Sabalka in~\cite{FS} gave an alternative description for these three kinds of cells in $UD_n\Gamma$ as follows : An edge $e$ in a cell $c=\{c_1,\cdots,c_{n-1},e\}$ is \emph{order-respecting} if $e$ is not a deleted edge and there is no vertex $v$ in $c$ such that $v$ is adjacent to $\tau(e)$ in $T$ and $\tau(e)<v<\iota(e)$. A cell is critical if it contains neither order-respecting edges nor unblocked vertices. A cell is collapsible if it contains at least one order-respecting edge and each unblocked vertex is larger than the initial vertex of some order-respecting edge. A cell is redundant if it contains at least one unblocked vertex that is smaller than the initial vertices of all order-respecting edges. Notice that there is exactly one critical 0-cell $\{0,1,\cdots,n-1\}$ by the assumption that there are at least $n-1$ edges between $0$ and the nearest vertex with valency $\ge3$ in the maximal tree.

A choice of a maximal tree of $\Gamma$ and its planar embedding determine an order on vertices and in turn a Morse complex $UM_n\Gamma$ that is homotopy equivalent to $UD_n\Gamma$. We wish to compute its homology groups via the cellular structure of $UM_n\Gamma$.

Let $(C_i(UD_n\Gamma),\partial)$ be the (cubical) cellular chain complex of $UD_n\Gamma$.
For an $i$-cell $c=\{e_1,e_2,\ldots,e_i,v_{i+1},\ldots,v_n\}$ of $UD_n\Gamma$ such that $e_1,\ldots,e_i$ are edges with $\tau(e_1)<\tau(e_2)<\cdots<\tau(e_i)$ and $v_{i+1},\ldots,v_n$ are vertices of $\Gamma$, let
$$\partial_k^\iota(c)=\{e_1, \ldots, e_{k-1}, e_{k+1}, \ldots, e_i,v_{i+1},\ldots,v_n,\iota(e_k)\},$$
$$\partial_k^\tau(c)=\{e_1, \ldots, e_{k-1}, e_{k+1}, \ldots, e_i,v_{i+1},\ldots,v_n, \tau(e_k)\}.$$ Then we define the boundary map as
$$\partial(c)=\sum_{k=1}^i(-1)^k(\partial_k^\iota(c)-\partial_k^\tau(c)).$$
Notice that this definition of $\partial$ on $UD_n\Gamma$ is different from that in \cite{FS} and \cite{KKP} in sign convention. This convention seems more convenient in the current work.
Let $M_i(UD_n\Gamma)$ be the free abelian group generated by critical $i$-cells. We now try to turn the graded abelian group $\{M_i(UD_n\Gamma)\}$ into a chain complex.

Let $R:C_i(UD_n\Gamma)\to C_i(UD_n\Gamma)$ be a homomorphism defined by $R(c)=0$ if $c$ is a collapsible $i$-cell, by $R(c)=c$ if $c$ is critical, and by $R(c)=\pm \partial W(c)+c$ if $c$ is redundant where the sign is chosen so that the coefficient of $c$ in $\partial W(c)$ is $-1$. By \cite{For}, there is a nonnegative integer $m$ such that $R^m=R^{m+1}$ and let $\widetilde R=R^m$. Then $\widetilde R(c)$ is in $M_i(UD_n\Gamma)$ and we have a homomorphism $\widetilde R : C_i(UD_n\Gamma)\to M_i(UD_n\Gamma)$. Define a map $\widetilde\partial:M_i(UD_n\Gamma)\to M_{i-1}(UD_n\Gamma)$ by $\widetilde\partial(c) = \widetilde R\partial(c)$. Then $(M_i(UD_n\Gamma),\widetilde\partial)$ forms a chain complex. However, the inclusion $M_*(UD_n\Gamma)\hookrightarrow C_*(UD_n\Gamma)$ is not a chain map. Instead, consider a homomorphism $\varepsilon:M_i(UD_n\Gamma)\to C_i(UD_n\Gamma)$ defined as follows: For a (critical) $i$-cell $c$, $\varepsilon(c)$ is obtained from $c$ by minimally adding collapsible $i$-cells until it becomes closed in the sense that for each redundant $(i-1)$-cell $c'$ in the boundary of every $i$-cell summand in $\varepsilon(c)$, $W(c')$ already appears in $\varepsilon(c)$. Then $\varepsilon$ is a chain map that is a chain homotopy inverse of $\widetilde R$. Thus $(M_i(UD_n\Gamma),\widetilde\partial)$ and $(C_i(UD_n\Gamma),\partial)$ have the same chain homotopy type.

\begin{exa}\label{ex:K33n2} Since $UM_2K_{3,3}$ is a nonorientable surface of nonorientable genus 5 as seen in Figure~\ref{fig:K33n2}, we easily see that $H_1(B_2K_{3,3})\cong\mathbb Z^4\oplus Z_2$. However we want to compute it directly from the chain complex $(M_i(UD_nK_{3,3}),\widetilde\partial)$ to demonstrate discrete Morse theory. In fact, $H_1(B_nK_{3,3})\cong H_1(B_2K_{3,3})$ for any braid index $n$ (see Lemma~\ref{lem:biconnected}) and the existence of a 2-torsion will be needed later.
\end{exa}
The Morse complex $UM_2K_{3,3}$ has seven critical 1-cells
$\{0\mbox{-}3,1\}$, $\{0\mbox{-}4,1\}$, $\{0\mbox{-}4,5\}$, $\{1\mbox{-}5,0\}$, $\{1\mbox{-}5,2\}$, $\{2\mbox{-}4,3\},\{3\mbox{-}5,0\}$
and three critical 2-cells
$\{0\mbox{-}3,1\mbox{-}5\}$, $\{0\mbox{-}4,1\mbox{-}5\}$, $\{0\mbox{-}4,3\mbox{-}5\}$.
We compute the boundary images of critical 2-cells. First,
$$\widetilde\partial(\{0\mbox{-}3,1\mbox{-}5\})=\widetilde R\circ\partial(\{0\mbox{-}3,1\mbox{-}5\})=\widetilde R(-\{1\mbox{-}5,3\}+\{1\mbox{-}5,0\}+\{0\mbox{-}3,5\}-\{0\mbox{-}3,1\})$$
Since $\{1\mbox{-}5,0\}$ and $\{0\mbox{-}3,1\}$ are critical 1-cells, we only consider other two 1-cells.
\begin{align*}
\widetilde R(\{1\mbox{-}5,3\})=&\widetilde R(-\partial(\{1\mbox{-}5,2\mbox{-}3\})+\{1\mbox{-}5,3\})=\widetilde R(\{2\mbox{-}3,5\}-\{2\mbox{-}3,1\}+\{1\mbox{-}5,2\})\\
=&\widetilde R(-\{2\mbox{-}3,1\})+\{1\mbox{-}5,2\}=\widetilde R(-\partial\{2\mbox{-}3,0\mbox{-}1\}-\{2\mbox{-}3,1\})+\{1\mbox{-}5,2\}\\
=&\widetilde R(-\{2\mbox{-}3,0\}-\{0\mbox{-}1,3\}+\{0\mbox{-}1,2\})+\{1\mbox{-}5,2\}=\{1\mbox{-}5,2\}
\end{align*}
In the above computation, $\{2\mbox{-}3,5\}$, $\{2\mbox{-}3,0\}$, $\{0\mbox{-}1,3\}$, and $\{0\mbox{-}1,2\}$ are collapsible.

The following computation make us feel the need of utilities such as Lemma~\ref{lem:naturalmove}.
\begin{align*}
\widetilde R(\{0\mbox{-}3,5\})=&\widetilde R(\partial(\{0\mbox{-}3,4\mbox{-}5\})+\{0\mbox{-}3,5\})=\widetilde R(\{4\mbox{-}5,3\}-\{4\mbox{-}5,0\}+\{0\mbox{-}3,4\})\\
=&\widetilde R(\{4\mbox{-}5,2\})+\widetilde R(-\partial\{0\mbox{-}3,2\mbox{-}4\}+\{0\mbox{-}3,4\})\\
=&\widetilde R(\{4\mbox{-}5,1\})+\widetilde R(\{2\mbox{-}4,3\}-\{2\mbox{-}4,0\}+\{0\mbox{-}3,2\})\\
=&\widetilde R(\{4\mbox{-}5,0\})+\{2\mbox{-}4,3\}+\widetilde R(\{0\mbox{-}3,2\})\\
=&\{2\mbox{-}4,3\}+\widetilde R(\{0\mbox{-}3,1\})=\{2\mbox{-}4,3\}+\{0\mbox{-}3,1\}
\end{align*}
So $\widetilde\partial(\{0\mbox{-}3,1\mbox{-}5\}) =-\{1\mbox{-}5,2\}+\{1\mbox{-}5,0\}+\{2\mbox{-}4,3\}$. This result can be expressed by a row vector of coefficients. The boundary images of the other two critical 2-cells give two more rows. Thus the second boundary map can be expressed by the following $(3\times 7)$-matrix and it can be put into an echelon form via row operations.
$${\left(
\begin{array}{ccccccc}
0&0&0&1&-1&1&0\\
0&-1&1&1&-1&0&0\\
0&-1&1&0&0&1&0
\end{array} \right)
\rightarrow
\left(
\begin{array}{ccccccc}
0&\framebox[0.4cm][l]{-1}&1&1&-1&0&0\\
0&0&0&\framebox[0.4cm][l]{1}&-1&1&0\\
0&0&0&0&0&\framebox[0.4cm][l]{2}&0
\end{array} \right)}$$

Since there is only one critical 0-cell, the first boundary map is zero. So the cokernel of the second boundary map is isomorphic to $H_1(B_2K_{3,3})$. The free part of $H_1(B_2K_{3,3})$ is generated by critical 1-cells corresponding to a column do not contain a pivot (the first non-zero entry in a row). The torsion part of $H_1(B_2K_{3,3})$ generated by critical 1-cells corresponding to a column contains a pivot that is not $\pm 1$. Thus $H_1(B_2K_{3,3})\cong\mathbb Z^4\oplus Z_2$.
\qed

\subsection{Discrete Morse theory on $D_n\Gamma$}\label{ss22:PBn}

The discrete Morse theory on $D_n\Gamma$ is similar to that on $UD_n\Gamma$ except the fact that it uses ordered $n$-tuples instead unordered $n$-tuples.

Let $\widetilde K_i$ denote the set of all $i$-cells of $D_n\Gamma$ and $\widetilde K_{-1}=\emptyset$. Define $\widetilde W_i:\widetilde K_i\to \widetilde K_{i+1}\cup\{\mbox{\rm void}\}$ for $i\ge -1$ by induction on $i$. Let $o=(c_1,c_2,\cdots,c_n)$ be an $i$-cell. If $o\notin\im(\widetilde W_{i-1})$ and there are unblocked vertices in $o$ as an entry and, say, $c_j$ is the smallest unblocked vertex then $\widetilde W_i(o)=(c_1,c_2,\cdots,v\mbox{-}c_j,\cdots,c_n)$ where the edge $v\mbox{-}c_j$ is in $T$. Otherwise, $\widetilde W_i(o)=\mbox{\rm void}$.
Let $\widetilde K_* =\bigcup \widetilde K_i$.
Define $\widetilde W:\widetilde K_* \to \widetilde K_*\cup\{\mbox{\rm void}\}$ by $\widetilde W(o)=\widetilde W_i(o)$ for an $i$-cell $o$. Then $\widetilde W$ is well-defined and each cell in $\widetilde W(\widetilde K_*)-\{\mbox{\rm void}\}$ has the unique preimage under $\widetilde W$, and there is no cell in $\widetilde K_* $ that is both an image and a preimage of other cells under $\widetilde W$.
%\begin{figure}[ht]
%\psfrag{(0,1)}{\footnotesize$(0,1)$}
%\psfrag{(0,2)}{\footnotesize$(0,2)$}
%\psfrag{(0,3)}{\footnotesize$(0,3)$}
%\psfrag{(0,4)}{\footnotesize$(0,4)$}
%\psfrag{(0,5)}{\footnotesize$(0,5)$}
%\psfrag{(1,2)}{\footnotesize$(1,2)$}
%\psfrag{(1,3)}{\footnotesize$(1,3)$}
%\psfrag{(1,4)}{\footnotesize$(1,4)$}
%\psfrag{(1,5)}{\footnotesize$(1,5)$}
%\psfrag{(2,3)}{\footnotesize$(2,3)$}
%\psfrag{(2,4)}{\footnotesize$(2,4)$}
%\psfrag{(2,5)}{\footnotesize$(2,5)$}
%\psfrag{(3,4)}{\footnotesize$(3,4)$}
%\psfrag{(3,5)}{\footnotesize$(3,5)$}
%\psfrag{(4,5)}{\footnotesize$(4,5)$}
%\psfrag{(1,0)}{\footnotesize$(1,0)$}
%\psfrag{(2,0)}{\footnotesize$(2,0)$}
%\psfrag{(3,0)}{\footnotesize$(3,0)$}
%\psfrag{(4,0)}{\footnotesize$(4,0)$}
%\psfrag{(5,0)}{\footnotesize$(5,0)$}
%\psfrag{(2,1)}{\footnotesize$(2,1)$}
%\psfrag{(3,1)}{\footnotesize$(3,1)$}
%\psfrag{(4,1)}{\footnotesize$(4,1)$}
%\psfrag{(5,1)}{\footnotesize$(5,1)$}
%\psfrag{(3,2)}{\footnotesize$(3,2)$}
%\psfrag{(4,2)}{\footnotesize$(4,2)$}
%\psfrag{(5,2)}{\footnotesize$(5,2)$}
%\psfrag{(4,3)}{\footnotesize$(4,3)$}
%\psfrag{(5,3)}{\footnotesize$(5,3)$}
%\psfrag{(5,4)}{\footnotesize$(5,4)$}
%\centering
%\includegraphics[height=11cm]{D2.eps}
%\caption{$UD_2K_{3,3}$ and $W$ on $UD_2K_{3,3}$}
%\label{fig:D2}
%\end{figure}

Let $\rho:D_n\Gamma \to UD_n\Gamma$ be the quotient map defined by $\rho(c_1,\cdots,c_n)=\{c_1,\cdots,c_n\}$. From the definition of $\widetilde W$ it is easy to see that an $i$-cell $o$ in $D_n\Gamma$ is critical (or collapsible or redundant, respectively) if and only if so is an $i$-cell $\rho(o)$ in $UD_n\Gamma$. Note that there are $n!$ critical 0-cells. Critical cells produce a Morse complex $M_n\Gamma$ of $D_n\Gamma$. Figure~\ref{fig:MD2} is a Morse complex $M_2K_{3,3}$ of $D_2K_{3,3}$. The circular (respectively, square) dots give the critical 0-cell $(0,1)$ (respectively, $(1,0)$).

\begin{figure}[ht]
\psfrag{(0-3,1)}{\footnotesize$(0\mbox{-}3,1)$}
\psfrag{(0-4,1)}{\footnotesize$(0\mbox{-}4,1)$}
\psfrag{(0-4,5)}{\footnotesize$(0\mbox{-}4,5)$}
\psfrag{(1-5,0)}{\footnotesize$(1\mbox{-}5,0)$}
\psfrag{(1-5,2)}{\footnotesize$(1\mbox{-}5,2)$}
\psfrag{(2-4,3)}{\footnotesize$(2\mbox{-}4,3)$}
\psfrag{(3-5,0)}{\footnotesize$(3\mbox{-}5,0)$}
\psfrag{(0-3,1-5)}{\footnotesize$(0\mbox{-}3,1\mbox{-}5)$}
\psfrag{(0-4,1-5)}{\footnotesize$(0\mbox{-}4,1\mbox{-}5)$}
\psfrag{(0-4,3-5)}{\footnotesize$(0\mbox{-}4,3\mbox{-}5)$}
\psfrag{(1,0-3)}{\footnotesize$(1,0\mbox{-}3)$}
\psfrag{(1,0-4)}{\footnotesize$(1,0\mbox{-}4)$}
\psfrag{(5,0-4)}{\footnotesize$(5,0\mbox{-}4)$}
\psfrag{(0,1-5)}{\footnotesize$(0,1\mbox{-}5)$}
\psfrag{(2,1-5)}{\footnotesize$(2,1\mbox{-}5)$}
\psfrag{(3,2-4)}{\footnotesize$(3,2\mbox{-}4)$}
\psfrag{(0,3-5)}{\footnotesize$(0,3\mbox{-}5)$}
\psfrag{(1-5,0-3)}{\footnotesize$(1\mbox{-}5,0\mbox{-}3)$}
\psfrag{(1-5,0-4)}{\footnotesize$(1\mbox{-}5,0\mbox{-}4)$}
\psfrag{(3-5,0-4)}{\footnotesize$(3\mbox{-}5,0\mbox{-}4)$}
\centering
\includegraphics[height=4cm]{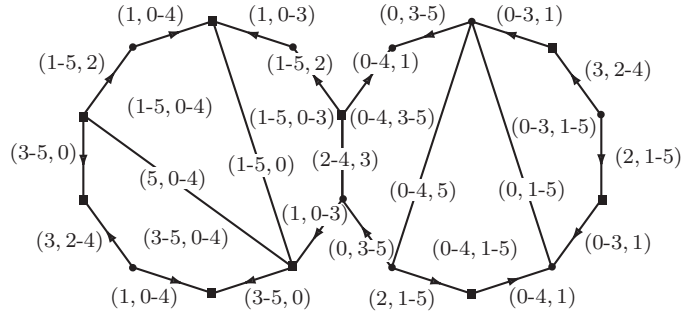}
\caption{Morse complex $M_2K_{3,3}$ of $D_2K_{3,3}$}
\label{fig:MD2}
\end{figure}

Give a $i$-cell $o\in D_n\Gamma$, let $\partial_k^\iota(o)$ ($\partial_k^\tau(o)$, respectively) denote the $(i-1)$-cell obtained from $o$ by replacing the $k$-th edge by its initial (terminal, respectively) vertex. Define
$$\partial(o)=\sum_{k=1}^i(-1)^k\partial_k^\iota(o)-\partial_k^\tau(o).$$
Then $(C_i(D_n\Gamma), \partial)$ forms a (cubical) cellular chain complex. Let $M_i(D_n\Gamma)$ be the free abelian group generated by critical $i$-cells. The reduction homomorphism $\widetilde R : C_i(D_n\Gamma)\to M_i(D_n\Gamma)$ is also well-defined. For $\widetilde\partial=\widetilde R\circ\partial$, $(M_i(D_n\Gamma), \widetilde\partial)$ forms a Morse chain complex that is chain homotopy equivalent to $(C_i(D_n\Gamma), \partial)$.

In order to carry over some of computational results on $UD_n\Gamma$ to $D_n\Gamma$, we introduce a bookkeeping notation.
Give an order among vertices and edges of $\Gamma$ by comparing the number assigned to vertices or terminal vertices of edges. Define a projection $\phi:D_n\Gamma\to S_n$ by sending $o=(c_1,\ldots,c_n)$ to the permutation $\sigma$ such that  $c_{\sigma(1)}<\cdots<c_{\sigma(n)}$. And define a bijection $\Phi:D_n\Gamma\to UD_n\Gamma\times S_n$ by $\Phi(o)=(\rho(o),\phi(o))$. For example, $\Phi((1\mbox{-}3,2))=(\{1\mbox{-}3,2\},id)$ and $\Phi((4,3\mbox{-}5))=(\{4,3\mbox{-}5)\},(1,2))$ where id is the identity permutation.
The maps $\widetilde W$, $\partial$, $\widetilde R$, and $\widetilde\partial$ are carried over to $K^*\times S_n$, $C_*(UD_n\Gamma)\times S_n$, and $M_*(UD_n\Gamma)\times S_n$ by conjugating with $\Phi$. For example, the $i$-th boundary homomorphism on $M_*(UD_n\Gamma)\times S_n$ is given by $\Phi\circ\widetilde\partial\circ\Phi^{-1}$. To make the notation more compact, an element $(c,\sigma)\in K^*\times S_n$ will be denoted by $c_\sigma$.

\begin{exa}\label{ex:K33n2-order}
Let $\Gamma$ be $K_{3,3}$ and a maximal tree and an order be given as Figure~\ref{fig:K33n2}. We want to compute $H_1(P_2K_{3,3})$ which will be used later.
\end{exa}
From Figure~\ref{fig:MD2}, one can see that $H_1(P_2K_{3,3})\cong\mathbb Z^8$. But we want to demonstrate how compute $H_1(P_2K_{3,3})$ using the Morse chain complex. Let $\sigma$ be the permutation $(1,2)\in S_2$. There are two critical 0-cells $\{0,1\}_{\rm id}$, $\{0,1\}_\sigma$. There are fourteen critical 1-cells $\{0\mbox{-}3,1\}_{\rm id}$, $\{0\mbox{-}3,1\}_\sigma$, $\{0\mbox{-}4,1\}_{\rm id}$, $\{0\mbox{-}4,1\}_\sigma$, $\{0\mbox{-}4,5\}_{\rm id}$, $\{0\mbox{-}4,5\}_\sigma$, $\{1\mbox{-}5,0\}_{\rm id}$,
$\{1\mbox{-}5,0\}_\sigma$, $\{1\mbox{-}5,2\}_{\rm id}$, $\{1\mbox{-}5,2\}_\sigma$, $\{2\mbox{-}4,3\}_{\rm id}$, $\{2\mbox{-}4,3\}_\sigma$, $\{3\mbox{-}5,0\}_{\rm id}$, $\{3\mbox{-}5,0\}_\sigma$ and their image under $\widetilde\partial$ are as follows:
$$\widetilde\partial(\{0\mbox{-}3,1\}_{\rm id})=\widetilde R(-\{1,3\}_\sigma+\{0,1\}_{\rm id})=-\{0,1\}_\sigma+\{0,1\}_{\rm id}$$
since $\widetilde R(\{1,3\}_\sigma)=\widetilde R(\partial(\{0\mbox{-}1,3\}_\sigma)+\{1,3\}_\sigma)=\widetilde R(\{0,3\}_\sigma)=\cdots=\{0,1\}_\sigma$ (see Lemma~\ref{lem:naturalmoveD2}). So $\widetilde\partial(\{0\mbox{-}3,1\}_\sigma)=-\{0,1\}_{\rm id}+\{0,1\}_\sigma$ because of $\sigma^2=\mbox{\rm id}$.
Similarly we can compute images of critical 1-cells as follows:
\begin{align*}
\widetilde\partial(\{0\mbox{-}4,1\}_{\rm id})=&\widetilde\partial(\{1\mbox{-}5,2\}_{\rm id})=\widetilde\partial(\{2\mbox{-}4,3\}_{\rm id})= -\{0,1\}_\sigma+\{0,1\}_{\rm id}\\
\widetilde\partial(\{0\mbox{-}4,5\}_{\rm id})=&\widetilde\partial(\{1\mbox{-}5,0\}_{\rm id})=\widetilde\partial(\{3\mbox{-}5,0\}_{\rm id})=0.
\end{align*}

There are six critical 2-cells $\{0\mbox{-}3,1\mbox{-}5\}_{\rm id}$, $\{0\mbox{-}3,1\mbox{-}5\}_\sigma$, $\{0\mbox{-}4,1\mbox{-}5\}_{\rm id}$, $\{0\mbox{-}4,1\mbox{-}5\}_\sigma$, $\{0\mbox{-}4,3\mbox{-}5\}_{\rm id}$, $\{0\mbox{-}4,3\mbox{-}5\}_\sigma$. We compute boundaries for the first two.
$$\widetilde\partial(\{0\mbox{-}3,1\mbox{-}5\}_{\rm id})
=\widetilde R(-\{1\mbox{-}5,3\}_\sigma+\{1\mbox{-}5,0\}_{\rm id}+\{0\mbox{-}3,5\}_{\rm id}-\{0\mbox{-}3,1\}_{\rm id}).$$
Since $\{1\mbox{-}5,0\}_{\rm id}$ and $\{0\mbox{-}3,1\}_{\rm id}$ are critical 1-cells, we only consider other two 1-cells.
\begin{align*}
\widetilde R(\{1\mbox{-}5,3\}_\sigma)=&\widetilde R(-\partial(\{1\mbox{-}5,2\mbox{-}3\}_\sigma)+\{1\mbox{-}5,3\}_\sigma)\\
=&\widetilde R(\{2\mbox{-}3,5\}_{\rm id}-\{2\mbox{-}3,1\}_\sigma+\{1\mbox{-}5,2\}_\sigma)=\{1\mbox{-}5,2\}_\sigma
\end{align*}
since $\widetilde R(\{2\mbox{-}3,1\})=0$ and no critical 1-cell appears in the process of computing $\widetilde R(\{2\mbox{-}3,1\})$ (see Example~\ref{ex:K33n2}).
\begin{align*}
\widetilde R(\{0\mbox{-}3,5\}_{\rm id})&=\widetilde R(\{0\mbox{-}3,4\}_{\rm id})=\widetilde R(-\partial(\{0\mbox{-}3,2\mbox{-}4\}_{\rm id})+\{0\mbox{-}3,4\}_{\rm id})\\
&=\widetilde R(\{2\mbox{-}4,3\}_\sigma-\{2\mbox{-}4,0\}_{\rm id}+\{0\mbox{-}3,2\}_{\rm id})\\
&=\{2\mbox{-}4,3\}_\sigma+\widetilde R(\{0\mbox{-}3,1\}_{\rm id})=\{2\mbox{-}4,3\}_\sigma+\{0\mbox{-}3,1\}_{\rm id}.
\end{align*}
So $\widetilde\partial(\{0\mbox{-}3,1\mbox{-}5\}_{\rm id})=-\{1\mbox{-}5,2\}_\sigma+\{1\mbox{-}5,0\}_{\rm id}+\{2\mbox{-}4,3\}_\sigma$. This implies $\widetilde\partial(\{0\mbox{-}3,1\mbox{-}5\}_\sigma)=-\{1\mbox{-}5,2\}_{\rm id}+\{1\mbox{-}5,0\}_\sigma+\{2\mbox{-}4,3\}_{\rm id}.$

Over these critical cells, the second boundary map is represented by the following matrix.
$$\left(
\begin{array}{cccccccccccccc}
0&0&0&0&0&0&1&0&0&-1&0&1&0&0\\
0&0&0&0&0&0&0&1&-1&0&1&0&0&0\\
0&0&-1&0&1&0&1&0&0&-1&0&0&0&0\\
0&0&0&-1&0&1&0&1&-1&0&0&0&0&0\\
0&0&-1&0&1&0&0&0&0&0&1&0&0&0\\
0&0&0&-1&0&1&0&0&0&0&0&1&0&0
\end{array} \right)$$
Since the kernel of the first boundary map is generated by either $o$ or $o\pm\{0\mbox{-}3,1\}_{\rm id})$ for all other critical 1-cells $o$, the matrix obtained from the above matrix by deleting the first column is a presentation matrix of $H_1(P_2K_{3,3})$. Using row operations on the presentation matrix, we obtain the following echelon form.
$$\left(
\begin{array}{ccccccccccccc}
0&\framebox[0.4cm][l]{-1}&0&1&0&1&0&0&-1&0&0&0&0\\
0&0&\framebox[0.4cm][l]{-1}&0&1&0&1&-1&0&0&0&0&0\\
0&0&0&0&0&\framebox[0.4cm][l]{-1}&0&0&1&1&0&0&0\\
0&0&0&0&0&0&\framebox[0.4cm][l]{-1}&1&0&0&1&0&0\\
0&0&0&0&0&0&0&0&0&\framebox[0.4cm][l]{1}&1&0&0\\
0&0&0&0&0&0&0&0&0&0&0&0&0
\end{array} \right)$$
Thus $H_1(P_2K_{3,3})\cong\mathbb Z^8$.
\qed

\subsection{The second boundary homomorphism}\label{ss23:images}

To give a general computation of the second boundary homomorphism $\widetilde\partial$ on a Morse complex, we first exhibit redundant 1-cells whose reductions are straightforward and then explain how to choose a maximal tree of a given graph to take advantage of these simple reductions.

Let $\Gamma$ be a graph and $T$ be a maximal tree of $\Gamma$. Let $c$ be a redundant $i$-cell in $UD_n\Gamma$, $v$ be an unblocked vertex in $c$ and $e$ be the edge in $T$ starting from $v$. Let $V_{e}(c)$ denote the $i$-cell obtained from $c$ by replacing $v$ by $\tau(e)$. Define a function $V:K_i\to K_i$ by $V(c)=V_e(c)$ if $c$ is redundant and
$\iota(e)$ is the smallest unblocked vertex in $c$, and by $V(c)=c$ otherwise.
The function $V$ should stabilize to a function $\widetilde V:K_i\to K_i$ under iteration, that is, $\widetilde V=V^m$ for some non-negative integer $m$ such that $V^m=V^{m+1}$.

\begin{lem}{\rm (Kim-Ko-Park \cite{KKP})}\label{lem:naturalmove}
Let $c$ be a redundant cell and $v$ be a unblocked vertex. Suppose that for the edge $e$ starting from $v$, there is no vertex $w$ that is either in $c$ or an end vertex of an edge in $c$ and satisfies $\tau(e)< w<\iota(e)$. Then $\widetilde R(c)=\widetilde RV_e(c)$.
\end{lem}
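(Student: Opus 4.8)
The aim is to prove the identity $\widetilde R(c)=\widetilde R(V_e(c))$ in $M_i(UD_n\Gamma)$, equivalently $\widetilde R\bigl(c-V_e(c)\bigr)=0$. The plan is to realize $c-V_e(c)$ as part of the boundary of a single higher cell and then push everything through the reduction. I would introduce the $(i{+}1)$-cell $d$ obtained from $c$ by promoting the unblocked vertex $v$ to its tree edge $e$, i.e.\ by replacing the entry $v$ with the edge $e$ (recall $\iota(e)=v$). Resolving $e$ in $\partial d$ at its two ends restores $v$ on one side and produces $\tau(e)$ on the other, so those two faces contribute exactly $\pm\bigl(c-V_e(c)\bigr)$, while the faces obtained by resolving the \emph{original} edges of $c$ assemble into a chain $O$ in which the edge $e$ survives. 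Thus $\partial d=\pm(c-V_e(c))+O$, and the whole lemma comes down to showing $\widetilde R(O)=0$. In the principal case where $v$ is the \emph{smallest} unblocked vertex this is even cleaner: then $d=W(c)$, and the definition of $R$ on the redundant cell $c$ unwinds to $R(c)=V_e(c)\mp O$, whence $\widetilde R(c)=\widetilde R(V_e(c))\mp\widetilde R(O)$ and again only $\widetilde R(O)=0$ is needed.

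Before attacking $O$ I would cash in the hypothesis. Because no vertex lying in $c$ or serving as an endpoint of an edge of $c$ satisfies $\tau(e)<w<\iota(e)$, and because passing from $c$ to $d$ only deletes $v$ and occupies $\tau(e)$, the edge $e$ is order-respecting in $d$ and in every face appearing in $O$. Consequently $\tau(e)$ is blocked by $e$ throughout $O$, which pins down the local reduction behaviour of these faces.

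The heart of the matter is $\widetilde R(O)=0$, and this is where I expect the real difficulty. The guiding observation is that promoting $v$ to $e$ removes precisely the small unblocked vertex that forced $c$ to be redundant, so one expects each face of $O$ to have become collapsible—its unblocked vertices all dominated by the initial vertex of an order-respecting edge—whence $\widetilde R$ kills $O$ term by term. The subtle point is that resolving an original edge $e_k$ frees one of its endpoints and could in principle unblock a vertex below $v$; I would neutralize this by treating the two resolutions $\partial_k^\iota$ and $\partial_k^\tau$ of each $e_k$ together (they enter $O$ with opposite signs) and by using the no-obstruction hypothesis to exclude a genuinely new small unblocked vertex, so that any offending contributions cancel in pairs. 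For a general, non-minimal unblocked $v$ the same boundary relation holds but $d$ need no longer be collapsible; here I would argue by induction on the number of unblocked vertices of $c$ lying below $v$, pushing the smallest unblocked vertex down first via the minimal case and invoking the hypothesis to keep each intermediate move unobstructed, so that the induction is well-founded.

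Assembling the pieces, $\partial d=\pm(c-V_e(c))+O$ together with $\widetilde R(\partial d)=0$ (immediate from $\widetilde R\partial=\widetilde\partial\widetilde R$ once $\widetilde R(d)=0$, which holds since $d=W(c)$ is collapsible in the minimal case) and the vanishing $\widetilde R(O)=0$ yields $\widetilde R(c)=\widetilde R(V_e(c))$. The single genuine obstacle is the identity $\widetilde R(O)=0$: bookkeeping exactly which faces turn collapsible after the promotion, and verifying that the sign-paired terms cancel, is the technical crux on which the whole lemma turns.
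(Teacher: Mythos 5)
The paper itself never proves this lemma; it is imported verbatim from \cite{KKP}, so your attempt can only be judged on its own terms and against the closest in-paper relative, namely the proof of Lemma~\ref{lem:naturalmove-2}. Your opening reduction is correct and is the right first move: with $d$ the cell obtained from $c$ by promoting $v$ to $e$, one has $\partial d=\pm\bigl(c-V_e(c)\bigr)+O$; when $v$ is the smallest unblocked vertex, $d=W(c)$, the definition of $R$ gives $R(c)=V_e(c)\mp O$, and $\widetilde R\circ R=\widetilde R$, so the lemma is exactly the assertion $\widetilde R(O)=0$. Your observation that $e$ remains order-respecting in every face of $O$ is also correct, and it is the one place where the hypothesis genuinely enters.

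The gap is in everything after that, and it is a gap of substance, not of bookkeeping. First, the hypothesis does not ``exclude a genuinely new small unblocked vertex'': it only forbids occupied vertices strictly between $\tau(e)$ and $\iota(e)$. When an original edge $e_k$ of $c$ is resolved, one of its endpoints disappears, and every vertex that was blocked by that endpoint, as well as the surviving endpoint itself, may become unblocked; such vertices can perfectly well be smaller than $v$ (the hypothesis merely forces them below $\tau(e)$), so the faces of $O$ are in general redundant and nothing dies term by term. Second, your fallback claim $\widetilde R(\partial_k^\iota d)=\widetilde R(\partial_k^\tau d)$ is not a formal sign-pairing fact: it is a statement of exactly the same depth as the lemma being proved, now for the edge $e_k$, about which nothing is assumed. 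In particular $e_k$ may be a deleted edge, and for deleted edges such ``slide the token along the edge'' identities are false in general---their failure is precisely what produces the nonzero right-hand sides in Lemmas~\ref{lem:boundary-1} and~\ref{lem:boundary-2}. The identity does hold in your situation, but only because both faces still carry the order-respecting edge $e$, and exploiting that requires real work: either an induction over the well-founded order that the acyclic matching $W$ places on cells (reduce each redundant face, cancel the two resolutions of $e$ that reappear by the inductive hypothesis, iterate), or a token-walk argument of the kind the paper uses to prove Lemma~\ref{lem:naturalmove-2}, where the analogous identity $\widetilde R(c_{\iota(p)})=\widetilde R(c_{\tau(p)})$ is obtained by moving the free token through an entire component $T_p$ of $T-\{\tau(e)\}$, not by pairing signs. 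Third, your non-minimal case is also unjustified as stated: ``pushing the smallest unblocked vertex $u$ down first via the minimal case'' would require the no-obstruction hypothesis for the edge $e_u$ emanating from $u$, which you are not given. What does work is to observe that under your hypothesis $u$ is still the smallest unblocked vertex of $V_e(c)$, so that $W$ commutes with the swap $v\leftrightarrow\tau(e)$, and then to compare the boundary expansions of $R(c)$ and $R(V_e(c))$ face by face, again by induction; but that is a different argument from the one you propose.
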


We continue to define more notations and terminology.  For each vertex $v$ in $\Gamma$, there is a unique edge path $\gamma_v$ from $v$ to the base vertex 0 in $T$. For vertices $v$, $w$ in $\Gamma$, $v\wedge w$ denotes the vertex that is the first intersection between $\gamma_v$ and $\gamma_w$. Obviously, $v\wedge w\le v$ and $v\wedge w\le w$. The number assigned to the branch of $v$ occupied by the path from $v$ to $w$ in $T$ is denoted by $g(v,w)$. If $v=w\wedge v$, $g(v,w)\ge 1$ and if $v>w\wedge v$, $g(v,w)=0$. An edge $e$ in $\Gamma$ is said to be {\em separated} by a vertex $v$ if $\iota(e)$ and $\tau(e)$ lie in two distinct components of $T-\{v\}$. It is clear that only a deleted edge can be separated by a vertex. If a deleted edges $d$ is not separated by $v$, then $\iota(d)$, $\tau(d)$, and $\iota(d)\wedge\tau(d)$ are all in the same component of $T-\{v\}$.

For redundant 1-cells, we can strengthen the above lemma as follows.
\begin{lem}{\rm [Special Reduction]}\label{lem:naturalmove-2}
Let $c$ be a redundant 1-cell containing an edge $p$. Suppose the redundant 1-cell $c$ has an unblocked vertex $v$ and the edge $e$ starting from $v$ satisfies the following:
\begin{itemize}
\item[(a)] Every vertex $w$ in $c$ satisfying $\tau(e)< w<\iota(e)$ is blocked.
\item[(b)] If an end vertex $w$ of $p$ satisfies $\tau(e)< w<\iota(e)$ then $p$ is not separated by $\tau(e)$.
\end{itemize}
Then $\widetilde R(c)=\widetilde RV_e(c)$. Therefore if $p$ is not a deleted edge then $\widetilde R(c)=\widetilde R\widetilde V(c)$.
\end{lem}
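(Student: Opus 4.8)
The plan is to realize the difference $c-V_e(c)$ as (most of) the boundary of an explicit $2$-cell and to show that the leftover boundary terms are killed by $\widetilde R$. Since $v$ is unblocked, $\tau(e)$ is free in $c$, so $D=(c\setminus\{v\})\cup\{e\}$ is a genuine $2$-cell carrying the two edges $e$ and $p$. Expanding its boundary along these two edges gives
$$\partial D=\pm\bigl(c-V_e(c)\bigr)\pm\bigl(\partial_p^\iota D-\partial_p^\tau D\bigr),$$
where the first bracket is the $e$-term (replacing $e$ by $\iota(e)=v$ returns $c$, by $\tau(e)$ returns $V_e(c)$) and the second is the $p$-term, whose two summands are the $1$-cells obtained from $c$ by deleting $p$, keeping $e$, and inserting $\iota(p)$ or $\tau(p)$. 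I would induct on the number of unblocked vertices of $c$ smaller than $v$. In the base case $v$ is the smallest unblocked vertex, so $W(c)=D$ and the definition of $R$ on the redundant cell $c$ reads $R(c)=V_e(c)\mp(\partial_p^\iota D-\partial_p^\tau D)$; applying $\widetilde R$ then reduces the lemma to the single identity $\widetilde R(\partial_p^\iota D)=\widetilde R(\partial_p^\tau D)$.

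Condition (a) is what makes this base case reachable and keeps $e$ well behaved. First, $e$ is order-respecting in $D$: a standalone vertex $w$ of $c$ adjacent to $\tau(e)$ in $T$ with $\tau(e)<w<\iota(e)$ would have $\tau(e)$ as its parent and hence be unblocked in $c$ (because $v$ unblocked forces $\tau(e)$ free), contradicting (a). The same computation shows that passing from $c$ to $D$ blocks only vertices whose parent is $\tau(e)$, so by (a) every unblocked vertex of $c$ below $v$ actually lies below $\tau(e)$. This is exactly what lets the inductive step flow those smaller unblocked vertices down toward the root by repeated use of Lemma~\ref{lem:naturalmove}: they stay below $\tau(e)$, hence outside the interval $(\tau(e),\iota(e))$, so they neither interfere with the slide of $v$ nor disturb conditions (a) and (b), and flowing them reduces to the base case.

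The conceptual heart, and the step I expect to be the main obstacle, is the identity $\widetilde R(\partial_p^\iota D)=\widetilde R(\partial_p^\tau D)$, where condition (b) enters. The two faces agree away from a single residual token sitting at $\iota(p)$ versus $\tau(p)$, and both carry $e$ occupying $\tau(e)$ and $v$. If an endpoint of $p$ lies in $(\tau(e),\iota(e))$, condition (b) forces $p$ not to be separated by $\tau(e)$, so $\iota(p)$ and $\tau(p)$ lie in one component of $T-\{\tau(e)\}$; I would then slide the residual token from each endpoint down its tree-path to the common meet $m=\iota(p)\wedge\tau(p)$. This sequence stays inside that one component (so it never needs $\tau(e)$ as a blocking parent) and moves to strictly smaller vertices (so it never meets $v$), whence each elementary slide is a legitimate natural move covered by Lemma~\ref{lem:naturalmove}; both faces therefore reduce to the same cell carrying the token at $m$. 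The remaining positions of $\iota(p)$ and $\tau(p)$ relative to the interval are handled by the same slide-to-the-meet device together with the collapsibility of $e$, reproducing the vanishing already present in Lemma~\ref{lem:naturalmove}. The delicate points are verifying these slides against the hypotheses of Lemma~\ref{lem:naturalmove} in the presence of $e$, and seeing that separation by $\tau(e)$ is precisely the obstruction that (b) removes.

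Finally, the ``therefore'' clause is immediate: when $p$ is not a deleted edge it lies in $T$ and so cannot be separated by any vertex, so (b) holds automatically for every choice of unblocked vertex. Taking $v$ to be the smallest unblocked vertex at each stage—so that (a) holds trivially—and applying the first part repeatedly realizes one iteration of $V$ at a time; iterating until the process stabilizes yields $\widetilde R(c)=\widetilde R\widetilde V(c)$.
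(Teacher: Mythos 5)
Your overall skeleton does match the paper's: realize $c-V_e(c)$ inside the boundary of the $2$-cell $D=(c\setminus\{v\})\cup\{e\}$, reduce everything to the identity $\widetilde R(\partial_p^\iota D)=\widetilde R(\partial_p^\tau D)$, and use condition (b) to merge the two residual tokens inside a single component of $T-\{\tau(e)\}$; your closing ``therefore'' paragraph is exactly the paper's. But the two steps that carry the real weight have genuine gaps. The first is the inductive step, which is false as stated. To flow a smaller unblocked vertex $u$ (necessarily $u<\tau(e)$) down, you invoke Lemma~\ref{lem:naturalmove}, whose hypothesis requires the interval $(\tau(e_u),\iota(e_u))$ to contain no vertex of $c$ \emph{and no endpoint of $p$}; conditions (a) and (b) control only the interval $(\tau(e),\iota(e))$ and say nothing about $(\tau(e_u),u)$. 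Concretely, let $A$ be a vertex of valency $3$, let the deleted edge $p$ have one end in the interior of the first branch of $A$ and the other end below $A$, and put $u$ adjacent to $A$ on the second branch with $v$ far above $u$ on that branch; then $c=\{p,u,v\}$ is redundant and (a),(b) hold vacuously for $(c,v)$, yet an end of $p$ lies in $(\tau(e_u),u)=(A,u)$, so Lemma~\ref{lem:naturalmove} does not apply to $(c,u)$. Worse, since $p$ is separated by $A$, the conclusion of your flow step fails outright: the two $p$-faces of $(c\setminus\{u\})\cup\{e_u\}$ have different $\widetilde R$-images (one reduces to a nonzero critical $1$-cell with a token on each branch of $A$, the other to a collapsible cell), so $\widetilde R(c)\neq\widetilde R V_{e_u}(c)$. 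The paper never makes this reduction: when both ends of $p$ avoid $(\tau(e),\iota(e))$, it shows that (a) forces that interval to contain no vertex of $c$ at all and applies Lemma~\ref{lem:naturalmove} directly to the pair $(c,v)$ --- that lemma requires no minimality of $v$ --- and the $2$-cell identity is used only in the remaining case, where (b) puts both ends of $p$ in $T_p$, above $\tau(e)$, and only there is a reduction like yours unobstructed.

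The second gap is in the heart of the base case. Sliding only the residual token down to the meet $\iota(p)\wedge\tau(p)$ presumes each slide is a legitimate natural move, but condition (a) allows $c$ to contain blocked vertices inside $(\tau(e),\iota(e))$, and such vertices necessarily lie in $T_p$ with blocking chains terminating at the ends of $p$. In the faces $\partial_p^\iota D$ and $\partial_p^\tau D$ one end of $p$ has been vacated, so some of these vertices become unblocked; they can sit exactly in the intervals your elementary slides need to be empty, so Lemma~\ref{lem:naturalmove} again does not apply, and the common configuration the two faces actually reach is not ``residual token at the meet, all else fixed.'' This is precisely what the paper's auxiliary finite set of $1$-cells (all placements of the $T_p$-tokens, the rest of the cell frozen) is designed to handle: at each step one flows the \emph{smallest} unblocked vertex lying in $T_p$ --- minimality, the fact that $e$ is the only edge present, and suitable subdivision are what make the hypothesis of Lemma~\ref{lem:naturalmove} verifiable --- and suitable subdivision again gives a unique fully blocked terminal configuration $c_p$ to which both faces flow. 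Your argument is complete only in the special case where no vertex of $c$ is blocked through an end of $p$.
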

\begin{proof}
Assume that both ends of $p$ are not between $\tau(e)$ and $\iota(e)$. Since $p$ is the only edge in $c$ that can initiate a blockage, it is impossible to have a vertex between $\tau(e)$ and $\iota(e)$ due to the condition (a). Then we are done by Lemma~\ref{lem:naturalmove}.

Assume that an end of $p$ is between $\tau(e)$ and $\iota(e)$. By the condition (b),
both $\iota(p)$ and $\tau(p)$ are in the same component $T_p$ of $T-\{\tau(e)\}$ and are between $\tau(e)$ and $\iota(e)$. For a vertex $w$ in $T_p$, $c_w$ denotes the 1-cell obtained from $c$ by replacing $v$ by $e$ and $p$ by $w$.
We will show that $\widetilde R(c_{\iota(p)})=\widetilde R(c_{\tau(p)})$. Then
$$\widetilde R(c)=\widetilde RV_e(c)\pm \{\widetilde R(c_{\iota(p)})-\widetilde R(c_{\tau(p)})\}=\widetilde RV_e(c)$$ where the sign $\pm$ is determined by the order between the terminal vertices of $p$ and $e$.

Let $W$ be the set of all 1-cells obtained from $c_{\iota(p)}$ replacing vertices in $T_p$ by vertices that are also in $T_p$. If $c'\in W$ has no unblocked vertex in $T_p$ then $c'$ is unique because $\Gamma$ is suitably subdivided. This 1-cell is denoted by $c_p$. If $c'\in W$ has an unblocked vertex in $T_p$, let $u$ be the smallest unblocked vertex in $T_p$ and $e'$ be the edge starting from $u$. Then $c'$ and $u$ satisfy the hypothesis of Lemma~\ref{lem:naturalmove} since $e$ is the only edge in $c'$ and every vertex in $T-T_p$ is not between $\tau(e)$ and $\iota(e)$. So $\widetilde R(c')=\widetilde RV_{e'}(c')$. By iterating this argument, we have $\widetilde R (c_{\iota(p)})=\widetilde R (c_p)=\widetilde R (c_{\tau(p)})$ because $V_{e'}(c')$ is also in the finite set $W$.

If $p$ is not a deleted edge, then the condition (b) always holds and so $c$ and the smallest unblocked vertex in $c$ satisfy the hypothesis of this lemma. So $\widetilde R(c)=\widetilde RV(c)$. By repeating the argument, we have $\widetilde R(c)=\widetilde R\widetilde V(c)$.
\end{proof}

For an oriented discrete configuration space $D_n\Gamma$, the statement corresponding to Lemma~\ref{lem:naturalmove} holds at least for $n=2$ (see Lemma~\ref{lem:naturalmoveD2}), but the statement corresponding to Lemma~\ref{lem:naturalmove-2} is false in general.

For example, let $\Gamma$ be the graph in Figure~\ref{fig:Exo2}.
\begin{figure}[ht]
\psfrag{0}{\small0}
\psfrag{4}{\small4}
\psfrag{6}{\small6}
\psfrag{8}{\small8}
\psfrag{10}{\small10}
\psfrag{12}{\small12}
\centering
\includegraphics[height=1.5cm]{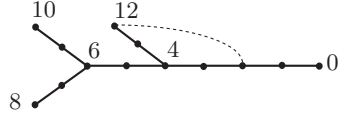}
\caption{The graph $\Gamma$}
\label{fig:Exo2}
\end{figure}
We consider the critical 2-cell $o=(2\mbox{-}12,6\mbox{-}9,7)$ in $D_3\Gamma$. In the unordered case, opposite sides have the same images under $\widetilde V$ but $\widetilde V((2\mbox{-}12,6,7))=(2\mbox{-}12,3,4)$ and $\widetilde V((2\mbox{-}12,9,7))=(2\mbox{-}12,4,3)$. Furthermore,
\begin{align*}
\widetilde R((12,6\mbox{-}9,7))&=\widetilde R((11,6\mbox{-}9,7))\\
&=\widetilde R((4,6\mbox{-}9,7)-(4\mbox{-}11,6,7)+(4\mbox{-}11,9,7))\\
&=(0,6\mbox{-}9,7)-(4\mbox{-}11,5,6)+(4\mbox{-}11,6,5)\\
&\ne \widetilde V((12,6\mbox{-}9,7)).
\end{align*}

Discrete Morse theory can be powerful in discrete situations but we need to reduce the number of instances to be investigated and the amount of computation involved for each instance. In our situation, it is important to choose a nice maximal tree and its planar embedding. The following lemma make such choices which will be used throughout the article. For example, the Morse complex induced from such choices has the second boundary map describable by using Lemma~\ref{lem:naturalmove-2}.

From now on, we assume that every graph is suitably subdivided, finite, and connected unless stated otherwise. When $n=2$, it is convenient to additionally assume that each path between two vertices of valency $\ne 2$ in a suitably subdivided graph contains at least two edges.

\begin{lem}{\rm [Maximal Tree and Order]}\label{lem:treeandorder}
For a given graph $\Gamma$, there is a maximal tree and its planar embedding so that the induced order on vertices satisfies:
\begin{itemize}
\item[(T1)] The initial vertices of all deleted edges are vertices of valency 2.
\item[(T2)] Every deleted edge $d$ is not separated by any vertex $v$ such that $v<\tau(d)$;.
\item[(T3)] If the $k$-th branch of a vertex $v$ has the property that $v$ separates a deleted edge $d$ and $g(v,\iota(d))=k$, and the $j$-th branch of $v$ does not have the property, then $j<k$.
\end{itemize}
\end{lem}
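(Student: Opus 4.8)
The plan is to build $T$ by depth-first search from the base vertex and then argue that the three conditions are structural facts that can be decoupled. The key observation is a reformulation of (T2): since the boundary walk induces a preorder in which every ancestor receives a smaller number than its descendants, a vertex $v$ separating a deleted edge $d$ with $v<\tau(d)$ would have to lie on the tree geodesic from $\tau(d)$ to $\iota(d)$ strictly above $\tau(d)$, which happens exactly when $\tau(d)\wedge\iota(d)<\tau(d)$, i.e. when neither endpoint of $d$ is an ancestor of the other. If instead $\tau(d)\wedge\iota(d)=\tau(d)$, so that $\tau(d)$ is an ancestor of $\iota(d)$, then every vertex separating $d$ is a descendant of $\tau(d)$ and hence has a larger number. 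Thus (T2) holds if and only if every deleted edge joins an ancestor--descendant pair, which is precisely the defining property of the non-tree edges of a DFS spanning tree. So I would first run a depth-first search from a chosen valency-$1$ vertex $0$ (subdividing the initial path so the nearest vertex of valency $\ge 3$ is at distance $\ge n-1$, as the setup requires), let $T$ be the resulting tree, and note that every deleted edge is then a back edge, with $\tau(d)$ the ancestor endpoint and $\iota(d)$ the descendant endpoint.

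Next I would secure (T1) by a harmless further subdivision, legitimate because $H_1(UC_n\Gamma)$ is invariant under suitable subdivision. For each deleted edge $d$ I insert a new vertex $v$ on $d$ adjacent to $\iota(d)$, place the short arc joining $v$ and $\iota(d)$ into $T$ as a pendant child of $\iota(d)$, and keep the remaining arc from $\tau(d)$ to $v$ as the deleted edge. The new vertex has valency $2$; and because $v$ is a child of $\iota(d)$ while $\tau(d)$ is still a strict ancestor of $\iota(d)$, the replaced deleted edge remains a back edge whose initial (descendant) vertex is $v$. Hence (T1) holds with $\iota$ of each deleted edge of valency $2$, and (T2) is preserved.

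The remaining freedom is the planar embedding, equivalently the rotation system. Any rotation system on a tree yields a genus-$0$ embedding (the boundary walk is a single face), and the clockwise walk visits the children of each vertex in the chosen cyclic order, so the induced vertex order is exactly the DFS preorder for that child-ordering; in particular the $k$-th branch of $v$ is its $k$-th child in exploration order and ``$j<k$'' means branch $j$ is explored first. The essential point for (T3) is that the property in question---whether a child subtree of $v$ contains $\iota(d)$ for some deleted edge $d$ whose ancestor endpoint $\tau(d)$ lies strictly above $v$---is a feature of the rooted tree together with its deleted edges and is \emph{independent} of how siblings are ordered, since reordering siblings changes numerical labels but not the ancestor--descendant relations that define $\iota$, $\tau$, separation, and $g$. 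I can therefore, independently at each vertex $v$, order the child subtrees \emph{without} the property before those \emph{with} it, which is exactly (T3); because the choices at distinct vertices concern independent local rotations they can be made simultaneously, and none of them disturbs (T1) or (T2), which depend only on valencies and ancestor--descendant relations.

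The main obstacle I anticipate is bookkeeping rather than conceptual. One must verify carefully that the clockwise boundary-walk numbering really is the DFS preorder, so that the branch indices and the ordering ``$j<k$'' genuinely track exploration order, and that the pendant-vertex subdivision used for (T1) creates no new separations violating (T2) and no new branches violating (T3). Both checks reduce to the single robust fact that all three conditions are phrased purely in terms of the rooted-tree structure and the valencies, quantities that subdivision and sibling-reordering leave under control. Once this invariance is made precise, the three conditions decouple cleanly and are met, respectively, by the DFS tree, the pendant-vertex subdivision, and the per-vertex sibling ordering.
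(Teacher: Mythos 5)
Your proof is correct in substance, and it takes a genuinely different route from the paper's. The paper chooses deleted edges by a greedy metric rule (repeatedly delete the edge nearest $0$ on a circuit nearest $0$) and then \emph{asserts} that (T1) and (T2) hold for any planar embedding of the resulting tree, finishing (T3) by iteratively sliding offending branches. You instead isolate the structural invariant that the paper leaves implicit: since the boundary-walk numbering is a preorder of the rooted tree, (T2) holds if and only if every deleted edge joins an ancestor--descendant pair (your meet argument for the ``only if'' direction is exactly right), and this back-edge property is precisely what a DFS spanning tree provides. The same invariance handles (T3) cleanly: whether a branch of $v$ contains $\iota(d)$ for some deleted edge $d$ separated by $v$ depends only on the rooted tree and the set of deleted edges, not on the rotation system, so the rotations at distinct vertices may be fixed independently in a single pass. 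What your route buys is verifiability, since each condition is reduced to a rotation-independent property of the rooted tree; what the paper's route buys is that it never leaves the given graph $\Gamma$.

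That last point is your one real deviation. To get (T1) you subdivide every deleted edge, so you produce a maximal tree of a \emph{subdivision} of $\Gamma$ rather than of $\Gamma$ itself, whereas the lemma (and each of its later invocations) fixes $\Gamma$ and asks for a tree of that graph; invariance of $H_1$ does not change what the statement claims. Fortunately the detour is unnecessary: under the standing assumption that $\Gamma$ is suitably subdivided, no edge joins two vertices of valency $\ge 3$, and in a DFS the ancestor endpoint of a back edge can never be a non-root vertex of valency $2$ --- such a vertex, once discovered, immediately explores its unique non-parent edge, so that edge is either a tree edge or a back edge in which the valency-$2$ vertex is the \emph{descendant}. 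Since no endpoint of a deleted edge has valency $1$ (such an edge would be a bridge), $\tau(d)$ has valency $\ge 3$ and hence $\iota(d)$ has valency $2$ automatically: (T1) is free for your DFS tree, with no modification of $\Gamma$. The only residual care needed is at the root: you assume a valency-$1$ basepoint exists, but $\Gamma$ may have none (e.g.\ a subdivided $K_4$); as in the paper's step (I) one then takes $0$ with $\Gamma-\{0\}$ connected, and connectivity of $\Gamma-\{0\}$ makes the DFS root a leaf of the tree automatically. When this forces $0$ to have valency $2$ in $\Gamma$, the non-tree edge created at the root needs the same (T1) check, which is where the choice of $0$ and of the first explored edge matters. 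With these repairs your argument is complete, and tighter than the one in the paper.
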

\begin{proof}
We construct a desired maximal tree in the following three steps.
\paragraph{(I) Choice of a base vertex 0 on $\Gamma$}
We assign 0 to a vertex $v$ such that $v$ is of valency 1 in $\Gamma$ or $\Gamma-\{v\}$ is connected if there is no vertex of valency 1. This is necessary to make the base vertex have valency 1 in a maximal tree so that there is one critical 0-cell.
\paragraph{(II) Choice of deleted edges}
We consider a metric on $\Gamma$ such that each edge is of length 1.
\begin{itemize}
\item[(1)] Delete an edge nearest from 0 on a circuit nearest from 0.
\item[(2)] Repeat (1) until the remainder is a tree $T$
\end{itemize}
Then the order on vertices obtained any planar embedding $p$ of $T$ satisfy the conditions (T1) and (T2) since the terminal vertices of all deleted edges are of valency $\ge 3$ in $\Gamma$.
\paragraph{(III) Modification of a planar embedding}
If the order on vertices obtained by $p$ does not satisfy the condition (T3), then there are a vertex $A$ with valency $\ge 3$ on $T$ and branches $j$ of $A$ that violate (T3). The base vertex 0 and branches $j$ do not lie on the same component of $\Gamma-\{A\}$. We slide the components containing branches $j$ over other branches so that every branch of $A$ satisfies (T3) (see Figure~\ref{fig:CMT}). We repeat this process until the induced order satisfies (T3).
\begin{figure}[ht]
\psfrag{0}{\small0}
\centering
\includegraphics[height=1.5cm]{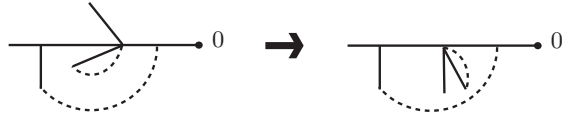}
\caption{Modification of a branch}
\label{fig:CMT}
\end{figure}
\end{proof}

From now on, we assume that we always choose a maximal tree and its embedding as given in Lemma~\ref{lem:treeandorder}.

\begin{exa}\label{ex:K5TO}
A maximal tree of $K_5$ and its planar embedding according to Lemma~\ref{lem:treeandorder} for $n=4$ is given in Figure~\ref{fig:K5n4T} and Figure~\ref{fig:K5n4TO}
\end{exa}

\begin{figure}[ht]
\psfrag{0}{\small0}
\centering
\includegraphics[height=2cm]{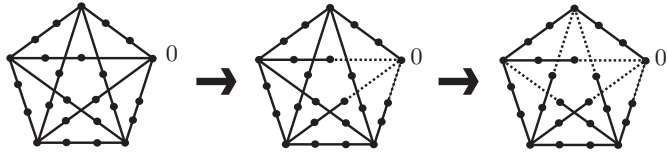}
\caption{Choice of a maximal tree of $K_5$}
\label{fig:K5n4T}
\end{figure}

\begin{figure}[ht]
\psfrag{0}{\small0}
\psfrag{3}{\small3}
\psfrag{6}{\small6($=A$)}
\psfrag{8}{\small8}
\psfrag{11}{\small11($=B$)}
\psfrag{13}{\small13}
\psfrag{15}{\small15}
\psfrag{18}{\small18($=C$)}
\psfrag{20}{\small20}
\psfrag{22}{\small22}
\psfrag{24}{\small24}
\psfrag{d1}{\small$d_1$}
\psfrag{d2}{\small$d_2$}
\psfrag{d3}{\small$d_3$}
\psfrag{d4}{\small$d_4$}
\psfrag{d5}{\small$d_5$}
\psfrag{d6}{\small$d_6$}
\centering
\includegraphics[height=3cm]{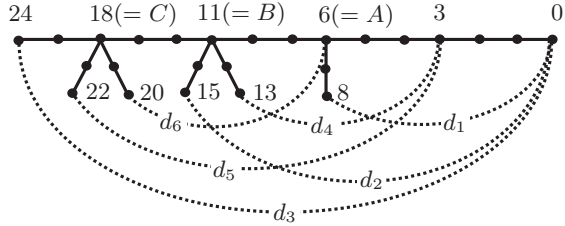}
\caption{Order on the maximal tree of $K_5$}
\label{fig:K5n4TO}
\end{figure}

When we work with an arbitrary graph $\Gamma$ of an arbitrary index $n$, it is convenient to represent cells of $UD_n\Gamma$ by using the following notations used in \cite{FS, KKP}. Let $A$ be a vertex of valency $\mu+1$ $(\ge 3)$ in a maximal tree of $\Gamma$. Starting from the branch clockwise next to the branch joining to the base vertex, we number branches incident to $A$ clockwise. Let $\vec a$ be a vector $(a_1,\ldots,a_\mu)$ of nonnegative integers and let $|\vec a|=\sum_{i=1}^\mu a_i$. And $\vec\delta_k$ denotes the $k$-th coordinate unit vector. Then for $1\le k\le \mu$, $A_k(\vec a)$ denotes the set consisting of one edge $e$ with $\tau(e)=A$ that lies on the $k$-th branch together with $a_i$ blocked vertices that lie on the $i$-th branch. Sometimes the edge $e$ is denoted by $A_k$. Note that this definition is little different from the one used in \cite{FS, KKP} but is more convenient in this work. For $1\le s\le n$, $0_s$ denotes the set $\{0,1,\ldots,s-1\}$ of $s$ consecutive vertices from the base vertex. Let $\dot A(\vec a)$ denote the set of vertices consisting of $A$
together with $a_i$ blocked vertices that lies on the $i$-th branch and let $A(\vec a)=\dot A(\vec a)-\{A\}$. Then $A(\vec a)$ can be obtained from $A_k(\vec a-\vec\delta_k)$ by replacing an edge $e$ with $\iota(e)$. Every critical $i$-cell is represented by the following union:
$$ A^1_{k_1}(\vec a^1)\cup\ldots\cup A^\ell_{k_\ell}(\vec a^\ell)\cup\{d_1,\ldots,d_q\}\cup\{v_1,\ldots,v_r\}\cup 0_s,$$
where $A^1,\ldots,A^\ell$ are vertices of valency $\ge 3$, and $d_1,\ldots,d_q$ are deleted edges, and $v_1,\ldots,v_r$ are blocked vertices blocked by deleted edges. Furthermore, since $s$ is uniquely determined by $s=n-(\ell+|\vec a^1|+\cdots+|\vec a^\ell|+q+r)$, we will omit $0_s$ in the notation. Let $\vec a-1$ denote the vector obtained from $\vec a$ by subtracting 1 from the first positive entry. Then $\vec a-\alpha$ denotes the vector obtained from $\vec a$ by iterating the above operation $\alpha$ times. Define $p(\vec a)=i$ if $a_i$ is the first nonzero entry of $\vec a$. For $1\le k\le \mu$, set $(\vec a)_k=(a_1,\ldots,a_{k-1},0,\ldots,0)$ and $|\vec a|_k=a_1+\cdots+a_{k-1}$.

By Condition (T1), there are no vertices blocked by the initial vertex of any deleted edge. Let $d(\vec a)$ denote the set consisting of a deleted edge $d$ together with $a_i$ blocked vertices that lie on the $i$-th branch of $\tau(d)$ for each $i$. Every critical 2-cell can be represented by one of the following forms:
$$A_k(\vec a)\cup B_\ell(\vec b),\ A_k(\vec a)\cup d(\vec b),\ d(\vec a) \cup d'(\vec b)$$
where $A$ and $B$ are vertices of valency $\ge 3$ in $T$, $d$ and $d'$ are deleted edges.
Condition (T2) implies that there is no pair of edges such that the terminal vertex of one edge separates the other edge and vice versa. So we need not handle this troublesome case. Condition (T3) will be used in Section~\ref{ss31:PMatrix}.

The following notation is useful in describing images under the second boundary map:
$$\mathbf{A}(\vec a,\ell)=R(\sum^{|\vec a|}_{\alpha = 0} A_{p(\vec a-\alpha)}((\vec a -\alpha)-\vec\delta_{p(\vec a-\alpha)}+\vec\delta_\ell))$$
where $A$ is a vertex of valency $\ge3$, $\vec a$ is a vector defined at $A$, and $1\le\ell\le\mu$. It is straightforward to see that a sum of critical 1-cells represented by this notation has the following properties.

\begin{pro}\label{pro:A1}
\begin{itemize}
\item[(i)] If $a_m=b_m$ for all $m>\ell$, then $\mathbf{A}(\vec a,\ell)=\mathbf{A}(\vec b,\ell)$.
\item[(ii)] If $p(\vec a)>\ell$, then $\mathbf{A}(\vec a,\ell)-\mathbf{A}(\vec a -1,\ell)=R(A_{p(\vec a)}(\vec a-\vec\delta_{p(\vec a)} + \vec\delta_\ell))$.
\end{itemize}
\end{pro}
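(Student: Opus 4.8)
The plan is to reduce both parts to a single reindexing identity for the defining sum, combined with the observation that the summands appearing are images of collapsible cells and are therefore annihilated by $R$. First I would unwind the notation. In the sum defining $\mathbf A(\vec a,\ell)$ the index $\alpha$ runs so that the vectors $\vec a,\vec a-1,\vec a-2,\dots$ strictly decrease in total weight until they reach $\vec 0$; thus the sum effectively runs over the $|\vec a|$ indices $\alpha=0,\dots,|\vec a|-1$ for which $\vec a-\alpha\neq\vec 0$. The elementary fact driving everything is that decrementing commutes with an index shift: $(\vec a-1)-\beta=\vec a-(\beta+1)$, since both sides mean ``subtract $1$ from the first positive entry $\beta+1$ times.'' Hence the $\beta$-th summand of $\mathbf A(\vec a-1,\ell)$ is exactly the $(\beta+1)$-th summand of $\mathbf A(\vec a,\ell)$, and because $|\vec a-1|=|\vec a|-1$ the two index ranges coincide except at the single index $\alpha=0$. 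By linearity of $R$ this yields the master identity
\[
\mathbf A(\vec a,\ell)-\mathbf A(\vec a-1,\ell)=R\bigl(A_{p(\vec a)}(\vec a-\vec\delta_{p(\vec a)}+\vec\delta_\ell)\bigr),
\]
valid for every $\vec a\neq\vec 0$.

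Statement (ii) is then immediate: it is exactly this master identity restricted to the case $p(\vec a)>\ell$, so no work beyond the reindexing is required.

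For (i) I would examine the right-hand side of the master identity when instead $p(\vec a)\le\ell$. Put $\vec c=\vec a-\vec\delta_{p(\vec a)}+\vec\delta_\ell$. Since all entries of $\vec a$ indexed below $p(\vec a)$ vanish and $\ell\ge p(\vec a)$, adding $\vec\delta_\ell$ places no blocked vertex on a branch numbered below $p(\vec a)$; consequently no vertex of the cell $A_{p(\vec a)}(\vec c)$ is adjacent to $A=\tau(A_{p(\vec a)})$ and lies strictly between $A$ and $\iota(A_{p(\vec a)})$. Hence the tree edge $A_{p(\vec a)}$ is order-respecting, and as the remaining points of the cell are blocked vertices, $A_{p(\vec a)}(\vec c)$ is collapsible, so $R$ kills it. Therefore $\mathbf A(\vec a,\ell)=\mathbf A(\vec a-1,\ell)$ whenever $p(\vec a)\le\ell$. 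Iterating this while the first positive entry remains $\le\ell$ deletes, one at a time, every entry of $\vec a$ indexed by $1,\dots,\ell$ without changing $\mathbf A(\vec a,\ell)$, producing $\mathbf A(\vec a^{\,0},\ell)$ where $\vec a^{\,0}$ agrees with $\vec a$ in coordinates $>\ell$ and is zero in coordinates $\le\ell$. Running the same reduction on $\vec b$ gives $\mathbf A(\vec b,\ell)=\mathbf A(\vec b^{\,0},\ell)$, and the hypothesis $a_m=b_m$ for all $m>\ell$ forces $\vec a^{\,0}=\vec b^{\,0}$, which proves (i).

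The reindexing bookkeeping is routine; the one step that genuinely demands care is the collapsibility claim in (i), where one must check against the precise definitions of \emph{order-respecting} and \emph{blocked} that the displaced cell $A_{p(\vec a)}(\vec c)$ really contains an order-respecting edge and no unblocked vertex, so that it is collapsible rather than critical or redundant. This is also exactly where the hypothesis $\ell\ge p(\vec a)$ is used in an essential way: it is what guarantees the extra vertex $\vec\delta_\ell$ is not inserted below the edge, and hence what makes the edge order-respecting. I expect this verification, rather than the algebra, to be the main obstacle.
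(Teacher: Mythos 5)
Your proof is correct, and it coincides with what the paper intends: the paper offers no argument at all for this proposition (it is asserted as ``straightforward to see''), and your verification --- the shift identity $(\vec a-1)-\beta=\vec a-(\beta+1)$ giving the master recursion by linearity of $R$, plus the observation that for $p(\vec a)\le\ell$ the extra term $A_{p(\vec a)}(\vec a-\vec\delta_{p(\vec a)}+\vec\delta_\ell)$ has no blocked vertex on a branch below $p(\vec a)$, hence is collapsible and killed by $R$ --- is exactly the intended bookkeeping. The one point that genuinely needed care, checking collapsibility against the definitions of order-respecting edges and blocked vertices, is handled correctly.
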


As mentioned above, there are three types of critical 2-cells. We will describe the images of each of these three types under $\widetilde\partial$. Since an edge $A_k$ is never separated by any vertex, Lemma~\ref{lem:treeandorder} implies $\widetilde\partial(A_k(\vec a)\cup B_\ell(\vec b))=0$, which was first proved by Farley and Sabalka in~\cite{FS}. So we consider the remaining two types. To help grasp the idea behind,  examples are followed by general formulae.

\begin{exa}
Let $\Gamma$ be $K_5$ and a maximal tree
and an order be given as Example~\ref{ex:K5TO}. We want to
compute $\widetilde\partial(c)$ for the 2-cell $c=B_3(1,0,1)\cup d_2$ in $M_2(UD_4\Gamma)$.
\end{exa}
\begin{figure}[ht]
\psfrag{0}{\small0}
\psfrag{11}{\small$B$}
\psfrag{d2}{\small$d_2$}
\centering
\includegraphics[height=2cm]{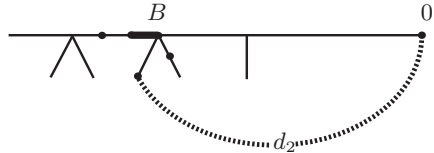}
\caption{$B_3(1,0,1)\cup d_2$}
\label{fig:Ex1}
\end{figure}
Since $\tau(d_2)<B$, using $\widetilde\partial=\widetilde R\partial$ and Lemma~\ref{lem:naturalmove} we have
\begin{align*}
\widetilde\partial(c)=&\widetilde R(B(1,0,2)\cup d_2)-\widetilde R(\dot B(1,0,1)\cup d_2)-\widetilde R(B_3(1,0,1)\cup\iota(d_2))+B_3(1,0,1)\\
=&\widetilde R(B(0,0,2)\cup d_2)-\widetilde R(B(0,0,1)\cup d_2)-B_3(1,1,1)+B_3(1,0,1)
\end{align*}
Since $B_3(0,0,1)\cup d_2$ is collapsible, using Lemma~\ref{lem:naturalmove}, we have
\begin{align*}
0=&\:\widetilde\partial\widetilde R(B_3(0,0,1)\cup d_2)=\widetilde R\partial(B_3(0,0,1)\cup d_2)\\
=&\:\widetilde R(B(0,0,2)\cup d_2-\dot B(0,0,1)\cup d_2-B_3(0,0,1)\cup \iota(d_2)+B_3(0,0,1))\\
=&\:\widetilde R(B(0,0,2)\cup d_2)-\widetilde R(B(0,0,1)\cup d_2) -B_3(0,1,1)
\end{align*}
Similarly
\begin{align*}
0=&\:\widetilde\partial\widetilde R(B_3\cup d_2)=\widetilde R\partial(B_3\cup d_2)\\
=&\:\widetilde R(B(0,0,1)\cup d_2-\{B\}\cup d_2-B_3\cup \iota(d_2)+B_3)\\
=&\:\widetilde R(B(0,0,1)\cup d_2)-d_2-B_3(0,1,0)
\end{align*}
So
\begin{align*}
\widetilde\partial(c)=&B_3(1,0,1)-B_3(1,1,1)+B_3(0,1,1)\\
=&B_3(1,0,1)-B_3(1,1,1)-\mathbf{B}((1,0,1),2)+\mathbf{B}((1,0,2),2)
\end{align*}
\qed

\begin{lem}{\rm[Boundary Formula I]}\label{lem:boundary-1}
Let $c=A_k(\vec a)\cup d(\vec b)$ and $\ell=g(A,\iota(d))$. If $d$ is separated by $A$, $$\widetilde\partial (c)=A_k(\vec a)-A_k(\vec a+\vec\delta_\ell)-\mathbf{A}(\vec a,\ell)+\mathbf{A}(\vec a+\vec\delta_k,\ell).$$
Otherwise, $\widetilde\partial (c)=0$.
\end{lem}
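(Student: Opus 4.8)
The plan is to expand $\widetilde\partial(c)=\widetilde R\,\partial(c)$ into the four boundary terms coming from the two edges $A_k$ and $d$ of $c=A_k(\vec a)\cup d(\vec b)$, reduce each term separately, and reassemble, exactly mirroring the worked example that precedes the lemma. First I would fix the sign bookkeeping. When $d$ is separated by $A$, condition (T2) forces $\tau(d)$ to lie in the base-ward component of $T-\{A\}$ (otherwise $A$ itself would be a separating vertex smaller than $\tau(d)$), so $\tau(d)<A$ and $d$ precedes $A_k$ in the terminal-vertex order. Hence
$$\partial(c)=-\partial_d^\iota(c)+\partial_d^\tau(c)+\partial_{A_k}^\iota(c)-\partial_{A_k}^\tau(c),$$
where $\partial_d^\iota(c)=A_k(\vec a)\cup\{\iota(d)\}\cup(\text{the }\vec b\text{-vertices})$, $\partial_d^\tau(c)=A_k(\vec a)\cup\{\tau(d)\}\cup(\text{the }\vec b\text{-vertices})$, $\partial_{A_k}^\iota(c)=A(\vec a+\vec\delta_k)\cup d(\vec b)$, and $\partial_{A_k}^\tau(c)=\dot A(\vec a)\cup d(\vec b)$.

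The two terms born from the deleted edge are the easy ones: once $d$ is replaced by a single vertex, the only remaining edge is $A_k$, which is not deleted, so condition (b) of Lemma~\ref{lem:naturalmove-2} holds automatically and the full reduction $\widetilde R=\widetilde R\widetilde V$ is available. In $\widetilde R(\partial_d^\tau(c))$ the vertex $\tau(d)$ and all the $\vec b$-vertices lie base-ward of $A$ and slide into $0_s$, leaving the critical cell $A_k(\vec a)$. In $\widetilde R(\partial_d^\iota(c))$ the vertex $\iota(d)$ lies on the $\ell$-th branch of $A$ with $\ell=g(A,\iota(d))$, so by Special Reduction it slides down that branch until blocked adjacent to $A$, contributing $\vec\delta_\ell$, while the $\vec b$-vertices again fall to the base; thus $\widetilde R(\partial_d^\iota(c))=A_k(\vec a+\vec\delta_\ell)$. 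With the signs above these produce the summands $A_k(\vec a)-A_k(\vec a+\vec\delta_\ell)$.

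The remaining two terms, $\widetilde R(A(\vec a+\vec\delta_k)\cup d(\vec b))$ and $\widetilde R(\dot A(\vec a)\cup d(\vec b))$, are where the work lies, since they still contain the deleted edge $d$ and Special Reduction can fail; this is the main obstacle. Here I would first apply Lemma~\ref{lem:naturalmove} to clear the blocked vertices unobstructed by $d$, and then exploit the collapsibility of the auxiliary cells $A_k(\vec c)\cup d(\vec b)$ so obtained: since such a cell is collapsible, $\widetilde R\,\partial\bigl(A_k(\vec c)\cup d(\vec b)\bigr)=\widetilde\partial\,\widetilde R\bigl(A_k(\vec c)\cup d(\vec b)\bigr)=0$, which yields a recurrence relating $\widetilde R(A(\vec c+\vec\delta_k)\cup d(\vec b))$, $\widetilde R(\dot A(\vec c)\cup d(\vec b))$ and a single critical $1$-cell of type $A_{p(\vec c)}(\cdots)$. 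Telescoping these recurrences over $\vec c=\vec a-\alpha$ for $\alpha=0,\dots,|\vec a|$ and invoking Proposition~\ref{pro:A1} assembles exactly the sums $\mathbf A(\vec a,\ell)$ and $\mathbf A(\vec a+\vec\delta_k,\ell)$, giving $\widetilde R(\partial_{A_k}^\iota(c))-\widetilde R(\partial_{A_k}^\tau(c))=-\mathbf A(\vec a,\ell)+\mathbf A(\vec a+\vec\delta_k,\ell)$. Because every recurrence outputs only critical cells supported at $A$, the answer comes out independent of $\vec b$. The delicate points to verify are that the $p(\vec a-\alpha)$-pattern in the definition of $\mathbf A$ is precisely what the telescoping reproduces and that no spurious critical cells survive the reductions performed in the presence of $d$.

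Finally, for the unseparated case I would run the same expansion but observe that now $\iota(d)$, $\tau(d)$ and $\iota(d)\wedge\tau(d)$ all lie in one component of $T-\{A\}$ and, by (T2), $\tau(d)$ does not separate $A_k$. Consequently $\iota(d)$ and $\tau(d)$ slide to the same blocked slot, so $\widetilde R(\partial_d^\iota(c))=\widetilde R(\partial_d^\tau(c))$, and symmetrically $\widetilde R(\partial_{A_k}^\iota(c))=\widetilde R(\partial_{A_k}^\tau(c))$; all four terms cancel in pairs and $\widetilde\partial(c)=0$, exactly paralleling the vanishing $\widetilde\partial(A_k(\vec a)\cup B_\ell(\vec b))=0$ already noted for two tree edges.
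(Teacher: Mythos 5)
Your proposal is correct and takes essentially the same route as the paper's own proof: the same four-term expansion of $\widetilde R\,\partial(c)$ with the sign fixed by $\tau(d)<A$, Special Reduction (Lemma~\ref{lem:naturalmove-2}) applied to the two deleted-edge boundary terms to produce $A_k(\vec a)-A_k(\vec a+\vec\delta_\ell)$, and the recurrence coming from collapsible auxiliary $2$-cells (which the paper packages as an induction on $|\vec a|$ proving $\widetilde R(A(\vec a)\cup d(\vec b))=d(\vec b+|\vec a|\vec\delta_m)+\mathbf{A}(\vec a,\ell)$) telescoped via Proposition~\ref{pro:A1} to yield $-\mathbf{A}(\vec a,\ell)+\mathbf{A}(\vec a+\vec\delta_k,\ell)$, with the unseparated case handled by the same pairwise cancellation of the four reduced terms.
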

\begin{proof}
Let $B=\tau(d)$. Then
\begin{align*}
\widetilde\partial(c)&=\widetilde R\partial(A_k(\vec a)\cup d(\vec b))\\
&=\pm\widetilde R(\dot A(\vec a)\cup d(\vec b)-A(\vec a+\vec\delta_k)\cup d(\vec b)-A_k(\vec a)\cup \dot B(\vec b)+A_k(\vec a)\cup B(\vec b)\cup\{\iota(d)\})
\end{align*}
where the sign is determined by the order between $A$ and $B$.

Since $A_k$ is not separated by any vertex, Lemma~\ref{lem:naturalmove-2} implies $\widetilde R(A_k(\vec a)\cup \dot B(\vec b))=\widetilde R\circ\widetilde V(A_k(\vec a)\cup \dot B(\vec b))$ and $\widetilde R(A_k(\vec a)\cup B(\vec b)\cup\{\iota(d)\} )=\widetilde R\circ\widetilde V(A_k(\vec a)\cup B(\vec b)\cup\{\iota(d)\} )$.

Assume that $d$ is not separated by $A$. Then $\widetilde V(A_k(\vec a)\cup \dot B(\vec b))=\widetilde V(A_k(\vec a)\cup B(\vec b)\cup\{\iota(d)\})$. So we only consider $\widetilde R(\dot{A}(\vec a)\cup d(\vec b)-A(\vec a+\vec\delta_k)\cup d(\vec b))$. Let $C$ be the unique largest vertex of valency $\ge 3$ such that $C<A$. Since $d$ is not separated by any vertex between $C$ and $A$, Lemma~\ref{lem:naturalmove} implies $\widetilde R(\dot{A}(\vec a)\cup d(\vec b))=\widetilde R(C((|\vec a|+1)\vec\delta_{g(C,A)})\cup d(\vec b))=\widetilde R(A(\vec a+\vec\delta_k)\cup d(\vec b))$. Thus $\widetilde\partial(c)=0$.

Assume that $d$ is separated by $A$. By Condition (T1) on our maximal tree, $A>B=\tau(d)$ and so the negative sign is valid in the expression of $\widetilde\partial(c)$ above.  Lemma~\ref{lem:naturalmove-2} implies $\widetilde R(A_k(\vec a)\cup \dot B(\vec b))=A_k(\vec a)$ and $\widetilde R(A_k(\vec a)\cup B(\vec b)\cup\{\iota(d)\})=\widetilde R\circ\widetilde V(A_k(\vec a+\vec\delta_\ell)\cup B(\vec b))=A_k(\vec a+\vec\delta_\ell)$. Let $m=g(B,A)$. Since $\widetilde R(\dot A(\vec a)\cup d(\vec b))=\widetilde R(A(\vec a)\cup d(\vec b+\vec\delta_m))$, it is sufficient to prove the formula $$\widetilde R(A(\vec a)\cup d(\vec b))=d(\vec b +|\vec a|\vec\delta_m)+\mathbf{A}(\vec a,\ell).$$
We use the induction on $|\vec a|$.
\begin{align*}
&\widetilde R(A(\vec a)\cup d(\vec b))\\
=\:&\widetilde R(\dot A(\vec a -1)\cup d(\vec b)+A_{p(\vec a)}(\vec a-\vec\delta_{p(\vec a)})\cup B(\vec b) \cup \{\iota(d)\})\\
&-\widetilde R(A_{p(\vec a)}(\vec a-\vec\delta_{p(\vec a)})\cup \dot B(\vec b))\\
=\:& \widetilde R(A(\vec a -1)\cup d(\vec b+\vec\delta_m)+A_{p(\vec a)}(\vec a+\vec\delta_\ell-\vec\delta_{p(\vec a)})-A_{p(\vec a)}(\vec a-\vec\delta_{p(\vec a)}))\\
=\:& d(\vec b+|\vec a|\vec\delta_m)+\mathbf{A}(\vec a-1,\ell)+R(A_{p(\vec a)}(\vec a+\vec\delta_\ell-\vec\delta_{p(\vec a)}))\\
=\:& d(\vec b +|\vec a|\vec\delta_m)+\mathbf{A}(\vec a,\ell)
\end{align*}
Notice that $A_{p(\vec a)}(\vec a-\vec\delta_{p(\vec a)})$ is collapsible. It is easy to verify the formula for $|\vec a|=1$.
\end{proof}

Let $d$ and $d'$ be deleted edges such that $\tau(d)>\tau(d')$, $C=\iota(d)\wedge\iota(d')$, $\ell=\min\{g(C,\iota(d)),g(C,\iota(d')\}$ and $k=\max\{g(C,\iota(d)),g(C,\iota(d')\}$. Then we define $$\mbox{$\wedge$}(d,d')=C_k(\vec\delta_\ell)$$

\begin{exa}
Let $\Gamma$ be $K_5$ and a maximal tree
and an order be given as Example~\ref{ex:K5TO}. We want to
compute $\widetilde\partial(c)$ for the 2-cell $c=d_6(0,1)\cup d_4$ in $M_2(UD_4\Gamma)$.
\end{exa}
\begin{figure}[ht]
\psfrag{0}{\small0}
\psfrag{6}{\small$A$}
\psfrag{11}{\small$B$}
\psfrag{d4}{\small$d_4$}
\psfrag{d6}{\small$d_6$}
\centering
\includegraphics[height=2cm]{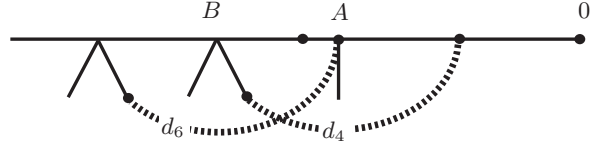}
\caption{$d_6(0,1)\cup d_4$}
\label{fig:Ex2}
\end{figure}
Since $\tau(d_4)<\tau(d_6)$, using $\widetilde\partial=\widetilde R\partial$ and Lemma~\ref{lem:naturalmove} we have
\begin{align*}
\widetilde\partial(c)=&\widetilde R(A(0,1)\cup\{\iota(d_6)\}\cup d_4)-\widetilde R(\dot A(0,1)\cup d_4)\\
&-\widetilde R(d_6(0,1)\cup\iota(d_4))+\widetilde R(d_6(0,1)\cup\tau(d_4))\\
=&\widetilde R(B(0,0,1)\cup d_4(1))-d_4(2)-d_6(0,2)+d_6(0,1)
\end{align*}
Since $B_3\cup d_4(1)$ is collapsible, using Lemma~\ref{lem:naturalmove}, we have
\begin{align*}
0=&\:\widetilde\partial\widetilde R(B_3\cup d_4(1))=\widetilde R\partial(B_3\cup d_4(1))\\
=&\:\widetilde R(B(0,0,1)\cup d_4(1)-\{B\}\cup d_4(1)-B_3\cup\iota(d_4)+B_3)\\
=&\:\widetilde R(B(0,0,1)\cup d_4(1))-d_4(2) -B_3(1,0,0).
\end{align*}
So
\begin{align*}
\widetilde\partial(c)=&d_6(0,1)-d_6(0,2)+\{d_4(2)+B_3(1,0,0)\}-d_4(2)\\
=&d_6(0,1)-d_6(0,2)+\mbox{$\wedge$}(d_6,d_4).
\end{align*}
\qed

\begin{lem}{\rm[Boundary Formula II]}\label{lem:boundary-2}
Let $c=d(\vec a)\cup d'(\vec b)$ such that $\tau(d)>\tau(d')$ and let $A=\tau(d)$, $k=g(A,\iota(d))$, and $\ell=g(A,\iota(d'))$. If $d'$ is separated by $A$,
$$\widetilde\partial (c)=d(\vec a)-d(\vec a+\vec\delta_k)-\mathbf{A}(\vec a,\ell)+\mathbf{A}(\vec a+\vec\delta_k,\ell)+\varepsilon\mbox{$\wedge$}(d,d')$$
where $\varepsilon=0$ for $k\ne\ell$, $\varepsilon=-1$ for $k=\ell$ and $\iota(d)<\iota(d')$, and $\varepsilon=1$ for $k=\ell$ and $\iota(d')<\iota(d)$.
Otherwise, $\widetilde\partial (c)=0$.
\end{lem}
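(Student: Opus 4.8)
The plan is to imitate the proof of Boundary Formula I (Lemma~\ref{lem:boundary-1}), reusing its inductive identity and isolating the one genuinely new feature, the term $\wedge(d,d')$. Set $A=\tau(d)$ and $B=\tau(d')$; by hypothesis $A>B$, so $d'$ precedes $d$ among the two edges of $c$ and all signs in $\partial$ are fixed. Expanding $\partial(c)$ and feeding it into $\widetilde\partial=\widetilde R\partial$ yields the four summands
\[\widetilde\partial(c)=\widetilde R\big(d(\vec a)\cup\dot B(\vec b)-d(\vec a)\cup B(\vec b)\cup\{\iota(d')\}+A(\vec a)\cup\{\iota(d)\}\cup d'(\vec b)-\dot A(\vec a)\cup d'(\vec b)\big),\]
the first pair obtained by resolving the edge $d'$ (keeping $d$) and the second by resolving $d$ (keeping $d'$). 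First I would reduce the four summands separately and then collect them.

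For the pair keeping $d'$ I would invoke verbatim the identity $\widetilde R(A(\vec a)\cup d'(\vec b))=d'(\vec b+|\vec a|\vec\delta_m)+\mathbf A(\vec a,\ell)$ with $m=g(B,A)$, which was established by induction on $|\vec a|$ inside the proof of Lemma~\ref{lem:boundary-1} (now with $d'$ playing the role of the deleted edge). Sliding $A$ down rewrites $\dot A(\vec a)\cup d'(\vec b)$ as $A(\vec a)\cup d'(\vec b+\vec\delta_m)$, and sliding $\iota(d)$ down branch $k$ to the vertex adjacent to $A$ rewrites $A(\vec a)\cup\{\iota(d)\}\cup d'(\vec b)$ as $A(\vec a+\vec\delta_k)\cup d'(\vec b)$; applying the identity to both summands, the $d'(\cdots)$-parts cancel and this pair contributes exactly $-\mathbf A(\vec a,\ell)+\mathbf A(\vec a+\vec\delta_k,\ell)$.

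For the pair keeping $d$ I would use the Special Reduction Lemma (Lemma~\ref{lem:naturalmove-2}). By condition (T2), $d$ is separated only by vertices larger than $A$, so hypothesis (b) holds while the vertices of $d'(\vec b)$ and the freed vertex $\iota(d')$ are pushed down at or below $A$. Thus $d(\vec a)\cup\dot B(\vec b)$ collapses to $d(\vec a)$, while in $d(\vec a)\cup B(\vec b)\cup\{\iota(d')\}$ the vertex $\iota(d')$—which lies above $A$ on branch $\ell$ because $d'$ is separated by $A$—descends to a blocked vertex of $d$ on that branch, contributing $-d(\vec a+\vec\delta_\ell)$. When instead $d'$ is not separated by $A$, the two resolutions of $d'$ have equal images under $\widetilde V$ and the two resolutions of $d$ become equal via Lemma~\ref{lem:naturalmove}, so all four terms cancel and $\widetilde\partial(c)=0$, exactly as in Lemma~\ref{lem:boundary-1}.

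I expect the only real obstacle to be the emergence of $\wedge(d,d')$, which happens precisely when $k=\ell$: then $\iota(d)$ and $\iota(d')$ sit above $A$ on the same branch, so $C=\iota(d)\wedge\iota(d')>A$. In reducing $A(\vec a)\cup\{\iota(d)\}\cup d'(\vec b)$ the vertex $\iota(d)$ now descends along the branch that carries the deleted edge $d'$; hypothesis (b) of Lemma~\ref{lem:naturalmove-2} fails below $C$, the descent halts at $C$, and reducing the resulting cell $C(\cdots)\cup d'(\cdots)$ against $d'$ produces the extra critical $1$-cell $\wedge(d,d')$. I would extract this contribution from the relation $\widetilde R\partial(x)=0$ valid for any collapsible cell $x$ (because $\widetilde R$ is a chain map and kills collapsible cells), exactly as in the worked example, and read off its sign from the order of $\iota(d)$ and $\iota(d')$ along the shared branch, obtaining $\varepsilon=-1$ for $\iota(d)<\iota(d')$ and $\varepsilon=+1$ for $\iota(d')<\iota(d)$. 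Finally, when $k=\ell$ Proposition~\ref{pro:A1}(i) forces $\mathbf A(\vec a,\ell)=\mathbf A(\vec a+\vec\delta_k,\ell)$, so those two terms drop out and the asserted formula survives. The delicate points are the repeated verification of hypothesis (b) during each descent and the bookkeeping of $\varepsilon$; this is also exactly where working in $UD_n\Gamma$ rather than $D_n\Gamma$ is essential, since the analogue of Lemma~\ref{lem:naturalmove-2} fails for the ordered configuration space.
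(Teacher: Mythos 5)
Your overall architecture does match the paper's proof: expand $\partial c$ into the four faces, reuse the inductive identity from the proof of Lemma~\ref{lem:boundary-1} for the pair keeping $d'$, reduce the pair keeping $d$ via Lemma~\ref{lem:naturalmove-2}, and get vanishing in the non-separated case. The genuine gap is in your localization of the term $\mbox{$\wedge$}(d,d')$, and hence in the sign $\varepsilon$. You claim that in both sub-cases of $k=\ell$ the wedge arises from the face $A(\vec a)\cup d'(\vec b)\cup\{\iota(d)\}$, because hypothesis (b) of Lemma~\ref{lem:naturalmove-2} ``fails below $C$'' during the descent of $\iota(d)$. That is true only when $\iota(d')<\iota(d)$. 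When $\iota(d)<\iota(d')$, i.e.\ $p=g(C,\iota(d))<m=g(C,\iota(d'))$, the descent of $\iota(d)$ crosses $C$ cleanly: at the crossing step the edge $e$ has $\tau(e)=C$ and $\iota(e)$ adjacent to $C$ on branch $p$, so the vertices $w$ with $\tau(e)<w<\iota(e)$ are exactly those on branches $1,\ldots,p-1$ of $C$, and neither end of $d'$ is among them ($\iota(d')$ lies on branch $m>p$, and $\tau(d')<C$); hypothesis (b) holds vacuously and $\widetilde R(A(\vec a)\cup d'(\vec b)\cup\{\iota(d)\})=\widetilde R(A(\vec a+\vec\delta_k)\cup d'(\vec b))$ with no extra term, exactly as the paper states. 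In that case the wedge is produced instead by the other face, $d(\vec a)\cup B(\vec b)\cup\{\iota(d')\}$: there the descending vertex is $\iota(d')$, the in-cell edge is $d$, and at the crossing step $\iota(d)$ does lie between $C$ and the vertex adjacent to $C$ on branch $m$, while $d$ is separated by $C$, so (b) fails and the explicit $R$-computation yields $\widetilde R(d(\vec a)\cup B(\vec b)\cup\{\iota(d')\})=\widetilde R(d(\vec a+\vec\delta_\ell)\cup B(\vec b))+\mbox{$\wedge$}(d,d')$. This also contradicts your third paragraph, where you assert that the pair keeping $d$ always reduces cleanly; it fails precisely in this sub-case.

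The mislocation is not cosmetic, because it is what determines $\varepsilon$. Inside whichever face produces it, the wedge appears with coefficient $+1$; the sign $\varepsilon$ is then the coefficient of that face in $\partial c$, namely $+1$ for $A(\vec a)\cup d'(\vec b)\cup\{\iota(d)\}$ and $-1$ for $d(\vec a)\cup B(\vec b)\cup\{\iota(d')\}$. With your mechanism (wedge always from the $+$ face) you could only ever obtain $\varepsilon=+1$; ``reading off the sign from the order of $\iota(d)$ and $\iota(d')$ along the shared branch'' has no basis within a single face, so your derivation of $\varepsilon=-1$ is unsupported. A minor further remark: your clean reduction of the pair keeping $d$ gives $-d(\vec a+\vec\delta_\ell)$, which agrees with the paper's own proof and with Lemma~\ref{lem:but}; the term $-d(\vec a+\vec\delta_k)$ in the statement is evidently a typo ($k$ for $\ell$), harmless when $k=\ell$ but inconsistent with your claim that the asserted formula is recovered verbatim in the case $k\ne\ell$.
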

\begin{proof}
Let $B=\tau(d')$. Then $A=\tau(d)>B$. So $$\widetilde\partial(c)=\widetilde R(A(\vec a)\cup d'(\vec b)\cup\{\iota(d)\}-\dot A(\vec a)\cup d'(\vec b)-d(\vec a)\cup B(\vec b)\cup\{\iota(d')\}+d(\vec a)\cup \dot B(\vec b)).$$
Note that $d$ is not separated by $B$. Assume $d'$ is not separated by $A$. By Lemma~\ref{lem:naturalmove-2},
$\widetilde R(A(\vec a)\cup d'(\vec b)\cup\{\iota(d)\})=\widetilde R(\dot A(\vec a)\cup d'(\vec b))$, $\widetilde R(d(\vec a)\cup B(\vec b)\cup\{\iota(d')\})=\widetilde R(d(\vec a)\cup \dot B(\vec b))$ and so $\widetilde\partial (c)=0$.

Now assume that $d'$ is separated by $A$. If $k\ne\ell$, then $d'$ (and $d$, respectively) is not separated by any vertex other than $A$ on the path between $A$ and $\iota(d)$ (and $\iota(d')$). So we see that $\widetilde R(A(\vec a)\cup d'(\vec b)\cup\{\iota(d)\})=\widetilde R(A(\vec a +\vec\delta_k)\cup d'(\vec b))$ and $\widetilde R(d(\vec a)\cup B(\vec b)\cup\{\iota(d')\})=\widetilde R(d(\vec a+\vec\delta_\ell)\cup B(\vec b))$.

Assume $k=\ell$. Let $C=\iota(d)\wedge\iota(d')$, $m=g(C,\iota(d'))$ and $p=g(C,\iota(d))$. Then $A<C$. If $\iota(d)<\iota(d')$, then $p<m$ and so Lemma~\ref{lem:naturalmove-2} implies
$$\widetilde R(A(\vec a)\cup d'(\vec b)\cup\{\iota(d)\})=\widetilde R(A(\vec a +\vec\delta_k)\cup d'(\vec b)).$$
And we have
\begin{align*}
\widetilde R(d(\vec a)\cup B(\vec b)\cup\{\iota(d')\})=&\widetilde R(d(\vec a)\cup B(\vec b)\cup\{C\}+A(\vec a)\cup B(\vec b)\cup C_m(\delta_m)\cup\{\iota(d)\})\\
&-\widetilde R(\dot A(\vec a)\cup B(\vec b)\cup C_m(\vec\delta_m))\\
=&\widetilde R(d(\vec a+\vec\delta_\ell)\cup B(\vec b))+\widetilde R(A(\vec a)\cup B(\vec b)\cup \mbox{$\wedge$}(d,d'))\\
=&\widetilde R(d(\vec a+\vec\delta_\ell)\cup B(\vec b))+\mbox{$\wedge$}(d,d')
\end{align*}

Finally if $\iota(d')<\iota(d)$, then $m<p$ and so Lemma~\ref{lem:naturalmove-2} implies
\begin{align*}
\widetilde R(A(\vec a)\cup d'(\vec b)\cup\{\iota(d)\})=&\widetilde R(A(\vec a)\cup d'(\vec b)\cup\{C\}-A(\vec a)\cup B(\vec b)\cup C_p(\delta_p)\cup\{\iota(d')\})\\
&+\widetilde R(A(\vec a)\cup \dot B(\vec b)\cup C_p(\vec\delta_p))\\
=&\widetilde R(A(\vec a+\vec\delta_k)\cup d'(\vec b))+\widetilde R(A(\vec a)\cup B(\vec b)\mbox{$\wedge$}(d,d'))\\
=&\widetilde R(A(\vec a+\vec\delta_k)\cup d'(\vec b))+\mbox{$\wedge$}(d,d').
\end{align*}
And we have
$$\widetilde R(d(\vec a)\cup B(\vec b)\cup\{\iota(d')\})=\widetilde R(d(\vec a+\vec\delta_\ell)\cup B(\vec b)).$$

The remaining part can be proved by the same argument as in the proof of Lemma~\ref{lem:boundary-1}.
\end{proof}

To prove that for planar graphs the first homologies of graph braid groups are torsion free, we need an additional requirement. So we modify Lemma~\ref{lem:treeandorder} for planar graphs as follows.

\begin{lem}{\rm[Maximal Tree and Order for Planar Graph]}\label{lem:treeandorderforplanar}
For a given planar graph $\Gamma$, there is a maximal tree and its planar embedding so that the induced order on vertices satisfies (T1), (T2), and (T3) in Lemma~\ref{lem:treeandorder} and additionally
\begin{itemize}
\item[(T4)] If $\tau(d')<\tau(d)$ and $g(\tau(d),\iota(d))=g(\tau(d),\iota(d'))$ then $\iota(d)<\iota(d')$.
\end{itemize}
\end{lem}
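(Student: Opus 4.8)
The plan is to produce the tree and embedding by refining the output of Lemma~\ref{lem:treeandorder} using the planarity of $\Gamma$, after recognizing (T4) as a non-crossing condition on pairs of deleted edges. Its role is to pin down the sign $\varepsilon$ in Boundary Formula II (Lemma~\ref{lem:boundary-2}): it forces $\varepsilon=-1$ whenever $k=\ell$, which is exactly what removes torsion for planar graphs. I would first extract from (T2) that for every deleted edge $d$ the vertex $\tau(d)$ is an ancestor of $\iota(d)$ in $T$; for if not, then $\tau(d)\wedge\iota(d)<\tau(d)$ would separate $d$, contradicting (T2). Now let $d,d'$ satisfy the hypotheses of (T4), put $A=\tau(d)$ and $k=g(A,\iota(d))=g(A,\iota(d'))\ge 1$, so that $\iota(d)$ and $\iota(d')$ both lie on the $k$-th branch of $A$. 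Applying the same fact to $d'$ gives that $\tau(d')$ is an ancestor of $\iota(d')$; since $A$ is too and $\tau(d')<A$, the vertex $\tau(d')$ is an ancestor of $A$, so $A$ separates $d'$ and $\tau(d')<A<\iota(d),\iota(d')$.

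Next I would read (T4) as a statement about chords. Let $R$ be a regular neighborhood of $T$ and regard each deleted edge as an arc joining its endpoints along $\partial R$, the endpoints occurring in the order of the vertex numbering. For the pair above, the arc of $d$ runs between $A$ and $\iota(d)$, while the arc of $d'$ runs between $\tau(d')<A$ and $\iota(d')$; as both $\iota(d)$ and $\iota(d')$ exceed $A$, exactly one of two configurations occurs. Either $\iota(d)<\iota(d')$, so that $[A,\iota(d)]\subset[\tau(d'),\iota(d')]$ and the two arcs are nested, or $\iota(d')<\iota(d)$, so that $\tau(d')<A<\iota(d')<\iota(d)$ and the two arcs cross. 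Hence (T4) for this pair is equivalent to the arcs of $d$ and $d'$ being non-crossing; equivalently, $g(C,\iota(d))<g(C,\iota(d'))$ at $C=\iota(d)\wedge\iota(d')$, which is precisely the case giving $\varepsilon=-1$ in Lemma~\ref{lem:boundary-2}.

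To make all such pairs non-crossing at once I would use planarity. Beginning with a tree and order furnished by Lemma~\ref{lem:treeandorder}, so that (T1)--(T3) already hold, I would realize the embedding of $T$ as the restriction of a planar embedding of $\Gamma$ in $S^2$; this is possible because $\Gamma$ is planar and the regular neighborhood $R$ of the tree is a disk, whose complement $S^2\setminus R$ is again a disk bounded by $\partial R$. Every deleted edge is then an arc in this complementary disk with endpoints among the numbered vertices on $\partial R$, and no two of these arcs cross. By the previous paragraph, (T4) holds for every relevant pair.

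The delicate point, and the step I expect to be the main obstacle, is to arrange (T1)--(T3) and the non-crossing property in a single embedding. Conditions (T1) and (T2) are fixed once the base vertex and the deleted edges are chosen (steps (I) and (II) of Lemma~\ref{lem:treeandorder}) and are insensitive to the embedding. For (T3) one must check that the branch-sliding of step (III)---pushing every branch that separates a deleted edge past those that do not---can be carried out as an ambient planar re-embedding, so that it destroys neither the non-crossing of the arcs nor the conditions already secured. I would argue that the freedom available in a planar embedding (reflection, together with the independent re-embedding of the $2$-connected blocks of $\Gamma$) suffices to place, at every vertex, the separating branches clockwise-last while keeping all deleted edges inside the complementary disk. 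This global consistency is exactly where planarity is essential: for a non-planar $\Gamma$ the relevant arcs are forced to cross, so (T4) fails and the surviving sign $\varepsilon=+1$ in Lemma~\ref{lem:boundary-2} is what produces the $2$-torsion.
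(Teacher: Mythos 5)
Your reduction of (T4) to planarity breaks at the claimed equivalence ``(T4) for a pair $d,d'$ $\Leftrightarrow$ the arcs of $d$ and $d'$ do not cross.'' That equivalence treats each vertex of $T$ as a single point of $\partial R$, but a vertex of valency $\mu$ in $T$ meets $\partial R$ in $\mu$ arcs (one per corner), and a deleted edge attaches at one \emph{specific} corner; which corner it attaches to is not recorded by the vertex numbering. Consequently two deleted edges whose vertex numbers alternate, $\tau(d')<\tau(d)<\iota(d')<\iota(d)$, can still be disjoint arcs in the complementary disk: attach them at corners on opposite sides of the tree. Concretely, let $T$ be the path $v_0v_1v_2v_3v_4$ together with leaves $u$ (adjacent to $v_3$) and $w$ (adjacent to $v_4$), and let the deleted edges be $d'=uv_1$ and $d=wv_2$; this pair is a legitimate output of Lemma~\ref{lem:treeandorder}. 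Embed the path horizontally with $u,d'$ above it and $w,d$ below it: a planar embedding of $\Gamma$ whose restriction to $T$ induces (up to reflection) the order $v_0,v_1,v_2,v_3,u,v_4,w$, i.e.\ $u=4$, $v_4=5$, $w=6$. Conditions (T1)--(T3) hold, the two deleted edges are disjoint arcs of the complementary disk, and yet $\tau(d')=1<2=\tau(d)$, $g(2,\iota(d))=g(2,\iota(d'))=1$, while $\iota(d)=6>4=\iota(d')$, so (T4) fails. The dilemma is sharp: if your ``arcs'' are the actual edges of the planar embedding, they are non-crossing but (T4) can fail; if they are abstract chords attached at the first-visit points (so that (T4) really is equivalent to non-crossing), then planarity of $\Gamma$ no longer guarantees those chords are non-crossing. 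This is exactly what the paper's Figure~\ref{fig:dd} records: there are \emph{two} non-crossing configurations, and the bad one (b) is not excluded by planarity.

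The missing idea is that the deleted edges themselves must be chosen compatibly with the planar embedding, not inherited from Lemma~\ref{lem:treeandorder} and patched by re-embedding afterwards. The paper's proof fixes a planar embedding of all of $\Gamma$ with the base vertex in the outermost region, and then chooses deleted edges by repeatedly traveling the outermost component of $\partial R(T)$ clockwise from $0$ (starting with $T=\Gamma$) until first meeting an edge lying on a circuit, deleting that edge, and iterating; it is this Step~II, not planarity alone, that kills configuration (b) of Figure~\ref{fig:dd}, and (T3) is then arranged by the same branch-sliding as before. In my example this procedure deletes $uv_1$ and $v_2v_3$ (not $wv_2$), after which (T4) does hold. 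Your final paragraph senses that a global compatibility is needed, but locates it in the wrong place (interaction of the (T3)-sliding with non-crossing); the argument has already failed one step earlier, at the inference ``no crossings $\Rightarrow$ (T4).'' Your first paragraph (that (T2) forces $\tau(d)$ to lie on the tree path from $\iota(d)$ to $0$, hence $A$ separates $d'$) and your reading of the sign $\varepsilon$ in Lemma~\ref{lem:boundary-2} are correct, but they do not repair this gap.
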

\begin{proof}
Since $\Gamma$ is suitably subdivided, each path between two vertices of valency $\ne 2$ passes through at least 2 edges.

\paragraph{(I) Choices of a base vertex 0 and a planar embedding}
We assign 0 to a vertex $v$ such that $v$ is of valency 1 in $\Gamma$ or $\Gamma-\{v\}$ is connected if there is no vertex of valency 1. Choose a planar embedding of $\Gamma$ such that the base vertex 0 lies in the outmost region. Let $T=\Gamma$. Go to Step II.
\paragraph{(II) Choice of deleted edges}
Take a regular neighborhood $R$ of $T$. As traveling the outmost component of $\partial R$ clockwise from the base vertex until either coming back to 0 or meeting an edge that is on a circuit. If the former is the case, we are done. If the latter is the case, delete the edge and let $T$ be the rest. Repeat Step II.

\begin{figure}[ht]
\psfrag{0}{\small0}
\psfrag{d1}{\small$d$}
\psfrag{d2}{\small$d'$}
\centering
\subfigure[]{\includegraphics[height=1.5cm]{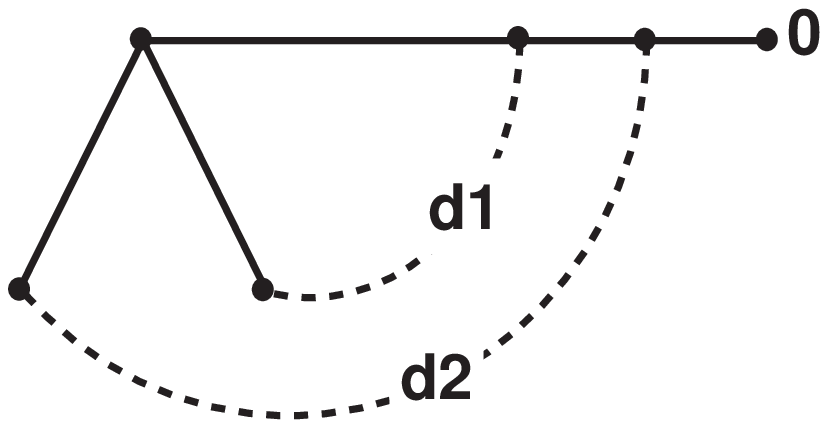}}\qquad
\subfigure[]{\includegraphics[height=2cm]{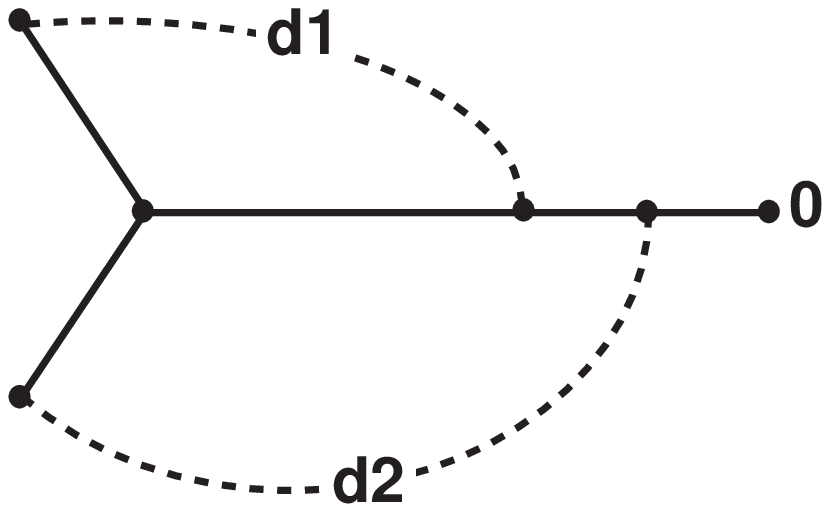}}
\caption{Relative locations of two deleted edges}\label{fig:dd}
\end{figure}

Then the order on vertices obtained by traveling a regular neighborhood $R$ of the maximal tree $T$ clockwise from 0 satisfies Conditions (T1) and (T2) since the terminal vertices of all deleted edges are vertices of valency $\ge 3$ in $\Gamma$. Moreover if $\tau(d')<\tau(d)$ and $g(\tau(d),\iota(d))=g(\tau(d),\iota(d'))$ for two deleted edges $d$ and $d'$ then there are two possibilities as Figure~\ref{fig:dd} since there is no intersection of the two edges. But by Step II the second possibility in Figure (b) is impossible. So the order satisfies Condition (T4). Concerning Condition (T3), we modify the planar embedding as in Lemma~\ref{lem:treeandorder}.
\end{proof}

\begin{exa}\label{ex:K4n3}
A maximal tree and an order on $K_4$ for $n=3$, which satisfy Lemma~\ref{lem:treeandorderforplanar}.
\end{exa}
\begin{figure}[ht]
\psfrag{0}{\small0}
\psfrag{2}{\small2}
\psfrag{4}{\small4}
\psfrag{6}{\small6}
\psfrag{9}{\small9}
\centering
\includegraphics[height=3cm]{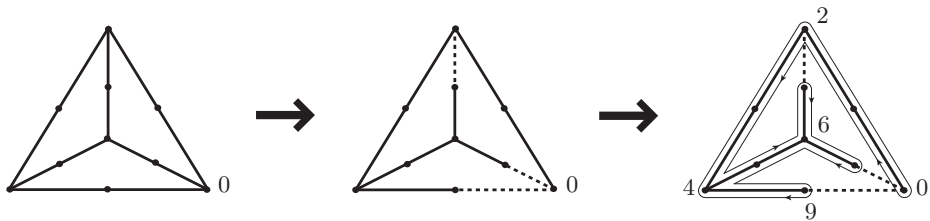}
\caption{The maximal tree and the order on $K_4$}
\label{fig:K4PE}
\end{figure}\qed

Condition (T4) implies that there are no critical 2-cells whose boundary images correspond to the case $\varepsilon=1$ in Lemma~\ref{lem:boundary-2}. Note that Condition (T4) implies that the given graph is planar. Thus a given graph has a maximal tree and an order on vertices satisfy (T1)--(T4) if and only if the graph is planar.

\section{First homologies}\label{s:three}

We will derive formulae for $H_1(B_n\Gamma)$ and $H_1(P_2\Gamma)$ in terms of graph-theoretical quantities. We will characterize presentation matrices for $H_1(B_n\Gamma)$ over bases given by critical 2-cells and critical 1-cells in \S\ref{ss31:PMatrix} and will count the number of relevant critical 1-cells in terms of graph-theoretical quantities in \S\ref{ss32:H1}. A parallel discussion for $H_1(P_2\Gamma)$ will be presented in \S\ref{ss33:H1PB2}.

\subsection{Presentation matrices}\label{ss31:PMatrix}

A presentation matrix of $H_1(B_n\Gamma)$ is determined by the second boundary homomorphism
over bases given by critical 2-cells and critical 1-cells. We will give orders on critical 1-cells and critical 2-cells to easily locate pivots and zero rows in the presentation matrixes.

The number of critical cells enormously grows in both the size of graph and the braid index. For example, consider $K_5$ with braid index 4 and its maximal tree and an order given in Example~\ref{ex:K5TO}. The numbers of critical 1-cells of the form $A_k(\vec a)$ and $d(\vec a)$ are 58 and 21. And the numbers of critical 2-cells of the form $A_k(\vec a)\cup B_\ell(\vec b)$, $A_k(\vec a)\cup d(\vec b)$ and $d(\vec a)\cup d'(\vec b)$ are 15, 167 and 56. So we have a presentation matrix of the size $238\times 79$. Fortunately rows of the matrix are highly dependent. The following lemmas illustrates some of this phenomena.

\begin{lem}\label{cor:boundary}{\rm [Dependence among Boundary Images I]}\\%
{\rm (1)} $\widetilde\partial (A_k(\vec a)\cup d'(\vec b))=\widetilde\partial (A_k(\vec a)\cup d')$\\
{\rm (2)} $\widetilde\partial (d(\vec a)\cup d'(\vec b))=\widetilde\partial (d(\vec a)\cup d')$ for $\tau(d)>\tau(d')$
\end{lem}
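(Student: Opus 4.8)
The plan is to read off both identities directly from the two Boundary Formulas, Lemma~\ref{lem:boundary-1} and Lemma~\ref{lem:boundary-2}. The single observation that drives the whole proof is that the right-hand sides of both formulas are completely independent of the vector $\vec b$ recording blocked vertices on the branches of $\tau(d')$; they involve only $\vec a$ together with combinatorial data attached to the edges themselves.

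For part (1), I would apply Boundary Formula I to $c = A_k(\vec a)\cup d'(\vec b)$. The relevant index $\ell = g(A,\iota(d'))$ is determined by $A$ and the edge $d'$ alone, and whether $d'$ is separated by $A$ is likewise a property of $d'$ and $A$ only — neither depends on $\vec b$. Hence the formula produces the same output, namely $A_k(\vec a)-A_k(\vec a+\vec\delta_\ell)-\mathbf{A}(\vec a,\ell)+\mathbf{A}(\vec a+\vec\delta_k,\ell)$ (or $0$ in the non-separated case), whether we feed in a general $\vec b$ or the zero vector. Since $d'(\vec 0)$ is exactly the singleton $\{d'\}$, the case $\vec b = \vec 0$ is precisely $A_k(\vec a)\cup d'$, and the two boundary images coincide.

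For part (2) the same reasoning applies via Boundary Formula II to $c = d(\vec a)\cup d'(\vec b)$ with $\tau(d) > \tau(d')$, $A = \tau(d)$, $k = g(A,\iota(d))$, $\ell = g(A,\iota(d'))$. I would note that each term on the right — $d(\vec a)$, $d(\vec a+\vec\delta_k)$, $\mathbf{A}(\vec a,\ell)$, $\mathbf{A}(\vec a+\vec\delta_k,\ell)$, the sign $\varepsilon$ (governed by $k$, $\ell$ and the order between $\iota(d)$ and $\iota(d')$), and the cell $\wedge(d,d')$ — is a function of $\vec a$ and of the two edges $d,d'$ only, with no dependence on $\vec b$. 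Setting $\vec b = \vec 0$ then identifies the boundary image with that of $d(\vec a)\cup d'$.

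The only point that genuinely requires a check — and the closest this statement comes to an obstacle — is confirming that $A_k(\vec a)\cup d'$ and $d(\vec a)\cup d'$ are themselves legitimate critical $2$-cells to which the Boundary Formulas apply, i.e. that collapsing $\vec b$ to $\vec 0$ does not destroy criticality. This is immediate from the description of critical cells: deleting the blocked vertices on the branches of $\tau(d')$ leaves the two edges (together with $A_k$ or $d$, and the remaining vertices of $\vec a$, all still blocked) unchanged and introduces no order-respecting edge or unblocked vertex, so the resulting cell is still critical and the formulas apply verbatim. With that in hand the lemma is a direct corollary, and its role is simply to record that the $\vec b$-coordinate is irrelevant to boundary images, thereby collapsing many rows of the presentation matrix.
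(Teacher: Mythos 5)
Your proposal is correct and takes essentially the same route as the paper, whose entire proof is the observation that the right-hand sides of Boundary Formulas I and II (Lemmas~\ref{lem:boundary-1} and~\ref{lem:boundary-2}) do not involve $\vec b$, so that specializing to $\vec b=\vec 0$ gives both identities. Your extra verification that $A_k(\vec a)\cup d'$ and $d(\vec a)\cup d'$ remain critical (the displaced vertices simply join the blocked stack $0_s$ at the base vertex) is a harmless refinement that the paper leaves implicit.
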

\begin{proof}
We can observe that the boundary images in Lemma~\ref{lem:boundary-1} and~\ref{lem:boundary-2} are independent of $\vec b$ and depend only on the initial vertex of the first edge whose terminal vertex is less than ends of the second edge.
\end{proof}

\begin{lem}\label{cor:boundary2}{\rm [Dependence among Boundary Images II]}\\%
{\rm (1)} If $A$ separates $d'$ and $d''$ and $g(A,\iota(d'))=g(A,\iota(d''))$, $$\widetilde\partial (A_k(\vec a)\cup d')=\widetilde\partial (A_k(\vec a)\cup d'').$$
{\rm (2)} If $\tau(d)$ separates $d'$ and $d''$ and $g(\tau(d),\iota(d))\ne g(\tau(d),\iota(d'))=g(\tau(d),\iota(d''))$,  $$\widetilde\partial (d(\vec a)\cup d')=\widetilde\partial (d(\vec a)\cup d'').$$
{\rm (3)} If $\tau(d)$ separates $d'$ and $d''$ and $g(\tau(d),\iota(d))= g(\tau(d),\iota(d'))=g(\tau(d),\iota(d''))$, $$\widetilde\partial (d(\vec a)\cup d'-d(\vec a)\cup d'')=\pm(\mbox{$\wedge$}(d,d')\pm\mbox{$\wedge$}(d,d'')).$$
\end{lem}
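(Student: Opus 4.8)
The plan is to derive all three statements directly from the two Boundary Formulas (Lemma~\ref{lem:boundary-1} and Lemma~\ref{lem:boundary-2}) together with Lemma~\ref{cor:boundary}, by observing that each asserted equality or relation reduces to comparing closed-form expressions that depend only on a small amount of combinatorial data attached to the deleted edges. The unifying principle is that the boundary image of a critical 2-cell of type $A_k(\vec a)\cup d'$ or $d(\vec a)\cup d'$ is controlled entirely by (i) which branch of the separating vertex carries the initial vertex of the second deleted edge, recorded by the index $\ell=g(\cdot,\iota(d'))$, and (ii) whether a $\wedge$-term appears. So I would first reduce each claim to a comparison of the relevant $\ell$-values and $\wedge$-terms, then read off the equalities.

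For part (1), I would apply Lemma~\ref{cor:boundary}(1) to discard the vector $\vec b$, reducing both sides to $\widetilde\partial(A_k(\vec a)\cup d')$ and $\widetilde\partial(A_k(\vec a)\cup d'')$. By Lemma~\ref{lem:boundary-1}, each of these is either $0$ (if $A$ does not separate the edge) or the expression $A_k(\vec a)-A_k(\vec a+\vec\delta_\ell)-\mathbf{A}(\vec a,\ell)+\mathbf{A}(\vec a+\vec\delta_k,\ell)$ with $\ell=g(A,\iota(d'))$. The hypothesis states that $A$ separates both $d'$ and $d''$ and that $g(A,\iota(d'))=g(A,\iota(d''))$; hence both land in the nonzero case with the \emph{same} value of $\ell$. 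Since the formula depends on $d'$ only through $\ell$ (the vector $\vec b$ already having been eliminated), the two boundary images coincide, giving the equality. Part (2) is entirely parallel: I would invoke Lemma~\ref{cor:boundary}(2) to strip $\vec b$, then apply Boundary Formula~II (Lemma~\ref{lem:boundary-2}). Here the hypothesis $g(\tau(d),\iota(d))\ne g(\tau(d),\iota(d'))=g(\tau(d),\iota(d''))$ forces $k\ne\ell$ for both $d'$ and $d''$, so the correction term vanishes ($\varepsilon=0$) in both cases and the $\wedge$-term does not arise; the surviving expression $d(\vec a)-d(\vec a+\vec\delta_k)-\mathbf{A}(\vec a,\ell)+\mathbf{A}(\vec a+\vec\delta_k,\ell)$ depends on the second edge only through the common value $\ell$, so the two images agree.

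For part (3) the situation differs precisely because now $k=g(\tau(d),\iota(d))=g(\tau(d),\iota(d'))=g(\tau(d),\iota(d''))=\ell$, so we are in the $k=\ell$ regime of Lemma~\ref{lem:boundary-2} where the correction term $\varepsilon\,\wedge(d,d')$ is present. Writing out $\widetilde\partial(d(\vec a)\cup d')$ and $\widetilde\partial(d(\vec a)\cup d'')$ from the formula, the main bulk $d(\vec a)-d(\vec a+\vec\delta_k)-\mathbf{A}(\vec a,\ell)+\mathbf{A}(\vec a+\vec\delta_k,\ell)$ is identical for the two (it depends only on $\vec a$ and the common $k=\ell$), so upon subtracting, everything cancels except the $\wedge$-terms, leaving $\varepsilon'\wedge(d,d')-\varepsilon''\wedge(d,d'')$ where each $\varepsilon$ is $\pm1$ determined by the order between $\iota(d)$ and $\iota(d')$ (resp. $\iota(d'')$). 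This yields exactly the claimed $\pm(\wedge(d,d')\pm\wedge(d,d''))$, the outer sign and inner sign being bookkeeping of these two $\varepsilon$ values.

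The main obstacle I anticipate is not any single computation but keeping the sign conventions consistent across the cases: in part (3) the two terms may have the same or opposite $\varepsilon$ depending on how $\iota(d')$ and $\iota(d'')$ are ordered relative to $\iota(d)$, and one must verify that the stated $\pm(\wedge(d,d')\pm\wedge(d,d''))$ correctly accommodates all four sign combinations permitted by Lemma~\ref{lem:boundary-2}. I would handle this by fixing, say, $\iota(d)$ and splitting into subcases according to whether $\iota(d')$ and $\iota(d'')$ each lie above or below $\iota(d)$, checking that the resulting $\varepsilon$-combination matches one of the allowed sign patterns; the rest is a routine invocation of the Boundary Formulas and of Lemma~\ref{cor:boundary}.
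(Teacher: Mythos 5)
Your proposal is correct and takes essentially the same route as the paper: the paper's entire proof is that the lemma is ``immediate from Lemmas~\ref{lem:boundary-1} and \ref{lem:boundary-2},'' and your argument simply spells this out, observing that the separated-case formulas depend on the second deleted edge only through $\ell=g(\cdot,\iota(d'))$ and, in part (3), that all four $\varepsilon$-sign combinations fit the pattern $\pm(\mbox{$\wedge$}(d,d')\pm\mbox{$\wedge$}(d,d''))$. The sign bookkeeping you flag as the main risk is handled correctly, so there is nothing to fix.
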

\begin{proof}
Immediate from Lemmas~\ref{lem:boundary-1} and \ref{lem:boundary-2}.
\end{proof}

Using the lemmas, we can reduce the size of the presentation matrix of $H_1(B_4K_5)$ to $91\times79$ by ignoring zero rows. We will see that the number of rows is still large comparing to the number of pivots. In order to find pivots systematically, we need to order critical cells.

%Using Corollaries~\ref{cor:boundary} and \ref{cor:boundary2} we count the number of %distinct rows other than zero rows as follows: The number of is 0. There are 0, 66 and 48 %distinct images from $\widetilde\partial(A_k(\vec a)\cup B_\ell(\vec b))$, %$\widetilde\partial(A_k(\vec a)\cup d(\vec b))$ and $\widetilde\partial(d(\vec a)\cup %d'(\vec b))$, respectively.
%It is also large. So we will consider row operations. Using row operations we can %efficiently reduce the above presentation matrix. For example, by (3) of %Corollary~\ref{cor:boundary2} we can reduce the above $114\times79$ matrix to a %$91\times79$ matrix.

Define the size $s(c)$ of a critical 1-cell $c$ to be the number of vertices blocked by the edge in $c$, more precisely, define $s(c)=|\vec a|$ for $c=A_k(\vec a)$ or $c=d(\vec a)$. Define the size $s(c)$ of a critical 2-cell $c$ to be the number of vertices blocked by the edge in $c$ that has the larger terminal vertex.

We assume that a set of $m$-tuples is always lexicographically ordered in the discussion below.
For edges $e, e'$, Declare $e>e'$ if $e$ is a deleted edge and $e'$ is an edge on $T$ or if both are either deleted edges or edges on $T$ and $(\tau(e),\iota(e))>(\tau(e'),\iota(e'))$. The set of critical 1-cells $c$ is linearly ordered by triples $(s(c),e,\vec a)$ where $c$ is given by either $A_k(\vec a)$ or $d(\vec a)$. The following lemma motivates this order.

\begin{lem}\label{lem:but}{\rm [Leading Coefficient]}
Let $c$ be a critical 2-cell containing two edge $e$ and $e'$ such that $\tau(e)>\tau(e')$. Assume that $\vec a$ represent vertices blocked by $\tau(e)$ in $c$. If $\widetilde\partial(c)\ne 0$ then the largest summand in $\widetilde\partial(c)$ has the triple $(s(c)+1,e,\vec a+\vec\delta_{g(\tau(e),\iota(e'))})$. Furthermore, if $e$ is a deleted edge $d$ then the largest summand is $- d(\vec a+\vec\delta_{g(\tau(e),\iota(e'))})$ and if $e$ is on $T$, then the largest summand is $- A_k(\vec a+\vec\delta_{g(\tau(e),\iota(e'))})$ where $A=\tau(e)$ and $k=g(A,\iota(e))$.
\end{lem}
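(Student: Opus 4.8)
The plan is to read off the leading summand directly from the two Boundary Formulae (Lemma~\ref{lem:boundary-1} and Lemma~\ref{lem:boundary-2}) and then verify that this summand is maximal with respect to the order $(s(c),e,\vec a)$ on critical 1-cells. First I would dispose of the degenerate cases: by the hypothesis $\widetilde\partial(c)\ne 0$, the relevant Boundary Formula is in its nonzero branch, which means the second edge $e'$ is separated by $\tau(e)$. Writing $\ell=g(\tau(e),\iota(e'))$, both formulae produce a term $-e_{\text{-part}}(\vec a+\vec\delta_\ell)$ together with the $\mathbf{A}$-sums and (in Formula~II) a possible $\wedge(d,d')$ term. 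The whole argument reduces to showing that the single term $-A_k(\vec a+\vec\delta_\ell)$ or $-d(\vec a+\vec\delta_\ell)$ dominates every other summand.

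Next I would analyze the size and shape of each summand appearing in the formulae. The candidate leading term has size $|\vec a+\vec\delta_\ell|=s(c)+1$, since by the definition of $s(c)$ the vector $\vec a$ records exactly the vertices blocked by the larger terminal edge $e$, so $|\vec a|=s(c)$. The key observation is that the notation $\mathbf{A}(\vec a,\ell)$ unravels, by its definition, into a sum $R\bigl(\sum_\alpha A_{p(\vec a-\alpha)}((\vec a-\alpha)-\vec\delta_{p(\vec a-\alpha)}+\vec\delta_\ell)\bigr)$ of critical 1-cells each of size at most $|\vec a|=s(c)$, strictly less than $s(c)+1$; the same bound holds for the $\wedge(d,d')=C_k(\vec\delta_\ell)$ term, which has size $1\le s(c)$ (or is likewise of lower order in the triple). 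Thus every summand other than the candidate has size $\le s(c)$, and hence is strictly smaller in the first coordinate of the ordering triple. This immediately isolates $-A_k(\vec a+\vec\delta_\ell)$ (resp. $-d(\vec a+\vec\delta_\ell)$) as the unique summand of top size $s(c)+1$.

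It then remains to pin down the edge-coordinate and sign. The leading term carries the edge $e$ itself (with $A=\tau(e)$, $k=g(A,\iota(e))$ in the tree case), so its triple is $(s(c)+1,e,\vec a+\vec\delta_\ell)$ as claimed. The negative sign is already explicit in the statements of both Boundary Formulae: the coefficient of $A_k(\vec a+\vec\delta_\ell)$ in Lemma~\ref{lem:boundary-1} is $-1$, and likewise for $d(\vec a+\vec\delta_k)$ — wait, here one must be careful, since Formula~II writes $-d(\vec a+\vec\delta_k)$ where $k=g(A,\iota(d))$ refers to the first edge's own branch, whereas the $\mathbf{A}$-terms use $\ell=g(A,\iota(d'))$. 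I would confirm that the relevant leading term for the ordering is indeed the one indexed by the branch toward the \emph{other} edge, i.e. $\vec\delta_{g(\tau(e),\iota(e'))}$, and check that any coincidence or cancellation among the remaining lower-size terms does not promote something to size $s(c)+1$.

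The main obstacle I anticipate is precisely this bookkeeping of which branch index decorates the leading term and ensuring no $\mathbf{A}(\vec a,\ell)-\mathbf{A}(\vec a+\vec\delta_k,\ell)$ cancellation inadvertently produces a surviving term of size $s(c)+1$. By Proposition~\ref{pro:A1}(ii), the difference $\mathbf{A}(\vec a+\vec\delta_k,\ell)-\mathbf{A}(\vec a,\ell)$ telescopes to a single reduced cell $R(A_{p}(\cdots+\vec\delta_\ell))$ whose size is again bounded by $|\vec a|=s(c)$, so no size-$(s(c)+1)$ term is created by this difference; verifying this bound carefully, rather than the sign convention, is where the real content lies. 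Once the size bound on the $\mathbf{A}$-differences and the $\wedge$-term is secured, the lexicographic comparison in the second and third coordinates is routine and the stated triple and sign follow.
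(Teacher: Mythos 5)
Your overall route---read the summands off Lemmas~\ref{lem:boundary-1} and \ref{lem:boundary-2} and compare them in the order on critical 1-cells---is the same as the paper's, but the quantitative claim you build the proof on is false. You assert that every summand other than the candidate has size at most $s(c)$, justified by saying that Proposition~\ref{pro:A1}(ii) telescopes $\mathbf{A}(\vec a+\vec\delta_k,\ell)-\mathbf{A}(\vec a,\ell)$ into a single cell of size at most $s(c)$. Two things go wrong. First, when the telescoping hypothesis actually holds (it needs $(\vec a+\vec\delta_k)-1=\vec a$, i.e.\ $k\le p(\vec a)$, together with $k>\ell$; this does occur for $c=d(\vec a)\cup d'(\vec b)$, where criticality imposes no condition on $\vec a$), the difference equals $R(A_k(\vec a+\vec\delta_\ell))$, a \emph{critical} cell of size exactly $s(c)+1$, not $\le s(c)$. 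Concretely, for $c=d(0,0,1)\cup d'$ with $k=2$, $\ell=1$ one computes $\widetilde\partial(c)=d(0,0,1)-d(1,0,1)+A_2(1,0,1)$, which has two summands of top size $s(c)+1=2$. Second, for $c=A_k(\vec a)\cup d(\vec b)$ the telescoping does not even apply (criticality of $c$ forces $p(\vec a)<k$, so $(\vec a+\vec\delta_k)-1\ne\vec a$), and instead the $\alpha=0$ term of $\mathbf{A}(\vec a+\vec\delta_k,\ell)$ is $A_{p(\vec a)}(\vec a+\vec\delta_k-\vec\delta_{p(\vec a)}+\vec\delta_\ell)$, which again has size $s(c)+1$ and survives $R$ whenever $\ell<p(\vec a)$. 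Add to this your own observation that $\wedge(d,d')$ has size $1=s(c)+1$ when $s(c)=0$, and the size coordinate alone simply does not isolate the candidate.

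The lemma is still true, but for a reason your write-up relegates to ``routine'': the second (edge) coordinate of the lexicographic order. Every extra top-size summand exhibited above carries a \emph{tree} edge. When $e$ is a deleted edge $d$, the order declares deleted edges larger than all tree edges, so $-d(\vec a+\vec\delta_\ell)$ beats $A_k(\vec a+\vec\delta_\ell)$ and $\wedge(d,d')$ regardless of size ties. When $e=A_k$ is a tree edge, the extra top-size summand has edge $A_{p(\vec a)}$ with $p(\vec a)<k$, hence $\iota(A_{p(\vec a)})<\iota(A_k)$ and $A_{p(\vec a)}<A_k$; moreover the same inequality $p(\vec a)<k$ is what prevents the top term of $\mathbf{A}(\vec a+\vec\delta_k,\ell)$ from being $+A_k(\vec a+\vec\delta_\ell)$ and cancelling your candidate outright---a risk you never address. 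So the edge comparison, together with the criticality constraint on $c$, is the actual content of the proof, not a routine afterthought. Finally, your unresolved worry about $\vec\delta_k$ versus $\vec\delta_\ell$ in Formula~II should be settled in favor of $\vec\delta_\ell$: tracing the face $d(\vec a)\cup B(\vec b)\cup\{\iota(d')\}$ through $\widetilde R$ as in the proof of Lemma~\ref{lem:boundary-2}, the vertex $\iota(d')$ lands on branch $\ell=g(\tau(d),\iota(d'))$, which is the indexing that Lemma~\ref{lem:but} and Lemma~\ref{cor:pivotal1cell} rely on (the printed $\vec\delta_k$ in that formula is a typo).
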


\begin{proof}
By Lemmas ~\ref{lem:boundary-1} and \ref{lem:boundary-2} we see that $\widetilde\partial(c)$ is determined by $e$, $\vec a$ and $\tau(e')$. Using the order on critical 1-cells, it is easy to verify the lemma.
\end{proof}

In the view of this lemma, it is natural to order critical 2-cells as follows. For a critical 2-cell $c$, let $e$ and $e'$ denote edges in $c$ such that $\tau(e)>\tau(e')$ and $\vec a$ and $\vec a'$ represent vertices blocked by $e$ and $e'$, respectively.
The set of critical 2-cells $c$ is linearly ordered by 6-tuples $$(s(c),e,\vec a+\vec\delta_{g(\tau(e),\iota(e'))},g(\tau(e),\iota(e')),e',\vec a').$$
Then the first three terms determine the largest summand in $\widetilde\partial(c)$. The fourth term helps to find the boundary image of $c$ other than a summand of the form $\mbox{$\wedge$}(d,d')$  and the last two terms are added to make the order linear.

Lemma~\ref{lem:but} implies that the second boundary homomorphism $\widetilde\partial$ is represented by a block-upper-triangular matrix over bases of critical 2-cells and critical 1-cells ordered reversely. In fact, the presentation matrix is divided into blocks by $s(c)$ and each block is further divided into smaller blocks by the value $e$ of 6-tuples. The first column of each diagonal block is a vector of $- 1$. The $- 1$ entry at the lower left corner of each diagonal block will be called a {\em pivot} and  a critical 2-cell corresponding to a pivotal row is said to be {\em pivotal}. In other word, a pivotal 2-cell is the smallest one among all critical 2-cells that have the same (up to sign) largest summand in their boundary images. The following lemma says that non-pivotal rows turn into a zero row with few exceptions under row operations.

\begin{lem}\label{lem:rows}{\rm [Non-Pivotal Rows]}
Let $c$ be a non-pivotal critical 2-cell such that $\widetilde\partial(c)\ne0$. If $s(c)\ge1$, then the row corresponding to $c$ is a linear combination of rows below. If $s(c)=0$, then the row corresponding to $c$ is either a linear combination of rows below or made into a row consisting of only two nonzero entries that are $\pm 1$ by row operations.
\end{lem}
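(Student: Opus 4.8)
Proof plan.

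The plan is to exploit the dependence relations among boundary images (Lemmas~\ref{cor:boundary} and~\ref{cor:boundary2}) together with the Leading Coefficient Lemma~\ref{lem:but} to show that, once the rows are arranged by the $6$-tuple order, almost every non-pivotal row can be cleared by subtracting a suitable multiple of a single strictly-smaller pivotal row and then recursively cleaning up the remainder. First I would fix a non-pivotal critical $2$-cell $c$ with $\widetilde\partial(c)\neq 0$, write $e,e'$ for its two edges with $\tau(e)>\tau(e')$, and let $\vec a$ record the vertices blocked by $e$. By Lemma~\ref{lem:but} the largest summand of $\widetilde\partial(c)$ is (up to sign) the critical $1$-cell with triple $(s(c)+1,e,\vec a+\vec\delta_{g(\tau(e),\iota(e'))})$. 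Because $c$ is non-pivotal, there is a strictly smaller critical $2$-cell $c_0$ — the pivotal one — whose boundary has the same largest summand up to sign; I would subtract $\pm$(the $c_0$-row) from the $c$-row to annihilate that leading entry. The core claim is that the resulting row lies below $c$ and can itself be reduced, and I would set up an induction on the $6$-tuple order to finish.

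The key computation is to identify exactly what survives after this leading cancellation. Here Lemma~\ref{cor:boundary} is decisive: since $\widetilde\partial$ depends on the second edge only through $\tau(e')$, not through the blocking vector $\vec b$, and since Lemma~\ref{cor:boundary2}(1),(2) says that replacing $d'$ by a parallel $d''$ separated at the same branch leaves $\widetilde\partial$ unchanged, the difference $\widetilde\partial(c)\mp\widetilde\partial(c_0)$ telescopes. I would split into the three $2$-cell types $A_k(\vec a)\cup d'$, $d(\vec a)\cup d'$, and the edge-on-tree cases, and in each case read off from Boundary Formulae~I and~II (Lemmas~\ref{lem:boundary-1},~\ref{lem:boundary-2}) that the remaining summands are either of the form $\pm A_k(\vec a')$ / $\pm d(\vec a')$ with $\vec a'$ strictly smaller in the lexicographic sense (hence corresponding to rows strictly below $c$), or of the exceptional commutator-type form $\wedge(d,d')$. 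The terms $\mathbf{A}(\vec a,\ell)$ and $\mathbf{A}(\vec a+\vec\delta_k,\ell)$ that appear are, by Proposition~\ref{pro:A1}, themselves $R$-reductions of $1$-cells of strictly smaller size, so they contribute only to columns beneath $c$ and are caught by the inductive hypothesis.

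The dichotomy $s(c)\ge 1$ versus $s(c)=0$ is where the $\wedge(d,d')$ terms must be handled carefully, and this is the main obstacle. When $s(c)\ge 1$ I expect every leftover summand after clearing the leading entry to have strictly smaller size or strictly smaller triple, so the row is genuinely a $\mathbb Z$-combination of rows below, using that the vector $\vec a$ always has at least one unit to spare for the reductions in Lemma~\ref{lem:boundary-2} and so the $\wedge$-summand is itself dominated by a lower row via Lemma~\ref{cor:boundary2}(3). When $s(c)=0$, however, $\vec a=\vec 0$, the formulae collapse, the $\mathbf{A}$-terms vanish, and Boundary Formula~II can leave precisely two nonzero entries: the leading $1$-cell and a single $\wedge(d,d')$ term with coefficient $\pm 1$ (the case $\varepsilon=\pm 1$), with nothing strictly below available to absorb the $\wedge$-summand. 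In that degenerate case the row cannot be zeroed and instead stabilizes to a two-entry $\pm 1$ row, which is exactly the stated alternative. The delicate point I would take most care over is verifying that in the $s(c)=0$ subcase no \emph{third} nonzero entry can survive — this rests on checking, via Condition~(T3) and the definition of $\wedge(d,d')$, that the two deleted edges meet at a single vertex $C=\iota(d)\wedge\iota(d')$ and that the reduction of $C_k(\vec\delta_\ell)$ introduces no further critical $1$-cells — so that the conclusion is genuinely ``only two nonzero entries, both $\pm 1$.''
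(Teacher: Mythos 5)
Your opening move (cancel the leading summand of $\widetilde\partial(c)$ against the smaller critical $2$-cell that realizes the pivot) matches the spirit of the paper, but the core of your argument has a genuine gap: after that cancellation you are left with a $1$-chain --- differences of $\mathbf{A}$-terms, leftover summands $d(\cdot)$ or $A_k(\cdot)$, and possibly $\mbox{$\wedge$}$-terms --- and you dispose of it by saying these ``contribute only to columns beneath $c$ and are caught by the inductive hypothesis.'' This conflates two different things: being \emph{supported in smaller columns} and lying in the \emph{span of the rows below} $c$. Your induction is set up on rows of non-pivotal $2$-cells, but the remainder is not the row of any $2$-cell, so the inductive hypothesis says nothing about it; indeed at one point you describe the $1$-cells $\pm A_k(\vec a')$, $\pm d(\vec a')$ as ``corresponding to rows strictly below $c$,'' which they do not --- they index columns. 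The actual content of the paper's proof is exactly the work your plan skips: exhibiting explicit combinations of strictly smaller $2$-cells whose boundaries equal the remainder. For $c=d(\vec a)\cup d'$ this is the four-term combination $d(\vec a)\cup d'-d(\vec a+\vec\delta_\ell-\vec\delta_m)\cup d''-d(\vec a-\vec\delta_m)\cup d'+d(\vec a-\vec\delta_m)\cup d''$, whose boundary is a pure difference of $\mathbf{A}$-terms, which is then recognized as $\widetilde\partial\circ R$ of sums $\sum_\alpha A_{p(\vec x-\alpha)}((\vec x-\alpha)-\vec\delta_{p(\vec x-\alpha)}+\vec\delta_m)\cup d'$ of smaller $2$-cells, with Condition (T3) of Lemma~\ref{lem:treeandorder} invoked to supply auxiliary deleted edges. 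Moreover the whole argument splits according to whether a deleted edge $d''$ separated by $\tau(e)$ with $m=g(\tau(e),\iota(d''))<\ell$ and $a_m\ne 0$ exists at all; when it does not, the paper needs a different argument for $c=d(\vec a)\cup d'$ (via Lemma~\ref{cor:boundary2}(3)) and proves the case cannot occur for $c=A_k(\vec a)\cup d'$. None of this dichotomy appears in your plan, and without it the reduction does not go through.

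Your $s(c)=0$ analysis is also factually off. For a non-pivotal $c=d\cup d'$ with $\widetilde\partial(c)\ne0$ one has $k=\ell$ and $\varepsilon=\pm1$, and Boundary Formula II gives \emph{three} nonzero entries, $d$, $-d(\vec\delta_\ell)$ and $\varepsilon\mbox{$\wedge$}(d,d')$, not two. The two-entry row in the statement arises only after subtracting the pivotal row $\widetilde\partial(d\cup d'')$, where $d''$ is the smallest deleted edge separated by $\tau(d)$ with $g(\tau(d),\iota(d''))=\ell$: this kills both $d$ and $d(\vec\delta_\ell)$ and leaves $\pm(\mbox{$\wedge$}(d,d')\pm\mbox{$\wedge$}(d,d''))$. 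So the two surviving entries sit in the columns of the two \emph{separating} $1$-cells $\mbox{$\wedge$}(d,d')$ and $\mbox{$\wedge$}(d,d'')$, not, as you assert, in the columns of ``the leading $1$-cell and a single $\mbox{$\wedge$}(d,d')$ term.'' This is not a cosmetic difference: Theorem~\ref{thm:matrix} and the subsequent classification of critical $1$-cells into pivotal, separating and free depend on the residual entries lying in $\mbox{$\wedge$}$-columns, and a row retaining a unit entry in a pivotal column would break that classification.
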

\begin{proof}
Assume $e'$ is a deleted $d'$ separated by $\tau(e)$ since $\widetilde\partial(c)=0$ otherwise. We may also assume that $c$ is the smallest among all critical 2-cells whose boundary images equal to $\widetilde\partial(c)$. Then by Lemma~\ref{cor:boundary} and
Lemma~\ref{cor:boundary2}, the 6-tuple for $c$ is given by
$$(s(c),e,\vec a+\vec\delta_{g(\tau(e),\iota(d'))},g(\tau(e),\iota(d')),d',0)$$
so that there is no smaller deleted edge $d''$ separated by $\tau(e)$ satisfying $g(\tau(e),\iota(d'))=g(\tau(e),\iota(d''))$. Set $k=g(\tau(e),\iota(e))$ and $\ell=g(\tau(e),\iota(d'))$.

There are three possibilities: (I) $s(c)\ge 1$ and $c=d(\vec a)\cup d'$, (II) $s(c)\ge 1$ and $c=A_k(\vec a)\cup d'$ and (III) $s(c)=0$ and $c=d\cup d'$.

\paragraph{(I) Assume $s(c)\ge 1$ and $c=d(\vec a)\cup d'$}
Set $A=\tau(d)$. We consider the following two cases separately:
\begin{itemize}
\item[(a)] There is a deleted edge $d''$ separated by $A$ such that $a_m\ne0$ and $m<\ell$ for $m=g(A,\iota(d''))$ ;
\item[(b)] There is no such a deleted edge.
\end{itemize}
For Case (a), we consider the following boundary image of a linear combination:
$$\widetilde\partial(d(\vec a)\cup d'-d(\vec a+\vec\delta_\ell-\vec\delta_m)\cup d'' - d(\vec a -\vec\delta_m)\cup d' +d(\vec a -\vec\delta_m)\cup d'')=$$ $$\mathbf{A}(\vec a +\vec\delta_\ell-\vec\delta_m,m)-\mathbf{A}(\vec a-\vec\delta_m,m) -\{\mathbf{A}(\vec a +\vec\delta_\ell-\vec\delta_m+ \vec\delta_k,m)-\mathbf{A}(\vec a-\vec\delta_m+\vec\delta_k,m)\}$$
The three term other than $c$ in the left side of the equation are critical 2-cells less than $c$. So it is sufficient to show that the right side, that will be denoted by $\mathbf{R}$, is a linear combination of boundary images of critical 2-cells less than $c$. The sum $\mathbf{R}$ depends on the order among $k$, $\ell$ and $m$. If $m\ge k$ then $\mathbf{A}(\vec a +\vec\delta_\ell-\vec\delta_m,m)=\mathbf{A}(\vec a +\vec\delta_\ell-\vec\delta_m+ \vec\delta_k,m)$ and $\mathbf{A}(\vec a-\vec\delta_m,m)=\mathbf{A}(\vec a-\vec\delta_m+\vec\delta_k,m)\}$ by Proposition~\ref{pro:A1} and so $\mathbf{R}=0$.

Since $m<\ell$, Proposition~\ref{pro:A1} and Lemma~\ref{lem:boundary-1} implies that for any $\vec x$
\begin{align*}
\widetilde\partial\circ R(\sum_{\alpha=0}^{|\vec x|_\ell}A_{p(\vec x-\alpha)}&((\vec x-\alpha)-\vec\delta_{p(\vec x-\alpha)}+\vec\delta_m)\cup d')\\
&=\mathbf{A}(\vec x,m)-\mathbf{A}(\vec x+\vec\delta_\ell,m)+A_\ell((\vec x -|\vec x|_\ell)+\vec\delta_m)
\end{align*}
To shorten formulae, let $\vec b=\vec a-\vec\delta_m+\vec\delta_k$ and $\vec c=\vec a-\vec\delta_m$.
If $m<k$ then
\begin{align*}
\mathbf{R}=&\:\:\widetilde\partial\circ R (\sum_{\alpha=0}^{|\vec b|_\ell}A_{p(\vec b-\alpha)}((\vec b-\alpha)-\vec\delta_{p(\vec b-\alpha)}+\vec\delta_m)\cup d')\\
&\:\:-\widetilde\partial\circ R (\sum_{\alpha=0}^{|\vec c|_\ell}A_{p(\vec c-\alpha)}((\vec b-\alpha)-\vec\delta_{p(\vec b-\alpha)}+\vec\delta_m)\cup d')\\
&-A_\ell((\vec b -|\vec b|_\ell)+\vec\delta_m)+A_\ell((\vec c -|\vec c|_\ell)+\vec\delta_m)
\end{align*}
If $m<k<\ell$, $\vec b-|\vec b|_\ell=\vec c -|\vec c|_\ell$ by Lemma~\ref{lem:boundary-1}. If $m<\ell\le k$,
$$\widetilde\partial (A_\ell((\vec c-|\vec c|_\ell)+\vec\delta_m)\cup d''')=A_\ell((\vec b -|\vec b|_\ell)+\vec\delta_m)-A_\ell((\vec c -|\vec c|_\ell)+\vec\delta_m)$$
since there is a deleted edge $d'''$ separated by $A$ such that $k=g(A,\iota(d'''))$ by (T3) of Lemma~\ref{lem:treeandorder}.

In Case (b), by the assumption there is no deleted edge $d''$ separated by $\tau(d)$ such that $g(A,\iota(d''))=m<\ell$ and $x_m\ne0$ for $\vec x=\vec a+\vec\delta_\ell$ and so there is no critical 2-cell with the 6-tuple $(s(c),d,\vec x,m,d'',0)$ such that $m<\ell$ and $A$ separates $d''$. If $k\ne\ell$, $c$ would be pivotal by the assumption on $c$. So $k=\ell$. By Lemma~\ref{cor:boundary2}(3), $\widetilde\partial(d(\vec a)\cup d'-d(\vec a)\cup d''')=\widetilde\partial(d\cup d'-d\cup d''')$ where $d'''$ is the smallest deleted edge such that $A$ separates $d'''$ and $g(A,\iota(d'''))=\ell$. Note that $|\vec a|\ge1$ since $s(c)\ge1$. And $d'''<d'$ since $c$ is pivotal. Thus we have a desired linear combination.

\paragraph{(II) Assume $s(c)\ge 1$ and $c=A_k(\vec a)\cup d'$}
Consider the following cases separately:
\begin{itemize}
\item[(a)] There is a deleted edge $d''$ separated by $A$ such that $g(A,\iota(d''))=m<\ell$ and one of the following conditions holds:\\
    (i) $a_m\ge1$ if $k\le m$, (ii) $a_m\ge1$ and $|\vec a|_k\ge2$ if $m<k\le\ell$, (iii) $a_m\ge1$ and $|\vec a|_k\ge2$ if $m<\ell<k$, and (iv) $a_m\ge1$ and $|\vec a|_k=1$ if $m<\ell<k$;
\item[(b)] There is no such a deleted edge.
\end{itemize}
For Cases (a)(i)-(iii), we consider the following boundary image of the linear combination:
$$\widetilde\partial(A_k(\vec a)\cup d'-A_k(\vec a +\vec\delta_\ell-\vec\delta_m)\cup d'' - A_k(\vec a -\vec\delta_m)\cup d' +A_k(\vec a - \vec\delta_m)\cup d'')=$$ $$\mathbf{A}(\vec a +\vec\delta_\ell-\vec\delta_m,m)-\mathbf{A}(\vec a-\vec\delta_m,m) -\{\mathbf{A}(\vec a +\vec\delta_\ell-\vec\delta_m +\vec\delta_k,m)-\mathbf{A}(\vec a-\vec\delta_m +\vec\delta_k,m)\}$$
The three terms other than $c$ in the left side of the equation are critical 2-cells less than $c$. Then it is sufficient to show that the right side is a linear combination of boundary images of critical 2-cells less than $c$. We omit the proof since it is similar to Case (I)(a).

For Case (a)(iv), we consider the following boundary image of the linear combination:
$$\widetilde\partial(A_k(\vec a)\cup d' -A_k(\vec a+\vec\delta_\ell-\vec\delta_m)\cup d'' -A_\ell(\vec a+\vec\delta_\ell-\vec\delta_k)\cup d''')=0$$ where $d'''$ is a deleted edge separated by $A$ and $g(A,\iota(d'''))=k$. Note that the existence of $d'''$ is guaranteed by Condition (T3) of Lemma~\ref{lem:treeandorder}.

We will show that Case (b) does not occur. Suppose that there is no deleted edge $d''$ separated by $A$ such that $g(A,\iota(d''))=m<\ell$ and $A_k(\vec x-\vec\delta_m)$ is critical for $\vec x=\vec a+\vec\delta_\ell$. So there is no critical 2-cell with the 6-tuple $(s(c),A_k(\vec\delta_k),\vec x,m,d'',0)$ such that $m<\ell$ and $A$ separates $d''$. Then $c$ would be pivotal since $d'$ is the the smallest among deleted edges $d''$ separated by $A$ such that $g(A,\iota(d''))=\ell$.

\paragraph{(III) Assume $s(c)=0$ and $c=d\cup d'$}
Let $k=g(\tau(d),\iota(d))$. Since $c$ is non-pivotal, $k=\ell$.  By Lemma~\ref{cor:boundary2}(3), $\widetilde\partial(d\cup d'-d\cup d'')=\pm(\mbox{$\wedge$}(d,d')\pm\mbox{$\wedge$}(d,d''))$ where $d''$ is the smallest deleted edge separated by $A$ such that $g(A,\iota(d''))=\ell$. Note that if $d'=d''$ then $c$ would be pivotal. This completes the proof.
\end{proof}

We are ready to see the main theorem of this section.

\begin{thm}\label{thm:matrix}
Let $M$ be a presentation matrix of $H_1(B_n\Gamma)$ represented by $\tilde\partial$ over bases of critical 2-cells and 1-cells ordered reversely.
Up to row operations, each row of $M$ satisfies one of the followings:
\begin{enumerate}
\item[\rm{(1)}] consists of all zeros;
\item[\rm{(2)}] there is a $\pm 1$ entry that is the only nonzero entry in the column it belongs to;
\item[\rm{(3)}] there are only two nonzero entries which are $\pm1$.
\end{enumerate}
If $\Gamma$ is planar then two nonzero entries in {\rm(3)} have opposite signs. Furthermore, the number of rows satisfying {\rm(3)} does not depend on braid indices.
\end{thm}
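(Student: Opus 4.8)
The plan is to exploit the block-upper-triangular structure already in place. After ordering the critical $2$-cells and critical $1$-cells by their $6$-tuples and triples and reversing as in the statement, Lemma~\ref{lem:but} guarantees that whenever $\widetilde\partial(c)\ne0$ its largest summand is a single critical $1$-cell with coefficient $-1$; hence each diagonal block has a first column of $-1$'s and the lower-left $-1$ is the pivot. First I would run an integer Gaussian elimination that respects this order, sweeping the blocks by increasing size $s(c)$ and, within a size, by the edge $e$. Every row with $\widetilde\partial(c)=0$ is already of type (1), so only the rows with $\widetilde\partial(c)\ne0$ need attention, and by the notion of pivotal cell these split into pivotal and non-pivotal ones.

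For a pivotal $2$-cell $c$ the pivot is a $\pm1$ entry with only zeros below it in its column; since it is a unit I would subtract integer multiples of the pivotal row from the other rows to annihilate the rest of that column, leaving the pivotal row of type (2). For a non-pivotal $2$-cell with $\widetilde\partial(c)\ne0$ I would invoke Lemma~\ref{lem:rows}: if $s(c)\ge1$ the row is a linear combination of rows strictly below it, so subtracting them produces a zero row of type (1); if $s(c)=0$ then $c=d\cup d'$ and, subtracting the pivotal row $\widetilde\partial(d\cup d'')$ (with $d''$ the smallest deleted edge separated by $\tau(d)$ on branch $\ell$) via Lemma~\ref{cor:boundary2}(3), the row becomes $\pm(\wedge(d,d')\pm\wedge(d,d''))$, which has exactly two nonzero $\pm1$ entries --- type (3). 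Because Lemma~\ref{lem:rows} always expresses a reducible row through rows below it in the ordering, this is a well-founded downward induction on the $6$-tuples and terminates.

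For the planarity refinement I would use Condition (T4) of Lemma~\ref{lem:treeandorderforplanar}, which (as remarked after that lemma) rules out the case $\varepsilon=+1$ of Lemma~\ref{lem:boundary-2}. Thus in a type-(3) difference $\widetilde\partial(d\cup d')-\widetilde\partial(d\cup d'')$ both $\wedge$-terms enter with $\varepsilon=-1$, while the shared summands $d(\vec a)-d(\vec a+\vec\delta_k)-\mathbf A(\vec a,\ell)+\mathbf A(\vec a+\vec\delta_k,\ell)$ cancel, leaving $-\wedge(d,d')+\wedge(d,d'')$, i.e. two entries of opposite sign. For the $n$-independence I would observe that type-(3) rows arise solely from non-pivotal $2$-cells $d\cup d'$ of size $0$ with $g(\tau(d),\iota(d))=g(\tau(d),\iota(d'))$; such cells carry no blocking vertices, so both their number and the pivotal/non-pivotal split among them are dictated entirely by the arrangement of deleted edges in $T$, hence are independent of $n$.

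The hard part will be reconciling the type-(3) rows with the clearing of pivot columns. To make a pivotal row genuinely of type (2) its pivot must be the unique nonzero entry of its column across \emph{all} rows, so I must clear that column even in the type-(3) rows; but a cell $\wedge(d,d')=C_k(\vec\delta_\ell)$ can itself occur as the leading summand of a pivotal $2$-cell such as $C_k\cup d'''$, so a column carrying a type-(3) entry may also be a pivot column. I would resolve this by scheduling the eliminations strictly by the $6$-tuples, finishing all higher-size blocks (and the pivot columns in them) before finalizing the size-$0$ two-entry rows, and then verifying that after this sweep the two $\wedge$-columns supporting each type-(3) relation no longer meet an uncleared pivot; the upper-triangularity from Lemma~\ref{lem:but} is what makes such a consistent schedule possible.
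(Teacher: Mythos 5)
Your proposal is correct and follows essentially the same route as the paper's proof: pivotal rows are cleared to type (2) using the $-1$ pivots guaranteed by Lemma~\ref{lem:but}, non-pivotal rows are disposed of by Lemma~\ref{lem:rows} (zero rows when $s(c)\ge1$, two-entry rows from $\widetilde\partial(d\cup d'-d\cup d'')$ when $s(c)=0$), planarity enters via Condition (T4) killing the $\varepsilon=+1$ case of Lemma~\ref{lem:boundary-2}, and $n$-independence holds because type-(3) rows arise only from size-$0$ cells $d\cup d'$. The ``hard part'' you flag at the end is actually vacuous: a separating 1-cell $\wedge(d,d')=C_k(\vec\delta_\ell)$ is a tree-edge cell of size $1$, whereas every pivot is either a tree-edge cell of size $\ge 2$ or contains a deleted edge (Lemmas~\ref{cor:pivotal1cell} and~\ref{cor:separating1cell}) --- indeed your example $C_k\cup d'''$ is not even a critical 2-cell, since with no vertex blocked at $C$ on a branch $<k$ the edge $C_k$ is order-respecting --- so pivot columns and separating columns can never coincide and no elimination schedule needs special care.
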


\begin{proof} A pivotal row satisfies (2) by killing all entries above the pivot via row operations. A row of the type (3) is produced from the relation $\widetilde\partial(d\cup d''-d\cup d')=\pm(\mbox{$\wedge$}(d,d'')\pm\mbox{$\wedge$}(d,d'))$ in the last part of the proof of the previous lemma. Obviously the number of these relations does not depend on braid indices. If $\Gamma$ is planar, the relation becomes $\widetilde\partial(d\cup d''-d\cup d')=\pm(\mbox{$\wedge$}(d,d'')-\mbox{$\wedge$}(d,d'))$
by Lemma~\ref{lem:boundary-2} and Lemma~\ref{lem:treeandorderforplanar}. Therefore two nonzero entries in {\rm(3)} have opposite signs.
\end{proof}

Further row operations among rows of the type (3) in the theorem may produce new pivots $\pm 2$ but if two nonzero entries have opposite signs, all of new pivots are $\pm 1$ and so we have the following corollary.

\begin{cor}\label{thm:torsionfree}
If $H_1(B_n\Gamma)$ has a torsion, it is a 2-torsion and the number of 2-torsions does not depend on braid indices. For a planar graph $\Gamma$,  $H_1(B_n\Gamma)$ is torsion-free.
\end{cor}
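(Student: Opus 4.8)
The plan is to read the torsion of $H_1(B_n\Gamma)$ off the Smith normal form of the presentation matrix $M$ produced by Theorem~\ref{thm:matrix}, and to trace all torsion back to the rows of type~(3). Because $B_n\Gamma$ has a single critical $0$-cell $\{0,1,\dots,n-1\}$, the first boundary map is zero and $H_1(B_n\Gamma)\cong\operatorname{coker}\widetilde\partial$ is precisely the cokernel presented by $M$. First I would dispose of the rows of types~(1) and~(2). A type~(1) row is zero. A type~(2) row carries a $\pm1$ that is the unique nonzero entry of its column, so I may pivot on it: this produces an invariant factor $1$, and the accompanying column operations touch only the pivot row. Since that column has no other nonzero entry, every type~(3) row vanishes there and is left untouched by the pivoting. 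Iterating, I strip off all type~(1) and~(2) rows without disturbing the type~(3) rows, which are supported on a complementary set of columns (the $\wedge(d,d')$-cells). Hence the torsion of $H_1(B_n\Gamma)$ equals the torsion of $\operatorname{coker}M_3$, where $M_3$ is the submatrix of type~(3) rows.

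Next I would study $M_3$, each of whose rows has exactly two nonzero entries, both $\pm1$; thus $M_3$ is a signed incidence matrix on the $\wedge$-cells. The central claim is that all nonzero invariant factors of $M_3$ equal $1$ or $2$. I would argue this by integer row reduction: after choosing a $\pm1$ pivot and clearing its column, each affected row retains two $\pm1$ entries unless its two surviving entries collide in a single column, where they cancel to $0$ (opposite signs) or coalesce into a lone $\pm2$ (equal signs). A row reduced to a single $\pm2$ is thereafter used only to clear its own column; subtracting suitable multiples of the remaining weight-two rows transports that $2$ to another column or cancels it, but never doubles it to $\pm4$. Thus every entry produced lies in $\{0,\pm1,\pm2\}$ and every pivot is $\pm1$ or $\pm2$, so each nontrivial invariant factor of $M_3$ is $2$, proving that any torsion of $H_1(B_n\Gamma)$ is $2$-torsion.

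Granting this, the torsion subgroup is $(\mathbb Z/2)^{\,\operatorname{rank}_{\mathbb Q}M_3-\operatorname{rank}_{\mathbb F_2}M_3}$, since the number of invariant factors equal to $2$ is the number of even ones, namely $\operatorname{rank}_{\mathbb Q}M_3-\operatorname{rank}_{\mathbb F_2}M_3$. To see this count is independent of the braid index, I would note that $M_3$ itself is: by Lemmas~\ref{lem:rows} and~\ref{lem:boundary-2} its rows arise from the relations $\widetilde\partial(d\cup d''-d\cup d')=\pm(\wedge(d,d'')\pm\wedge(d,d'))$, which depend only on pairs of deleted edges and on the fixed cells $\wedge(d,d')=C_k(\vec\delta_\ell)$, not on $n$; Theorem~\ref{thm:matrix} already records that the number of type~(3) rows is $n$-independent. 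Hence the number of $\mathbb Z/2$ summands does not vary with $n$. Finally, when $\Gamma$ is planar, Theorem~\ref{thm:matrix} forces the two nonzero entries of every type~(3) row to have opposite signs, so in the reduction above every collision cancels to $0$ and no $\pm2$ is ever created; $M_3$ is then an oriented incidence matrix, hence totally unimodular, its cokernel is torsion-free, and $H_1(B_n\Gamma)$ is free.

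The step I expect to be the main obstacle is the claim that no invariant factor of $M_3$ exceeds $2$, i.e.\ that the reduction never manufactures a $\pm4$. The delicate bookkeeping is to confirm that once a lone $\pm2$ appears it can only be relocated or annihilated, never combined with a second equal $\pm2$ in one column; making this airtight --- or, more safely, supplementing the $\mathbb F_2$-rank computation with a mod-$4$ argument that directly excludes a $\mathbb Z/4$ summand --- is the heart of the matter and is exactly where the signed-graph structure of $M_3$ must be used.
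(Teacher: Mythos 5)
Your proposal is correct and follows essentially the same route as the paper: the paper's (one-sentence) proof likewise strips the unit pivots coming from rows of type (2) in Theorem~\ref{thm:matrix} and observes that row operations among the weight-two type-(3) rows can only produce new pivots $\pm1$ or $\pm2$, with the opposite-sign condition in the planar case forcing all pivots to be $\pm1$, and with the $n$-independence of the type-(3) relations giving $n$-independence of the torsion count. The bookkeeping you flag as the main obstacle does close up exactly as you sketch (a lone $\pm2$ can only be transported to another column or cancelled against another lone $\pm2$, never doubled), which is precisely the fact the paper asserts without further argument in the remark preceding the corollary.
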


We classify critical 1-cells according to Theorem~\ref{thm:matrix}. A critical 1-cell is said to be
\begin{itemize}
\item[(i)] \emph{pivotal} if it corresponds to pivotal columns, which is related to (2);
\item[(ii)] \emph{separating} if it corresponds to columns of nonzero entries of (3);
\item[(iii)] \emph{free} otherwise.
\end{itemize}

Clearly a pivotal 1-cell has no contribution to $H_1(B_n\Gamma)$ and a free 1-cell contribute a free summand to $H_1(B_n\Gamma)$. To complete the computation of $H_1(B_n\Gamma)$, it is enough to consider the submatrix obtained by deleting pivotal rows and zero rows and deleting pivotal columns and columns of free 1-cells. This submatrix will be referred as a {\em undetermined block} for $H_1(B_n\Gamma)$ and will be studied in \S\ref{ss32:H1}. Rows of an undetermined block are of the type (3) and columns corresponds to separating 1-cells. It will be useful later to have a geometric characterization of pivotal 1-cells.

\begin{lem}\label{cor:pivotal1cell}{\rm [Pivotal 1-Cell]}
A critical 1-cell $c$ is pivotal if and only if $c$ is either $A_k(\vec a)$ or $d(\vec a)$
such that there is a deleted edge $d'$ separated by $A$ or $\tau(d)$ and $a_m\ge1$ for $m=g(A,\iota(d'))$ and in addition $s(c)\ge2$ when $c=A_k(\vec a)$.
\end{lem}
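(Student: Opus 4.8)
The plan is to establish the biconditional by characterizing exactly which critical 1-cells occupy a pivotal column in the block-upper-triangular presentation matrix $M$ of Theorem~\ref{thm:matrix}. Recall that a critical 1-cell is pivotal precisely when it corresponds to the column containing the pivot (the lower-left $-1$ entry) of some diagonal block, i.e.\ when it is the largest summand (up to sign) in $\widetilde\partial(c)$ for some pivotal critical 2-cell $c$. By Lemma~\ref{lem:but}, the largest summand in a nonzero $\widetilde\partial(c)$ is $-A_k(\vec a+\vec\delta_{g(\tau(e),\iota(e'))})$ (when the larger edge $e$ lies on $T$, with $A=\tau(e)$, $k=g(A,\iota(e))$) or $-d(\vec a+\vec\delta_{g(\tau(e),\iota(e'))})$ (when $e=d$ is deleted). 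So the whole task is to read off, from these two boundary formulae, which 1-cells $A_k(\vec a)$ or $d(\vec a)$ actually arise as such leading terms of pivotal 2-cells.

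First I would prove the ``only if'' direction. Suppose $c'=A_k(\vec a)$ (or $d(\vec a)$) is pivotal, so it is the leading summand of $\widetilde\partial(c)$ for a pivotal 2-cell $c$ whose larger edge is $A_k$ (or $d$). By Lemma~\ref{lem:but} the smaller edge $e'$ of $c$ must be a deleted edge $d'$ separated by $A$ (respectively $\tau(d)$), since otherwise Lemma~\ref{lem:boundary-1} or Lemma~\ref{lem:boundary-2} forces $\widetilde\partial(c)=0$; moreover $\vec a = (\text{blocked vector of }c) + \vec\delta_m$ with $m=g(A,\iota(d'))$, which forces $a_m\ge1$. The size constraint is the delicate point: for $c'=A_k(\vec a)$ the 2-cell $c=A_k(\vec a-\vec\delta_m)\cup d'$ must itself be critical and of size $s(c)=s(c')-1\ge1$, and the leading-term degree in Lemma~\ref{lem:but} is $s(c)+1$, so $s(c')=s(c)+1\ge2$. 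For $c'=d(\vec a)$ no such extra margin is needed because the pivotal 2-cell $d(\vec a-\vec\delta_m)\cup d'$ can have size $0$. This yields exactly the stated condition.

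For the ``if'' direction I would run the construction in reverse: given $A_k(\vec a)$ (or $d(\vec a)$) with a deleted edge $d'$ separated by $A$ (or $\tau(d)$), $a_m\ge1$ for $m=g(A,\iota(d'))$, and the size condition, I exhibit the candidate pivotal 2-cell and verify via Lemmas~\ref{lem:boundary-1} and~\ref{lem:boundary-2} that $c'$ is its leading summand, then check that $c'$ genuinely sits in a pivotal column. The main obstacle will be confirming that the candidate 2-cell is in fact \emph{the smallest} critical 2-cell with that leading summand (so that its leading entry survives as a pivot rather than being cleared by a row below it). This is exactly the content distilled in the proof of Lemma~\ref{lem:rows}: among all 2-cells sharing a leading summand, the pivotal one is the minimal one in the $6$-tuple order, and one must take $d'$ to be the smallest deleted edge separated by $A$ with $g(A,\iota(d'))=m$ minimal subject to $a_m\ge1$. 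I would invoke Lemma~\ref{lem:rows} to guarantee that all non-minimal competitors reduce (for $s\ge1$) to linear combinations of rows below, so the minimal one retains its pivot and $c'$ is pivotal. The size hypothesis $s(c')\ge2$ in the $A_k$-case is precisely what ensures the reduced 2-cell has $s(c)\ge1$, placing it in the regime of Lemma~\ref{lem:rows} where non-pivotal rows vanish rather than becoming two-term rows of type~(3); this is where I expect the bookkeeping to be most error-prone and where Condition~(T3) (via the argument in Case~(I)(a) and Case~(II)) does the real work of eliminating the competing boundary images.
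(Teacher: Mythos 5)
Your framing and your ``only if'' direction do match the paper's proof: by the definition of pivotal columns and Lemma~\ref{lem:but}, a critical 1-cell is pivotal exactly when it occurs (up to sign) as the largest summand of $\widetilde\partial(c_2)$ for some critical 2-cell $c_2$ with $\widetilde\partial(c_2)\ne0$, and from this the conditions $a_m\ge1$ and, in the $A_k(\vec a)$ case, $s\ge2$ follow as you indicate --- the point being that criticality of a 2-cell whose larger edge is the tree edge $A_k$ forces a blocked vertex on some branch of $A$ numbered less than $k$, hence at least one blocked vertex at $A$.

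Your ``if'' direction, however, has a genuine gap, and it aims at the wrong difficulty. First, the minimality worry is a non-issue: a pivotal 1-cell is by definition one whose column contains a pivot, and the pivot sits in the row of the \emph{smallest} critical 2-cell having the given largest summand (that is what ``pivotal 2-cell'' means). So to prove $c$ is pivotal it suffices to exhibit \emph{any} critical 2-cell whose boundary image has largest summand $\pm c$; the smallest such 2-cell then exists automatically and its pivot lies in the column of $c$. No appeal to Lemma~\ref{lem:rows} and no choice of a minimal $d'$ is needed. Second --- and this is the real gap --- your candidate 2-cell $A_k(\vec a-\vec\delta_m)\cup d'$ need not be critical. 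If $|\vec a|_k=1$ and $m<k$, removing the blocked vertex on the $m$-th branch leaves no vertex on any branch of $A$ numbered less than $k$, so the edge $A_k$ becomes order-respecting and the candidate cell is not critical; no 2-cell built from this $d'$ can then have $c$ as its largest summand. This is precisely where the paper invokes Condition (T3) of Lemma~\ref{lem:treeandorder}: since $s(c)\ge2$ and $|\vec a|_k=1$, there is some $j\ge k$ with $a_j\ge1$, and (T3) supplies a deleted edge $d''$ separated by $A$ with $g(A,\iota(d''))=j$, so that $A_k(\vec a-\vec\delta_j)\cup d''$ \emph{is} critical and has largest summand $c$. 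Your use of (T3) ``to eliminate competing boundary images'' via Lemma~\ref{lem:rows} addresses a different and here irrelevant problem; without the case analysis on $|\vec a|_k$ versus $m$ your construction fails in exactly the case described above.
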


\begin{proof}
By the definition of pivotal 1-cell and Lemma~\ref{lem:but}, $c$ is a pivotal 1-cell iff there is a critical 2-cell whose boundary image has the largest summand $c$ iff $s(c)\ge2$ for $A_k(\vec a)$ ($s(c)\ge1$ for $d(\vec a)$, respectively) and there is a deleted edge $d'$ separated by $A$ such that the 1-cell $A_k(\vec a-\vec\delta_m)$ ($d(\vec a-\vec\delta_m)$, respectively) exits and is critical for $m=g(A,\iota(d'))$. A critical 1-cell $d(\vec a-\vec\delta_m)$ exits iff $a_m\ge1$. So we are done.

Assume that $c=A_k(\vec a)$. The ``only if'' part is now clear. To show the ``if'' part, consider $|\vec a|_{k}$ and $m$. If $|\vec a|_{k}\ge2$ or $|\vec a|_{k}=1$ and $m\ge k$, then $A_k(\vec a-\vec\delta_m)$ is a critical 1-cell and we are done. If $|\vec a|_{k}=1$ and $m\le k-1$, then $a_j\ge1$ for some $j\ge k$ since $s(c)\ge2$. By Condition (T3) in Lemma~\ref{lem:treeandorder}, there is a deleted edge $d''$ separated by $A$ such that $g(A,\iota(d''))=j$. Then the largest summand of $\widetilde\partial(A_k(\vec a-\vec\delta_j)\cup d'')$ is $c$ and so $c$ is pivotal.
\end{proof}

We can also have a geometric characterization for a separating 1-cells which is clear from the definition of separating 1-cells and Lemma~\ref{cor:boundary2}(3).

\begin{lem}\label{cor:separating1cell}{\rm [Separating 1-Cell]}
A critical 1-cell $c$ is separating if and only if  there are three deleted edges such that $c$ is a summand of $\widetilde\partial(d\cup d'-d\cup d'')$ such that $\tau(d)>\tau(d')$, $\tau(d)>\tau(d'')$ and $g(\tau(d),\iota(d))=g(\tau(d),\iota(d'))=g(\tau(d),\iota(d''))$. In fact, $c$ is of the form $A_k(\vec\delta_m)$ such that $c=\mbox{$\wedge$}(d,d')$ (or $\mbox{$\wedge$}(d,d'')$, respectively) and deleted edges $d$ and $d'$ (or $d''$) are separated by $A$.
\end{lem}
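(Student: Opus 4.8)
The plan is to read both directions of the equivalence directly off the classification of rows established in Theorem~\ref{thm:matrix}, combined with the explicit relation recorded in Lemma~\ref{cor:boundary2}(3), and then to extract the geometric shape of $c$ from the definition of $\mbox{$\wedge$}(d,d')$. The whole statement is essentially an unwinding of the bookkeeping set up in the previous lemmas, so the emphasis is on tracing where type-(3) rows come from and on checking the one genuinely geometric assertion about separation.

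First I would recall that, by the definition of a separating 1-cell, $c$ is separating exactly when the column indexed by $c$ carries a nonzero entry in some row of type (3) of Theorem~\ref{thm:matrix}. The proof of that theorem shows every type-(3) row is produced, up to row operations, by a relation $\widetilde\partial(d\cup d'-d\cup d'')$ of the kind isolated in Case (III) of the proof of Lemma~\ref{lem:rows}, where $d,d',d''$ are deleted edges with $\tau(d)>\tau(d')$ and $\tau(d)>\tau(d'')$, and with the branch indices coinciding, $g(\tau(d),\iota(d))=g(\tau(d),\iota(d'))=g(\tau(d),\iota(d''))$ (the equality $k=\ell$ is forced by non-pivotality, and the third edge $d''$ is chosen separated by $\tau(d)$ along the same branch). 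This yields the forward implication together with the stated constraints, and exhibits $c$ as a summand of $\widetilde\partial(d\cup d'-d\cup d'')$.

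For the reverse implication I would begin with three deleted edges satisfying the listed conditions and invoke Lemma~\ref{cor:boundary2}(3) to get $\widetilde\partial(d\cup d'-d\cup d'')=\pm(\mbox{$\wedge$}(d,d')\pm\mbox{$\wedge$}(d,d''))$. Since $\tau(d)$ separates both $d'$ and $d''$ along the common branch, this boundary image has exactly two nonzero entries, each $\pm1$, and is therefore a row of type (3); hence both summands $\mbox{$\wedge$}(d,d')$ and $\mbox{$\wedge$}(d,d'')$ are separating 1-cells. For the ``in fact'' clause I would unwind the definition $\mbox{$\wedge$}(d,d')=C_k(\vec\delta_\ell)$ with $C=\iota(d)\wedge\iota(d')$, $\ell=\min\{g(C,\iota(d)),g(C,\iota(d'))\}$ and $k=\max\{g(C,\iota(d)),g(C,\iota(d'))\}$. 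Setting $A=C$ displays $c$ in the claimed form $A_k(\vec\delta_m)$ with $m=\ell$, and $A$ has valency $\ge 3$ since it is the branch point of $\gamma_{\iota(d)}$ and $\gamma_{\iota(d')}$ together with the edge heading toward the base vertex $0$.

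The step I expect to require the most care is the geometric claim that $A=\iota(d)\wedge\iota(d')$ separates both $d$ and $d'$. Here I would argue that, because $g(\tau(d),\iota(d))=g(\tau(d),\iota(d'))$, the paths $\gamma_{\iota(d)}$ and $\gamma_{\iota(d')}$ both pass through $\tau(d)$ and leave it along the same branch, so their first common vertex $A$ lies strictly above $\tau(d)$ on that branch; thus $\iota(d)$ and $\iota(d')$ sit on distinct branches above $A$, while $\tau(d)$ lies in the component of $T-\{A\}$ containing $0$. Since $\tau(d)$ separates $d'$ and $\tau(d')<\tau(d)<A$, the vertex $\tau(d')$ likewise lies in the $0$-component of $T-\{A\}$. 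Consequently each of $d$ and $d'$ has one endpoint above $A$ and one endpoint below $A$, so $A$ separates both, which is the final clause. Everything else is a direct translation of the earlier results; the only fussy part is keeping the branch indices $k,\ell$ and the $\min/\max$ in the definition of $\mbox{$\wedge$}$ consistent with the separation hypotheses.
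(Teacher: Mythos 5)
Your proof is correct and takes essentially the same route the paper intends: the paper offers no explicit argument (it declares the lemma ``clear from the definition of separating 1-cells and Lemma~\ref{cor:boundary2}(3)''), and your write-up is a faithful unwinding of exactly that remark --- tracing type-(3) rows back to Case (III) of the proof of Lemma~\ref{lem:rows} via Theorem~\ref{thm:matrix}, invoking Lemma~\ref{cor:boundary2}(3) for the converse, and reading the form $A_k(\vec\delta_m)$ and the separation statement off the definition of $\mbox{$\wedge$}(d,d')$. The only step you should make explicit is that in the converse the separation of $d'$ and $d''$ by $\tau(d)$ (the actual hypothesis of Lemma~\ref{cor:boundary2}(3), which is not listed in the lemma's conditions) follows from the stated conditions: the branch equalities force $\iota(d')$, $\iota(d'')$ to be descendants of $\tau(d)$, while $\tau(d')<\tau(d)$ and $\tau(d'')<\tau(d)$ together with the vertex ordering and condition (T2) place $\tau(d')$, $\tau(d'')$ in the component of $T-\{\tau(d)\}$ containing the basepoint.
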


It is now easy to recognize free 1-cells. So we can compute $H_1(B_n\Gamma)$ by using the undetermined block after counting the number of free 1-cells.

\begin{exa}\label{ex:H1B4K5}
Suppose a maximal tree and an order is given as Example~\ref{ex:K5TO} for the complete graph $K_5$. We want to compute $H_1(B_4K_5)$ which will be needed later.
\end{exa}
Recall the maximal tree and the order on vertices as Figure~\ref{fig:ExK5n4}.
\begin{figure}[ht]
\psfrag{0}{\small0}
\psfrag{6}{\small$A$}
\psfrag{11}{\small$B$}
\psfrag{18}{\small$C$}
\psfrag{d1}{\small$d_1$}
\psfrag{d2}{\small$d_2$}
\psfrag{d3}{\small$d_3$}
\psfrag{d4}{\small$d_4$}
\psfrag{d5}{\small$d_5$}
\psfrag{d6}{\small$d_6$}
\centering
\includegraphics[height=3cm]{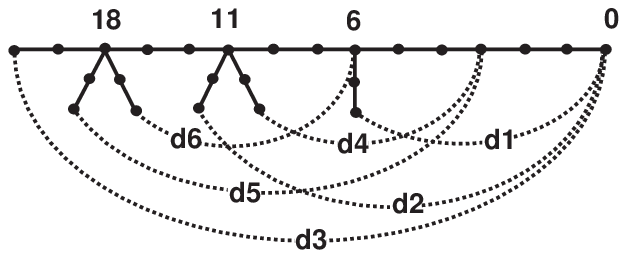}
\caption{The maximal tree and the order on $K_5$}
\label{fig:ExK5n4}
\end{figure}
By Lemma~\ref{cor:pivotal1cell}, all critical 1-cells but of the forms $d$ or $A_k(\vec a)$ with $|\vec a|=1$ are pivotal. All of critical 1-cells of the form $A_k(\vec a)$ with $|\vec a|=1$ are separating by Lemma~\ref{cor:separating1cell}. Thus the number of free 1-cells is 6 that equals $\beta_1(\Gamma)$. Critical 2-cells of the form $d\cup d'-d\cup d''$ give separating 1-cells by Lemma~\ref{cor:boundary2}(3).
Over the basis $\{d_6\cup d_5-d_6\cup d_2,\ d_6\cup d_4-d_6\cup d_2,\ d_6\cup d_3-d_6\cup d_2,\ d_5\cup d_3-d_5\cup d_1,\ d_5\cup d_2-d_5\cup d_1,\ d_4\cup d_3-d_4\cup d_1,\ d_4\cup d_2-d_4\cup d_1\}$ of critical 2-cells and the basis $\{C_3(1,0,0),\ C_3(0,1,0),\ C_2(1,0,0),\ B_3(1,0,0),\ B_3(0,1,0),\ B_2(1,0,0),\ A_2(1,0)\}$ of separating 1-cells, we have the undetermined block for $H_1(B_4\Gamma)$ as follows:
$${\left(
\begin{array}{ccccccc}
&&-1&&-1&&\\
&&&1&-1&&\\
-1&&&&-1&&\\
&-1&&&&&-1\\
&&&&1&&-1\\
&&&-1&&&-1\\
&&&&&-1&-1
\end{array} \right)
\rightarrow
\left(
\begin{array}{ccccccc}
-1&&&&-1&&\\
&-1&&&&&-1\\
&&-1&&-1&&\\
&&&1&-1&&\\
&&&&1&&-1\\
&&&&&-1&-1\\
&&&&&&\framebox[0.5cm][l]{-2}
\end{array} \right)}$$

After putting the undetermined block into a row echelon form, we see that all separating 1-cells but $A_2(1,0)$ are null homologous and $A_2(1,0)$ represents a 2-torsion homology class. Thus $H_1(B_4K_5)\cong\mathbb Z^6\oplus \mathbb Z_2$ and the free part is generated by $[d_i]$ for $i=1,\cdots,6$.
\qed

\subsection{First homologies of graph braid groups}\label{ss32:H1}
In this section we will discuss how to compute the first integral homology of a graph braid group in terms of graph-theoretic invariants. Our strategy is to decompose a given graph into simpler graphs and to compute the contribution from simpler pieces and from the cost of decomposition. The following example illustrates this strategy.

\begin{exa}\label{ex:B3n3}
Let $\Gamma$ be a graph with a maximal tree given in Figure~\ref{fig:B3T}. We want to compute $H_1(B_3\Gamma)$.
\end{exa}
\begin{figure}[ht]
\psfrag{0}{\small0}
\psfrag{A}{\small$A$}
\psfrag{d1}{\small$d_1$}
\psfrag{d2}{\small$d_2$}
\psfrag{d3}{\small$d_3$}
\psfrag{d4}{\small$d_4$}
\centering
\includegraphics[height=3cm]{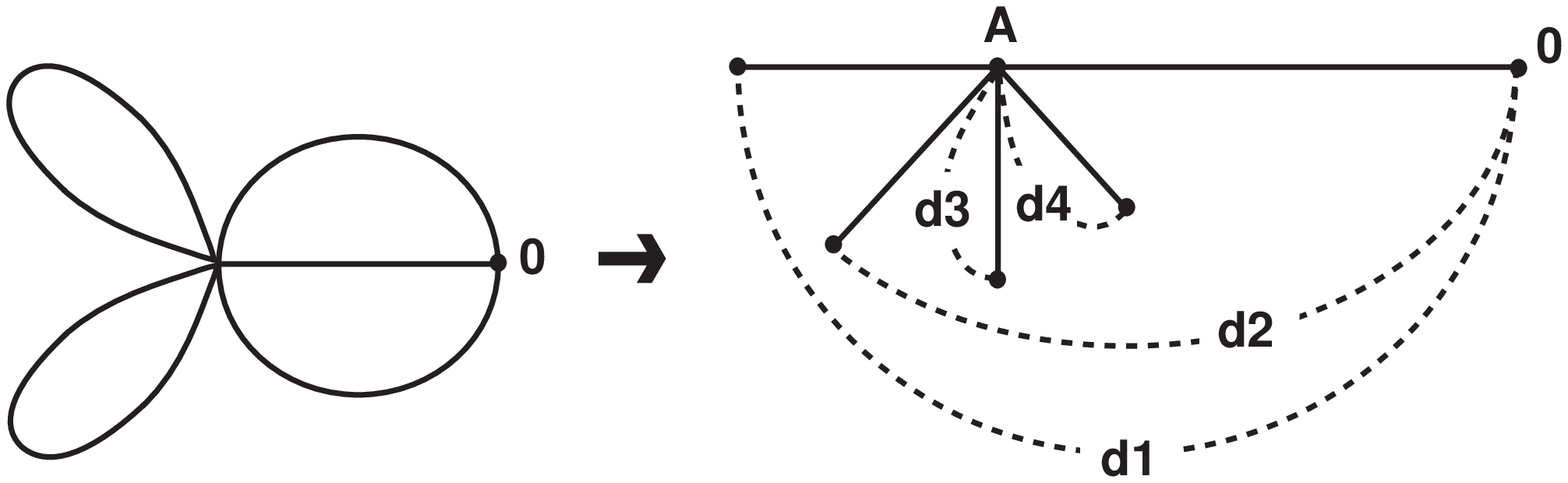}
\caption{$\Gamma$ and a maximal tree $T$}
\label{fig:B3T}
\end{figure}
Give an order on vertices obtained by traveling a regular neighborhood of the maximal tree $T$ clockwise from 0. There are no pairs of critical 2-cells that induce a row satisfying (3) in Theorem~\ref{thm:matrix}. So there are no separating 1-cells. Thus there is no torsion and the rank of $H_1(B_3\Gamma)$ is equal to the number of free 1-cells. There are 28 free 1-cells as follows:\\
$d_i$ for $i=1,2,3,4$; $d_i(\vec a)$ for $i=1,2$ and $\vec a=(1,0,0,0)$, $(0,1,0,0)$, $(2,0,0,0)$, $(1,1,0,0)$, $(0,2,0,0)$; $A_2(\vec a)$ for $\vec a=(1,0,0,0)$, $(2,0,0,0)$, $(1,1,0,0)$; $A_3(\vec a)$ for $\vec a=(1,0,0,0), (0,1,0,0), (2,0,0,0), (1,1,0,0), (0,2,0,0)$; and $A_4(\vec a)$ for $\vec a=(1,0,0,0)$, $(0,1,0,0)$, $(0,0,1,0)$, $(2,0,0,0)$, $(1,1,0,0)$, $(0,2,0,0)$.\\
Consequently, $H_1(B_3\Gamma)\cong\mathbb Z^{28}$.

The vertex $A$ decomposes $\Gamma$ to two circles and one $\Theta$-shape graph that are all subgraphs of the original. The first homologies of two circles are generated by $d_4(2,0,0,0)$ and $d_3(0,2,0,0)$. And the first homology of $\Theta$-shape graph is generated by $d_1$, $d_2$ and $A_4(0,0,1,0)$. The remaining free 1-cells lie over at least two distinct components and they are the cost of decomposition. So the first homology of $\Gamma$ can also be decomposed as
$$H_1(B_3\Gamma)=\langle d_4(2,0,0,0)\rangle\oplus\langle d_3(0,2,0,0)\rangle\oplus\langle d_1,d_2,A_4(0,0,1,0)\rangle\oplus\mathbb Z^{23}$$
\qed

In order to formalize this idea, we need some notions and facts from graph theory. A {\em cut} of a connected graph is a set of vertices whose removal separates at least a pair of vertices.  A graph is {\em $k$-vertex-connected} if the size of a smallest cut is $\ge k$. If a graph has no cut (for example, complete graphs) and the number $m$ of vertices is $\ge 2$ then the graph is defined to be (m-1)-vertex-connected. The graph of one vertex is defined to be 1-vertex-connected. ``2-vertex-connected" and ``3-vertex-connected" will be referred as {\em biconnected} and {\em triconnected}. Let $C$ ba a cut of $\Gamma$. A {\em $C$-component} is the closure of a connected component of $\Gamma-C$ in $\Gamma$ viewed as topological spaces. So a $C$-component is a subgraph of $\Gamma$.

Recall that we are assuming that every graph is suitably subdivided, finite, and connected. A suitably subdivided graph is always simple, i.e has neither multiple edges nor loops, and moreover it has no edge between vertices of valency $\ge 3$.
A cut is called a {\em $k$-cut} if it contains $k$ vertices. The set of 1-cuts of a graph $\Gamma$ is well-defined and we can decompose $\Gamma$ into components that are either biconnected or the complete graph $K_2$ by iteratively taking $C$-components for all 1-cut $C$. This decomposition is unique. The topological types of biconnected components of a given graph do not depend on subdivision. In fact, a subdivision merely affects the number of $K_2$ components.

Let $C$ be a 2-cut $\{x,y\}$ of a biconnected graph $\Gamma$. We find it convenient to modify each $C$-component by adding an extra edge between $x$ and $y$. We refer to this modified $C$-component as a {\em marked $C$-component}. If a marked $C$-component has a 2-cut $C'$, we take all marked $C'$-components of the marked $C$-component. By iterating this procedure, we can decompose a biconnected graph into components that are either triconnected or the complete graph $K_3$. This decomposition is unique for a biconnected suitably subdivided graph (for example, see \cite{CE}) and will be called a {\em marked decomposition}. The topological types of triconnected components of a given graph do not depend on subdivision. In fact, a subdivision merely affects the number of $K_3$ components.

A graph is said to have {\em topologically} a certain property if it has the property after ignoring vertices of valency 2.
We assume that each component in the above two decompositions is always suitably subdivided by subdividing it if necessary. Then triconnected components in the above decompositions are topologically triconnected. Note that a subdivision of a biconnected graph is again biconnected.

\begin{lem}{\rm [Decomposition of Connected Graph]} \label{lem:cutvertex}
Let $x$ be a 1-cut in a graph $\Gamma$. Then
$$H_1(B_n\Gamma)\cong (\oplus_{i=1}^\mu H_1(B_n\Gamma_{x,i}))\oplus \mathbb Z^{N(n,\Gamma,x)}$$
where $\Gamma_{x,i}$ are $x$-components of $\Gamma$,
$$N(n,\Gamma,x)=\left(\begin{array}{c} n+\mu-2 \\ n-1 \end{array}\right) \times(\nu-2) -\left(\begin{array}{c} n+\mu-2 \\ n \end{array}\right)-(\nu-\mu-1),$$
$\mu$ is the number of $x$-components of $\Gamma$, and $\nu$ is the valency of $x$ in $\Gamma$.
\end{lem}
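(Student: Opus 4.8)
The plan is to read $H_1(B_n\Gamma)$ off the presentation matrix of Theorem~\ref{thm:matrix} after choosing a maximal tree adapted to the cut. First I would take $T$ and an order as in Lemma~\ref{lem:treeandorder} so that $T$ restricts to a maximal tree of each $x$-component $\Gamma_{x,i}$, with the base vertex $0$ lying in one of them; the clockwise order then refines to an order on each $\Gamma_{x,i}$ satisfying (T1)--(T3). The decisive geometric fact is that a cycle of $\Gamma$ cannot cross the cut vertex, so every deleted edge lies inside a single $\Gamma_{x,i}$. More generally, if a vertex $v$ and a deleted edge $d$ lie in different $x$-components, then $\iota(d)$ and $\tau(d)$ lie in the same component of $T-\{v\}$, so $d$ is not separated by $v$; in particular \emph{no deleted edge is separated by $x$}.

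Next I would sort critical cells by their relation to $x$. Since the branches recorded by the vector $\vec a$ of a symbol $A_k(\vec a)$ or $d(\vec a)$ all point away from $0$, they stay inside the same $x$-component as the edge $A_k$ (or $d$), because $x$ is the only vertex whose branches meet several components. Hence a critical $1$-cell has active data (edge together with blocked vertices) spread over two or more components exactly when its single edge issues from $x$; call such cells \emph{straddling}, and all others \emph{interior}. By Boundary Formulas I and II (Lemmas~\ref{lem:boundary-1} and~\ref{lem:boundary-2}), $\widetilde\partial(c)$ is nonzero only when the smaller edge of the critical $2$-cell $c$ is a deleted edge separated by the terminal vertex of the larger edge; by the paragraph above this forces both edges, and hence every summand of $\widetilde\partial(c)$, into one component. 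Consequently the presentation matrix is block-diagonal -- one block per component together with zero rows -- and every straddling $1$-cell occupies a zero column. Thus each straddling $1$-cell is neither pivotal nor separating (Lemmas~\ref{cor:pivotal1cell} and \ref{cor:separating1cell}), so it is free and contributes a $\mathbb Z$-summand.

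I would then identify the $i$-th diagonal block with a presentation matrix of $H_1(B_n\Gamma_{x,i})$. Because the reductions $\widetilde R$ and $\widetilde V$ only push particles toward $0$, the boundary computation for cells interior to $\Gamma_{x,i}$ uses only cells interior to $\Gamma_{x,i}$, with the downward path from $\Gamma_{x,i}$ through $x$ to $0$ playing the role of the tail to the base when $\Gamma_{x,i}$ is treated intrinsically and the resting block $0_s$ playing the role of the resting particles at that base. Since the classification into pivotal, separating and free is determined blockwise, Theorem~\ref{thm:matrix} and Corollary~\ref{thm:torsionfree} give
$$H_1(B_n\Gamma)\cong\Big(\bigoplus_{i=1}^\mu H_1(B_n\Gamma_{x,i})\Big)\oplus\mathbb Z^{N},$$
where $N$ is the number of straddling critical $1$-cells.

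It remains to prove $N=N(n,\Gamma,x)$, and this combinatorial count is the main obstacle. A straddling $1$-cell is a symbol $x_k(\vec a)$ whose edge runs up one branch at $x$, whose blocked vertices are distributed over the branches at $x$ so that at least two of the $\mu$ components carry active data, and which is critical. I would compute $N$ as the number of critical $1$-cells with edge at $x$ minus the number of those confined to a single component, each being a stars-and-bars distribution of the $n$ particles among the $\mu$ components and the $\nu$ edges at $x$; here $\binom{n+\mu-2}{n-1}$ is the number of placements of $n-1$ particles into $\mu$ components and $\binom{n+\mu-2}{n}$ the number into $\mu-1$ components. The expression $\binom{n+\mu-2}{n-1}(\nu-2)-\binom{n+\mu-2}{n}-(\nu-\mu-1)$ is exactly the value of this difference. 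The delicate point is that the raw counts involve the numbers of tree-branches and of deleted edges meeting $x$, which depend on the subdivision and on which cycles pass through $x$; the asserted formula amounts to showing that these dependencies cancel so that the answer depends only on $n$, $\mu$ and $\nu$. Verifying this cancellation -- and checking it against the count $N=23$ for the cut vertex $A$ of Example~\ref{ex:B3n3}, where $n=3$, $\mu=3$, $\nu=7$ -- is where the bulk of the bookkeeping resides.
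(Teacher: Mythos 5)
Your overall strategy (adapt the maximal tree to the cut, split the presentation matrix, count the leftover $1$-cells) is the same as the paper's, but the geometric claim on which your whole block-decomposition rests is false: it is \emph{not} true that no deleted edge is separated by $x$. Your justification is a non sequitur: the preceding sentence concerns a vertex $v$ and a deleted edge $d$ lying in \emph{different} $x$-components, but $x$ itself lies in \emph{every} $x$-component (they are closures of components of $\Gamma-\{x\}$), so that sentence says nothing about $v=x$. In fact a deleted edge $d$ lying entirely inside one $x$-component is separated by $x$ exactly when $\iota(d)$ and $\tau(d)$ sit on different branches of $x$ in $T$, i.e.\ exactly when the cycle that $d$ closes passes through $x$; with the tree of Lemma~\ref{lem:treeandorder} this happens precisely when $m>\mu$, where $m$ is the valency of $x$ in $T$, because then the component $\Gamma_{x,\mu}$ containing the base vertex owns both the $0$-th branch and the branches $\mu,\dots,m-1$. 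The paper's own proof constructs such edges: for each $i$ with $\mu\le i\le m-1$ it picks a deleted edge $d'$ with $g(x,\iota(d'))=i$ and $g(x,\tau(d'))=0$, which is separated by $x$. Concretely, in Example~\ref{ex:B3n3} the two deleted edges of the $\Theta$-shaped $x$-component are separated by the cut vertex $A$.

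This error invalidates your next step. A critical $2$-cell $A_k(\vec a)\cup d'$ with $A=x$, or $d(\vec a)\cup d'$ with $\tau(d)=x$, where $d'$ is a deleted edge separated by $x$, has nonzero boundary by Lemmas~\ref{lem:boundary-1} and~\ref{lem:boundary-2}, and by Lemma~\ref{lem:but} its leading term is a straddling $1$-cell; so the matrix is not block diagonal in your sense and not every straddling $1$-cell occupies a zero column. The correct dichotomy, which is where most of the paper's work lies, is that a straddling cell is free when all its blocked vertices lie on branches $1,\dots,\mu-1$, and is pivotal (Lemma~\ref{cor:pivotal1cell}) as soon as some blocked vertex lies on a branch $i\ge\mu$, precisely because of the deleted edges separated by $x$ exhibited above. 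For instance, $A_2(1,0,1,0)$ in Example~\ref{ex:B3n3} is straddling yet pivotal, which is why it is absent from that example's list of $28$ free cells. Consequently your $N$, the number of \emph{all} straddling cells, strictly exceeds $N(n,\Gamma,x)$ whenever $m>\mu$ (in the example it would come out well above the correct value $23$), so the isomorphism you assert is false as stated; your argument is sound only in the special case $m=\mu$, i.e.\ when no cycle of $\Gamma$ passes through $x$. Finally, even setting this aside, you explicitly leave the identity $N=N(n,\Gamma,x)$ unverified, and that count---first discarding the pivotal straddling cells, then enumerating the free ones of types $A_k(\vec a)$ and $d(\vec a)$ by stars and bars---is the actual content of the lemma.
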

\begin{proof}
Assume that $\Gamma$ has a maximal tree $T$ and an order on vertices as Lemma~\ref{lem:treeandorder}. Except the $x$-component containing the base vertex 0, each $x$-component $\Gamma_{x,i}$ has new base point $x$ and we maintain the numbering on vertices. Then $x$ is the smallest vertex on each $x$-component not containing the original base vertex 0. Unless $A=x$, every critical 1-cell of the type $A_k(\vec a)$ can be thought of as a critical 1-cell in one of $x$-components by regarding vertices blocked by 0 as vertices blocked by $x$. Similarly, unless $\iota(d)=x$ or $\tau(d)=x$, a deleted edge $d$ does not join distinct $x$-components and so a critical 1-cell of the type $d(\vec a)$ can be regarded as a critical 1-cell in one of $x$-components. Therefore a critical 1-cell in $UD_n\Gamma$ that belong to none of $x$-components must contain an edge incident to $x$.

We first claim that the undetermined block for $H_1(B_n\Gamma)$ is a block sum of the undetermined blocks for $H_1(B_n\Gamma_{x,i})$'s.
A row of an undetermined block is obtained by the boundary image of a critical 2-cell of the form $d\cup d'$ (see Lemma~\ref{cor:separating1cell}). If two deleted edges $d$ and $d'$ are in distinct $x$-component, the boundary image is trivial since the terminal vertex of one edge cannot separate the other. Thus both $d$ and $d'$ are in the same $x$-component and so each separating 1-cell for $UD_n\Gamma$ must be a separating 1-cell for exactly one of $x$-components.

The proof is completed by counting the number of free 1-cells that cannot be regarded as those in any one of $x$-components. Let $m$ be the valency of $x$ in the maximal tree. Then $\mu\le m$. Recall that branches incident to $x$ are numbered by $0, 1,\ldots,m-1$ clockwise starting from the 0-th branch pointing the base vertex 0.  The $i$-th and the $j$-th branches do not belong to the same $x$-component for $1\le i, j\le \mu-1$ by (T2) of Lemma~\ref{lem:treeandorder}. When $\mu\le m-1$, the $i$-th and the $0$-th branches belong to the same $x$-component for $\mu\le i\le m-1$ by Condition (T3) of Lemma~\ref{lem:treeandorder}. For $1\le i\le \mu$, let $\Gamma_{x,i}$ denote the $x$-component containing the $i$-branch. Then the $x$-component $\Gamma_{x,\mu}$ contains the $\mu$-th to the $(m-1)$-st branches and the 0-th branch.

Set $A=x$. If $1\le k\le\mu-1$ or $|\vec a|_\mu\ge1$ then $A_k(\vec a)$ cannot be a critical 1-cell over any one of $x$-components. We divide this situation into the following four cases:

\begin{itemize}
\item[(a)] $1\le k\le\mu-1$ and $|\vec a|=|\vec a|_\mu$
\item[(b)] $1\le k\le\mu-1$ and $|\vec a|>|\vec a|_\mu$
\item[(c)] $\mu\le k\le m-1$ and $|\vec a|=|\vec a|_\mu$
\item[(d)] $\mu\le k\le m-1$ and $|\vec a|>|\vec a|_\mu$
\end{itemize}
To use Lemma~\ref{cor:pivotal1cell}, consider a deleted edge $d'$ such that $g(A,\iota(d'))=i$. For $1\le i\le\mu-1$, $\tau(d')$ is in $\Gamma_{x,i}$ since $\iota(d')$ is in $\Gamma_{x,i}$. So $A$ cannot separate $d'$. Thus every critical 1-cell satisfying either (a) or (c) is free. On the other hand, for $\mu\le i\le m-1$, we may choose $d'$ such that $g(A,\tau(d'))=0$ since both the $i$-th and the $0$-th branches lie on $\Gamma_{x,\mu}$. So $A$ separates $d'$. Thus every critical 1-cell satisfying either (b) or (d) is pivotal. Note that in cases of (a) and (c), $|\vec a|_\mu\ge1$ since $A_k(\vec a)$ is critical.

There are $\nu-m$ deleted edges $d$ such that $\tau(d)=x$ and $\iota(d)$ lies on the $i$-th branch of $x$ for some $1\le i\le m-1$. Unless all $(n-1)$ vertices blocked by $\tau(d)$ lie on the $x$-component containing $\iota(d)$, $d(\vec a)$ cannot be a critical 1-cell over any one of $x$-components. If $|\vec a|>|\vec a|_\mu$ then a critical 1-cell $d(\vec a)$ is  pivotal. Otherwise it is free. This means that vertices in $\Gamma_{x,\mu}$ must lie on the 0-th branch in order to be free. Counting combinations with repetition, the numbers of free 1-cells for the three cases are given as follows:
\begin{align*}
\mbox{The number of } A_k(\vec a) \mbox{ in (a) } =& \left(\begin{array}{c} n+\mu-2\\ n-1 \end{array} \right) \times (\mu-2)- \left(\begin{array}{c} n+\mu-2\\ n \end{array} \right) +1\\
\mbox{The number of } A_k(\vec a) \mbox{ in (c) } =& \left(\begin{array}{c} n+\mu-2\\ n-1 \end{array} \right) \times (m-\mu)- (m-\mu)\\
\mbox{The number of } d(\vec a) =& \left(\begin{array}{c} n+\mu-2\\ n-1 \end{array} \right) \times (\nu-m)- (\nu-m).
\end{align*}
The sum is equal to $N(n,\Gamma,x)$ which is the number of free 1-cells that cannot be seen inside each $x$-component.
\end{proof}

The above lemma decomposes the first homology of a graph braid group into the first homologies of graph braid groups on biconnected components
together with a free part determined by the valency and the number of $x$-component of each 1-cut $x$. Since $N(n,\Gamma,x)=0$ for a 1-cut $x$ of valency 2 and $UD_n(\Gamma)$ is contractible if $\Gamma$ is topologically a line segment, this decomposition of $H_1(B_n\Gamma)$ is independent of subdivision. Farley obtained a similar decomposition in \cite{Far} when $\Gamma$ is a tree.

\begin{lem}\label{lem:biconnected}
For a biconnected graph $\Gamma$, $H_1(B_n\Gamma)\cong H_1(B_2\Gamma)$.
\end{lem}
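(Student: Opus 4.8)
The plan is to show that the first homology of the braid group over a biconnected graph is independent of the braid index $n$ by arguing that the \emph{undetermined block} (the only source of torsion) and the rank of the free part both stabilize at $n=2$. The key structural fact, established in the previous subsection, is that $H_1(B_n\Gamma)$ is computed from a presentation matrix whose nonzero rows are of type (2) (pivotal) or type (3) (separating), and that the type-(3) rows---hence the torsion---come entirely from critical 2-cells of the form $d\cup d'$ with $s(c)=0$, via the relation in Lemma~\ref{cor:boundary2}(3). The crucial observation of Theorem~\ref{thm:matrix} is already that \emph{the number of rows of type (3) does not depend on braid indices}. So the entire undetermined block, which involves only size-zero critical cells $d\cup d'$ (two deleted edges with no extra blocked vertices) and separating 1-cells of the form $A_k(\vec\delta_m)$ (again size one), is literally the same for every $n\ge 2$: none of these cells contain any ``spectator'' blocked vertices beyond the minimum needed, and in a biconnected graph there are enough edges that these minimal configurations already exist at $n=2$.

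First I would pin down that the undetermined block is $n$-independent. A separating 1-cell is, by Lemma~\ref{cor:separating1cell}, of the form $A_k(\vec\delta_m)$ with $|\vec a|=1$, and a row of type (3) arises from $\widetilde\partial(d\cup d'-d\cup d'')$ where $d,d',d''$ are three deleted edges with a common branch index at $\tau(d)$. Both of these data are purely combinatorial features of the maximal tree and the set of deleted edges of $\Gamma$; by Lemma~\ref{cor:boundary} the boundary images are independent of the vectors $\vec b$ of extra blocked vertices. Since a biconnected suitably subdivided graph has the same cycle structure and the same number of deleted edges regardless of how finely it is subdivided (and regardless of $n$), the collection of size-zero 2-cells $d\cup d'$ and the resulting separating 1-cells is in canonical bijection across all $n\ge 2$, and the undetermined block is identical. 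Therefore the torsion subgroup of $H_1(B_n\Gamma)$ is the same for all $n\ge 2$, in particular equal to that of $H_1(B_2\Gamma)$.

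Next I would handle the free rank. The free summand of $H_1(B_n\Gamma)$ is generated by the free 1-cells together with those separating 1-cells that survive the undetermined block; since the undetermined block is $n$-independent, the number of surviving separating classes is $n$-independent. It remains to show the number of \emph{free} 1-cells is $n$-independent for a biconnected graph. Here I would invoke Lemma~\ref{cor:pivotal1cell}: a critical 1-cell $A_k(\vec a)$ or $d(\vec a)$ is pivotal precisely when it carries a blocked vertex on a branch $m=g(A,\iota(d'))$ that separates some deleted edge $d'$ (with the size condition $s(c)\ge2$ in the $A_k$ case). In a biconnected graph, \emph{every} vertex of valency $\ge 3$ has the property that at least one of its branches separates a deleted edge (this is exactly what biconnectivity guarantees---there are no ``free'' branches as there are in a tree with pendant cycles). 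Consequently, any critical 1-cell carrying an extra blocked vertex beyond the minimal configuration becomes pivotal, so the only non-pivotal (free or separating) 1-cells are those of minimal size: $d$ and $A_k(\vec\delta_m)$. These minimal-size cells already exist at $n=2$ and their count does not grow with $n$.

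The main obstacle will be the last claim---that biconnectivity forces every extra blocked vertex to make a critical 1-cell pivotal, so that free 1-cells do not proliferate with $n$. I would make this precise by showing that for a biconnected graph, given any critical 1-cell $A_k(\vec a)$ or $d(\vec a)$ with $|\vec a|\ge 1$ and the relevant size bound, one can always locate a deleted edge $d'$ separated by $A$ (resp.\ $\tau(d)$) with $g(\cdot,\iota(d'))=m$ for some $m$ with $a_m\ge 1$; biconnectivity ensures that removing the single vertex $A$ does not disconnect $\Gamma$, so every branch at $A$ lies on a cycle and hence meets a deleted edge that $A$ separates, after possibly invoking Condition (T3) to relocate $m$ to a qualifying branch as in the ``if'' direction of Lemma~\ref{cor:pivotal1cell}. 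Granting this, the count of free 1-cells equals the number of minimal-size critical 1-cells, which is determined by $\beta_1(\Gamma)$ and the branch structure alone, independent of $n$. Combining the $n$-independence of the free rank and of the torsion then yields $H_1(B_n\Gamma)\cong H_1(B_2\Gamma)$.
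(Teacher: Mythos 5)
Your proposal is correct and is essentially the paper's own argument: the paper likewise observes that the undetermined block involves only critical 2-cells with $s(c)=0$ and critical 1-cells with $s(c)=1$ (hence is independent of the braid index), and then uses biconnectivity, via Lemma~\ref{cor:pivotal1cell}, to show that the remaining critical 1-cells (those of size $\ge 2$) are pivotal --- exactly your key step that each branch of a cut-candidate vertex $A$ must meet a deleted edge separated by $A$, since otherwise $A$ would disconnect $\Gamma$. The only cosmetic difference is organizational: the paper does not separately argue that $d(\vec\delta_m)$ is pivotal, instead noting that critical 1-cells with $s(c)\le 1$ and their pivotal/free/separating classification correspond bijectively between $UD_n\Gamma$ and $UD_2\Gamma$, whereas you prove the (true, slightly stronger) claim that all non-minimal 1-cells are pivotal.
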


\begin{proof}
A sequence of vertices starting from the base vertex in a critical cell can be ignored to give a corresponding critical cell for a lower braid index. So a critical 1-cell with $s(c)\le1$ in $UD_n\Gamma$ can be regarded as a critical 1-cell in $UD_2\Gamma$.
An undetermined block involves only critical 2-cells with $s(c)=0$ and critical 1-cells with $s(c)=1$ and so it is well-defined independently of braid indices $\ge 2$.

It is now sufficient to show that every critical 1-cell $c$ with $s(c)\ge2$ is pivotal.
To show that a critical 1-cell $A_k(\vec a)$ with $|\vec a|\ge 2$ is pivotal, we need to find a deleted edge satisfying Lemma~\ref{cor:pivotal1cell}. Suppose there is no deleted edge $d'$ such that $A$ separate $d'$ and $g(A,\iota(d'))=g(A,v)$ for the second smallest vertex $v$ blocked by $A$. By Lemma~\ref{lem:treeandorder} (T2), $\tau(d')<A$. This means that the vertex $A$ disconnects the $g(A,v)$-th branch of $A$ from the rest of $\Gamma$. This contradicts the biconnectivity of $\Gamma$.

For a critical 1-cell  $d(\vec a)$ with $|\vec a|\ge2$, let $v$ be the smallest vertex blocked by $\tau(d)$. Then we can argue similarly to show $d(\vec a)$ is pivotal.
\end{proof}

For the sake of the previous lemma, it is enough to consider 2-braid groups for biconnected graphs in order to compute $n$-braid groups.

\begin{lem}\label{lem:decom}
Let $\{x,y\}$ be a 2-cut in a biconnected graph $\Gamma$, $\Gamma'$ be a $\{x,y\}$-component of $\Gamma$, $\widehat\Gamma'$ be the marked $\{x,y\}$-component of $\Gamma'$, $\Gamma''$ be the complementary subgraph, i.e. $\Gamma''$ be the closure of $\Gamma-\Gamma'$ in $\Gamma$, and  $\widehat\Gamma''$ be obtained from $\Gamma''$ by adding an extra edge between $x$ and $y$. Then
$$H_1(B_2\Gamma)\oplus\mathbb Z\cong H_1(B_2\widehat\Gamma')\oplus H_1(B_2\widehat\Gamma'')$$
\end{lem}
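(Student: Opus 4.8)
The plan is to adapt the proof of Lemma~\ref{lem:cutvertex}, using the two marking edges to absorb the critical cells that straddle the cut $\{x,y\}$. First I would choose a maximal tree $T$ and a vertex order for $\Gamma$ as in Lemma~\ref{lem:treeandorder}. Since $\{x,y\}$ separates $\Gamma$, the unique $T$-geodesic between $x$ and $y$ lies entirely on one side; because the conclusion is symmetric in $\widehat\Gamma'$ and $\widehat\Gamma''$, I may arrange after relabeling that this geodesic runs through $\Gamma''$, so that $T''=T\cap\Gamma''$ is a spanning tree of $\Gamma''$ while $T\cap\Gamma'$ is a forest with exactly two components, one meeting $x$ and one meeting $y$. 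Then $T''\cup\{\epsilon''\}$ presents $\widehat\Gamma''$ with the marking edge $\epsilon''$ as a deleted edge, whereas $(T\cap\Gamma')\cup\{\epsilon'\}$ is a spanning tree of $\widehat\Gamma'$ with $\epsilon'$ a tree edge; keeping the induced orders, all three graphs share the same numbering away from the cut. A bookkeeping count then identifies the deleted edges: those of $\widehat\Gamma'$ are exactly the deleted edges of $\Gamma$ lying in $\Gamma'$, while those of $\widehat\Gamma''$ are the deleted edges of $\Gamma$ lying in $\Gamma''$ together with the new edge $\epsilon''$. In particular $\beta_1(\widehat\Gamma')+\beta_1(\widehat\Gamma'')=\beta_1(\Gamma)+1$, and the surplus edge $\epsilon''$ is the source of the extra $\mathbb Z$.

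By Theorem~\ref{thm:matrix} and the classification of critical $1$-cells following it, one has $H_1(B_2\Gamma)\cong\mathbb Z^{f(\Gamma)}\oplus\operatorname{coker}(U_\Gamma)$, where $f(\Gamma)$ is the number of free $1$-cells and $U_\Gamma$ is the undetermined block, and likewise for $\widehat\Gamma'$ and $\widehat\Gamma''$. Thus it suffices to prove two things: that $U_\Gamma$ is, up to row and column operations, the block sum $U_{\widehat\Gamma'}\oplus U_{\widehat\Gamma''}$, and that $f(\Gamma)+1=f(\widehat\Gamma')+f(\widehat\Gamma'')$. Granting these, the isomorphism follows formally from $\operatorname{coker}(U_{\widehat\Gamma'}\oplus U_{\widehat\Gamma''})\cong\operatorname{coker}(U_{\widehat\Gamma'})\oplus\operatorname{coker}(U_{\widehat\Gamma''})$ together with the single free summand carried over by the $+1$.

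For the free count I would classify every critical $1$-cell $A_k(\vec a)$ or $d(\vec a)$ by Lemma~\ref{cor:pivotal1cell} and Lemma~\ref{cor:separating1cell}. Both being pivotal and being separating are detected only through deleted edges separated by the base vertex of the cell, and by Conditions (T2)--(T3) such a separated deleted edge must lie in the same $\{x,y\}$-component as the cell unless the separating vertex is $x$ or $y$. Hence away from the cut each free (respectively pivotal, separating) $1$-cell of $\Gamma$ has the same status in the component it occupies, and conversely for the marked graphs, where vertices formerly blocked by the tree-path through the other side are now blocked by the marking edge—exactly the substitution used in Lemma~\ref{lem:cutvertex}. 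The net surplus is a single free generator with no counterpart in $\Gamma$, namely $\epsilon''(\vec 0)=[\epsilon'']$: as a size-$0$ cell it is free by Lemma~\ref{cor:pivotal1cell}, and it accounts for the $+1$.

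For the undetermined block, its rows are the boundary images of critical $2$-cells $d\cup d'$ and its columns the separating $1$-cells $\wedge(d,d')$ (Lemma~\ref{cor:separating1cell} and Lemma~\ref{cor:boundary2}(3)). A row is nonzero only when $\tau(d)$ separates $d'$, which by Condition (T2) forces $\iota(d')$ and $\tau(d')$ into distinct components of $T-\{\tau(d)\}$; away from the cut this confines $d$, $d'$, and the whole relation to a single $\{x,y\}$-component, giving the block-diagonal form. The delicate case—and the step I expect to be the main obstacle—is when the separating vertex $\tau(d)$ is $x$ or $y$: there a relation in $\widehat\Gamma''$ that uses $\epsilon''$ must be matched, in $\Gamma$, with the relation in which the connectivity of $\Gamma'$ plays the role that $\epsilon''$ plays in $\widehat\Gamma''$. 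The resolution I envisage is a local dictionary at $x$ and $y$: under the chosen order $\epsilon''$ occupies precisely the branch of its terminal vertex that the entire subgraph $\Gamma'$ occupies in $\Gamma$, so that the branch labels $g(\cdot,\cdot)$ and the meets $\wedge(\cdot,\cdot)$ computed in $\widehat\Gamma''$ agree with those in $\Gamma$ after collapsing $\Gamma'$ to the single edge $\epsilon''$. Once this dictionary is verified, Lemmas~\ref{lem:boundary-1}, \ref{lem:boundary-2}, \ref{cor:boundary} and \ref{cor:boundary2} make the relevant boundary images agree term by term, yielding the block decomposition and completing the proof.
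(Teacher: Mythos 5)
Your reduction of the lemma to two identities---that the undetermined block of $\Gamma$ is, up to row and column operations, the block sum of those of $\widehat\Gamma'$ and $\widehat\Gamma''$, and that the free $1$-cells satisfy $f(\Gamma)+1=f(\widehat\Gamma')+f(\widehat\Gamma'')$---is a genuinely different route from the paper's, and the identities themselves appear to be correct. However, your proof of them rests on a claim that is false, namely the confinement statement: ``by Conditions (T2)--(T3) such a separated deleted edge must lie in the same $\{x,y\}$-component as the cell unless the separating vertex is $x$ or $y$.'' Condition (T2) only forces the separating vertices of $d'$ to be \emph{larger} than $\tau(d')$; it does not prevent cross-component separation. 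In your setup $T\cap\Gamma'$ is a two-component forest, so any deleted edge $d'\subset\Gamma'$ joining its two pieces has its tree path running from $\tau(d')$ through $x$, along the entire $x$--$y$ geodesic inside $\Gamma''$, through $y$, and down to $\iota(d')$. Hence \emph{every} essential vertex interior to that geodesic---a vertex of $\Gamma''$ not equal to $x$ or $y$---separates $d'$, so the cross-talk is not confined to the cut vertices at all.

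This is not a benign oversight, because the mixed rows it produces are exactly the ones that make your block decomposition true. Take $\Gamma$ to be two copies of $K_4$ glued along $\{x,y\}$, so $\widehat\Gamma'\cong\widehat\Gamma''\cong K_4$ and the lemma forces $H_1(B_2\Gamma)\cong\mathbb Z^7$; put the base vertex at $x$ and the geodesic $x$--$c$--$y$ inside $\Gamma''$. Then $c$ separates the two deleted edges of $\Gamma'$ incident to $x$, and the critical $2$-cells $d\cup d'$ with $\tau(d)=c$ (where $d$ is the deleted edge of $\Gamma''$ based at $c$) and $d'\subset\Gamma'$ have nonzero boundary. The type-(3) rows obtained from these are the \emph{only} rows identifying the straddling separating $1$-cell at $y$ (edge pointing into $\Gamma''$, blocked vertex in $\Gamma'$) with the separating $1$-cell at the interior essential vertex of $\Gamma''$; under your dictionary they correspond precisely to the relations of $U_{\widehat\Gamma''}$ that involve $\epsilon''$. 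If they are discarded, as your block-diagonality argument requires, those two $1$-cells are misclassified as free, the free-cell count becomes $7$ instead of $5$, and the machinery returns $H_1(B_2\Gamma)\cong\mathbb Z^8$, contradicting the lemma. So your dictionary must be formulated and verified at every essential vertex along the geodesic---matching branch labels, the cells $\wedge(\cdot,\cdot)$, and the formulas of Lemma~\ref{lem:boundary-2} for $\epsilon''$ against those for each of the (possibly several) crossing deleted edges of $\Gamma'$---and not just at $x$ and $y$. Doing this is essentially equivalent to what the paper actually does: it constructs order-preserving embeddings $f'\colon\widehat\Gamma'\to\Gamma$ and $f''\colon\widehat\Gamma''\to\Gamma$ sending each marking edge to a path through the opposite component, shows $f'_*$ and $f''_*$ are injective with $\im(f'_*)\cap\im(f''_*)\cong\mathbb Z$, and proves $\im(f'_*)+\im(f''_*)=H_1(B_2\Gamma)$ by showing the straddling cells at $y$ are either pivotal or homologous to cells in $\im(f'_*)$. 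As it stands, your argument has a genuine hole at its central step.
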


\begin{proof}
If either $\widehat\Gamma'$ or $\widehat\Gamma''$ is a topological circle, this lemma is a tautology since $H_1(B_2S^1)\cong\mathbb Z$. So we assume that $\widehat\Gamma'$ and $\widehat\Gamma''$ are not a topological circle. For a biconnected graph, we may regard $x$ as the base vertex 0 and choose a maximal tree $T$ of $\Gamma$ that contains a path between $0$ and $y$ through $\Gamma'$. Choose a planar embedding of $T$ as given in Figure~\ref{fig:2Cde}(a) by using Lemma~\ref{lem:treeandorder} and number vertices of $\Gamma$. Then maximal trees of $\widehat\Gamma'$ and $\widehat\Gamma''$ and their planar embeddings are induced as Figure~\ref{fig:2Cde}(b)(c) where $d_0$ is the new deleted edge on the (subdivided) edge added between $0$ and $y$ and $d_i$'s for $i\ge1$ are deleted edges incident to 0 in $\Gamma$ and $\widehat\Gamma''$. We maintain the numbering on vertices of $\widehat\Gamma'$ and $\widehat\Gamma''$ so that all vertices of valency 2 on the added edge that is subdivided is larger than any vertex in $\widehat\Gamma'$ and $y$ is the second smallest vertex of valency$\ge3$ in $\widehat\Gamma''$. Let $\nu$ and $\nu'$ be valencies of $y$ in maximal trees of $\Gamma$ and $\Gamma'$, respectively. Then $\nu-\nu'+1$ is in fact the number of $\{0,y\}$-components by Lemma~\ref{lem:treeandorder}.
\begin{figure}[ht]
\psfrag{0}{\small0}
\psfrag{3}{\small$y$}
\psfrag{d1}{\small$\Gamma'-\{x,y\}$}
\psfrag{d2}{\small$\Gamma''-\{x,y\}$}
\psfrag{d3}{\small$d_i$}
\psfrag{d4}{\small$d_0$}
\centering
\subfigure[A maximal tree of $\Gamma$]{\includegraphics[height=1.5cm]{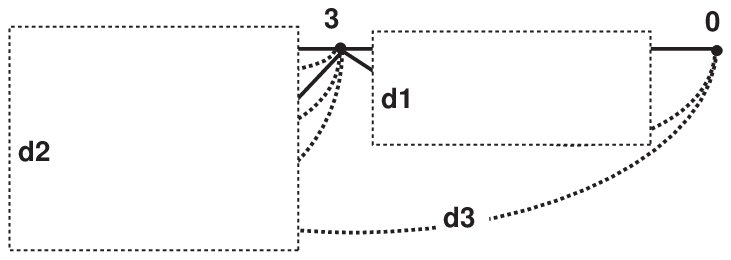}}\qquad
\subfigure[$\widehat\Gamma'$]{\includegraphics[height=1.5cm]{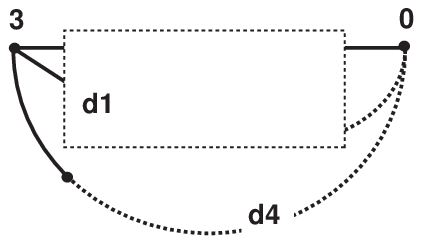}}\qquad
\subfigure[$\widehat\Gamma''$]{\includegraphics[height=1.5cm]{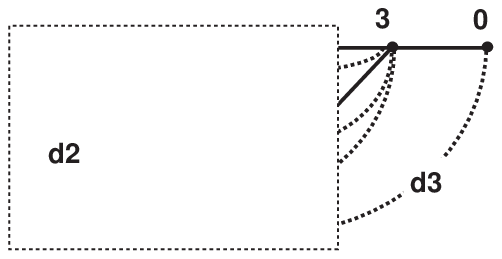}}
\caption{A decomposition of $\Gamma$}\label{fig:2Cde}
\end{figure}

There is a natural graph embedding $f':\widehat \Gamma'\to \Gamma$ by sending the extra edge to a path from $y$ to 0 via the $\nu'$-th branch of $y$ after suitable subdivision. Then the delete edge $d_0$ is sent to one of $d_i$'s. Also there is a natural graph embedding $f'':\widehat \Gamma''\to \Gamma$ by sending the extra edge to the path from 0 to $y$ in the maximal tree of $\Gamma$ after subdivision. Both $f'$ and $f"$ are order-preserving. It is easy to see that $f''$ induces a bijection between critical 1-cells of $UD_2\widehat\Gamma''$ and those of $UD_2\Gamma$ and it preserves the types of critical 1-cells: pivotal, free or separating. Thus the induced homomorphism $f_*'':H_1(B_2\widehat\Gamma'')\to H_1(B_2\Gamma)$ is injective.
Every critical 2-cell in $UD_2\Gamma$ is of the form $d\cup d'$.  If a critical 2-cell $d\cup d'$ is in neither $UD_2(f'(\Gamma'))$ nor $UD_2(f''(\Gamma''))$ then both deleted edges are not simultaneously in the same image under $f'$ or $f''$ and so $\widetilde\partial(d\cup d')=0$ by Lemma~\ref{lem:boundary-2}. Thus the induced homomorphisms $f_*':H_1(B_2\widehat\Gamma')\to H_1(B_2\Gamma)$ and $f_*'':H_1(B_2\widehat\Gamma'')\to H_1(B_2\Gamma)$ are injective. Moreover it is clear that $\mbox{im}(f_*')\cap\mbox{im}(f_*'')$ is isomorphic to $\mathbb Z$ generated by $f_*'([d_0])$.

We are done if we show  $\mbox{im}(f_*')+\mbox{im}(f_*'')=H_1(B_2\Gamma)$. Set $A=y$. There are the following two types of 1-cells in $UD_2\Gamma$ that are neither in $UD_2(f'(\Gamma'))$ nor in $UD_2(f''(\Gamma''))$: $d(\vec\delta_m)$ for $\tau(d)=A$ and $1\le m<\nu'\le g(A,\iota(d))\le\nu-1$ or $A_k(\vec\delta_m)$ for $1\le m<\nu'\le k\le\nu-1$.
Since $\Gamma'$ is a $\{x,y\}$-component, for each $m$-th branch of $A$ such that $1\le m<\nu'$ there is a deleted edge $d'$ separated by $A$ satisfying $\tau(d')>0$ and $g(A,\iota(d'))=m$ and so $d(\vec\delta_m)$ are pivotal and so it vanishes in $H_1(B_2\Gamma)$.

Since $\{x,y\}$ is a 2-cut, for each $k$-th branch of $A$ such that $\nu'\le k\le\nu-1$ there is a deleted edge $d_i$ such that $g(A,\iota(d_i))=k$ and $\tau(d_i)=0$. Since $g(\tau(d'),\iota(d_i))=g(\tau(d'),A)=g(\tau(d'),\iota(f'(d_0)))$ for the deleted edge $d'$ found above,
$$\widetilde\partial(d'\cup d_i-d'\cup f'(d_0))=\pm(\mbox{$\wedge$}(d',d_i)\pm\mbox{$\wedge$}(d',f'(d_0)))=\pm(A_k(\vec\delta_m)\pm A_{\nu'}(\vec\delta_m))$$
by Lemma~\ref{cor:boundary2}(3).
Thus $A_k(\vec\delta_m)$ and $A_{\nu'}(\vec\delta_m)$ are homologous up to signs and $A_{\nu'}(\vec\delta_m)$ is a critical 1-cell in $UD_2(f'(\Gamma'))$.
\end{proof}

Let $\Theta_m$ be the graph consisting two vertices and $m$ edges between them. For example, $\Theta_3$ is the letter shape of $\Theta$.

\begin{lem}{\rm [Decomposition of Biconnected Graph]}\label{lem:bidecomp}
Let $\{x,y\}$ be a 2-cut in a biconnected graph $\Gamma$, and $\Gamma_1,\ldots,\Gamma_m$ denote $\{x,y\}$-components. Then $$H_1(B_2\Gamma)\oplus\mathbb Z\cong \oplus_{i=1}^{m} H_1(B_2\Gamma_i)\oplus\mathbb Z^{(m-1)(m-2)/2}.$$
\end{lem}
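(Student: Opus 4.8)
The plan is to prove the statement by repeatedly applying Lemma~\ref{lem:decom}, which peels off one marked $\{x,y\}$-component at a time. Throughout I read $\Gamma_i$ as the \emph{marked} $\{x,y\}$-component (that is, $\Gamma_i$ together with an extra edge joining $x$ and $y$), which is the object Lemma~\ref{lem:decom} actually produces; this reading is the correct one, since for $\Gamma=\Theta_3$ the unmarked arcs have $H_1(B_2(\cdot))=0$ whereas the marked circles have $H_1(B_2S^1)\cong\mathbb Z$, and only the marked reading reproduces $H_1(B_2\Theta_3)\cong\mathbb Z^3$. The subtlety I want to respect is that $m$ does \emph{not} decrease under Lemma~\ref{lem:decom}: splitting off a component and then marking the complement creates a brand new arc, so the number of $\{x,y\}$-components stays equal to $m$. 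For this reason I would fix $m$ and induct instead on the number $t$ of $\{x,y\}$-components that are \emph{not} topological segments. Each step will trade one non-arc component for an arc, dropping $t$ by one, and the base case $t=0$ is exactly the graph $\Theta_m$ of $m$ parallel arcs.

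For the inductive step I would choose a non-arc component $\Gamma_1$ and apply Lemma~\ref{lem:decom} with $\Gamma'=\Gamma_1$ and $\Gamma''$ the complementary subgraph, obtaining $H_1(B_2\Gamma)\oplus\mathbb Z\cong H_1(B_2\widehat\Gamma_1)\oplus H_1(B_2\widehat\Gamma'')$. I would then check that $\widehat\Gamma''$ is again biconnected with $\{x,y\}$ a $2$-cut, and that its $\{x,y\}$-components are $\Gamma_2,\ldots,\Gamma_m$ together with the single arc born from the marking; hence $\widehat\Gamma''$ has $m$ components but only $t-1$ non-arc ones. Applying the induction hypothesis to $\widehat\Gamma''$, and using that the marked version of the new arc is a topological circle with $H_1(B_2S^1)\cong\mathbb Z$, gives $H_1(B_2\widehat\Gamma'')\oplus\mathbb Z\cong\bigl(\bigoplus_{i=2}^m H_1(B_2\widehat\Gamma_i)\bigr)\oplus\mathbb Z\oplus\mathbb Z^{(m-1)(m-2)/2}$. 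Substituting this into the first isomorphism and cancelling one free $\mathbb Z$ summand closes the induction.

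The hard part will be the base case $H_1(B_2\Theta_m)\cong\mathbb Z^{\binom m2}$, since here Lemma~\ref{lem:decom} is useless (marking $\Theta_{m-1}$ merely rebuilds $\Theta_m$) and a genuinely independent computation is needed. Because $\Theta_m$ is planar, Corollary~\ref{thm:torsionfree} tells me $H_1(B_2\Theta_m)$ is torsion-free, so only its rank must be determined. I would compute this either by Euler characteristic — observing that every cycle of $\Theta_m$ passes through both $x$ and $y$, so no two cycles are vertex-disjoint and hence $H_2(UD_2\Theta_m)=0$, which forces $\beta_1=1-\chi(UD_2\Theta_m)=\binom m2$ — or, staying inside the machinery of \S\ref{s:three}, by enumerating the critical $1$-cells and reading off the rank from Theorem~\ref{thm:matrix}. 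A final arithmetic check confirms the formula is consistent: the base case contributes $m$ circles ($\mathbb Z^m$) plus $\mathbb Z^{(m-1)(m-2)/2}$, and indeed $\binom m2-(m-1)=(m-1)(m-2)/2$.

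In summary, the two steps most in need of care are the persistence of the hypotheses of Lemma~\ref{lem:decom} along the induction — that each intermediate graph stays biconnected with $\{x,y\}$ a $2$-cut, and that the repeated marking is bookkept so that exactly one arc is created per step — and the rank computation for $\Theta_m$. I would also make the marked interpretation of $\Gamma_i$ explicit at the outset, since the claimed identity genuinely fails for the unmarked components.
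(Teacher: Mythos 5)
Your proposal is correct, and its skeleton is exactly the paper's proof: the paper also peels off the $\{x,y\}$-components by repeated application of Lemma~\ref{lem:decom} to the marked complementary graph, arriving at $H_1(B_2\Gamma)\oplus\mathbb Z^{m}\cong(\oplus_{i=1}^{m} H_1(B_2\Gamma_i))\oplus H_1(B_2\Theta_m)$ with the $\Gamma_i$ understood as marked components, and then cancels $\mathbb Z^{m-1}$ after computing $H_1(B_2\Theta_m)$. Your induction on the number of non-arc components is a more carefully bookkept version of this ``repeated application,'' and your insistence on the marked reading is a genuine and needed clarification: as you observe, the stated identity fails for unmarked components already for $\Theta_3$, and the marked components $\widehat\Gamma_i$ are precisely what the paper substitutes into this lemma when proving Theorem~\ref{thm:H1Bn}. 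The only real divergence is the base case. The paper computes $H_1(B_2\Theta_m)\cong\mathbb Z^{(m-1)(m-2)/2+(m-1)}$ by choosing a maximal tree via Lemma~\ref{lem:treeandorder} and counting critical cells --- $(m-1)(m-2)/2$ critical 1-cells of type $A_k(\vec\delta_\ell)$, $(m-1)$ of type $d$, and no critical 2-cells, so the Morse chain complex has vanishing second boundary map and $H_1$ is free on the critical 1-cells --- which is your second option. Your first option (Euler characteristic plus torsion-freeness from Corollary~\ref{thm:torsionfree}) also works but costs more than your sketch suggests: Lemma~\ref{lem:BF} is stated for the ordered space $D_2\Gamma$, so to conclude $\beta_2(UD_2\Theta_m)=0$ you need a transfer argument from $H_2(D_2\Theta_m)=0$ (do not quote the paper's relation $\beta_2(D_2\Gamma)=2\beta_2(UD_2\Gamma)$, which is derived from the results of \S\ref{s:three} and would be circular here; Barnett--Farber's theorem itself is external, so there is no circularity in principle), and you must still compute $\chi(UD_2\Theta_m)=1-m(m-1)/2$ by an explicit cell count, which is comparable in effort to the paper's enumeration of critical cells.
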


\begin{proof}
By repeated application of Lemma~\ref{lem:decom} on the marked complementary graph,  we have
$$H_1(B_2\Gamma)\oplus\mathbb Z^{m}\cong\oplus_{i=1}^{m} H_1(B_2\Gamma_i)\oplus H_1(B_2\Theta_m).$$
To compute $H_1(B_2\Theta_m),$ we choose a maximal tree and give an order according to Lemma~\ref{lem:treeandorder}. Then there are $(m-1)(m-2)/2$ critical 1-cells of the type $A_k(\vec\delta_m)$, $(m-1)$ critical 1-cells of the type $d$ and no critical 2-cells.
Thus $$H_1(B_2\Theta_m)\cong\mathbb Z^{(m-1)(m-2)/2+(m-1)}$$
and the formula follows.
\end{proof}

Note that $\Theta_m$ for $m\ge3$ only occurs as a marked complementary graph and it never appears in a marked decomposition of a simple biconnected graph by 2-cuts.
We can repeatedly apply Lemma~\ref{lem:bidecomp} to each marked 2-cut component unless it is topologically a circle and end up with the problem how to compute $H_1(B_2\Gamma)$ for a topologically triconnected graph $\Gamma$. Note that topologically triconnected components of a given biconnected graph are topologically simple since we assuming that graphs are suitably subdivided.

Given any triconnected graph $\Gamma$, there exists a sequence $\Gamma_1,\Gamma_2,\cdots,\Gamma_r$ of graphs such that $\Gamma_1=K_4$, $\Gamma_r=\Gamma$, and for $1\le i \le r-1$, $\Gamma_{i+1}$ is obtained from $\Gamma_i$ by either adding an edge or expanding at a vertex of valency $\ge 4$ as Figure~\ref{fig:Expand} (for example, see \cite{BM}). Note that an expansion at a vertex is a reverse of a contraction of an edge with end vertices of valency $\ge 3$. When we deal with a topologically triconnected graph, we first ignore vertices of valency 2 and find a sequence and then we subdivide each graph on the sequence if necessary.

\begin{figure}[ht]
\psfrag{0}{\small$x$}
\psfrag{1}{\small$x_1$}
\psfrag{2}{\small$x_2$}
\psfrag{d1}{$\Gamma_1$}
\psfrag{d2}{$\Gamma_2$}
\psfrag{d3}{$\Gamma_3$}
\centering
\includegraphics[height=2.5cm]{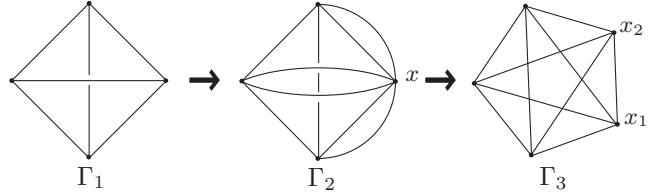}
\caption{A sequence for $K_5$}
\label{fig:Expand}
\end{figure}

\begin{lem}\label{lem:triconnect}{\rm [Topologically Simple Triconnected Graph]}
Let $\Gamma$ be a topologically simple and triconnected graph. Then all critical 1-cell of the type $A_k(\vec\delta_m)$ are homologous up to signs.
Furthermore $$H_1(B_2\Gamma)\cong\mathbb Z^{\beta_1(\Gamma)}\oplus K$$
where $K$ is $\mathbb Z$ if $\Gamma$ is planar or $\mathbb Z_2$ if $\Gamma$ is non-planar.
\end{lem}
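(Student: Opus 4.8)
The plan is to reduce to braid index two, split off the free part, and then analyze the undetermined block on the separating $1$-cells. Since a triconnected graph is in particular biconnected, Lemma~\ref{lem:biconnected} lets me replace $H_1(B_n\Gamma)$ by $H_1(B_2\Gamma)$, so I work throughout with $n=2$ and with a maximal tree and order as in Lemma~\ref{lem:treeandorder}. For $n=2$ every critical $1$-cell has size $\le 1$, hence is one of $d=d(\vec 0)$, $A_k(\vec\delta_m)$, or $d(\vec\delta_m)$ (a size-zero tree edge $A_k$ is order-respecting, so not critical). Using Lemma~\ref{cor:pivotal1cell} and $2$-connectivity I would first check that each $d(\vec\delta_m)$ is pivotal: $2$-connectivity forces a deleted edge $d'$ separated by $\tau(d)$ escaping the $m$-th branch, which is exactly the hypothesis of Lemma~\ref{cor:pivotal1cell}. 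By Lemma~\ref{cor:separating1cell} the separating $1$-cells are exactly the $A_k(\vec\delta_m)$ (triconnectivity guarantees each critical such cell really arises as a $\wedge(d,d')$, so none is free), and the deleted edges $d$ are then free. Since a connected graph has exactly $\beta_1(\Gamma)=E-V+1$ deleted edges and free $1$-cells split off a free summand, this produces the $\mathbb{Z}^{\beta_1(\Gamma)}$ factor; it remains to identify the cokernel $K$ of the undetermined block, whose rows are the relations $\widetilde\partial(d\cup d'-d\cup d'')=\pm(\wedge(d,d')\pm\wedge(d,d''))$ of Lemma~\ref{cor:boundary2}(3) and whose columns are the separating $1$-cells.

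For the homological claim I would show that the graph $G$ whose vertices are the separating $1$-cells and whose edges are these relations is connected; then a spanning tree of $G$ expresses every $A_k(\vec\delta_m)$ as $\pm$ one generator $g$, which is precisely the assertion that all $A_k(\vec\delta_m)$ are homologous up to sign. I would prove connectivity by induction along the construction sequence $K_4=\Gamma_1,\dots,\Gamma_r=\Gamma$ of Figure~\ref{fig:Expand}. The base case $\Gamma=K_4$ is a direct computation: three deleted edges, so $\beta_1=3$, all separating $1$-cells tied to one another, giving $H_1(B_2K_4)\cong\mathbb{Z}^3\oplus\mathbb{Z}$. For the inductive step, adding an edge creates one new deleted edge, hence a new free generator together with a family of new separating $1$-cells; each such new cell is $\wedge(d,d')$ with the new deleted edge as $d$ or $d'$, and triconnectivity of $\Gamma_{i+1}$ supplies a third deleted edge producing a relation of Lemma~\ref{cor:boundary2}(3) that ties the new cell to one already present, so $G$ stays connected. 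An expansion at a vertex leaves $\beta_1$ unchanged and merely redistributes branches, and the same bookkeeping keeps $G$ connected.

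Finally I would read off $K$ from the signs of the non-tree relations of $G$ using the planarity criterion built into Lemma~\ref{lem:treeandorderforplanar}. If $\Gamma$ is planar I choose a tree and order satisfying (T1)--(T4); then, by the remark following Lemma~\ref{lem:treeandorderforplanar}, the case $\varepsilon=+1$ of Lemma~\ref{lem:boundary-2} never occurs, so every relation has the opposite-sign form $\wedge(d,d')-\wedge(d,d'')$, the identifications $s=\pm g$ are globally consistent (the sign system on $G$ is orientable), and the cokernel is a single free summand $K\cong\mathbb{Z}$, in agreement with Theorem~\ref{thm:matrix}. If $\Gamma$ is non-planar then (T4) cannot be arranged, so at least one relation has the same-sign form $\wedge(d,d')+\wedge(d,d'')$; combined with connectivity this forces $2g=0$, and since by Corollary~\ref{thm:torsionfree} any torsion is a single $2$-torsion, the separating part collapses to $K\cong\mathbb{Z}_2$.

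The main obstacle is the inductive step: I must control precisely how the sets of pivotal, separating, and free $1$-cells, and the relations among them, transform under an edge addition or a vertex expansion—operations that force re-subdivision and a re-choice of maximal tree and order—and verify both that $G$ remains connected and that the orientability of its sign system flips exactly when planarity is lost. Establishing this transformation behaviour, rather than the bookkeeping of the free rank, is where the real work lies.
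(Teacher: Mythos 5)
Your overall strategy (classify $1$-cells for $n=2$, view the undetermined block as a signed graph $G$ on the separating $1$-cells, and induct along the construction sequence $K_4=\Gamma_1,\dots,\Gamma_r=\Gamma$) is the same as the paper's, but two steps in your plan have genuine gaps. First, the inductive step as you describe it cannot be run: the intermediate graphs $\Gamma_i$ in the construction sequence are in general \emph{not} topologically simple. For instance, when $\Gamma=K_5$ the graph $\Gamma_{r-1}$ is (topologically) $K_4$ with doubled edges, and for such graphs your inductive invariant is false -- the lemma itself fails for them. Concretely, if $X$ is a vertex joined to the base vertex by topological double edges, the cell $X_2(\vec\delta_1)$ is \emph{not} separating (Lemma~\ref{cor:separating1cell} requires a third deleted edge on the same branch pair, which does not exist), so it is a free generator and $H_1(B_2\Gamma_{r-1})$ is strictly larger than $\mathbb Z^{\beta_1}\oplus K$. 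So ``the same bookkeeping keeps $G$ connected'' has no meaning at those stages. The paper's proof is organized precisely around this obstruction: it arranges the last step to be an expansion, applies the induction hypothesis only to the simple graph $\Gamma'_{r-1}$ obtained by deleting one edge from each double pair, and then shows by explicit boundary computations that the non-separating cells $X_2(\vec\delta_1)$ and $Y_{p+1}(\vec\delta_p)$ of $\Gamma_{r-1}$ become separating, and homologous to the others, in $\Gamma$ -- using a fourth deleted edge $d$ whose existence is exactly what triconnectivity of $\Gamma$ guarantees (otherwise $\{X,Y\}$ would be a $2$-cut). That analysis, which you defer as ``bookkeeping,'' is the core of the proof.

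Second, your non-planar case is logically incomplete. Failure of (T4) produces one $2$-cell $d\cup d'$ with $\varepsilon=+1$ in Lemma~\ref{lem:boundary-2}, but rows of the undetermined block are differences $\widetilde\partial(d\cup d'-d\cup d'')$, and such a row has \emph{same-sign} entries only when the two $2$-cells carry opposite values of $\varepsilon$; so you have not yet produced a same-sign relation. Worse, even one same-sign relation does not force $2g=0$: relations of the form $s=\pm s'$ on a connected graph $G$ give torsion only when some \emph{cycle} of $G$ has an inconsistent sign product (an unbalanced cycle); a same-sign relation sitting on a spanning-tree edge of $G$ merely flips the identification of a generator and the cokernel stays $\mathbb Z$. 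Proving that an unbalanced cycle must occur for every (T1)--(T3) tree and order on a non-planar graph is a substantial combinatorial claim you have not addressed. The paper avoids it entirely: for non-planar $\Gamma$, Kuratowski's theorem gives a topologically embedded simple triconnected subgraph $K_5$ or $K_{3,3}$, the graph embedding induces a homomorphism on $H_1(B_2-)$ carrying the class $[A_k(\vec a)]$ of the subgraph to the corresponding class of $\Gamma$, and that class is computed explicitly to be a $2$-torsion in Examples~\ref{ex:K33n2} and~\ref{ex:H1B4K5}; hence $2[A_k(\vec a)]=0$ in $H_1(B_2\Gamma)$, which together with nontriviality gives $K\cong\mathbb Z_2$. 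Your planar case (connectivity of $G$ plus the all-opposite-sign property from Theorem~\ref{thm:matrix}) is fine once the inductive step is repaired, and matches the paper's use of Corollary~\ref{thm:torsionfree}.
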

\begin{proof}
We use induction on the number $s$ of vertices of valency $\ge 3$. To check for the smallest triconnected graph $K_4$, consider the maximal tree of $K_4$ and the order on vertices given in Figure~\ref{fig:K4PE}. Then it is easy to see that the lemma is true and in fact $H_1(B_2K_4)\cong\mathbb Z^4$.

Assume that for $s>4$, the lemma holds. Let $\Gamma$ be a triconnected graph with $s+1$ vertices of valency $\ge 3$. There is a sequence $K_4=\Gamma_1,\ldots,\Gamma_{r-1}, \Gamma_r=\Gamma$ of triconnected graphs described above. Since $\Gamma$ is topologically simple, we may assume that $\Gamma$ is obtained from $\Gamma_{r-1}$ by expanding at a vertex $x$. After ignoring vertices of valency 2, $\Gamma_{r-1}$ is a triconnected graph with $s$ vertices that may have double edges incident to $x$ and let
$\Gamma'_{r-1}$ be a simple triconnected graph obtained from $\Gamma_{r-1}$ by deleting one edge from each pair of double edges. Then there is an obvious graph embedding $\Gamma'_{r-1}\hookrightarrow\Gamma_{r-1}$. Let $x_0$ and $x_1$ be the expanded vertices of $x$ in $\Gamma$. Choose maximal trees $T$, $T_{r-1}$, and $T'_{r-1}$ of of $\Gamma$ and $\Gamma_{r-1}$ and $\Gamma'_{r-1}$ and orders on vertices according to Lemma~\ref{lem:treeandorder} so that $x$ is the base vertex for $\Gamma_{r-1}$ and $\Gamma'_{r-1}$ and $x_0$ is the base vertex 0 for $\Gamma$ as Figure~\ref{fig:Exp2}.
Then there are natural graph embeddings $T'_{r-1}\hookrightarrow T_{r-1}\hookrightarrow T$
that preserve the base vertices and orders.

Let $X$ be the second smallest vertex of valency$\ge3$ in $\Gamma_{r-1}$. If there are topologically double edges between $0$ and $X$ in $\Gamma_{r-1}$, then $X$ has valency 3 in $T_{r-1}$. Other vertices with topological double edges are situated in $\Gamma_{r-1}$ like $Y$ in Figure~\ref{fig:Exp2}. Vertices of the types $X$ or $Y$ behave in the same way in both $\Gamma_{r-1}$ and $\Gamma$. So there is a one-to-one corresponding between the set of all critical 1-cells of the form $A_k(\vec\delta_\ell)$ in $UD_2\Gamma_{r-1}$ and the set of those in $UD_2\Gamma$.

\begin{figure}[ht]
\psfrag{0}{\small0$(=x)$}
\psfrag{1}{\small0$(=x_0)$}
\psfrag{2}{\small$x_1$}
\psfrag{A}{\small$X$}
\psfrag{B}{\small$Y$}
\psfrag{d}{\small$d$}
\psfrag{d1}{\small$d_1$}
\psfrag{d2}{\small$d_2$}
\psfrag{d3}{\small$d_3$}
\centering
\subfigure[$\Gamma'_{r-1}$]{\includegraphics[height=2cm]{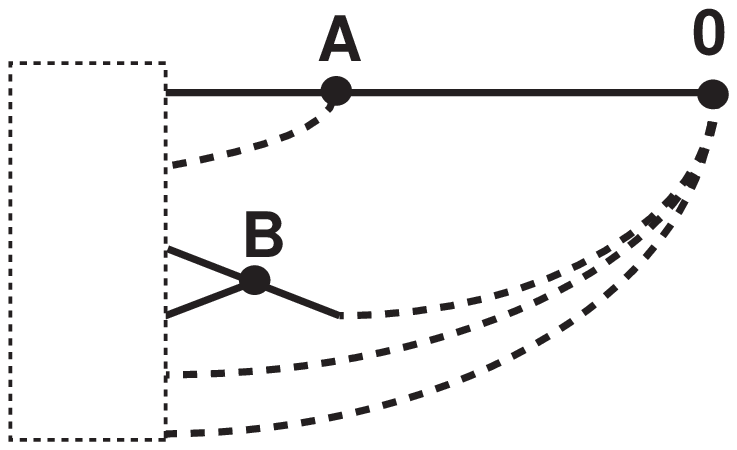}}\qquad
\subfigure[$\Gamma_{r-1}$]{\includegraphics[height=2cm]{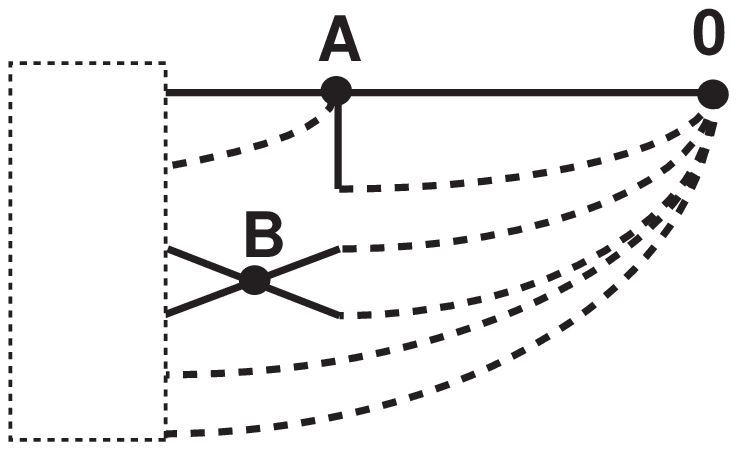}}\qquad
\subfigure[$\Gamma$]{\includegraphics[height=2cm]{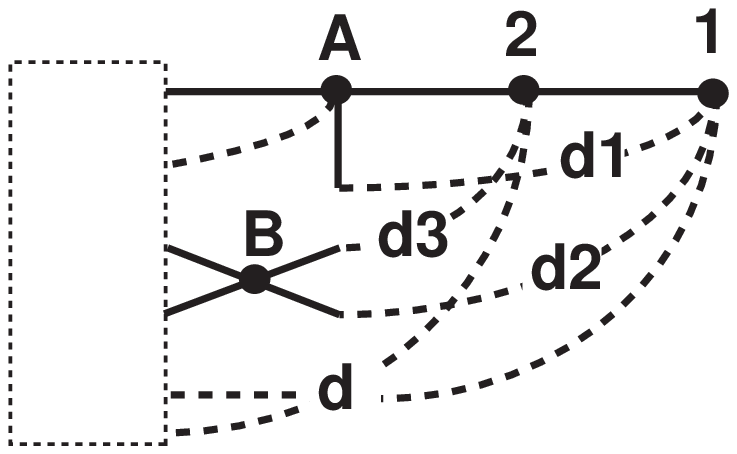}}
\caption{Expanding $\Gamma_{r-1}$ at $x$}\label{fig:Exp2}
\end{figure}

By the induction hypothesis, all critical 1-cells of the form $A_k(\vec\delta_\ell)$ in $UD_2\Gamma'_{r-1}$ are separating and homologous up to signs. We first find out which critical 1-cells of the form $A_k(\vec\delta_\ell)$ in $UD_2\Gamma_{r-1}$ is not separating. It is enough to check for the vertices of the type either $X$ or $Y$ since $A_k(\vec\delta_\ell)$ can be regarded as a critical 1-cell in $UD_2\Gamma'_{r-1}$ for all other vertices $A$ and $UD_2\Gamma_{r-1}$ has more critical 2-cells than $UD_2\Gamma'_{r-1}$. For $X$, there is only one critical 1-cell $X_2(\vec\delta_1)$ and it is not separating by Lemma~\ref{cor:separating1cell}. Suppose the $p$-th and the $(p+1)$-st branches of $Y$ are topological double edges from $Y$ to 0. Then $Y_{p+1}(\vec\delta_p)$ is not separating either by Lemma~\ref{cor:separating1cell}. Unless $k=p+1$ and $\ell=q$, $Y_k(\vec\delta_\ell)$ is separating because one of the $p$-th and the $(p+1)$-st branches lies on $\Gamma'_{r-1}$ and so $Y_k(\vec\delta_\ell)$ is homologous up to signs to other separating 1-cells by the induction hypothesis.

Finally we show that as critical 1-cells of $UD_2\Gamma$, $X_2(\vec\delta_1)$ and $Y_p(\vec\delta_q)$ are separating and homologous up to signs to other separating 1-cells. Let $d_1$ ($d_2$, respectively) be a deleted edge lying on the topological edge between 0 and $X$ ($Y$, respectively) in $\Gamma$, $d_3$ be a deleted edge lying on the topological edge between $x_1$ and $Y$ in $\Gamma$. Then $g(Y,\iota(d_3))$ and $g(Y,\iota(d_2))$ corresponds to $p$ and $p+1$. Since $\Gamma$ is topologically triconnected, there is a deleted edge $d$ other than $d_1$, $d_2$ and $d_3$ such that $\tau(d)$ is either 0 or $x_1$. Otherwise, $\{X,Y\}$ would be a 2-cut in $\Gamma$. In fact, Figure~\ref{fig:Exp2} shows examples of $d_1$, $d_2$, $d_3$, and $d$.
Consider the following boundary images on the Morse chain complex of $UD_2\Gamma$:\\
For $\tau(d)=0$,
\begin{align*}
\widetilde\partial(d_3\cup d_1-d_3\cup d)=&X_2(\vec\delta_1)\pm\mbox{$\wedge$}(d_3,d) \\
\widetilde\partial(d_3\cup d_2-d_3\cup d)=&-Y_p(\vec\delta_q)\pm\mbox{$\wedge$}(d_3,d).
\end{align*}
For $\tau(d)=x_2$,
\begin{align*}
\widetilde\partial(d_3\cup d_1-d_3\cup d_2)=&X_2(\vec\delta_1)+Y_p(\vec\delta_q)\\
\widetilde\partial(d\cup d_1-d\cup d_2)=&X_2(\vec\delta_1)\pm\mbox{$\wedge$}(d,d_2).
\end{align*}
So $$[X_2(\vec\delta_1)]=-[Y_p(\vec\delta_q)]=\pm[\mbox{$\wedge$}(d_3,d)]\mbox{(or }\pm[\mbox{$\wedge$}(d,d_2)]).$$
Thus all critical 1-cells of the form $A_k(\vec a)$ in $UD_2\Gamma$ are separating and homologous up to signs.

Now we consider $H_1(B_2\Gamma)$. Since we know $A_k(\vec a)$ are separating,  free 1-cells are of the form $d$ for some deleted edge $d$ by Lemma~\ref{cor:pivotal1cell}. The number of deleted edges is equal to $\beta_1(\Gamma)$. So $$H_1(B_2\Gamma)\cong\mathbb Z^{\beta_1(\Gamma)}\oplus\langle [A_k(\vec a)]\rangle.$$
It is easy to see that $[A_k(\vec a)]$ is not trivial in $H_1(B_2\Gamma)$. If $\Gamma$ is planar then $[A_k(\vec a)]$ is torsion free by Corollary~\ref{thm:torsionfree}. It is easy to see that if a topologically simple triconnected graph $\Gamma$ is embedded in a topologically simple triconnected graph $\widetilde\Gamma$ as graphs then the embedding induces a homomorphism $:H_1(B_2\Gamma)\to H_1(B_2\widetilde\Gamma)$ which corresponds the homology class $[A_k(\vec a)]$ to the same kind of homology classes. By Example~\ref{ex:H1B4K5} $[A_k(\vec a)]$ in $H_1(B_2K_5)$ is a 2-torsion. It is easy to check that $[A_k(\vec a)]$ in $H_1(B_2K_{3,3})$ is a 2-torsion from Example~\ref{ex:K33n2}. So if $\Gamma$ is a non-planar graph then $[A_k(\vec a)]$ in $H_1(B_2\Gamma)$ generates the summand $\mathbb Z_2$.
\end{proof}

By combining lemmas in this section, we can give a formula for $H_1(B_n\Gamma)$ for a finite connected graph $\Gamma$ and any braid indices using the connectivity of graphs. Recall
$$N(n,\Gamma,x)=\left(\begin{array}{c} n+\mu(x)-2 \\ n-1 \end{array}\right) \times(\nu(x)-2) -\left(\begin{array}{c} n+\mu(x)-2 \\ n \end{array}\right)-(\nu(x)-\mu(x)-1)$$ where $\mu(x)$ is the number of $x$-components of $\Gamma$ and $\nu(x)$ is the valency of $x$ in $\Gamma$. Note that if $\nu(x)=2$ (and so $\mu(x)=2$), then $N(n,\Gamma,x)=0$.
Let $V_1(\Gamma)$ denote a set of 1-cuts that decomposes $\Gamma$ into biconnected components and copies of topological line segments. Define $$N_1(n,\Gamma)=\sum_{x\in V_1(\Gamma)} N(n,\Gamma,x).$$

For a biconnected graph $\Gamma$, let $V_{2}$ denote a set of 2-cuts whose marked decomposition decomposes $\Gamma$ into triconnected components and copies of topological circles.
Define $$N_2(\Gamma)=\sum_{\{x,y\}\in V_{2}}\frac{(\mu(\{x,y\})-1)(\mu(\{x,y\})-2)}{2}$$
where $\mu(\{x,y\})$ denotes the number of $\{x,y\}$-components in $\Gamma$.
Note that for $C\in V_2$, $\mu(C)$ in $\Gamma$ is equal to that in any marked $D$-component for $D\in V_2$.
And note that if one of $x$ and $y$ has valency 2 for a 2-cut $\{x,y\}\in V_{2}$, then $\mu(\{x,y\})=2$.

For a connected graph $\Gamma$,
define $N_2(\Gamma)=\sum_{i=1}^k N_2(\Gamma_i)$
where $\Gamma_1,\ldots,\Gamma_k$ are biconnected components of $\Gamma$.

For a connected graph $\Gamma$, let $N_3(\Gamma)$ ($N'_3(\Gamma)$, respectively) be the number of triconnected components of $\Gamma$ that are planar (non-planar, respectively).

\begin{thm}\label{thm:H1Bn}
For a finite connected graph $\Gamma$,
$$H_1(B_n\Gamma)=\mathbb Z^{N_1(n,\Gamma)+N_2(\Gamma)+N_3(\Gamma)+\beta_1(\Gamma)}\oplus\mathbb Z_2^{N'_3(\Gamma)}$$
\end{thm}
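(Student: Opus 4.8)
The plan is to assemble the theorem from the decomposition lemmas of this section, following the canonical two-stage splitting of a graph: first along $1$-cuts into biconnected components, then along $2$-cuts into triconnected components and topological circles. The entire computation reduces to establishing, for a biconnected graph $B$, the local formula
\[
H_1(B_2B)\cong\mathbb Z^{N_2(B)+N_3(B)+\beta_1(B)}\oplus\mathbb Z_2^{N'_3(B)},
\]
which I will call $(\star)$. Granting $(\star)$, the theorem follows by summing over biconnected components, since $\sum_B\beta_1(B)=\beta_1(\Gamma)$ and $N_2,N_3,N'_3$ are defined as exactly these sums, together with the separate $1$-cut contribution.

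First I would handle the $1$-cuts. Iterating Lemma~\ref{lem:cutvertex} over the cuts in $V_1(\Gamma)$ peels off the biconnected components one factor at a time, leaving topological line segments whose configuration spaces are contractible and hence contribute nothing. To see that the accumulated free ranks add to exactly $N_1(n,\Gamma)=\sum_{x\in V_1(\Gamma)}N(n,\Gamma,x)$, I would check that when a cut $y\ne x$ lies inside an $x$-component $\Gamma_{x,i}$, both its valency $\nu(y)$ and its number of components $\mu(y)$ are the same whether computed in $\Gamma_{x,i}$ or in $\Gamma$ (removing the far side of $x$ merely collapses into the single component containing $x$), so $N(n,\Gamma_{x,i},y)=N(n,\Gamma,y)$ and the terms telescope. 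This yields $H_1(B_n\Gamma)\cong\bigl(\bigoplus_B H_1(B_nB)\bigr)\oplus\mathbb Z^{N_1(n,\Gamma)}$, and Lemma~\ref{lem:biconnected} replaces each $H_1(B_nB)$ by $H_1(B_2B)$, reducing everything to braid index $2$ and to $(\star)$.

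Then I would prove $(\star)$ by induction on the number of $2$-cuts in the marked decomposition of $B$. In the base cases $B$ is a topological circle, where $H_1(B_2S^1)\cong\mathbb Z$ matches $\beta_1=1$ with $N_2=N_3=N'_3=0$, or $B$ is topologically simple and triconnected, where Lemma~\ref{lem:triconnect} gives $\mathbb Z^{\beta_1(B)}\oplus K$ with $K=\mathbb Z$ (so $N_3=1$) when planar and $K=\mathbb Z_2$ (so $N'_3=1$) when non-planar. For the inductive step, splitting $B$ at a single $2$-cut $\{x,y\}$ into $m$ marked components and applying Lemma~\ref{lem:bidecomp} gives $H_1(B_2B)\oplus\mathbb Z\cong\bigoplus_i H_1(B_2\widehat\Gamma_i)\oplus\mathbb Z^{(m-1)(m-2)/2}$. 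The pivotal bookkeeping identity is that marking each component with an extra edge between $x$ and $y$ raises the total first Betti number by exactly one, $\sum_i\beta_1(\widehat\Gamma_i)=\beta_1(B)+1$, which cancels the spurious $\mathbb Z$ on the left. Combined with the additivities $\sum_i N_2(\widehat\Gamma_i)+(m-1)(m-2)/2=N_2(B)$, $\sum_i N_3(\widehat\Gamma_i)=N_3(B)$, and $\sum_i N'_3(\widehat\Gamma_i)=N'_3(B)$, which follow from the uniqueness of the $3$-block decomposition and the fact (noted before the theorem) that $\mu(C)$ is unchanged under passing to a marked component, the inductive hypothesis applied to each $\widehat\Gamma_i$ delivers $(\star)$ for $B$.

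The main obstacle is precisely this last reconciliation of error terms. Each application of Lemma~\ref{lem:bidecomp} introduces a free summand $\mathbb Z$ on the left and a block $\mathbb Z^{(m-1)(m-2)/2}$ on the right, while the marking edges simultaneously inflate the Betti numbers of the pieces; one must verify that these three effects interlock so that, after telescoping over the whole decomposition tree, the surviving free rank is exactly $\beta_1(B)+N_2(B)+N_3(B)$ and no circle or marking contribution is double-counted or lost. To perform the cancellation of free $\mathbb Z$-summands legitimately I would invoke Corollary~\ref{thm:torsionfree}, which guarantees that every torsion element is $2$-torsion and is carried entirely by the triconnected pieces; this makes the free and $\mathbb Z_2$ parts split off unambiguously, so that cancellation of $\mathbb Z$ factors on both sides of the isomorphisms is valid for these finitely generated abelian groups.
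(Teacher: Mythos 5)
Your proposal is correct and follows essentially the same route as the paper: reduce via Lemma~\ref{lem:cutvertex} and Lemma~\ref{lem:biconnected} to biconnected pieces at braid index $2$, then decompose along $2$-cuts via Lemma~\ref{lem:bidecomp} with base cases handled by Lemma~\ref{lem:triconnect}, using the identity $\sum_i\beta_1(\widehat\Gamma_i)=\beta_1(B)+|V_2|$ to cancel the spurious free summands. Your induction on the number of $2$-cuts is just an organized version of the paper's all-at-once application of Lemma~\ref{lem:bidecomp}, and your appeal to Corollary~\ref{thm:torsionfree} for cancelling $\mathbb Z$-factors is harmless but unnecessary, since cancellation already holds for all finitely generated abelian groups.
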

\begin{proof}
By Lemmas~\ref{lem:cutvertex} and \ref{lem:biconnected} we have $$H_1(B_n\Gamma)= (\oplus_{i} H_1(B_2\Gamma_i))\oplus \mathbb Z^{N_1(n,\Gamma)}$$
where $\Gamma_i$'s are biconnected components of $\Gamma$.
Since $N_2(\Gamma)$, $N_3(\Gamma)$, $N'_3(\Gamma)$ and $\beta_1(\Gamma)$ are equal to the sum of those for $\Gamma_i$, it is sufficient to show that for a biconnected graph $\Gamma$, $$H_1(B_2\Gamma)=\mathbb Z^{N_2(\Gamma)+N_3(\Gamma)+\beta_1(\Gamma)}\oplus\mathbb Z_2^{N'_3(\Gamma)}.$$

Let $V_2$ be a set of 2-cuts in $\Gamma$ such that the marked decomposition along $V_2$ decompose a biconnected graph $\Gamma$ into triconnected components and copies of topological circles.
Let $\{\widehat\Gamma_i\}$ be the set of marked components obtained from $\Gamma$ by cutting along $V_2$. By Lemma~\ref{lem:bidecomp},
$$H_1(B_2\Gamma)\oplus\mathbb Z^{|V_2|}=(\oplus H_1(B_2\widehat\Gamma_i))\oplus\mathbb Z^{N_2(\Gamma)}.$$
By Lemma~\ref{lem:triconnect},
$H_1(B_2\widehat\Gamma_i)\cong\mathbb Z^{\beta_1(\widehat\Gamma_i)}\oplus\mathbb Z$,  $\mathbb Z^{\beta_1(\widehat\Gamma_i)}\oplus\mathbb Z_2$, or $\mathbb Z$ if $\widehat\Gamma_i$ is a planar triconnected graph, a nonplanar triconnected graph, or a topological circle, respectively.
Thus $\oplus H_1(B_2\widehat\Gamma_i)\cong\mathbb Z^{N_3(\Gamma)+\sum\beta_1(\widehat\Gamma_i)}\oplus\mathbb Z_2^{N'_3(\Gamma)}$. Since we are dealing with marked components, $\sum\beta_1(\widehat\Gamma_i)=\beta_1(\Gamma)+|V_2|$.
Thus $H_1(B_2\Gamma)\cong\mathbb Z^{N_2(\Gamma)+N_3(\Gamma)+\beta_1(\Gamma)}\oplus\mathbb Z_2^{N'_3(\Gamma)}$.
\end{proof}

It seems difficult to compute higher homology groups of $B_n\Gamma$ in general.
However $UD_2\Gamma$ is a 2-dimensional complex and so $H_2(B_2\Gamma)$ is torsion-free. And the second Betti number of $B_2\Gamma$ is given as follows:

\begin{cor}
For a finite connected graph $\Gamma$,
\begin{align*}
\beta_2(B_2\Gamma)=&\:N_1(n,\Gamma)+N_2(\Gamma)+N_3(\Gamma)-\frac{1}{2}\sum_{x\in V(\Gamma)}(\nu(x)-1)(\nu(x)-2)\\
&+\frac{1}{2}\beta_1(\Gamma)(\beta_1(\Gamma)-1)+2.
\end{align*}
\end{cor}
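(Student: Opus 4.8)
The plan is to obtain $\beta_2(B_2\Gamma)$ from the Euler characteristic of the unordered discrete configuration space $UD_2\Gamma$. Since $UD_2\Gamma$ is a two-dimensional cube complex which is an Eilenberg--MacLane space for $B_2\Gamma$, we have $H_i(B_2\Gamma)\cong H_i(UD_2\Gamma)$ for all $i$, the group $H_2(B_2\Gamma)$ is free (being the top homology of a two-dimensional complex), and $H_i=0$ for $i\ge 3$. Assuming $\Gamma$ is connected and not a topological segment, $UD_2\Gamma$ is connected, so $\beta_0=1$ and
\[
\chi(UD_2\Gamma)=1-\beta_1(B_2\Gamma)+\beta_2(B_2\Gamma).
\]
Hence $\beta_2(B_2\Gamma)=\chi(UD_2\Gamma)-1+\beta_1(B_2\Gamma)$, and since $\beta_1(B_2\Gamma)$ is exactly the rank of $H_1(B_2\Gamma)$ supplied by Theorem~\ref{thm:H1Bn}, namely $N_1(2,\Gamma)+N_2(\Gamma)+N_3(\Gamma)+\beta_1(\Gamma)$, the entire problem reduces to evaluating $\chi(UD_2\Gamma)$.

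First I would compute $\chi(UD_2\Gamma)$ directly from the cubical structure rather than from the Morse complex, which is legitimate because the Euler characteristic is a (chain-)homotopy invariant and the two complexes compute the same homology. Writing $v=|V(\Gamma)|$ and $e=|E(\Gamma)|$, the $0$-cells of $UD_2\Gamma$ are unordered pairs of distinct vertices, the $1$-cells are pairs consisting of an edge together with a vertex disjoint from it, and the $2$-cells are pairs of vertex-disjoint edges. Counting these, and using that a suitably subdivided graph is simple so that two distinct edges meet in at most one vertex, gives
\[
\chi(UD_2\Gamma)=\binom{v}{2}-e(v-2)+\Big(\binom{e}{2}-\sum_{x\in V(\Gamma)}\binom{\nu(x)}{2}\Big).
\]

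Next I would rewrite this purely in terms of the quantities appearing in the statement. Using $\beta_1(\Gamma)=e-v+1$ together with the handshake identity $\sum_x\nu(x)=2e$, the right-hand side collapses to an expression involving only $\beta_1(\Gamma)$ and $\sum_x(\nu(x)-1)(\nu(x)-2)$; the latter combination is the natural one, since vertices of valency $2$ contribute nothing to it, which makes the count manifestly independent of the chosen subdivision (as it must be, because $\chi(UD_2\Gamma)=\chi(UC_2\Gamma)$ once $\Gamma$ is suitably subdivided). Substituting this simplified value of $\chi(UD_2\Gamma)$, together with the rank of $H_1(B_2\Gamma)$ from Theorem~\ref{thm:H1Bn}, into $\beta_2(B_2\Gamma)=\chi(UD_2\Gamma)-1+\beta_1(B_2\Gamma)$ produces the asserted formula: the terms $N_1$, $N_2$, $N_3$ pass through unchanged from the rank, while the contributions of $\beta_1(\Gamma)$, the degree sum, and the additive constant assemble into the remaining summands.

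The main obstacle is essentially the bookkeeping of this last step: carrying out the algebraic simplification so that the raw cell counts reassemble into precisely $\tfrac12\beta_1(\Gamma)(\beta_1(\Gamma)-1)-\tfrac12\sum_x(\nu(x)-1)(\nu(x)-2)$ plus the stated constant, and simultaneously verifying the two structural inputs on which this rests, namely that $UD_2\Gamma$ is connected (so that $\beta_0=1$) and that the discrete and continuous configuration spaces share the same Euler characteristic under suitable subdivision. Beyond these points, the corollary is a formal consequence of Theorem~\ref{thm:H1Bn} and the two-dimensionality of $UD_2\Gamma$.
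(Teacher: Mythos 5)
Your strategy coincides with the paper's: both are Euler-characteristic arguments that combine (i) $H_i(B_2\Gamma)\cong H_i(UD_2\Gamma)$ with $H_2$ free and $H_i=0$ for $i\ge 3$, (ii) the rank of $H_1(B_2\Gamma)$ from Theorem~\ref{thm:H1Bn}, and (iii) a computation of $\chi(UD_2\Gamma)$. The only genuine difference is in (iii): the paper counts critical cells of the Morse complex for a maximal tree whose deleted edges have both endpoints of valency $2$, whereas you count the cubical cells of $UD_2\Gamma$ directly. Your counts $v(v-1)/2$, $e(v-2)$ and $e(e-1)/2-\sum_x\nu(x)(\nu(x)-1)/2$ are correct for a suitably subdivided (hence simple) graph, and via $\beta_1(\Gamma)=e-v+1$ and $\sum_x\nu(x)=2e$ they simplify to
\[
\chi(UD_2\Gamma)=1-\beta_1(\Gamma)+\tfrac12\beta_1(\Gamma)(\beta_1(\Gamma)-1)-\tfrac12\sum_{x\in V(\Gamma)}(\nu(x)-1)(\nu(x)-2),
\]
which is exactly the value the paper's critical-cell count produces; your variant even sidesteps the paper's unproved assertion about the precise numbers of critical cells for that special tree.

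The problem is the step you explicitly deferred as ``bookkeeping.'' Substituting this $\chi$ and $\beta_1(B_2\Gamma)=N_1(2,\Gamma)+N_2(\Gamma)+N_3(\Gamma)+\beta_1(\Gamma)$ into $\beta_2(B_2\Gamma)=\chi(UD_2\Gamma)-1+\beta_1(B_2\Gamma)$ yields
\[
\beta_2(B_2\Gamma)=N_1(2,\Gamma)+N_2(\Gamma)+N_3(\Gamma)-\tfrac12\sum_{x\in V(\Gamma)}(\nu(x)-1)(\nu(x)-2)+\tfrac12\beta_1(\Gamma)(\beta_1(\Gamma)-1),
\]
with no ``$+2$''. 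So your computation does not ``produce the asserted formula''; it produces the formula with the final constant deleted. This is not a defect of your method but of the printed statement: the paper's own proof stops at the same constant-free expression (the ``$+2$'' never appears in its derivation), and examples confirm the constant-free version. For $\Gamma=\Theta_3$ one has $N_1=N_3=N_3'=0$, $N_2=1$, $\beta_1(\Gamma)=2$ and $\sum_x(\nu(x)-1)(\nu(x)-2)=4$, so the constant-free formula gives $\beta_2(B_2\Theta_3)=0$, which is correct (by Lemma~\ref{lem:BF} and the discussion following it, the second homology is generated by products of disjoint region boundaries, and in $\Theta_3$ no two of the three region boundaries are disjoint), while the printed formula gives $2$. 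Had you carried out the final algebra instead of asserting its outcome, you would have hit this discrepancy; a complete write-up must either derive the constant-free formula or flag that the statement's constant (and its ``$N_1(n,\Gamma)$,'' which should read ``$N_1(2,\Gamma)$'') is erroneous.
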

\begin{proof}
We choose a maximal tree such that two end vertices of every deleted edge have with valency 2. Then the number of critical 2-cells is equal to $\frac{1}{2}\beta_1(\Gamma)(\beta_1(\Gamma)-1)$ and the number of critical 1-cells is $\frac{1}{2}\sum_{x\in V(\Gamma)}(\nu(x)-1)(\nu(x)-2)+\beta_1(\Gamma)$. Using Euler characteristic of the Morse chain complex, we have
\begin{align*}
&\:1-\beta_1(B_2\Gamma)+\beta_2(B_2\Gamma)\\
=&\:1-\frac{1}{2}\sum_{x\in V(\Gamma)}(\nu(x)-1)(\nu(x)-2)-\beta_1(\Gamma)+\frac{1}{2}\beta_1(\Gamma)(\beta_1(\Gamma)-1)
\end{align*}
We use $\beta_1(B_2\Gamma)=N_1(n,\Gamma)+N_2(\Gamma)+N_3(\Gamma)+\beta_1(\Gamma)$ to complete the proof.
\end{proof}

\subsection{The homologies of pure graph 2-braid groups}\label{ss33:H1PB2}
In \S\ref{ss22:PBn}, we describe a Morse chain complex $M_n\Gamma$ of $D_n\Gamma$. The technology developed for $UD_n\Gamma$ in this article is not enough to compute $H_1(P_n\Gamma)$. For example, the boundary image of $(A_k(\vec a)\cup B_\ell(\vec b))_\sigma$ never vanishes in $M_n\Gamma$ for $n\ge4$. However for braid index 2 the second boundary map behaves in the way similar to unordered cases. This is because there are only one type critical 2-cells $(d\cup d')_\sigma$.

In general, the image of $c_\sigma$ under $\widetilde R$ or $\widetilde\partial$ is obtained by right multiplication by $\sigma$ on the permutation subscript of each term in the image of $c_{\rm id}$. For example, if $\widetilde R(c_{\rm id})=\sum_i (c_i)_{\tau_i}$ then $\widetilde R(c_{\sigma})=\sum_i (c_i)_{\tau_i\sigma}$. Thus we only consider $c_{\rm id}$. We will discuss 2-braid groups in this section and $\rho$ denotes the nontrivial permutation in $S_2$.

We have the following lemma for $D_2\Gamma$ that is similar to Lemma~\ref{lem:naturalmove} for $UD_n\Gamma$ but it is hard to have a lemma corresponding to Lemma~\ref{lem:naturalmove-2}.

\begin{lem}\label{lem:naturalmoveD2}{\rm [Special Reduction]}
Suppose a redundant 1-cell $c_{\rm id}$ in $D_2\Gamma$ has a simple unblocked vertex. Then $\widetilde R(c_{\rm id})=\widetilde R(V(c)_{\rm id})$.
\end{lem}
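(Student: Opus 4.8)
The plan is to mimic the proof of Lemma~\ref{lem:naturalmove} but work in the ordered setting $D_2\Gamma$, where the key simplification is that a $1$-cell has only two entries, so a ``simple unblocked vertex'' is just the single vertex entry that happens to be unblocked. First I would set up notation: write the redundant $1$-cell as $c_{\rm id}=(p,v)_{\rm id}$ (or $(v,p)_{\rm id}$ depending on the order), where $p$ is the unique edge and $v$ is the unblocked vertex; let $e$ be the edge of $T$ starting at $v$, so $V(c)$ replaces $v$ by $\tau(e)$. The hypothesis that $v$ is a \emph{simple} unblocked vertex is meant to guarantee that there is no vertex strictly between $\tau(e)$ and $\iota(e)=v$ that could obstruct the reduction; this is exactly the cleanliness condition (no interfering vertex $w$ with $\tau(e)<w<\iota(e)$) needed to invoke the unordered special-reduction argument. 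I would make precise what ``simple'' excludes, namely that the only other entry $p$ is an edge, and the situation where an end of $p$ lies strictly between $\tau(e)$ and $\iota(e)$ is ruled out (or handled) by the simplicity assumption.

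The core computation is to show $\widetilde R(c_{\rm id})=\widetilde R(V_e(c)_{\rm id})$, after which iterating the map $V$ and using the stabilization $\widetilde V=V^m$ gives the stated conclusion $\widetilde R(c_{\rm id})=\widetilde R(V(c)_{\rm id})$. The mechanism is the standard one from discrete Morse theory: since $v$ is unblocked and there is no obstructing vertex between $\tau(e)$ and $\iota(e)$, the $1$-cell $c'$ obtained by inserting the edge $e$ (i.e.\ forming the $2$-cell with entries $e$ and $p$) is collapsible, and $W(c')=$ the relevant $2$-cell, so that applying $\widetilde R$ to $\partial$ of that collapsible $2$-cell yields a relation equating $\widetilde R(c_{\rm id})$ and $\widetilde R(V_e(c)_{\rm id})$ with the intermediate redundant/collapsible terms cancelling. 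Concretely I would write out $\partial$ of the collapsing $2$-cell, apply $\widetilde R$, and observe that the only surviving critical contributions are the two desired ends, since the permutation subscript is merely carried along by right-multiplication (as the excerpt notes, $c_\sigma$ reductions follow from $c_{\rm id}$ reductions by right multiplication by $\sigma$), so working with the identity permutation suffices.

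The main obstacle, and the reason the lemma is stated with the restrictive hypothesis of a \emph{simple} unblocked vertex rather than the full strength of Lemma~\ref{lem:naturalmove-2}, is precisely the case where an endpoint of the edge $p$ falls strictly between $\tau(e)$ and $\iota(e)$. In the unordered proof of Lemma~\ref{lem:naturalmove-2}, one handles this by sliding $p$ within the subtree $T_p$ and showing $\widetilde R(c_{\iota(p)})=\widetilde R(c_{\tau(p)})$, using that opposite sides of a square have matching reductions. In the ordered setting this fails in general (the excerpt explicitly exhibits a counterexample in $D_3\Gamma$ where $\widetilde R$ and $\widetilde V$ disagree), because the permutation bookkeeping is not symmetric under swapping the roles of the two strands. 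So the honest strategy is \emph{not} to try to recover the analogue of Lemma~\ref{lem:naturalmove-2}; instead, the simplicity hypothesis is designed so that the troublesome ``edge-straddling'' configuration is excluded or trivial for $n=2$, reducing the proof to the clean case governed by the $n=2$ analogue of Lemma~\ref{lem:naturalmove}. I would therefore verify carefully that under the stated hypothesis the obstructing configuration cannot arise, and then the argument closes by the elementary collapsing relation above. I expect the delicate point to be articulating exactly why simplicity of the unblocked vertex, together with $n=2$ and the standing assumption (from the maximal-tree conventions of Lemma~\ref{lem:treeandorder}) that edges are never mutually separated, forces the reduction to proceed one $V$-step at a time without the higher-index pathology.
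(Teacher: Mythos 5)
Your overall skeleton (insert the tree edge $e_v$ at the unblocked vertex $v$, pass to the collapsible $2$-cell $W(c)$, and compare its four faces) is the same starting point as the paper's proof, but your proposal has a genuine gap at exactly the step that carries all of the content. Writing $e$ for the edge of $c$ and $e_v$ for the tree edge at $v$, the one-step reduction reads
$$R(c_{\rm id})=V(c)_{\rm id}+\{e_v,\iota(e)\}_{\rho^m}-\{e_v,\tau(e)\}_{\rm id},\qquad m=1 \mbox{ iff } \tau(e)<v<\iota(e),$$
so the lemma amounts to showing that the two middle terms have the \emph{same} image under $\widetilde R$. Those middle cells are in general redundant, their reductions are nonzero critical chains, and they cancel only as a pair; nothing about this is automatic. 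You assert that ``the intermediate redundant/collapsible terms cancel'' and that ``the only surviving critical contributions are the two desired ends,'' but you give no argument for this, and the remark you cite (that $\widetilde R(c_\sigma)$ is obtained from $\widetilde R(c_{\rm id})$ by right multiplication by $\sigma$) is irrelevant to it. The paper proves the cancellation by induction on the exponent $i$ with $R^i(c_{\rm id})=R^{i+1}(c_{\rm id})$: the middle cell stabilizes in fewer steps, so the induction hypothesis gives $\widetilde R(\{e_v,\iota(e)\}_{\rho^m})=\widetilde R\bigl(V(\{e_v,\iota(e)\})_{\rho^{2m}}\bigr)=\widetilde R(\{e_v,\tau(e)\}_{\rm id})$, where the last identification is precisely the $S_2$ parity fact $\rho^{2m}={\rm id}$. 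This parity observation is the entire reason the statement holds for $n=2$ while its analogue fails for $n\ge 3$; nothing playing its role appears in your proposal.

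Second, your reading of the hypothesis is off in a way that would defeat your plan even if the cancellation were granted. The ``simple'' hypothesis plays the role of the hypothesis of Lemma~\ref{lem:naturalmove}: no end of $e$ lies strictly between $\tau(e_v)$ and $v$, which guarantees that $V(c)$ keeps the subscript ${\rm id}$. It does \emph{not} exclude the configuration $\tau(e)<v<\iota(e)$, in which the moving vertex sits (in the vertex order) between the two ends of the other particle's edge; that case is permitted, occurs in the actual applications (e.g.\ inside the proof of Lemma~\ref{lem:boundary-order}), and is exactly the case $m=1$ where the permutation genuinely flips. So the strategy ``verify that the obstructing configuration cannot arise, then close by the elementary collapsing relation'' cannot be carried out: the flip-producing configuration does arise and must be handled by the induction-plus-parity argument rather than excluded. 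Finally, closing the clean case by appeal to ``the $n=2$ analogue of Lemma~\ref{lem:naturalmove}'' is circular, since Lemma~\ref{lem:naturalmoveD2} \emph{is} that analogue.
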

\begin{proof}
Let $e$ and $v$ be the edge and the vertex in $c$. Since $c$ contains only one vertex, $v$ is the smallest unblocked vertex. Let $e_v$ be the edge starting from $v$. Then $$R(c_{\rm id})=V(c)_{\rm id}+\{e_v,\iota(e)\}_{\rho^m}-\{e_v,\tau(e)\}_{\rm id}$$
where $m$ is 1 if $\tau(e)<v<\iota(e)$ or 0 otherwise.

We use induction on $i$ such that $R^i(c_{\rm id})=R^{i+1}(c_{\rm id})$. Since $c_{\rm id}$ is redundant, $i\ge2$.
Since $V(\{e_v,\iota(e)\}_{\rho^m})=\{e_v,\tau(e)\}_{\rho^{2m}}$,
$\widetilde RV(\{e_v,\iota(e)\}_{\rho^m})=\widetilde R(\{e_v,\tau(e)\}_{\rho^{2m}})$ by induction hypothesis.
Thus $\widetilde R(\{e_v,\iota(e)\}_{\rho^m}-\{e_v,\tau(e)\}_{\rm id})=\widetilde RV(\{e_v,\iota(e)\}_{\rho^m})-\widetilde R(\{e_v,\tau(e)\}_{\rm id})=0$. Thus $\widetilde R(c_{\rm id})=\widetilde R(V(c)_{\rm id})$.
\end{proof}

Since all critical 2-cells in $D_\Gamma$ is of the form $(d\cup d')_\sigma$, we only need the following:

\begin{lem}\label{lem:boundary-order}{\rm [Boundary Formulae]}
Let $c=d\cup d'$, $\tau(d)>\tau(d')$, $k=g(\tau(d),\iota(d))$ and $\ell=g(\tau(d),\iota(d'))$.
\begin{itemize}
\item[(a)] If $d'$ is separated by $\tau(d)$, $k\ne\ell$ and $\iota(d)<\iota(d')$ then $$\widetilde\partial (c_{\rm id})=d_{\rm id}-d(\vec\delta_\ell)_\rho.$$
\item[(b)] If $d'$ is separated by $\tau(d)$, $k=\ell$ and $\iota(d)<\iota(d')$ then $$\widetilde\partial (c_{\rm id})=d_{\rm id}-d(\vec\delta_\ell)_\rho-\mbox{$\wedge$}(d,d')_{\rm id}.$$
\item[(c)] If $d'$ is separated by $\tau(d)$ and $\iota(d')<\iota(d)$ then $$\widetilde\partial (c_{\rm id})=d_{\rm id}-d(\vec\delta_\ell)_\rho+\mbox{$\wedge$}(d,d')_\rho.$$
\item[(d)] Otherwise $\widetilde\partial (c_{\rm id})=0$.
\end{itemize}
\end{lem}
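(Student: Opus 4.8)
The plan is to reproduce the computation behind Lemma~\ref{lem:boundary-2} inside the ordered Morse complex $M_2\Gamma$, carrying the permutation subscripts along at every step. Since every critical $2$-cell of $D_2\Gamma$ has the form $(d\cup d')_\sigma$ and, by the remark preceding the lemma, $\widetilde\partial(c_\sigma)$ is obtained from $\widetilde\partial(c_{\rm id})$ by right multiplication by $\sigma$, it suffices to treat $c_{\rm id}$. Put $A=\tau(d)$ and $B=\tau(d')$, so $A>B$ and $c_{\rm id}=(d',d)$. The ordered boundary formula then gives
$$\partial(c_{\rm id})=-(\iota(d'),d)+(\tau(d'),d)+(d',\iota(d))-(d',\tau(d)),$$
and I would first rewrite each summand in the $c_\sigma$ notation, reading off the subscript by comparing the number of the vertex that replaced an edge against the terminal vertex of the surviving edge. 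Thus $(\tau(d'),d)=\{d,B\}_{\rm id}$ and $(d',\tau(d))=\{d',A\}_{\rm id}$, while $(\iota(d'),d)$ carries the subscript $\rho$ exactly when $\iota(d')>A$, i.e. exactly when $d'$ is separated by $A$.

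Next I would apply $\widetilde R$ termwise using Lemma~\ref{lem:naturalmoveD2}, which is available because for $n=2$ the unique vertex of a $1$-cell is automatically its smallest unblocked vertex. The decisive feature of that lemma is that a \emph{free} descent of the vertex preserves the subscript, the two $\rho^m$-contributions cancelling under $\widetilde R$. Hence $\{d,B\}_{\rm id}$ descends to $d_{\rm id}$ (the vertex stays below $A$ and never meets $d$). When $d'$ is not separated by $A$ (case (d)) both $\iota(d')$ and $\iota(d)$ lie below $A$ and descend freely to the same critical cells as $B$ and $A$ with matching subscripts, so the four terms cancel in pairs and $\widetilde\partial(c_{\rm id})=0$. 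When $d'$ is separated by $A$, the vertex $\iota(d')$ descends the $\ell$-th branch until blocked adjacent to $A$ by the endpoint $A$ of $d$, producing $d(\vec\delta_\ell)_\rho$; together with $d_{\rm id}$ from the first term this supplies the common part $d_{\rm id}-d(\vec\delta_\ell)_\rho$ of (a), (b), (c), and the pair $(d',\iota(d))-(d',\tau(d))$ cancels whenever $\iota(d)$ descends a branch disjoint from the one carrying the other edge.

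The step I expect to be the main obstacle is the birth of the $\mbox{$\wedge$}(d,d')$ term, which occurs precisely in the case $k=\ell$, where the descending vertex is forced to travel along the very branch occupied by the other deleted edge. Here the descent collides with the distal endpoint of that edge at the common ancestor $C=\iota(d)\wedge\iota(d')$, and the ordered analogue of Lemma~\ref{lem:naturalmove-2} fails (as the example following it shows), so no shortcut is available. I would instead follow the explicit reduction modelled on Example~\ref{ex:K33n2-order}, splitting the cell at $C$ via a term $C_p(\vec\delta_p)$ and separating the descent into a non-crossing part, which stops blocked adjacent to $A$ and contributes $d(\vec\delta_\ell)$ with the subscript of its parent, and a crossing part, which passes the terminal of the surviving edge and contributes $\mbox{$\wedge$}(d,d')$ with the \emph{opposite} subscript and the sign inherited from its parent. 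The obstructed term is $(\iota(d'),d)_\rho$ when $\iota(d)<\iota(d')$, giving $-\mbox{$\wedge$}(d,d')_{\rm id}$ (case (b)), and $(d',\iota(d))_{\rm id}$ when $\iota(d')<\iota(d)$, giving $+\mbox{$\wedge$}(d,d')_\rho$ (case (c)). Collecting the surviving summands then yields the four formulae, the entire divergence from the unordered Lemma~\ref{lem:boundary-2} being the bookkeeping of this single transposition at $C$.
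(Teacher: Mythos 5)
Your skeleton is the paper's own proof---write out the four ordered boundary summands with their subscripts, reduce each one by Lemma~\ref{lem:naturalmoveD2}, and handle the obstructed descents by hand---but two of the claims you rest it on are false, and each failure breaks a case of the lemma. The first is the assertion that $(\iota(d'),d)$ carries subscript $\rho$ ``exactly when $d'$ is separated by $A$.'' Separation implies $\iota(d')>A$, but not conversely: under the conventions of Lemma~\ref{lem:treeandorder}, $\iota(d')$ may lie on a branch of $\tau(d')$ (or of any ancestor of $A$) that the clockwise numbering visits \emph{after} the branch containing $A$; then $\iota(d')>A$ although $\iota(d')$ and $\tau(d')$ remain in one component of $T-\{A\}$. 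In that sub-case of your case (d) the summand $(\iota(d'),d)$ has subscript $\rho$ while $(\tau(d'),d)$ has subscript ${\rm id}$, so your argument that the four terms ``descend freely\dots with matching subscripts, so they cancel in pairs'' is unavailable. (Your remark that ``both $\iota(d')$ and $\iota(d)$ lie below $A$'' also cannot hold, since $\iota(d)>\tau(d)=A$ always.) This is exactly the non-routine half of the paper's proof: one descends $\iota(d')$ to the meet $B=\iota(d')\wedge\tau(d)$, performs the crossing step there---which flips the subscript from $\rho$ to ${\rm id}$ and spawns the pair $(B_m\cup\{\iota(d)\})_{\rm id}-(B_m\cup\{\tau(d)\})_{\rm id}$, shown to cancel under $\widetilde R$---and separately uses $g(B,\iota(d'))>g(B,\iota(d))$, hence $\iota(d')>\iota(d)$, to cancel the remaining two summands against each other. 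None of this appears in your proposal.

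The second failure: the $\wedge(d,d')$ term is not born ``precisely in the case $k=\ell$,'' and the pair $(d',\iota(d))-(d',\tau(d))$ does not cancel ``whenever $\iota(d)$ descends a branch disjoint from the one carrying the other edge.'' If $d'$ is separated by $A$ and $\ell<k$, then $C=\iota(d)\wedge\iota(d')$ is $A$ itself, and the last descent step of $\iota(d)$ inside $(d',\iota(d))_{\rm id}$---from the vertex $v$ adjacent to $A$ on the $k$-th branch down to $A$---is obstructed by $\iota(d')$, which satisfies $A<\iota(d')<v$. The crossing spawns $(A_k\cup\{\iota(d')\})_\rho$, which reduces to $A_k(\vec\delta_\ell)_\rho=\wedge(d,d')_\rho$, so these two summands leave $+\wedge(d,d')_\rho$ behind even though $k\ne\ell$. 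That is precisely why case (c) of the statement is keyed only on $\iota(d')<\iota(d)$, with no restriction to $k=\ell$. As written, your middle paragraph would output $\widetilde\partial(c_{\rm id})=d_{\rm id}-d(\vec\delta_\ell)_\rho$ in this sub-case, contradicting both the lemma and your own closing sentence (which does state the correct dichotomy). So the proposal is internally inconsistent, and the two computations that actually carry the proof---the non-separated descent past the meet, and the obstructed descent at $C$ including the degenerate case $C=A$---remain unestablished.
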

\begin{proof}
It is sufficient to compute images under $\widetilde R$ for each boundary 1-cell after obtaining the boundary of $c_{\rm id}$ in $D_2\Gamma$.

If $\iota(d')<\tau(d)$, then
$$\partial((d\cup d')_{\rm id})=(d\cup\{\tau(d')\})_{\rm id}-(d\cup\{\iota(d')\})_{\rm id}+(d'\cup\{\iota(d)\})_{\rm id}-(d'\cup\{\tau(d)\})_{\rm id}.$$
By Lemma~\ref{lem:naturalmoveD2} we have $\widetilde R((d\cup\{\tau(d')\})_{\rm id})=\widetilde R((d\cup\{\iota(d')\})_{\rm id})=d_{\rm id}$.
Since $\iota(d')<\tau(d)<\iota(d)$, we have $\widetilde R((d'\cup\{\iota(d)\})_{\rm id})=d'\cup\{\tau(d)\}_{\rm id}$. So if $\iota(d')<\tau(d)$, then
$\widetilde\partial((d\cup d')_{\rm id})=0$.

If $\iota(d')>\tau(d)$, then
$$\partial((d\cup d')_{\rm id})=(d\cup\{\tau(d')\})_{\rm id}-(d\cup\{\iota(d')\})_\rho+(d'\cup\{\iota(d)\})_{\rm id}-(d'\cup\{\tau(d)\})_{\rm id}.$$
Let $B=\iota(d')\wedge \tau(d)$ and $m=g(B,\iota(d'))$. If $d'$ is not separated by $\tau(d)$ then $\iota(d')>\iota(d)$ from $g(B,\iota(d'))>g(B,\tau(d))=g(B,\iota(d))$. So we have $\widetilde R((d'\cup\{\iota(d)\})_{\rm id})=(d'\cup\{\tau(d)\})_{\rm id}$ and
\begin{align*}
\widetilde R((d\cup\{\iota(d')\})_\rho)=&\widetilde R((d\cup B(\vec\delta_m))_\rho)\\
=&\widetilde R((d\cup\{ B\})_{\rm id}+(B_m\cup\{\iota(d)\})_{\rm id}-(B_m\cup\{\tau(d)\})_{\rm id})\\
=&\widetilde R((d\cup\{ B\})_{\rm id})=\widetilde R((d\cup\{\tau(d')\})_{\rm id}),
\end{align*}
since $\widetilde R((B_m\cup\{\iota(d)\})_{\rm id})=\widetilde R((B_m\cup\{\tau(d)\})_{\rm id})$ by Lemma~\ref{lem:naturalmoveD2}. So if $\iota(d')>\tau(d)$ and $d'$ is not separated by $\tau(d)$, then $\widetilde\partial((d\cup d')_{\rm id})=0$.

Let $q=g(\tau(d'),\tau(d))$. If $d'$ is separated by $\tau(d)$, then it is easy to see that $$\widetilde R((d\cup\{\tau(d')\})_{\rm id})=d_{\rm id} \mbox{ and } \widetilde R((d'\cup\{\tau(d)\})_{\rm id})=d'(\vec\delta_q)_{\rm id}.$$
Let $C=\iota(d')\wedge\iota(d)$ and $m_1=g(C,\iota(d'))$. Consider $\widetilde R((d\cup\{\iota(d')\})_\rho)$. If either $k\ne\ell$ or $k=\ell$ and $\iota(d')<\iota(d)$, then $\widetilde R((d\cup\{\iota(d')\})_\rho)=d(\vec\delta_\ell)_\rho$ by Lemma~\ref{lem:naturalmoveD2}. If $k=\ell$ and $\iota(d)<\iota(d')$, then by Lemma~\ref{lem:naturalmoveD2}
\begin{align*}
\widetilde R((d\cup\{\iota(d')\})_\rho)=&\widetilde R((d\cup C(\vec\delta_{m_1}))_\rho)\\
=\widetilde R((d\cup\{ C\})_\rho&+(C_{m_1}\cup\{\iota(d)\})_{\rm id}-(C_{m_1}\cup\{\tau(d)\})_\rho)\\
=\widetilde R((d\cup\{ C\})_\rho&+(C_{m_1}\cup\{\iota(d)\})_{\rm id})=d(\vec\delta_\ell)_\rho+\mbox{$\wedge$}(d,d')_{\rm id}.
\end{align*}

Let $m_2=g(C,\iota(d))$. Finally consider $\widetilde R((d'\cup\{\iota(d)\})_{\rm id})$. If $\iota(d)<\iota(d')$ then $\widetilde R((d'\cup\{\iota(d)\})_{\rm id})=d'(\vec\delta_q)_{\rm id}$. If $\iota(d')<\iota(d)$, then
\begin{align*}
\widetilde R((d'\cup\{\iota(d)\})_{\rm id})=&\widetilde R((d'\cup C(\vec\delta_{m_2}))_{\rm id})\\
=\widetilde R((d'\cup\{ C\})_{\rm id}&+(C_{m_2}\cup\{\iota(d)\})_\rho-(C_{m_2}\cup\{\tau(d)\})_{\rm id})\\
=\widetilde R((d'\cup\{ C\})_{\rm id}&+(C_{m_2}\cup\{\iota(d)\})_\rho)=d(\vec\delta_\ell)_{\rm id}+\mbox{$\wedge$}(d,d')_\rho.
\end{align*}
Combining the results, we obtain the desired formulae.
\end{proof}

Using the above lemma, we have the following lemma similar to Lemma~\ref{cor:boundary2}.

\begin{lem}\label{cor:boundary-order}{\rm [Dependence among Boundary Images]}
If $d_1$ and $d_2$ are separated by $\tau(d)$ and $g(\tau(d),\iota(d_1))=g(\tau(d),\iota(d_2))$, then
\begin{itemize}
\item[(1)] If $g(\tau(d),\iota(d))\ne g(\tau(d),\iota(d_1))$, then $\widetilde\partial ((d\cup d_1)_{\rm id})=\widetilde\partial ((d\cup d_2)_{\rm id})$.
\item[(2)] If $g(\tau(d),\iota(d))=g(\tau(d),\iota(d_1))$, then $$\widetilde\partial ((d\cup d_1)_{\rm id}-(d\cup d_2)_{\rm id})=-(-1)^i\mbox{$\wedge$}(d,d_1)_{\rho^i}+(-1)^j\mbox{$\wedge$}(d,d_2)_{\rho^j}.$$
\end{itemize}
\end{lem}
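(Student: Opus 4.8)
The plan is to evaluate both boundary images with Lemma~\ref{lem:boundary-order} and subtract, so that almost everything cancels. First I would check that the hypotheses of that lemma are met. Since $d_1$ and $d_2$ are separated by $\tau(d)$, neither can have $\tau(d)$ as its own terminal vertex (an edge is never separated by its terminal vertex, as that vertex is deleted and lies in no component of $T-\{\tau(d)\}$), and Condition (T2) of Lemma~\ref{lem:treeandorder} forces $\tau(d)\ge\tau(d_1)$ and $\tau(d)\ge\tau(d_2)$; hence in fact $\tau(d)>\tau(d_1)$ and $\tau(d)>\tau(d_2)$, so Lemma~\ref{lem:boundary-order} applies with $d$ as the larger edge. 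Write $A=\tau(d)$, $k=g(A,\iota(d))$, and $\ell=g(A,\iota(d_1))=g(A,\iota(d_2))$. In every case of Lemma~\ref{lem:boundary-order} the summands $d_{\rm id}$ and $d(\vec\delta_\ell)_\rho$ depend only on $d$ and on $\ell$, hence are identical for $(d\cup d_1)_{\rm id}$ and $(d\cup d_2)_{\rm id}$ and will cancel; only the $\mbox{$\wedge$}$-summand can differ, so the whole argument reduces to controlling it.

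For part (1), $k\neq\ell$, I would use that the Euler-tour numbering assigns to an entire lower-numbered branch of $A$ strictly smaller labels than to any higher-numbered one. Thus if $k<\ell$ then $\iota(d)<\iota(d_1),\iota(d_2)$ and case (a) applies to both, giving $d_{\rm id}-d(\vec\delta_\ell)_\rho$ with no $\mbox{$\wedge$}$-term; whereas if $k>\ell$ then $\iota(d_1),\iota(d_2)<\iota(d)$ and case (c) applies to both. In the latter situation the meet $\iota(d)\wedge\iota(d_m)$ equals $A$ for $m=1,2$, because $\iota(d)$ lies on branch $k$ and $\iota(d_m)$ on branch $\ell$ of $A$, so their paths to the base first coincide at $A$; hence $\mbox{$\wedge$}(d,d_1)=\mbox{$\wedge$}(d,d_2)=A_k(\vec\delta_\ell)$ and the two images agree. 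Either way $\widetilde\partial((d\cup d_1)_{\rm id})=\widetilde\partial((d\cup d_2)_{\rm id})$.

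For part (2), $k=\ell$, the three edges $d,d_1,d_2$ share branch $\ell$ of $A$, so the order of $\iota(d)$ relative to $\iota(d_1)$ and to $\iota(d_2)$ is no longer dictated by branch numbers, and the $\mbox{$\wedge$}$-terms need no longer coincide. I would set $i=0$ or $1$ according as $\iota(d)<\iota(d_1)$ or $\iota(d_1)<\iota(d)$, and likewise define $j$ from $d_2$. Cases (b) and (c) of Lemma~\ref{lem:boundary-order} then combine into the single expression $\widetilde\partial((d\cup d_1)_{\rm id})=d_{\rm id}-d(\vec\delta_\ell)_\rho-(-1)^i\mbox{$\wedge$}(d,d_1)_{\rho^i}$ (for $i=0$ this is case (b), for $i=1$ case (c)), and similarly for $d_2$ with $j$ in place of $i$. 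Subtracting, the $d_{\rm id}$ and $d(\vec\delta_\ell)_\rho$ terms cancel and leave exactly $-(-1)^i\mbox{$\wedge$}(d,d_1)_{\rho^i}+(-1)^j\mbox{$\wedge$}(d,d_2)_{\rho^j}$, which is the claimed formula.

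The argument is essentially bookkeeping once Lemma~\ref{lem:boundary-order} is in hand. The one place that demands genuine care is part (1), where I must verify that the two potentially distinct $\mbox{$\wedge$}$-terms actually coincide; this rests on the geometric observation that $k\neq\ell$ forces $\iota(d)\wedge\iota(d_m)=\tau(d)$, and I expect that to be the crux of the write-up, with the sign-and-permutation bookkeeping $(-1)^i$, $\rho^i$ of part (2) a close second.
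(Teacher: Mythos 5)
Your proof is correct and takes essentially the same approach as the paper: the paper states this lemma as an immediate consequence of Lemma~\ref{lem:boundary-order}, and your case analysis—matching $k\ne\ell$ to cases (a)/(c), matching $k=\ell$ to cases (b)/(c) via the parity indices $i,j$, and cancelling the common $d_{\rm id}-d(\vec\delta_\ell)_\rho$ terms—is precisely the verification the paper leaves implicit. Your observation that $k\ne\ell$ forces $\iota(d)\wedge\iota(d_m)=\tau(d)$, so the two $\mbox{$\wedge$}$-terms coincide in case (c), is the right crux and is consistent with the paper's conventions (where Condition (T2) guarantees $g(\tau(d),\iota(d))\ge1$, so every initial vertex lies on a numbered branch subtree).
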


Note that the second formula of the above lemma contains $i,j$ only for the parity purpose and play an important role of showing $H_1(P_2\Gamma)$ is torsion-free.

Declare an order on $S_2$ by $\mbox{\rm id}>\rho$. Recall the orders on critical 1-cells and critical 2-cells of $UD_n\Gamma$ from \S\ref{ss31:PMatrix}. By adding a permutation as the last component of the orders, we obtain orders given by 4-tuples $(s(c),e,\vec a,\sigma)$ for critical 1-cells in $D_2\Gamma$ and by 7-tuples $(s(c),e,\vec a+\vec\delta_{g(\tau(e),\iota(e'))},g(\tau(e),\iota(e')), e',\vec b,\sigma)$ for critical 2-cells.

The second boundary homomorphism $\widetilde\partial$ is represented by a matrix over bases of critical 2-cells and critical 1-cells ordered reversely. We go through the exactly same arguments as Sec~\ref{ss31:PMatrix} by using Lemmas~\ref{cor:boundary-order} and \ref{lem:boundary-order} and obtain the following theorem:

\begin{thm}\label{thm:matrix-D2}
Let $M$ be the matrix representing the second boundary homomorphism of $D_2\Gamma$ over bases of critical 2-cells and critical 1-cells ordered reversely.
Up to row operations, each row of $M$ satisfies one of the following:
\begin{enumerate}
\item[\rm{(1)}] consists of all zeros;
\item[\rm{(2)}] there is a $\pm 1$ entry that is the only nonzero entry in the column it belongs to;
\item[\rm{(3)}] there are only only two nonzero entries which are $\pm1$.
\end{enumerate}
Furthermore, up to multiplications of column by $-1$, the property (3) above can be modified to
\begin{enumerate}
\item[\rm{($3'$)}] there are two nonzero entries which are $\pm1$ and have opposite signs.
\end{enumerate}
\end{thm}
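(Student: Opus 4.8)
The plan is to mirror exactly the development of Theorem~\ref{thm:matrix} in \S\ref{ss31:PMatrix}, now using the ordered analogues established in this subsection. The matrix $M$ represents $\widetilde\partial$ over the ordered bases of critical $2$-cells and critical $1$-cells, both reversed. By the general principle stated at the start of the subsection, $\widetilde R(c_\sigma)$ and $\widetilde\partial(c_\sigma)$ are obtained from the expressions for $c_{\rm id}$ by right multiplication by $\sigma$ on every permutation subscript; hence the full matrix over $D_2\Gamma$ is obtained from the $c_{\rm id}$ computations by doubling, and the leading-coefficient phenomenon of Lemma~\ref{lem:but} carries over verbatim once the $4$-tuple and $7$-tuple orders (with the added $S_2$-component) are fixed. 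So first I would record that, since all critical $2$-cells are of the form $(d\cup d')_\sigma$ and their boundary images are given explicitly by Lemma~\ref{lem:boundary-order}, the largest summand of a nonzero $\widetilde\partial((d\cup d')_\sigma)$ is $\pm d(\vec a+\vec\delta_{g(\tau(d),\iota(d'))})_{\sigma'}$ for the appropriate $\sigma'$, exactly as in Lemma~\ref{lem:but}. This puts $M$ in block-upper-triangular form with $\pm1$ pivots.

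Next I would run the pivotal/non-pivotal dichotomy. A pivotal row is cleared above its pivot by row operations, yielding type~(2). For the non-pivotal rows, the argument of Lemma~\ref{lem:rows} applies, but it is far simpler here because there is only one species of critical $2$-cell: there are no $A_k(\vec a)\cup B_\ell(\vec b)$ or $A_k(\vec a)\cup d(\vec b)$ cases to dispose of, and Lemma~\ref{cor:boundary-order}(1) plays the role of Lemma~\ref{cor:boundary2}(1)--(2) in collapsing boundary images that agree. The only rows that survive as genuinely two-term rows come from differences $(d\cup d_1)_{\rm id}-(d\cup d_2)_{\rm id}$ with $g(\tau(d),\iota(d))=g(\tau(d),\iota(d_1))=g(\tau(d),\iota(d_2))$, whose boundary is computed in Lemma~\ref{cor:boundary-order}(2) to be
$$-(-1)^i\mbox{$\wedge$}(d,d_1)_{\rho^i}+(-1)^j\mbox{$\wedge$}(d,d_2)_{\rho^j}.$$
Both coefficients are $\pm1$, so every such row has exactly two nonzero $\pm1$ entries, giving type~(3); all remaining non-pivotal rows reduce to linear combinations of lower rows, i.e.\ become type~(1).

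The final, and genuinely new, point is the upgrade from (3) to ($3'$). In the unordered case the planarity hypothesis (T4) was needed to force opposite signs; here I would argue that no such hypothesis is required because the two $\wedge$-summands sit in \emph{different} columns of $M$. The two generators $\mbox{$\wedge$}(d,d_1)_{\rho^i}$ and $\mbox{$\wedge$}(d,d_2)_{\rho^j}$ are distinct critical $1$-cells (their underlying unordered cells $\mbox{$\wedge$}(d,d_1)$ and $\mbox{$\wedge$}(d,d_2)$ differ, since $d_1\ne d_2$ give different branch data at $C$), so they index two different columns. A row with two $\pm1$ entries in distinct columns can always be made to have opposite-sign entries by multiplying one of those two columns by $-1$; this column operation is harmless because it is a change of sign of a generator and does not disturb the single-column condition of any type~(2) row. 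I expect the main obstacle to be the bookkeeping needed to verify that these column sign-changes can be performed \emph{simultaneously and consistently} across all type-($3'$) rows without reintroducing equal signs elsewhere; the way around it is to observe, via Lemma~\ref{cor:separating1cell}'s ordered analogue, that each separating $1$-cell $\mbox{$\wedge$}(d,d')_{\rho^k}$ appears with a controlled sign pattern, so a single global assignment of column signs suffices. Once that is checked, every row is of type~(1), (2), or ($3'$), completing the proof.
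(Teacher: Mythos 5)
Your handling of types (1)--(3) follows the same route as the paper: rerun the \S\ref{ss31:PMatrix} arguments with Lemmas~\ref{lem:boundary-order} and \ref{cor:boundary-order} in place of the unordered boundary lemmas, using that all critical 2-cells of $D_2\Gamma$ have the form $(d\cup d')_\sigma$. That part is sound. The gap is in the upgrade to ($3'$), which is the only genuinely new content of this theorem. Your primary argument --- the two $\wedge$-summands lie in distinct columns, and a row with two $\pm1$ entries in distinct columns can be fixed by negating one of them --- proves nothing by itself: exactly the same is true of every type-(3) row in the unordered case (there too $\wedge(d,d')\neq\wedge(d,d'')$), yet for non-planar graphs no consistent system of column signs exists, and that inconsistency is precisely the source of the $\mathbb Z_2$ summands in $H_1(B_n\Gamma)$. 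Concretely, in the undetermined block for $K_5$ in Example~\ref{ex:H1B4K5}, the three rows coming from $d_6\cup d_4-d_6\cup d_2$, $d_5\cup d_2-d_5\cup d_1$, and $d_4\cup d_3-d_4\cup d_1$ impose on the column signs $\epsilon$ of $B_3(1,0,0)$, $B_3(0,1,0)$, $A_2(1,0)$ the incompatible relations $\epsilon_{B_3(1,0,0)}=\epsilon_{B_3(0,1,0)}$, $\epsilon_{B_3(0,1,0)}=\epsilon_{A_2(1,0)}$, $\epsilon_{B_3(1,0,0)}=-\epsilon_{A_2(1,0)}$. So everything rests on the global consistency claim that you defer with ``once that is checked'': you neither specify the assignment of column signs nor verify it.

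What closes the gap --- and is essentially the whole of the paper's proof --- is the parity bookkeeping already present in Lemma~\ref{cor:boundary-order}(2): in any two-term row, $\wedge(d,d_1)_{\rho^i}$ occurs with sign $-(-1)^i$ and $\wedge(d,d_2)_{\rho^j}$ with sign $+(-1)^j$, so the sign of every occurrence is tied to the parity of its own permutation subscript. Hence the single global basis change $c_{\rho^m}\mapsto(-1)^m c_{\rho^m}$ (negate exactly those columns whose subscript is $\rho$) turns every such row into one with entries $-1$ and $+1$. Rows arising from $(d\cup d_1)_\rho-(d\cup d_2)_\rho$ are covered by the same assignment, because right multiplication by $\rho$ shifts both subscripts in the row by the same parity and therefore preserves the opposite-sign conclusion. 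This verification is one line once stated, but it is the crux --- it is exactly the step that fails in the unordered non-planar setting --- so it cannot be left as expected bookkeeping.
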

\begin{proof} Lemma~\ref{cor:boundary-order}(2) implies that ($3'$) can be achieved by choosing a basis of critical 1-cells in which $(-1)^m c_{\rho^m}$ is used instead of just $c_{\rm id}$ or $c_{\rho}$.
\end{proof}

Since there are exactly two critical 0-cells, the 0-th skeleton $(M_2\Gamma)^0$ of a Morse complex of $M_2\Gamma$ of $D_2\Gamma$ consists of two points. Then the second boundary homomorphism gives a presentation matrix for $H_1(M_2\Gamma,(M_2\Gamma)^0)$. And $H_1(M_2\Gamma,(M_2\Gamma)^0)\allowbreak\cong H_1(M_2\Gamma)\oplus\mathbb Z\cong H_1(P_2\Gamma)\oplus\mathbb Z$.

Critical 1-cells of $D_2\Gamma$ can be classified to be pivotal, free, or separating as before. The undetermined block of separating 1-cells produces no torsion due to the property ($3'$) and so we have the following:

\begin{cor}
For a finite connected graph $\Gamma$, $H_1(P_2\Gamma)$ is torsion-free.
\end{cor}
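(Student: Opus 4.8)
The plan is to run essentially the same argument that established Corollary~\ref{thm:torsionfree} in the unordered setting, but now feeding in the stronger conclusion ($3'$) of Theorem~\ref{thm:matrix-D2} in place of (3). The starting point is the identification, noted just above, that the second boundary homomorphism $\widetilde\partial$ (over bases of critical $2$-cells and critical $1$-cells) is a presentation matrix for the relative homology $H_1(M_2\Gamma,(M_2\Gamma)^0)$, together with the isomorphism $H_1(M_2\Gamma,(M_2\Gamma)^0)\cong H_1(P_2\Gamma)\oplus\mathbb Z$. Hence it suffices to show that the cokernel of this presentation matrix is torsion-free, after which one strips off the $\mathbb Z$ summand.

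First I would apply Theorem~\ref{thm:matrix-D2} to put the matrix, up to row operations, into a form in which every row is of type (1), (2), or ($3'$). The rows of type (2) are pivotal: each contributes a $\pm1$ entry that is the only nonzero entry in its column, so the corresponding critical $1$-cell is eliminated and no torsion is introduced. Discarding the zero rows of type (1) together with all pivotal rows and pivotal columns (the block-triangular structure carried over from \S\ref{ss31:PMatrix} makes this splitting clean), we are left with the undetermined block, whose columns correspond to separating $1$-cells and whose rows are all of type ($3'$).

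The crux is then to see that this undetermined block has torsion-free cokernel, and this is exactly where property ($3'$) does the work. By ($3'$) each row of the block has precisely two nonzero entries, a $+1$ and a $-1$; thus each relation merely identifies one separating $1$-cell with another up to sign. Viewing columns as vertices and rows as (oriented) edges, the block is the incidence matrix of a graph, so clearing a pivotal column $p$ amounts to adding $\pm1$ times a two-entry row to another two-entry row: the entries in column $p$ cancel and one is left again with a row carrying exactly two unit entries of opposite sign. Consequently no row operation can ever create a pivot other than $\pm1$, so the cokernel of the block is free abelian (its invariant factors are all $0$ or $1$), and $H_1(M_2\Gamma,(M_2\Gamma)^0)$ is free abelian.

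Finally, from $H_1(M_2\Gamma,(M_2\Gamma)^0)\cong H_1(P_2\Gamma)\oplus\mathbb Z$ we conclude that $H_1(P_2\Gamma)$ is free abelian, hence torsion-free. The one delicate point — and the place where the ordered case genuinely diverges from the unordered one — is the opposite-sign guarantee in ($3'$): in the unordered setting two same-sign unit entries could combine to a pivot $\pm2$ and produce a $\mathbb Z_2$ summand (as happens for nonplanar $\Gamma$ in Theorem~\ref{thm:H1Bn}), whereas Lemma~\ref{cor:boundary-order}(2) forces opposite signs for \emph{every} graph $\Gamma$. This is precisely why $P_2\Gamma$ is torsion-free with no planarity hypothesis whatsoever.
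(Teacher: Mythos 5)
Your proposal is correct and follows exactly the paper's own route: identify $\widetilde\partial$ as a presentation matrix for $H_1(M_2\Gamma,(M_2\Gamma)^0)\cong H_1(P_2\Gamma)\oplus\mathbb Z$, invoke Theorem~\ref{thm:matrix-D2} to reduce to the undetermined block of separating $1$-cells, and use property ($3'$) to conclude that block contributes no torsion. The paper states this last step in one sentence; your incidence-matrix justification is a correct (and welcome) elaboration of it, not a different argument.
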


Using free 1-cells and the undetermined block for $H_1(M_2\Gamma,(M_2\Gamma)^0)$,  we can compute $H_1(P_2\Gamma)$.

\begin{exa}\label{ex:H1PB2K5}
Let $\Gamma$ be $K_5$ and a maximal tree and an order be given as Example~\ref{ex:K5TO}. We want to compute $H_1(P_2\Gamma)$.
\end{exa}
From $\widetilde\partial(d_6\cup d_5-d_6\cup d_2)=-C_2(1,0,0)-B_3(0,1,0)$ in Example~\ref{ex:H1B4K5}, we obtain
$$\widetilde\partial((d_6\cup d_5)_{\rm id}-(d_6\cup d_4)_{\rm id})=-C_2(1,0,0)_{\rm id}+\{-B_3(0,1,0)_\rho\}.$$
From Example~\ref{ex:H1B4K5} and Lemma~\ref{lem:boundary-order}, we obtain the undetermined block as follows:
$$\left(
\begin{array}{cccccccccccccc}
&&&&-1&&&&&1&&&&\\
&&&&&1&&&-1&&&&&\\
&&&&&&&-1&&1&&&&\\
&&&&&&1&&-1&&&&&\\
-1&&&&&&&&&1&&&&\\
&1&&&&&&&-1&&&&&\\
&&-1&&&&&&&&&&&1\\
&&&1&&&&&&&&&-1&\\
&&&&&&&&&-1&&&&1\\
&&&&&&&&1&&&&-1&\\
&&&&&&-1&&&&&&&1\\
&&&&&&&1&&&&&-1&\\
&&&&&&&&&&-1&&&1\\
&&&&&&&&&&&1&-1&
\end{array} \right)$$

$$\rightarrow
\left(
\begin{array}{cccccccccccccc}
-1&&&&&&&&&1&&&&\\
&1&&&&&&&-1&&&&&\\
&&-1&&&&&&&&&&&1\\
&&&1&&&&&&&&&-1&\\
&&&&-1&&&&&1&&&&\\
&&&&&1&&&-1&&&&&\\
&&&&&&1&&-1&&&&&\\
&&&&&&&-1&&1&&&&\\
&&&&&&&&1&&&&-1&\\
&&&&&&&&&-1&&&&1\\
&&&&&&&&&&-1&&&1\\
&&&&&&&&&&&1&-1&\\
&&&&&&&&&&&&-1&1\\
&&&&&&&&&&&&&0
\end{array} \right)$$
There are twelve free 1-cells, all of which are of the form $d_\sigma$. From the above matrix, $H_1(M_2\Gamma,(M_2\Gamma)^0)\cong\mathbb Z^{13}$ and so $H_1(P_2\Gamma)\cong\mathbb Z^{12}$.
\qed

For a free 1-cell $c$ in $UD_2\Gamma$, $c_{\rm id}$ and $c_\rho$ are free 1-cells in $D_2\Gamma$. So it is easy to modify Lemma~\ref{lem:cutvertex} and \ref{lem:biconnected} for $H_1(M_2\Gamma,(M_2\Gamma)^0)$ accordingly and one can verify that the contribution by $N_1(2,\Gamma)$ and $N_2(\Gamma)$ doubles because the number of free 1-cells doubles. However the proof of Lemma~\ref{lem:triconnect} deals with the undetermined block and it is safe to redo.

\begin{lem}{\rm [Topologically Simple Triconnected Graph]}
For a topologically simple and triconnected $\Gamma$, $$H_1(P_2\Gamma)\cong\mathbb Z^{2\beta_1(\Gamma)+\epsilon}$$
where $\epsilon$ is 1 if $\Gamma$ is planar or 0 if $\Gamma$ is non-planar.
\end{lem}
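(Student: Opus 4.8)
The plan is to run the ordered analogue of the proof of Lemma~\ref{lem:triconnect}, carrying the two permutation labels $\mathrm{id}$ and $\rho$ through the computation and reading off ranks from the presentation matrix of Theorem~\ref{thm:matrix-D2}. I would work throughout with $H_1(M_2\Gamma,(M_2\Gamma)^0)$, which is the cokernel of $\widetilde\partial$ on critical $1$-cells, and recover $H_1(P_2\Gamma)$ at the very end from $H_1(M_2\Gamma,(M_2\Gamma)^0)\cong H_1(P_2\Gamma)\oplus\mathbb Z$. Every critical cell of $D_2\Gamma$ is $c_\sigma$ for a critical cell $c$ of $UD_2\Gamma$ and $\sigma\in\{\mathrm{id},\rho\}$, and its type (pivotal, free, or separating) agrees with that of $c$. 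By Lemma~\ref{lem:triconnect} the free $1$-cells of $UD_2\Gamma$ are exactly the $\beta_1(\Gamma)$ deleted edges $d$, so the free $1$-cells of $D_2\Gamma$ are the $2\beta_1(\Gamma)$ cells $d_{\mathrm{id}},d_\rho$ and they contribute a free summand $\mathbb Z^{2\beta_1(\Gamma)}$. Moreover, by property $(3')$ of Theorem~\ref{thm:matrix-D2} the undetermined block carries no torsion, so it only remains to determine the rank of the summand generated by the separating $1$-cells.

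By Lemma~\ref{cor:separating1cell} every separating $1$-cell has the form $A_k(\vec\delta_m)$, so the separating $1$-cells of $D_2\Gamma$ are the labelled cells $A_k(\vec\delta_m)_\sigma$. Mirroring the induction of Lemma~\ref{lem:triconnect} on the number of vertices of valency $\ge3$, built along an expansion sequence starting from $K_4$, I would use Lemma~\ref{lem:boundary-order} and Lemma~\ref{cor:boundary-order} to show that every separating cell is, up to sign, homologous to one of the two labelled classes $g_0=[A_k(\vec\delta_m)_{\mathrm{id}}]$ and $g_1=[A_k(\vec\delta_m)_\rho]$. Thus the separating summand has rank at most $2$, and it remains to pin it down as a function of planarity.

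When $\Gamma$ is planar I expect the separating summand to have rank $2$: the boundary relations of Lemma~\ref{cor:boundary-order}(2) that relate $g_0$ to $g_1$ carry the parity label $\rho^i$ in a way that, on a tree chosen by Lemma~\ref{lem:treeandorderforplanar}, keeps $g_0$ and $g_1$ independent, each reproducing the torsion-free class $\mathbb Z$ of the planar unordered case. The base of the induction is $K_4$, where a direct computation as in Example~\ref{ex:H1PB2K5} gives $H_1(P_2K_4)\cong\mathbb Z^{7}=\mathbb Z^{2\cdot 3+1}$, consistent with a rank-$2$ separating summand, and functoriality of graph embeddings then propagates independence through the expansion sequence; this yields $H_1(P_2\Gamma)\cong\mathbb Z^{2\beta_1(\Gamma)+1}$, i.e. $\epsilon=1$. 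When $\Gamma$ is non-planar, the parity factors $(-1)^i,(-1)^j$ in Lemma~\ref{cor:boundary-order}(2) are exactly what replaces the $\mathbb Z_2$-torsion of the non-planar unordered case (Lemma~\ref{lem:triconnect}) by a relation tying $g_0$ to $g_1$, collapsing the separating summand from rank $2$ to rank $1$ while producing no torsion, in agreement with the corollary that $H_1(P_2\Gamma)$ is torsion-free. To see that the surviving rank is exactly $1$ and not $0$, I would invoke the base computations $H_1(P_2K_5)\cong\mathbb Z^{12}$ and $H_1(P_2K_{3,3})\cong\mathbb Z^{8}$ of Examples~\ref{ex:H1PB2K5} and \ref{ex:K33n2-order}, which exhibit a surviving rank-one separating class, and propagate this by embedding functoriality, using that every non-planar triconnected graph contains a subdivision of $K_5$ or $K_{3,3}$. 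This gives $H_1(P_2\Gamma)\cong\mathbb Z^{2\beta_1(\Gamma)}$, i.e. $\epsilon=0$.

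The main obstacle I anticipate is the careful bookkeeping of the permutation labels through the inductive expansion: establishing that the planar choice of maximal tree keeps the two labelled classes independent while non-planarity forces precisely one identifying relation between them, and checking that the parity signs in Lemma~\ref{cor:boundary-order}(2) prevent any new torsion from arising, so that the non-planar collapse is to $\mathbb Z$ rather than to a torsion group. Isolating this single relation, and confirming via the base cases that the separating summand has the exact rank claimed, is the crux of the argument.
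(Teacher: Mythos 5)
Your route is essentially the paper's: pass to $H_1(M_2\Gamma,(M_2\Gamma)^0)$, get $\mathbb Z^{2\beta_1(\Gamma)}$ from the free cells $d_\sigma$, reduce the separating classes to the two labelled classes $g_0=[A_k(\vec\delta_\ell)_{\rm id}]$ and $g_1=[A_k(\vec\delta_\ell)_\rho]$, and then decide $g_0\ne g_1$ (planar) versus $g_0=g_1$ (non-planar); the paper obtains the reduction to two classes by quoting the proof of Lemma~\ref{lem:triconnect} rather than re-running the induction with permutation labels, which is a cosmetic difference. The genuine problem is the \emph{direction} in which you invoke functoriality, in both cases. An induced homomorphism on $H_1$ preserves equalities of classes, never inequalities or non-vanishing: from $[x]\ne[y]$, or $[x]\ne 0$, in a subgraph nothing follows about $f_*[x]$ and $f_*[y]$ in the ambient graph. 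So ``functoriality of graph embeddings then propagates independence through the expansion sequence'' (your planar induction step) is not a valid step, and ``propagate this by embedding functoriality'' to conclude that the surviving non-planar class is nonzero (rank exactly $1$, not $0$) fails for the same reason.

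Both gaps are filled by material you already put on the table, and this is how the paper argues. In the planar case no induction and no base case are needed: the rows of the undetermined block come from Lemma~\ref{cor:boundary-order}(2), where $g(\tau(d),\iota(d))=g(\tau(d),\iota(d_1))=g(\tau(d),\iota(d_2))$, and Condition (T4) of Lemma~\ref{lem:treeandorderforplanar} forces $\iota(d)<\iota(d_1),\iota(d_2)$, i.e.\ only case (b) of Lemma~\ref{lem:boundary-order} (parity $i=j=0$) ever occurs; hence every row relates two ${\rm id}$-labelled cells or two $\rho$-labelled cells, the labels never mix, the cokernel splits accordingly, and $g_0\ne g_1$. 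In the non-planar case, what functoriality legitimately transports is the \emph{identification}: $\Gamma$ contains a subdivision of $K_5$ or $K_{3,3}$, in which $g_0=g_1$ by Examples~\ref{ex:K33n2-order} and~\ref{ex:H1PB2K5}, and pushing that equality forward (the embedding carries these classes to classes of the same kind, as in the proof of Lemma~\ref{lem:triconnect}) forces $g_0=g_1$ in $\Gamma$. Non-vanishing of the surviving class is then a purely local fact about the undetermined block, not a consequence of any base case: every row has exactly two nonzero entries $\pm1$, of opposite sign after the normalization ($3'$) of Theorem~\ref{thm:matrix-D2}, so the augmentation sending each normalized separating generator to $1$ annihilates all relations, and no separating class can die in the quotient.
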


\begin{proof}
We need to show $H_1(M_2\Gamma,(M_2\Gamma)^0)\cong\mathbb Z^{2\beta_1(\Gamma)+\epsilon+1}$. Critical 1-cells are of the forms $d_\sigma$, $d(\delta_\ell)_\sigma$ and $A_k(\vec\delta_\ell)_\sigma$ with $k>\ell$. It is easy to see that every critical 1-cell of the form $d(\delta_\ell)_\sigma$ is pivotal and the number of critical 1-cells of the form $d_\sigma$ is equal to $2\beta_1(\Gamma)$. We consider the undetermined block. From the proof of Lemma~\ref{lem:triconnect}, there are at most two homology classes of the form $[A_k(\vec\delta_\ell)_{\rm id}]$ and $[A_k(\vec\delta_\ell)_\rho]$. So it is sufficient to show $[A_k(\vec\delta_\ell)_{\rm id}]\ne[A_k(\vec\delta_\ell)_\rho]$ if $\Gamma$ is planar and $[A_k(\vec\delta_\ell)_{\rm id}]=[A_k(\vec\delta_\ell)_\rho]$ if $\Gamma$ is non-planar. If $\Gamma$ is planar, then by Condition (T4) in Lemma~\ref{lem:treeandorderforplanar}, there is no row representing $\pm\{A_k(\vec\delta_\ell)_{\rm id}+B_{k'}(\vec\delta_{\ell'})_\rho\}$. So $[A_k(\vec\delta_\ell)_{\rm id}]\ne[A_k(\vec\delta_\ell)_\rho]$. For non-planar graphs, we only need to verify for $K_5$ and $K_{3,3}$ as explain in the proof of Lemma~\ref{lem:triconnect}. Examples~\ref{ex:K33n2-order} and \ref{ex:H1PB2K5} show that $H_1(P_2K_5)$ and $H_1(P_2K_{3,3})$ satisfy the lemma.
\end{proof}

Using the same arguments in the proof of Theorem~\ref{thm:H1Bn}, we obtain the formula
$$H_1(M_2\Gamma,(M_2\Gamma)^0)\cong\mathbb Z^{2N_1(2,\Gamma)+2N_2(\Gamma)+2N_3(\Gamma)+2\beta_1(\Gamma)+N'_3(\Gamma)}.$$
This implies the following theorem.

\begin{thm}\label{thm:H1PB2}
For a finite connected graph $\Gamma$,
$$H_1(P_2\Gamma)\cong\mathbb Z^{2N_1(2,\Gamma)+2N_2(\Gamma)+2N_3(\Gamma)+2\beta_1(\Gamma)+N'_3(\Gamma)-1}$$
\end{thm}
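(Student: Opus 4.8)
The plan is to obtain the formula for $H_1(P_2\Gamma)$ directly from the relative homology computation that was assembled in the lemmas immediately preceding this theorem, by translating it into the absolute first homology of the pure braid group. Specifically, I would begin from the formula already displayed just before the statement,
$$H_1(M_2\Gamma,(M_2\Gamma)^0)\cong\mathbb Z^{2N_1(2,\Gamma)+2N_2(\Gamma)+2N_3(\Gamma)+2\beta_1(\Gamma)+N'_3(\Gamma)},$$
which encodes all the graph-theoretic bookkeeping. The whole content of the theorem is then the passage from this relative group to $H_1(P_2\Gamma)$ itself.

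The key step is the identity established earlier in \S\ref{ss33:H1PB2}, namely that since $D_2\Gamma$ has exactly two critical $0$-cells, the $0$-skeleton $(M_2\Gamma)^0$ is two points and one has
$$H_1(M_2\Gamma,(M_2\Gamma)^0)\cong H_1(M_2\Gamma)\oplus\mathbb Z\cong H_1(P_2\Gamma)\oplus\mathbb Z.$$
I would invoke this isomorphism and simply strip off one free $\mathbb Z$ summand. Combining the two displays, $H_1(P_2\Gamma)\oplus\mathbb Z\cong\mathbb Z^{2N_1(2,\Gamma)+2N_2(\Gamma)+2N_3(\Gamma)+2\beta_1(\Gamma)+N'_3(\Gamma)}$, and since $H_1(P_2\Gamma)$ is free abelian (by the corollary to Theorem~\ref{thm:matrix-D2}, which uses property ($3'$) to rule out torsion), cancelling the free summand yields exactly the claimed exponent reduced by one.

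The only genuine content beyond this cancellation lies in justifying the exponent in the relative formula, but that has already been done: one assembles it exactly as in the proof of Theorem~\ref{thm:H1Bn}, replacing each ingredient by its pure-braid counterpart. Namely, the contributions from $N_1(2,\Gamma)$ and $N_2(\Gamma)$ double because each free $1$-cell $c$ of $UD_2\Gamma$ lifts to the two free $1$-cells $c_{\rm id}$ and $c_\rho$ of $D_2\Gamma$ (so Lemmas~\ref{lem:cutvertex} and \ref{lem:biconnected} transfer with a factor of $2$), and the triconnected contribution is governed by the preceding lemma, which gives $\mathbb Z^{2\beta_1(\Gamma)+\epsilon+1}$ for each triconnected marked component with $\epsilon=1$ in the planar case and $\epsilon=0$ in the non-planar case. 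Summing these local contributions over the marked decomposition and the $1$-cut decomposition produces $2N_3(\Gamma)$ from the planar triconnected pieces and $N'_3(\Gamma)$ from the non-planar ones, together with $2\beta_1(\Gamma)$ from the deleted-edge generators.

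The main obstacle, such as it is, is bookkeeping rather than any new idea: one must check that the doubling and the triconnected count fit together so that the relative exponent is precisely the stated one before the $-1$. I expect the subtlest point to be confirming that $\epsilon$ contributes $2N_3(\Gamma)+N'_3(\Gamma)$ in aggregate---that is, $2$ for each planar and $1$ for each non-planar triconnected component---and that the Betti-number contributions $\sum\beta_1(\widehat\Gamma_i)=\beta_1(\Gamma)+|V_2|$ telescope correctly with the $|V_2|$ copies of $\mathbb Z$ introduced by the repeated application of the decomposition lemma, exactly as in Theorem~\ref{thm:H1Bn}. Once that accounting is in place, the theorem follows from the single algebraic cancellation of one free $\mathbb Z$.
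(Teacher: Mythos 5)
Your proposal is correct and follows essentially the same route as the paper: the paper also assembles the relative formula $H_1(M_2\Gamma,(M_2\Gamma)^0)\cong\mathbb Z^{2N_1(2,\Gamma)+2N_2(\Gamma)+2N_3(\Gamma)+2\beta_1(\Gamma)+N'_3(\Gamma)}$ by repeating the decomposition argument of Theorem~\ref{thm:H1Bn} with doubled free $1$-cells and the pure-braid triconnected lemma, and then strips off one $\mathbb Z$ via $H_1(M_2\Gamma,(M_2\Gamma)^0)\cong H_1(P_2\Gamma)\oplus\mathbb Z$. Your bookkeeping of the doubling, the $2N_3(\Gamma)+N'_3(\Gamma)$ contribution from $\epsilon$, and the telescoping $\sum\beta_1(\widehat\Gamma_i)=\beta_1(\Gamma)+|V_2|$ matches the paper's intended argument exactly.
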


Since there are no critical $i$-cells for $i\ge3$ in $D_2\Gamma$, $H_*(P_2\Gamma)$ is torsion-free. So we can can compute $H_2(P_2\Gamma)$ as follows. Choose a maximal tree such that two end vertex of every deleted edge have valency 2. Then there are two critical 0-cells and $\beta_1(\Gamma)(\beta_1(\Gamma)-1)$ critical 2-cells and the number of critical 1-cells is equal to $\sum_{x\in V(\Gamma)}(\nu(x)-1)(\nu(x)-2)+2\beta_1(\Gamma)$. So we have the second Betti number of $P_2\Gamma$ as follows:
\begin{align*}
\beta_2(P_2\Gamma)=&\:2N_1(2,\Gamma)+2N_2(\Gamma)+2N_3(\Gamma)+N'_3(\Gamma)+
\beta_1(\Gamma)(\beta_1(\Gamma)-1)\\
&-\sum_{x\in V(\Gamma)}(\nu(x)-1)(\nu(x)-2).
\end{align*}
A formula for $\beta_1(P_2\Gamma)-\beta_2(P_2\Gamma)$ was given by Barnett and Farber in~\cite{BF}.

As a closing thought of this section, it is tempting to use Lyndon-Hochschild-Serre spectral sequence for $S_n=B_n\Gamma/P_n\Gamma$ to extract some information about $H_1(P_n\Gamma)$ via homologies of the other two groups. In fact, we have the exact sequence
$$H_2(B_n\Gamma)\to H_2(S_n)\to H_1(P_n\Gamma)_{S_n}\to H_1(B_n\Gamma)\to H_1(S_n)\to 0$$
where $H_1(P_n\Gamma)_{S_n}$ is isomorphic to $H_1(P_n\Gamma)/\mathcal IH_1(P_n\Gamma)$ as $\mathbb Z[S_n]$-modules and $\mathcal I$ is the kernel of the augmentation $\mathbb Z[S_n]\to \mathbb Z$. Even though the action on critical 1-cells by $S_n$ is clearly understood, the action on homology classes is not so clear without any information about the second boundary map.

\section{Applications and more characteristics of graph braid groups}\label{s:four}
In this section we first discuss consequences of the formulae obtained in the previous section. Then we develop a technology for graph braid groups themselves that is parallel to the technology successfully applied for the first homologies of graph braid groups. And we discover more characteristics of graph braid groups and pure braid groups beyond their homologies. These characteristics are defined by weakening the requirement for right-angled Artin groups.

\subsection{Planar and non-planar triconnected graphs}

Since we are not interested in trivial graphs such as a topological line segment of a topological circle, we assume $\Gamma$ has at least a vertex of of valency $\ge 3$ in this discussion.
For any 1-cut $x$ of valency $\ge 3$, $N(n,\Gamma,x)> 0$. Thus
$N_1(n,\Gamma)=0$ if and only if there is no 1-cut of valency $\ge 3$ if and only if $\Gamma$ is biconnected.
If $N_2(\Gamma)=0$ for a biconnected graph $\Gamma$, then $\mu(\{x,y\})=2$ for every 2-cut $\{x,y\}$.
If $\Gamma$ has multiple edges between vertices $x$ and $y$ after ignoring vertices with valency 2, then $\mu(\{x,y\})>2$ for some 2-cut $\{x,y\}$ and so $N_2(\Gamma)>0$. Thus if $N_2(\Gamma)=0$ for a biconnected graph, then $\Gamma$ is topologically simple.
If $N_3(\Gamma)=N_3'(\Gamma)=0$, then $\Gamma$ does not topologically contain the complete graph $K_4$ by the construction of triconnected graphs.
%Therefore if $N_2(\Gamma)$ is also zero then $\Gamma$ contains no $\Theta$-shape graphs.
If $N_1(n,\Gamma)=N_2(\Gamma)=0$ and $N_3(\Gamma)+N_3'(\Gamma)=1$, then $\Gamma$ is topologically simple and triconnected.

Barnett and Farber proved in \cite{BF} that if for a finite connected planar graph $\Gamma$ with no vertices of valency $\le2$ that is embedded in $\mathbb R^2$, the connected components $U_0,U_1,\cdots,U_r$ of the complement $\mathbb R^2-p(\Gamma)$ with the unbounded component $U_0$ satisfy
\begin{itemize}
\item[(i)] the closure of every domain $\bar U_{i>0}$ is contractible and $\bar U_0$ is homotopy equivalent to $S^1$,
\item[(ii)] for every $i,j\in\{1,\cdots,r\}$, $\bar U_i\cap\bar U_j$ is connected,
\end{itemize}
then $\beta_1(D_2\Gamma)=2\beta_1(\Gamma)+1$.

Condition (i) implies that $\Gamma$ has no 1-cut. Condition (ii) imply that either $\Gamma$ is the $\Theta$-shape graph if $|V(\Gamma)|=2$ or $\Gamma$ has neither multiple edges nor 2-cuts if $|V(\Gamma)|>2$. So the hypotheses imply that $\Gamma$ is either the $\Theta$-shape graph or a planar simple triconnected graph. Thus Theorem~\ref{thm:H1PB2} covers this result. Furthermore, for any planar graph $\Gamma$, $\beta_1(P_2\Gamma)=2\beta_1(\Gamma)+1$ if and only if $N_1(2,\Gamma)+N_2(\Gamma)+N_3(\Gamma)=1$ and $N_3'(\Gamma)=0$. There are three nonnegative solutions: $(N_1(2,\Gamma),N_2(\Gamma),N_3(\Gamma))=(1,0,0)$, $(0,1,0)$ and $(0,0,1)$.

In the case of $(1,0,0)$, $\Gamma$ has only one 1-cut vertex of valency 3. So $\Gamma$ is either the $Y$-shape tree or the $P$-shape graph. In the case of $(0,1,0)$, $\Gamma$ is biconnected and has only one 2-cut $\{x,y\}$ with $\mu(\{x,y\})=3$. So $\Gamma$ is the $\Theta$-shape graph. Finally, the solution $(0,0,1)$ implies that $\Gamma$ is topologically simple and triconnected. Thus for a connected planar graph $\Gamma$ with no vertices of valency $\le 2$, $\beta_1(P_2\Gamma)=2\beta_1(\Gamma)+1$ if and only if $\Gamma$ is either the $\Theta$-shape graph or a simple triconnected graph.
Note that we cannot remove the assumption of being planar because there is a counterexample given in Figure~\ref{fig:coun-ex}.

\begin{figure}[ht]
\centering
\includegraphics[height=2.5cm]{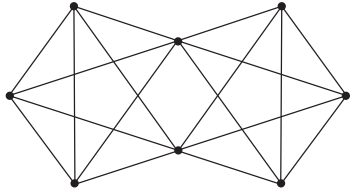}
\caption{A non-planar graph $\Gamma$ with $\beta_1(P_2\Gamma)=2\beta_1(\Gamma)+1$}
\label{fig:coun-ex}
\end{figure}

Farber and Hanbury proved in \cite{FH} that if for a graph $\Gamma$ with no vertices of valency $\le 2$, there exists a sequence $\Gamma_1,\Gamma_2,\cdots,\Gamma_r$ of graphs satisfying:
\begin{itemize}
\item[(i)] $\Gamma_1$ is either $K_5$ or $K_{3,3}$ and $\Gamma_r=\Gamma$.
\item[(ii)] For $1\le i \le r-1$, $\Gamma_{i+1}$ is obtained by adding an edge with ends $\{x,y\}$ to $\Gamma_i$ such that the complement $\Gamma_{i}-\{x,y\}$ is connected where $x$ and $y$ are points in $\Gamma_i$.
\end{itemize}
Then $\beta_1(D_2\Gamma)=2\beta_1(\Gamma)$.

The above construction obviously produces a non-planar, simple and triconnected graph $\Gamma$. Then $N_1(2,\Gamma)=N_2(\Gamma)=N_3(\Gamma)=0$ and $N_3'(\Gamma)=1$ and so Theorem~\ref{thm:H1PB2} contains this result. Moreover they conjectured that $\Gamma$ is non-planar and triconnected (This is equivalent to their hypothesis) if and only if $\beta_1(D_2\Gamma)=2\beta_1(\Gamma)$ and $H_1(P_2\Gamma)$ is torsion free. The same theorem also verifies this conjectures. Theorem~\ref{thm:H1PB2} implies that $\beta_1(P_2\Gamma)=2\beta_1(\Gamma)$ if and only if $2N_1(2,\Gamma)+2N_2(\Gamma)+2N_3(\Gamma)+N_3'(\Gamma)=1$. There is only one nonnegative solution $N_1(2,\Gamma)=N_2(\Gamma)=N_3(\Gamma)=0$ and $N_3'(\Gamma)=1$ for the equation. Thus $\beta_1(P_2\Gamma)=2\beta_1(\Gamma)$ if and only if the graph $\Gamma$ is non-planar, topologically simple and topologically triconnected.

\subsection{Graph braid groups and commutator-related groups}\label{ss41:GBG}

A group $G$ is \emph{commutator-related} if it has a finite presentation $\langle x_1,\cdots,x_n \mid r_1,\cdots, r_m\rangle$ such that each relator $r_j$ belongs to the commutator subgroup $[F,F]$ of the free group $F$ generated by $x_1,\ldots,x_n$.
We will prove that planar graph braid groups and pure graph 2-braid groups are commutator-related groups.

Since the abelianization of a given group $G$ is the first homology of $G$, we have the following.

\begin{pro}\label{pro:CRG}
Let $G$ be a group such that $H_1(G)\cong\mathbb Z^m$. If $G$ has a finite presentation with $m$-generators, then $G$ is commutator-related.
\end{pro}

Let $\Gamma$ be a planar graph. Since $UD_n\Gamma$ is a finite complex, $B_n\Gamma$ has a finite presentation. To prove that $B_n\Gamma$ is a commutator-related group, it is sufficient to show that there is a finite presentation with $m$ generators for $B_n\Gamma$ for $m=\beta_1(UD_n\Gamma)$.

The braid group $B_n\Gamma$ is given by the fundamental group of a Morse complex $UM_n\Gamma$ of $UD_n\Gamma$. Thus $B_n\Gamma$ has a presentation whose generators are critical 1-cells and whose relators are boundary words of critical 2-cells in terms of critical 1-cells. On the other hand, the computation using critical 1-cells and critical 2-cells in a Morse complex $M_n\Gamma$ of $D_n\Gamma$ does not give $P_n\Gamma$ since there are n! critical 0-cells and critical 1-cells between distinct critical 0-cells are also treated as generators. Instead it gives $\pi_1(M_n\Gamma/\sim)$ where $M_n\Gamma/\sim$ is the quotient obtained by identifying all critical 0-cells.

Even though discrete Morse theory can apply to $D_n\Gamma$ for any braid index $n$, we have not reached a level of sophistication enough to make good use due to obstacles explained in \S\ref{ss33:H1PB2}. For $n=2$, $\pi_1(M_2\Gamma/\sim)=P_2\Gamma*\mathbb Z$.  In fact $M_2\Gamma/\sim$ is homotopy equivalent to the wedge product of $M_2\Gamma$ and $S^1$ under a homotopy sliding one critical 0-cell to the other along a critical 1-cell and therefore
a presentation of $P_2\Gamma$ is obtained from that of $\pi_1(M_2\Gamma/\sim)$ by killing any one of critical 1-cells joining two 0-cells in the Morse complex $M_2\Gamma$, for example, a critical 1-cell of the form $A_k(\vec\delta_\ell)_{\rm id}$. Thus it is enough to show $\pi_1(M_2\Gamma/\sim)$ is a commutator-related group.

In order to rewrite a word in 1-cells of $UD_n\Gamma$ into an equivalent word in critical 1-cells, we use the rewriting homomorphism $\tilde r$ from the free group on 1-cells to the free group on critical 1-cells defined as follows: First define a homomorphism $r$ from the free group on $K_1$ to itself by $r(c)= 1$ if $c$ is collapsible, $r(c)= c$ if $c$ is critical, and $$r(c)=\{v\mbox{-}v_1,v_2,\ldots,v_{n-1},\iota(e)\} \{v,v_2,\ldots,v_{n-1},e\} \{v\mbox{-}v_1,v_2,\ldots,v_{n-1},\tau(e)\}^{-1}$$ if $c=\{v_1,v_2,\ldots,v_{n-1},e\}$ is redundant such that $v_1$ is the smallest unblocked vertex and $e$ is the edge in $c$. In fact, the abelian version of $r$ is the map $R$ defined in \S\ref{ss21:Bn}. Forman's discrete Morse theory in \cite{For} guarantees that there is a nonnegative integer $k$ such that $r^k(c)=r^{k+1}(c)$ for all 1-cells $c$. Let $\tilde r=r^k$, then for any 1-cell $c$, $\tilde r(c)$ is a word in critical 1-cells that is the image of $c$ under the quotient map defined by collapsing $UD_n\Gamma$ onto its Morse complex. We note that $k=0$ iff $c$ is critical, $k=1$ iff $c$ is collapsible, and $k\ge 2$ iff $c$ is redundant. By considering ordered $n$-tuples, we can similarly define $\tilde r$ from the free group on 1-cells of $D_n\Gamma$ to the free group on critical 1-cells of $D_n\Gamma$.

By rewriting the boundary word of a critical 2-cell in terms of critical 1-cells, it is possible to compute a presentation of $B_n\Gamma$ (or $\pi_1(D_n\Gamma/\sim)$, respectively) using a Morse complex of $UD_n\Gamma$ (or $D_n\Gamma$). However, the computation of $\tilde r$ is usually tedious and the following lemma somewhat shortens it.

\begin{lem}{\rm (Kim-Ko-Park \cite{KKP})}\label{lem:rewriting}
Let $c$ be a redundant 1-cell and $v$ be a unblocked vertex. Suppose that for the edge $e$ starting from $v$, there is no vertex $w$ that is either in $c$ or an end vertex of an edge in $c$ and satisfies $\tau(e)< w<\iota(e)$. Then $\tilde r(c)=\tilde r(V_e(c))$ where $V_e(c)$ denotes the $1$-cell obtained from $c$ by replacing $\iota(e)$ by $\tau(e)$.
\end{lem}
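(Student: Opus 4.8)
The plan is to mimic the proof of the abelian counterpart, Lemma~\ref{lem:naturalmove}, keeping track of words in the free group on critical 1-cells rather than of homology classes; the template is the already-established ordered analogue, Lemma~\ref{lem:naturalmoveD2}, whose proof proceeds by induction on the number of iterations of the rewriting map needed to stabilize. The basic mechanism is that $\tilde r=r^k$ satisfies $\tilde r\circ r=\tilde r$, so it suffices to expand $r(c)$ once and show that the extraneous factors are annihilated by $\tilde r$.

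First I would reduce to the case in which $v$ is the \emph{smallest} unblocked vertex of $c$. If $v$ is not smallest, let $u<v$ be the smallest unblocked vertex; the betweenness hypothesis forces $u<\tau(e)$ (if $u=\tau(e)$ then $v$ would be blocked), so moving $u$ down its tree edge neither blocks $v$ nor destroys the betweenness condition for $v$, and since the two moves act on disjoint entries they commute. An induction on the number of unblocked vertices below $v$ then lets me assume $v$ is smallest. In that case the rewriting rule gives directly
\[
r(c)=X\cdot V_e(c)\cdot Z^{-1},
\]
where, writing $p$ for the unique edge of the $1$-cell $c$, the cells $X$ and $Z$ are the two side faces of the collapsible square $W(c)$: both contain the tree edge $e$ together with the remaining vertices of $c$, and they differ only in that $X$ carries the vertex $\iota(p)$ while $Z$ carries $\tau(p)$. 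Applying the homomorphism $\tilde r$ and using $\tilde r\circ r=\tilde r$, the claim $\tilde r(c)=\tilde r(V_e(c))$ reduces to the single identity $\tilde r(X)=\tilde r(Z)$.

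To prove $\tilde r(X)=\tilde r(Z)$ I would reduce the two distinguished vertices $\iota(p)$ and $\tau(p)$ down their tree paths toward the base. The hypothesis that neither end of $p$ lies strictly between $\tau(e)$ and $\iota(e)$ guarantees that these reductions never interact with the edge $e$, so each elementary step is a smallest-vertex move and preserves $\tilde r$; since $X$ and $Z$ stabilize in strictly fewer steps than $c$, the induction hypothesis legitimises each step. The paths $\gamma_{\iota(p)}$ and $\gamma_{\tau(p)}$ merge at $\iota(p)\wedge\tau(p)$, and below the merge point the two reduction processes coincide, so $X$ and $Z$ are carried to a common word; hence $\tilde r(X)=\tilde r(Z)$. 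This mirrors the ``common reduction'' argument used for the side faces in the proof of Lemma~\ref{lem:naturalmove-2}.

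The main obstacle will be the case in which the edge $p$ is a \emph{deleted} edge. Then $\iota(p)\to\tau(p)$ is not a single tree move, so $X$ and $Z$ are not related by one application of $V$, and I cannot simply invoke the lemma inductively on the pair $(X,Z)$; instead I must run the two descents independently and verify that they genuinely produce the \emph{same} word (not merely conjugate words, nor words agreeing only in homology), which is where the non-commutativity makes the bookkeeping delicate. Controlling the order in which the vertices in the common subtree below $\iota(p)\wedge\tau(p)$ are processed, exactly as in the finite-set argument of Lemma~\ref{lem:naturalmove-2}, is the crux that must be executed with care.
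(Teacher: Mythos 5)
Your central reduction is where the argument breaks, and it is precisely the point at which the noncommutative statement differs from its homological shadow. From $r(c)=X\cdot V_e(c)\cdot Z^{-1}$ and $\tilde r\circ r=\tilde r$ you correctly get $\tilde r(c)=\tilde r(X)\,\tilde r(V_e(c))\,\tilde r(Z)^{-1}$, but this does \emph{not} reduce the lemma to the identity $\tilde r(X)=\tilde r(Z)$: if $\tilde r(X)=\tilde r(Z)=w$, you only obtain $\tilde r(c)=w\,\tilde r(V_e(c))\,w^{-1}$, a \emph{conjugate} of $\tilde r(V_e(c))$. In the free group on critical $1$-cells, equality would force $w$ to commute with $\tilde r(V_e(c))$, which for free groups means both are powers of a common element; nothing in your setup grants this, and the only way it holds uniformly is $w=1$. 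The cancellation $\widetilde R(c_{\iota(p)})-\widetilde R(c_{\tau(p)})=0$ that finishes the proof of the homological analogue, Lemma~\ref{lem:naturalmove-2}, is available only because the target of $\widetilde R$ is abelian; you have imported it into a setting where it is invalid. What a correct proof must establish is the stronger statement $\tilde r(X)=\tilde r(Z)=1$: by the betweenness hypothesis the tree edge $e$ is order-respecting in both side cells, and one shows by a separate induction that each side cell rewrites to the \emph{empty} word. This is not automatic, because $X$ and $Z$ need not be collapsible: a vertex of $c$ whose blocking was witnessed only by $\tau(p)$ becomes unblocked in $X$ (symmetrically, one blocked only by $\iota(p)$ becomes unblocked in $Z$), so the side cells can be redundant, and triviality of their images must be carried as an extra clause in the induction.

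Your fallback argument for $\tilde r(X)=\tilde r(Z)$ would not rescue this, and would not even prove that identity as stated. The rewriting map moves only the \emph{smallest} unblocked vertex of a cell, which in $X$ or $Z$ is typically one of the freshly unblocked small vertices just described, not $\iota(p)$ or $\tau(p)$; moreover the descents of $\iota(p)$ and $\tau(p)$ need not reach the merge point $\iota(p)\wedge\tau(p)$ at all: when the tree path from $\iota(p)$ to the base passes through $\tau(e)$, the descending vertex becomes blocked one tree edge above $\tau(e)$ (since $\tau(e)$ is an end of the edge $e$ sitting in the cell), while the descent inside $Z$ stops elsewhere, so the two processes terminate at \emph{different} collapsible cells. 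The correct conclusion --- both terminal cells are collapsible, hence both words are empty --- is exactly the statement your ``common word'' argument is not aimed at. A further repair: the induction measure in your first step (``number of unblocked vertices below $v$'') is not monotone, since moving $u$ down can unblock vertices that $u$ itself was blocking; the induction must instead run over the well-founded order furnished by acyclicity of the Morse matching (equivalently, the stabilization exponent of $r$), applied simultaneously to all three factors of $r(c)$. For calibration: the paper offers no proof of this lemma to compare against --- it quotes it from \cite{KKP} --- but any correct proof must contain the triviality statement $\tilde r(X)=\tilde r(Z)=1$, which is the real content missing from your proposal.
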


\begin{exa}\label{ex:T4n3}
We show that $B_3\Theta_4$ and $P_3\Theta_4$ are surface groups. These will serve counterexamples later.
\end{exa}
\begin{figure}[ht]
\psfrag{0}{\small0}
\psfrag{1}{\small1}
\psfrag{3}{\small3}
\psfrag{4}{\small4}
\psfrag{5}{\small5}
\psfrag{A}{\small$2(=A)$}
\psfrag{d1}{\small$d_1$}
\psfrag{d2}{\small$d_2$}
\psfrag{d3}{\small$d_3$}
\centering
\includegraphics[height=2.5cm]{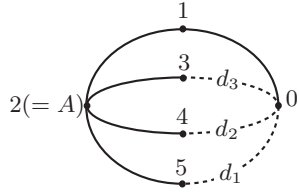}
\caption{$\Theta_4$ with the maximal tree and the order}
\label{fig:T4n3}
\end{figure}

Choose a maximal tree and an order on vertices as Figure~\ref{fig:T4n3}. First we compute $B_3\Theta_4$. There is eight critical 1-cells $A_2(1,0,0)$, $A_2(1,0,1)$, $A_3(1,0,0)$, $A_3(0,1,0)$, $A_3(1,1,0)$, $d_1$, $d_2$, $d_3$ and three critical 2-cells $A_2(1,0,0)\cup d_1$, $A_3(1,0,0)\cup d_2$, $A_3(0,1,0)\cup d_3$.
Using Lemma~\ref{lem:rewriting}, relators are given as follows:
\begin{align*}
\tilde r\circ\partial_w(A_2(1,0,0)\cup d_1)=&\tilde r(\{2\mbox{-}4,3,5\}\{d_1,2,3\}\{2\mbox{-}4,0,3\}^{-1}\{d_1,3,4\}^{-1})\\
=&\{2\mbox{-}4,3,5\}\{d_1,1,2\}\{2\mbox{-}4,0,3\}^{-1}\{d_1,1,2\}^{-1}\\
=&A_2(1,0,1)\cdot d_1\cdot A_2(1,0,0)^{-1}\cdot d_1^{-1}.
\end{align*}
Similarly,
$$\tilde r\circ\partial_w(A_3(1,0,0)\cup d_2)=A_3(1,1,0)\cdot d_2\cdot A_3(1,0,0)^{-1}\cdot d_2^{-1}\cdot A_3(0,1,0)^{-1}$$
and
\begin{align*}
\tilde r\circ\partial_w(A_3(0,1,0)\cup d_3)&\\
=A_3(1,1,0)\cdot A_2(1,0,&0)\cdot d_3\cdot A_3(0,1,0)^{-1}\cdot d_3^{-1}\cdot A_3(1,0,0)^{-1}\cdot A_2(1,0,1)^{-1}.
\end{align*}
We perform Tietze transformations that add three generators and relations as follows:
\begin{align*}
D_2=&A_3(0,1,0)\cdot d_2\cdot A_2(1,0,0)^{-1}\cdot A_3(0,1,0)^{-1},\ D_3=A_2(1,0,0)\cdot A_3(1,0,0)\cdot d_3\\
B\ =&A_3(0,1,0)\cdot A_2(1,0,0)\cdot A_3(1,0,0)\cdot A_2(1,0,0)^{-1}\cdot A_3(0,1,0)^{-1}
\end{align*}
Then we eliminate $A_2(1,0,1)$, $A_3(1,1,0)$, $A_3(1,0,0)$, $d_2$ and $d_3$. Thus $B_3\Theta_4$ has a presentation with six generators and one relator as follows:
$$\langle A_2(1,0,0),A_3(0,1,0),B, d_1,D_2,D_3\ |\ [d_1,A_2(1,0,0)][D_3,A_3(0,1,0)][B,D_2]\rangle.$$
This is a fundamental group of an orientable closed surface of genus 3.

In fact, $UD_3\Theta_4$ is an orientable closed surface of genus $3$. So we can see that its sixfold cover $D_3\Theta_4$ is an orientable surface of genus $13$ by considering Euler characteristics.
\qed

The rewriting algorithm seems exponential in the size of graphs. Fortunately, we need not precisely compute the boundary word of a critical 2-cell since we are only interested in the number of generators and how to eliminate generators via Tietze transformations. We use the technique developed in \S\ref{ss31:PMatrix} for $UD_n\Gamma$ and the parallel technique developed in \S\ref{ss33:H1PB2} for $D_2\Gamma$. Recall that the orders on critical 1-cells an on critical 2-cells was important ingredients for the techniques. Using the presentation matrices for $H_1(B_n\Gamma)$ or $H_1(D_2\Gamma,(D_2\Gamma)^0)$ over bases of 2-cells and 1-cells ordered reversely, critical 1-cells were classified into pivotal, free, and separating 1-cells.
\begin{lem}\label{lem:Ng-nonpivotal}{\rm [Elimination of Pivotal 1-Cells]}
Assume that $\Gamma$ has a maximal tree and an order according to Lemma~\ref{lem:treeandorder}. Then $B_n\Gamma$ and $\pi_1(D_2\Gamma/\sim)$ are generated by free and separating 1-cells.
\end{lem}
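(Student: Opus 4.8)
The plan is to start from the standard presentation of $B_n\Gamma$ coming from the Morse complex $UM_n\Gamma$: the generators are all critical $1$-cells and the relators are the rewritten boundary words $\tilde r\circ\partial_w(c)$, one for each critical $2$-cell $c$. (For $\pi_1(D_2\Gamma/\sim)$ one uses the analogous presentation from $M_2\Gamma$, whose generators are the critical $1$-cells $c_\sigma$ and whose relators are the rewritten boundary words of the critical $2$-cells $(d\cup d')_\sigma$.) The goal is then purely group-theoretic: eliminate every pivotal $1$-cell from this generating set by a sequence of Tietze transformations, leaving only free and separating $1$-cells.

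The engine is a noncommutative upgrade of Lemma~\ref{lem:but}. First I would establish that for a \emph{pivotal} critical $2$-cell $c$, the rewritten boundary word $\tilde r\circ\partial_w(c)$ contains its associated pivotal $1$-cell $p_c$ --- the unique largest critical $1$-cell in $\widetilde\partial(c)$ named by Lemma~\ref{lem:but} --- exactly once, and that every other letter occurring in the word is strictly smaller than $p_c$ in the order on critical $1$-cells fixed in \S\ref{ss31:PMatrix}. Granting this, the relation $\tilde r\circ\partial_w(c)=1$ can be solved for $p_c$, expressing it as a word in critical $1$-cells all smaller than $p_c$; this is exactly a Tietze transformation deleting the generator $p_c$ together with the relator. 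The computation of $\tilde r$ via Lemma~\ref{lem:rewriting}, which is the noncommutative counterpart of the reductions behind the Boundary Formulas, is what makes the leading letter transparent.

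Next I would organize the eliminations so that they do not interfere. By the definition of pivotal, each pivotal $1$-cell is the leading term of exactly one pivotal $2$-cell, so $c\mapsto p_c$ is a bijection between pivotal $2$-cells and pivotal $1$-cells. Processing the pivotal $2$-cells in \emph{decreasing} order of their leading terms, at each step the substitution for $p_c$ introduces only $1$-cells strictly below $p_c$, hence never resurrects an already-eliminated (larger) pivotal generator; this is precisely the group-level analogue of the block-upper-triangular structure used to locate pivots in Theorem~\ref{thm:matrix}. After all pivotal $2$-cells have been processed, every pivotal $1$-cell has been removed and the surviving generators are exactly the free and separating $1$-cells. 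The ordered case $\pi_1(D_2\Gamma/\sim)$ runs identically, substituting Lemma~\ref{lem:boundary-order} and the $4$-tuple/$7$-tuple orders for their unordered analogues.

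The main obstacle is the noncommutative bookkeeping in the first step: the abelianized statement (Lemma~\ref{lem:but}) only guarantees that $p_c$ occurs with total exponent $\pm1$, whereas a clean Tietze elimination wants $p_c$ to occur as a single letter with the rest of the word independent of $p_c$. I expect to have to trace the rewriting $\tilde r$ through each of the boundary types $A_k(\vec a)\cup d$ and $d(\vec a)\cup d'$ (and, for $D_2\Gamma$, the cases of Lemma~\ref{lem:boundary-order}) to confirm that the single largest letter is never split or cancelled during rewriting. The order on critical cells was designed precisely to force this triangularity, so the verification should be a finite, case-based check parallel to the proofs of Lemmas~\ref{lem:boundary-1} and~\ref{lem:boundary-2}, rather than anything genuinely new.
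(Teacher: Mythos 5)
Your strategy coincides with the paper's: take the Morse presentation, prove a noncommutative upgrade of Lemma~\ref{lem:but} for pivotal $2$-cells, and eliminate pivotal $1$-cells by Tietze transformations in decreasing order. The gap is in your key claim --- that in $\tilde r\circ\partial_w(c_2)$ every letter other than the leading pivotal $1$-cell is strictly smaller \emph{in the order of \S\ref{ss31:PMatrix}} --- which is false, and its failure is exactly the point the paper has to work around. Concretely, take a pivotal $2$-cell $c_2=A_k(\vec a)\cup d'$ with leading term $c_1=A_k(\vec a+\vec\delta_\ell)$. The computation in the proof of Lemma~\ref{lem:boundary-1} shows that both $\widetilde R(\dot A(\vec a)\cup d')$ and $\widetilde R(A(\vec a+\vec\delta_k)\cup d')$ contain the summand $d'\bigl((|\vec a|+1)\vec\delta_m\bigr)$ with $m=g(\tau(d'),A)$: these two occurrences cancel in the abelianization (which is why they are invisible in Lemma~\ref{lem:boundary-1}, hence in Lemma~\ref{lem:but}), but they survive as honest letters of the free-group word $\tilde r\circ\partial_w(c_2)$, where they are in general not adjacent and do not cancel. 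Such a letter has size $|\vec a|+1=s(c_1)$ and contains a deleted edge, so under the order of \S\ref{ss31:PMatrix} (sizes compared first, then every deleted edge ranked above every tree edge) it is strictly \emph{larger} than $c_1$; it can moreover itself be pivotal. So the triangular elimination scheme, run with your claim and your order, breaks on precisely these letters.

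The paper's proof makes two adjustments, both necessary, and your proposal has neither. First, for comparing pivotal cells it introduces a modified edge order, by triples $(\tau(e),t(e),\iota(e))$ with the terminal vertex compared \emph{before} the deleted-or-tree status $t(e)$; this makes the troublesome letters above small, since $\tau(d')<A$. Second, it weakens the claim to: $c_1$ occurs exactly once in $\tilde r\circ\partial_w(c_2)$ and is the largest \emph{pivotal} $1$-cell occurring there. This restriction is also unavoidable: under the terminal-vertex-first order, letters of the form $B_{k'}(\vec\delta_{\ell'})$ with $\tau(B_{k'})>\tau(d)$ (for instance the $\wedge(d,d')$-type letters arising when $s(c_2)=0$) now outrank $c_1=d(\vec\delta_\ell)$, and they do occur in the words; they are harmless only because, having size $1$ and a tree edge, they are never pivotal by Lemma~\ref{cor:pivotal1cell}, hence are kept as generators and never need to be solved for. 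The weakened claim still supports the decreasing induction, since only pivotal letters are eliminated. So the architecture of your argument is the right one, but the ``finite, case-based check'' you defer cannot close as you set it up; finding the correct order and the correct class of letters to compare is the substantive content of the paper's proof.
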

\begin{proof}
There is no difference between $B_n\Gamma$ and $\pi_1(D_2\Gamma/\sim)$ in our argument. We discuss only $B_n\Gamma$. The proof for $\pi_1(D_2\Gamma/\sim)$ is exactly the same except the fact that permutations are used as subscripts to express critical cells.

Consider pairs $(c_2,c_1)$ of a pivotal 2-cell $c_2$ that produces a pivotal 1-cell $c_1$. Then either $s(c_1)\ge2$ or $c_1$ is of the form $d(\vec\delta_\ell)$. In \S\ref{ss31:PMatrix}, a pivotal 1-cell $c_1$ is the largest summand of $\widetilde\partial(c_2)$ and so is not a summand of $\widetilde\partial(c'_2)$ for a pivotal 2-cell $c'_2<c_2$. We want to obtain the corresponding noncommutative version. 

We need to slightly modify the order on critical 1-cells when only pivotal 1-cells are compared. For an edge $e$ in $\Gamma$, set $t(e)=0$ if $e$ is in the maximal tree $T$ and $t(e)=1$ otherwise. Declare $e>e'$ if $(\tau(e),t(e),\iota(e))>(\tau(e'),t(e'),\iota(e'))$. The set of pivotal 1-cells are linearly ordered by the triple $(s(c),e,\vec a)$ under the modified order on edges. We modified the order on the set of pivotal 2-cells accordingly, that is, $c_2>c'_2$ for pivotal 2-cells $c_2$ and $c'_2$ if $c_1>c'_1$ when $(c_2,c_1)$ and $(c'_2,c'_1)$ are pairs of a pivotal 2-cell and the corresponding pivotal 1-cell.

Let $\partial_w(c)$ denote the boundary word of a given pivotal 2-cell $c$. We claim the following:
\begin{itemize}
\item[(a)] $c_1$ appears in the word $\tilde r\circ\partial_w(c_2)$ exactly once (as a letter or the inverse of a letter).
\item[(b)] Under the order defined above, $c_1$ is the largest pivotal 1-cells appeared in $\tilde r\circ\partial_w(c_2)$
\end{itemize}

Note that (b) implies that $c_1$ does not appear in $\tilde r\circ\partial_w(c'_2)$ for any pivotal 2-cell $c'_2<c_2$.
Then via Tietze transformations, we can inductively eliminate pivotal 1-cells from the set of generators given by critical 1-cells in $UD_n\Gamma$. Thus $B_n\Gamma$ is generated by free and separating 1-cells. Note that it is easy to perform inductive eliminations of pivotal 1-cells in decreasing order because no substitution is required.

To show our claim, we have to analyze each term in $\partial_w(c_2)$ due to the lack of luxury such as Lemmas~\ref{lem:boundary-1} and \ref{lem:boundary-2}. First consider the image of a redundant 1-cell under $\tilde r$. Let $c=\{e,v_1,\cdots,v_{n-1}\}$ be an 1-cell. Repeated applications of Lemma~\ref{lem:rewriting} imply that for any critical 1-cell $c'=\{e',v'_1,\cdots,v'_{n-1}\}$ appearing in $\tilde r(c)$, the vertex $\tau(e')$ is of valency $\ge 3$ in $\Gamma$ and of the form $v_i\wedge v_j$ or $v_i\wedge\tau(e)$ or $v_i\wedge \iota(e)$ and moreover $s(c')$ is less than or equal to the number of vertices that do not lie on the 0-th branch of $\tau(e')$ among $v_1,\ldots,v_{n-1}$, $\iota(e)$, and $\tau(e)$. By Lemma~\ref{lem:but}, if $c'$ contains a deleted edge, then $c$ also contains a deleted edge and $c'$ appears only once in $\tilde r(c)$. And the terminal vertex of the edge in $c_1$ is the larger one between terminal vertices of two edges in $c_2$.

Since $c_2$ is pivotal, the edge in $c_2$ with the smaller terminal vertex blocks no vertices (see the proof of Lemma~\ref{lem:rows}), there are two possibilities for $\partial_w(c_2)$ as follows:
\begin{align*}
\partial_w(A_k(\vec a)\cup d')&=A_k(\vec a)\cup\iota(d')\cdot d'\cup\dot A(\vec a)\cdot\{A_k(\vec a)\cup\tau(d')\}^{-1}\cdot\{d'\cup A(\vec a+\vec\delta_k)\}^{-1}\\
\partial_w(d(\vec a)\cup d')&=d(\vec a)\cup\iota(d')\cdot d'\cup\dot A(\vec a)\cdot\{d(\vec a)\cup\tau(d')\}^{-1}\cdot\{d'\cup A(\vec a)\cup\iota(d)\}^{-1}
\end{align*}
where $A=\tau(d)$ and $k=g(A,\iota(d))$.
Let $c$ be a redundant 1-cell in the above boundary words, $e$ be the edge in $c$, and $v$ be a vertex of valency$\ge 3$ in $\Gamma$ other than the base vertex 0. Then the number of vertices that do not lie on the 0-th branch of $v$ among vertices in $c$ and end vertices of $e$ is less than or equal to $s(c_2)+1$.

In the case of $c_2=A_k(\vec a)\cup d'$, the corresponding pivotal 1-cell $c_1$ is $A_k(\vec a+\vec\delta_{g(A,\iota(d'))})$ by Lemma~\ref{lem:but}. Repeated applications of  Lemma~\ref{lem:rewriting} implies $\tilde r(A_k(\vec a)\cup\iota(d'))=c_1$. Consider other three redundant 1-cells. Since $s(c_1)=s(c_2)+1$ and $\tau(d')\wedge A=\tau(d')<A$, the words $\tilde r(d'\cup\dot A(\vec a))$ and $\tilde r(A_k(\vec a)\cup\tau(d'))$ contain no pivotal 1-cells $>c_1$. Since $d'\cup A(\vec a+\vec\delta_k)$ contains a vertex $<\iota(A_k)$, $a_i\ge1$ for some $i<k$, that is, $p(\vec a)<k$. If $c=A_{p(\vec a)}(\vec a+\vec\delta_k-\vec\delta_{p(\vec a)}+\vec\delta_{g(A,\iota(d')})$ is pivotal then $c<c_1$. This imply that $\tilde r(d'\cup A(\vec a))$ contains no pivotal 1-cells $>c_1$. We are done.

In the case of $c_2=d(\vec a)\cup d'$, $c_1=d(\vec a+\vec\delta_{g(\tau(d),\iota(d'))})$ by Lemma~\ref{lem:but} and so (a) is true since $c_1$ contains a deleted edge. Both words $\tilde r(d'\cup\dot A(\vec a))$ and $\tilde r(d(\vec a)\cup\tau(d'))$ contain no pivotal 1-cell $>c_1$ by the same argument as for $A_k(\vec a)\cup d'$.
For any vertex $v>\tau(d)$ of valency $\ge 3$ in $\Gamma$, there is only one vertex in $d(\vec a)\cup\iota(d')$ that do not lie on the 0-th branch of $v$. If $c'$ is a critical 1-cell in $\tilde r(d(\vec a)\cup\iota(d'))$ such that the terminal vertex of the edge in $c$ is larger than $\tau(d)$, then $s(c')=1$ and so it is a critical 1-cell of the form $B_{k'}(\vec\delta_{\ell'})$ and so $c'$ is not pivotal. Similarly $\tilde r(d'\cup A(\vec a)\cup\iota(d))$ contains no pivotal 1-cell $>c_1$. This completes the proof.
\end{proof}

\begin{lem}\label{lem:Ngenerators}{\rm [Fewest Generators]}
$B_n\Gamma$ ($\pi_1(D_2\Gamma/\sim)$, respectively) has a presentation over $m$ generators
for the rank $m$ of $H_1(UD_n\Gamma;\mathbb Z_2)$ ($H_1(D_2\Gamma, (D_2\Gamma)^0)$, respectively).
\end{lem}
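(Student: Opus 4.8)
The plan is to start from the presentation of $B_n\Gamma$ (resp. $\pi_1(D_2\Gamma/\sim)$) produced by the Morse complex, in which the generators are the critical $1$-cells and the relators are the rewritten boundary words of the critical $2$-cells, and to cut the generating set down to exactly $m$ by two successive rounds of Tietze transformations. By Lemma~\ref{lem:Ng-nonpivotal} the first round has already been carried out: after eliminating every pivotal $1$-cell, the group is generated by the free and separating $1$-cells, and the surviving relators are (up to trivial ones that can be discarded) the boundary words of the critical $2$-cells of the form $d\cup d'$, whose abelianizations are the type-(3) rows $\pm(\wedge(d,d')\pm\wedge(d,d''))$ of Theorem~\ref{thm:matrix}. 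Thus it remains only to eliminate the redundant separating generators and to verify that the number which survives is exactly $m$.

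First I would pin down $m$ homologically. Since $UD_n\Gamma$ is connected, $H_0$ is torsion free, so the universal coefficient theorem gives $m=\dim_{\mathbb Z_2}H_1(UD_n\Gamma;\mathbb Z_2)=\dim_{\mathbb Z_2}\operatorname{coker}(M\otimes\mathbb Z_2)$, where $M$ is the matrix of $\widetilde\partial$ over the critical cells. Putting $M$ into the form of Theorem~\ref{thm:matrix} by integral (hence mod-$2$-invertible) row operations, the $P$ pivotal rows have $\pm1$ pivots in distinct pivotal columns while the remaining nonzero rows constitute the undetermined block $B$ supported on the separating columns; hence $\operatorname{rank}_{\mathbb Z_2}(M\bmod 2)=P+\operatorname{rank}_{\mathbb Z_2}(B\bmod 2)$ and, writing $F$ and $S$ for the numbers of free and separating $1$-cells, $m=F+S-\operatorname{rank}_{\mathbb Z_2}(B\bmod 2)$.

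The decisive computation is the $\mathbb Z_2$-rank of $B$. I would form the graph $H$ whose vertices are the separating $1$-cells and whose edges are the relations of Lemma~\ref{cor:boundary2}(3), each edge joining the two separating cells $\wedge(d,d')$ and $\wedge(d,d'')$ it involves (an edge is omitted when these coincide, since then the row vanishes mod $2$). Modulo $2$ every type-(3) row is $e_{\wedge(d,d')}+e_{\wedge(d,d'')}$, so $B\bmod 2$ is precisely the unsigned vertex--edge incidence matrix of $H$. Its left null space over $\mathbb Z_2$ consists of the vectors that are constant on each connected component, so $\operatorname{rank}_{\mathbb Z_2}(B\bmod 2)=S-c(H)$ with $c(H)$ the number of components; consequently $m=F+c(H)$. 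Here the signs of the two nonzero entries---whose disagreement for planar graphs is exactly what makes the integral cokernel torsion free---are invisible mod $2$, which is why the generator count is governed only by $c(H)$.

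Finally I would realize this count group-theoretically. Each type-(3) relator is a word in which the two separating generators $\wedge(d,d')$ and $\wedge(d,d'')$ each occur with total exponent $\pm1$; this is the non-abelian refinement that the rewriting map $\tilde r$ supplies, verified exactly as in the claims (a),(b) of the proof of Lemma~\ref{lem:Ng-nonpivotal} through Lemma~\ref{lem:rewriting}. Such a relator can therefore be used in a Tietze transformation to express one of the two generators as a word in the remaining generators and to remove it along with that generator. Choosing a spanning forest of $H$ and processing its edges removes precisely $S-c(H)$ separating generators, while the edges outside the forest (together with the omitted coincident relations) survive as relators among the remaining generators but introduce no new ones; what is left is a finite presentation on the $F$ free generators and one separating generator per component of $H$, i.e.\ on $F+c(H)=m$ generators. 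The identical argument, run with Theorem~\ref{thm:matrix-D2} and Lemmas~\ref{lem:naturalmoveD2}--\ref{cor:boundary-order} in place of their unordered analogues and with permutation subscripts carried along, gives the statement for $\pi_1(D_2\Gamma/\sim)$ and hence for $H_1(D_2\Gamma,(D_2\Gamma)^0)$. The one genuinely non-routine point---and the step I expect to absorb most of the work---is this non-abelian bookkeeping: guaranteeing that in each type-(3) relator every separating generator occurs exactly once, so that the eliminations are legitimate and never reintroduce a previously removed generator. This is precisely where one must track the rewriting $\tilde r$ rather than its abelianization $\widetilde R$.
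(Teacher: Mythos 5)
Your proposal reproduces the paper's proof in structure: start from the generating set of free and separating $1$-cells supplied by Lemma~\ref{lem:Ng-nonpivotal}, form the graph whose vertices are the separating $1$-cells and whose edges are the type-(3) relators coming from pairs $d\cup d'$, $d\cup d''$ (Lemma~\ref{cor:boundary2}(3)), conclude that the achievable number of generators is (number of free $1$-cells) plus (number of components of this graph), match that count with the $\mathbb{Z}_2$-rank of $H_1$, and realize the reduction by Tietze eliminations along the graph. Your explicit mod-$2$ incidence-matrix computation of the rank of the undetermined block is a cleaner phrasing of what the paper asserts directly, and the ordered case is handled by both in the same way.

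The one place where your plan is genuinely under-specified---and where the paper does real work---is the elimination scheme. The invariant you propose to verify, that \emph{every} separating generator occurs exactly once in each type-(3) relator, is stronger than what is true: the paper only establishes that the two endpoint cells $\wedge(d,d')$ and $\wedge(d,d'')$ occur exactly once and are the \emph{largest} separating $1$-cells in the word $w(d,d',d'')$; other, smaller separating $1$-cells may occur several times with exponent sum zero (this is all that properties (ii) and (iii) in the paper's proof give). Consequently, processing an arbitrary spanning forest can fail: solving one edge label for an endpoint and substituting the resulting word into another label can multiply the occurrences of a still-present generator there (the danger is not reintroducing a removed generator, as you say, but ruining solvability for a remaining one). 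The paper's remedy is precisely the linear order on separating $1$-cells: always eliminate the smallest remaining vertex, using the edge to its smallest neighbor; since nothing is smaller than the vertex being eliminated, the label being solved contains no separating $1$-cells other than its two endpoints, and one checks that properties (i)--(iii) persist after the substitutions. So your route is the paper's route, but the smallest-first ordering discipline is a needed ingredient of the induction, not an optional bookkeeping convenience.
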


\begin{proof}
We discuss only $UD_n\Gamma$. The proof for $D_2\Gamma/\sim$ is essentially the same.
Lemma~\ref{lem:Ng-nonpivotal} gives a presentation for $B_n\Gamma$ over free and separating 1-cells. Since each free 1-cell contributes to the rank of the first homology, we leave them. To consider separating 1-cell, let $d\cup d'$ be a pivotal 2-cell whose boundary word contains a pivotal 1-cell of the form $d(\vec\delta_m)$ with $g(\tau(d),\iota(d))=m$  and $d\cup d''$ be another critical 2-cell whose boundary word also contains $d(\vec\delta_m)$ as the largest critical 1-cell. Recall that the row corresponds to the difference of the two critical 2-cells $d\cup d'$ and $d\cup d''$ consists of two nonnegative entries $\pm 1$ that correspond to separating 1-cells $\mbox{$\wedge$}(d,d')$ and $\mbox{$\wedge$}(d,d'')$ in a presentation matrix for $H_1(B_n\Gamma)$. For the group presentation, this can be done by the Tietze transformation that eliminates the pivotal 1-cell $d(\vec\delta_m)$. After the elimination, a new relator $w(d,d',d'')$ is obtained by a substitution from the two boundary words. And $w(d,d',d'')$ contains separating 1-cells $\mbox{$\wedge$}(d,d')$ and $\mbox{$\wedge$}(d,d'')$. Furthermore, since
$$\tilde r\partial_w(d\cup d')=\tilde r(d'\cup\iota(d)\cdot d\cup\tau(d')\cdot\{d'\cup\tau(d)\}^{-1}\cdot\{d\cup\iota(d')\}^{-1}),$$
the terminal vertex of the edge in a critical 1-cell other than $\mbox{$\wedge$}(d,d')$ in $\tilde r\partial_w(d\cup d')$ is $\le\tau(d)$ by a similar argument as in the proof of Lemma~\ref{lem:Ng-nonpivotal}. Thus $\mbox{$\wedge$}(d,d')$ and $\mbox{$\wedge$}(d,d'')$ are greater than any other critical 1-cell of the form $A_k(\vec\delta_\ell)$ and so they are greater than other separating 1-cells in $w(d,d',d')$. Moreover, the exponent sum of each generator other than $\mbox{$\wedge$}(d,d')$ and $\mbox{$\wedge$}(d,d'')$ in $w(d,d',d'')$ is zero.

By our setup, if two separating 1-cells $\mbox{$\wedge$}(d,d')$ and $\mbox{$\wedge$}(d,d'')$ are homologous, then (i) there is a relator $w(d,d',d'')$ that contains each of them once, (ii) any other separating 1-cells in $w(d,d',d'')$ is less than them, and (iii) the exponent sum of each generator other than them is 0. Clearly the converse is also true.
Consider a labeled graph $G$ in which separating 1-cells are vertices and there are edges labeled by $w(d,d',d'')$ between two vertices $\mbox{$\wedge$}(d,d')$ and $\mbox{$\wedge$}(d,d'')$. The number of connected components of $G$ is exactly the number of homology classes of separating 1-cells. We are done if each connected component becomes a graph with one vertex (and loops) via inductive edge contractions.

Starting from the vertex $\mbox{$\wedge$}(d,d')$ that is the smallest separating 1-cell in $G$, we eliminate $\mbox{$\wedge$}(d,d')$ via a Tietze transformation as follows: Choose the smallest vertex $\mbox{$\wedge$}(d,d'')$ among all vertices adjacent to $\mbox{$\wedge$}(d,d')$, contract an edge $w(d,d',d'')$ (choose any edge if it is a multiedge) by throwing away the vertex $\mbox{$\wedge$}(d,d')$, solve $w(d,d',d'')$ for $\mbox{$\wedge$}(d,d')$, and assign new labels obtained by substitutions to all other edges that used to be incident to $\mbox{$\wedge$}(d,d')$. Then all edge labels except for loops again have the properties (i), (ii), and (iii) above. In particular, (i) follows from (ii) since $\mbox{$\wedge$}(d,d')$ was the smallest vertex in $G$. To iterate the process, let $G$ be the modified graph. Go to the smallest vertex in $G$ and start again. Since separating 1-cells are linearly ordered, this iteration clearly turn each connected component of $G$ into a graph with only the largest vertex together with loops. Note that the exponent sum of the vertex in the label of each loop is either 0 or $\pm 2$ due to the property of original labels.
\end{proof}

\begin{thm}\label{thm:crg}
If $\Gamma$ is a finite connected planar graph (a finite connected graph, respectively), then $B_n\Gamma$ ($P_2\Gamma$, respectively) is a commutator-related group.
\end{thm}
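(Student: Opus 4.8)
The plan is to reduce both assertions to Proposition~\ref{pro:CRG}: a group whose first homology is free abelian of rank $m$ and which admits a finite presentation on $m$ generators is automatically commutator-related. Thus for each of $B_n\Gamma$ (with $\Gamma$ planar) and $P_2\Gamma$ (with $\Gamma$ arbitrary) it suffices to produce a finite presentation whose number of generators equals the free rank of the abelianization. The hard analytic work — building a presentation on as few generators as the $\mathbb Z_2$-rank of $H_1$ — has already been carried out in Lemma~\ref{lem:Ngenerators} (together with Lemma~\ref{lem:Ng-nonpivotal}), so what remains is essentially to match two numerical invariants.

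For the planar braid group I would first invoke Corollary~\ref{thm:torsionfree}: since $\Gamma$ is planar, $H_1(B_n\Gamma)$ is torsion-free, so $H_1(B_n\Gamma)\cong\mathbb Z^m$ with $m=\beta_1(UD_n\Gamma)$. Because $UD_n\Gamma$ is connected and its first integral homology is free, the universal coefficient theorem gives $H_1(UD_n\Gamma;\mathbb Z_2)\cong H_1(UD_n\Gamma)\otimes\mathbb Z_2$ (the relevant $\Tor$ term vanishes as $H_0$ is free), so the $\mathbb Z_2$-rank is exactly $m$, with no torsion to inflate it. Lemma~\ref{lem:Ngenerators} then furnishes a finite presentation of $B_n\Gamma$ on precisely $m$ generators, and Proposition~\ref{pro:CRG} concludes that $B_n\Gamma$ is commutator-related.

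For the pure $2$-braid group I would use the torsion-freeness of $H_1(P_2\Gamma)$ for every finite connected $\Gamma$ (the corollary following Theorem~\ref{thm:matrix-D2}), so that $H_1(P_2\Gamma)\cong\mathbb Z^m$ with $m=\beta_1(P_2\Gamma)$. Passing through $\pi_1(M_2\Gamma/\!\sim)=P_2\Gamma*\mathbb Z$, Lemma~\ref{lem:Ngenerators} gives this group a finite presentation on $m'=\rk H_1(D_2\Gamma,(D_2\Gamma)^0)$ generators; and since $H_1(M_2\Gamma,(M_2\Gamma)^0)\cong H_1(P_2\Gamma)\oplus\mathbb Z$ we get $m'=m+1$. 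A presentation of $P_2\Gamma$ is then obtained by killing one critical $1$-cell joining the two critical $0$-cells (for instance $A_k(\vec\delta_\ell)_{\rm id}$), which corresponds to the free $\mathbb Z$ factor and drops the generator count to $m'-1=m$ while leaving finitely many relators. Proposition~\ref{pro:CRG} again applies. Equivalently, one may first show $P_2\Gamma*\mathbb Z$ is commutator-related and then note that setting a generator equal to $1$ carries the commutator subgroup of the ambient free group into that of the smaller free group, so commutator-relatedness descends from $P_2\Gamma*\mathbb Z$ to $P_2\Gamma$.

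The one genuinely delicate point — and the step I expect to demand the most care — is the bookkeeping of ranks. Lemma~\ref{lem:Ngenerators} is stated via the $\mathbb Z_2$-rank of $H_1$, whereas Proposition~\ref{pro:CRG} requires the generator count to equal the integral free rank; these agree exactly when $H_1$ has no $2$-torsion, which is precisely what Corollary~\ref{thm:torsionfree} supplies in the planar braid case and what torsion-freeness of $H_1(P_2\Gamma)$ supplies in the pure case. In the pure case one must also verify that the extra $\mathbb Z$ summand is a genuine free factor that is removed by eliminating a \emph{single} joining $1$-cell, so that the reduced presentation really has $m$ generators rather than more, and that this elimination introduces neither new torsion nor extra generators.
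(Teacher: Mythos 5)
Your proposal is correct and follows essentially the same route as the paper: the paper's proof likewise combines Corollary~\ref{thm:torsionfree} (resp.\ torsion-freeness of $H_1(P_2\Gamma)$) to identify the integral rank with the rank appearing in Lemma~\ref{lem:Ngenerators}, and then applies Proposition~\ref{pro:CRG}, handling $P_2\Gamma$ through $\pi_1(M_2\Gamma/\!\sim)\cong P_2\Gamma*\mathbb Z$ and the elimination of one critical $1$-cell joining the two critical $0$-cells exactly as set up in \S\ref{ss41:GBG}. The rank bookkeeping and the descent from $P_2\Gamma*\mathbb Z$ to $P_2\Gamma$ that you flag as delicate are treated at the same level of detail (in fact more tersely) in the paper itself.
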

\begin{proof}
Note that if $\Gamma$ is planar, $H_1(B_n\Gamma)\cong \mathbb Z^m$ for the rank $m$ of $H_1(B_n\Gamma;\mathbb Z_2)$. Now the theorem is immediate from Proposition~\ref{pro:CRG} and Lemma~\ref{lem:Ngenerators}. In fact, a careful analysis of the proof of Lemma~\ref{lem:Ngenerators} can also prove the theorem without Proposition~\ref{pro:CRG}.
\end{proof}

\subsection{Presentations of $B_2\Gamma$ and $P_2\Gamma$}\label{ss42:PGB2G}

A group $G$ is {\em simple-commutator-related} if $G$ has a presentation whose
relators are commutators. Clearly a right-angled Artin group is simple-commutator-related and a simple-commutator-related group is commutator-related.

In~\cite{FS2}, Farley and Sabalka conjectured that $B_2\Gamma$ is simple-commutator-related for a planar graph $\Gamma$ and relators are commutators of two words that represent disjoint circuits on the planar graph.  In a private correspondence, Abrams conjectured that $P_2\Gamma$ is simple-commutator-related for a planar graph $\Gamma$. There has been some doubt on these conjectures (for example, see \cite{Kur}). By combining our result with the result by Barnett and Farber in~\cite{BF}, we will prove that for a planar graph $\Gamma$, both $B_2\Gamma$ and $P_2\Gamma$ is simple-commutator-related and relators are commutators of disjoint circuits on $\Gamma$. So these conjectures are true.

First we need the following lemma proved by Barnett and Farber in \cite{BF}.

\begin{lem}{\rm (Barnett and Farber \cite{BF})}\label{lem:BF}
Let $\Gamma\subset\mathbb R^2$ be a planar graph and $U_0$, $U_1,\cdots,U_r$ be the connected components of $\mathbb R^2-\Gamma$ with $U_0$ denoting the unbounded component and $S(i)$ denote $\partial\overline U_i\subset\Gamma$. Then homology classes $[S(i)\times S(j)]$ with $S(i)\cap S(j)=\emptyset$ freely generate $H_2(D_2\Gamma)$.
\end{lem}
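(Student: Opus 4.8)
The plan is to exploit the fact that $D_2\Gamma$ is at most $2$-dimensional, so that the cellular chain complex has no $3$-cells and hence $H_2(D_2\Gamma)=Z_2(D_2\Gamma)$, the group of cellular $2$-cycles, with no boundaries to quotient out. In particular $H_2(D_2\Gamma)$ is a subgroup of a free abelian chain group and is therefore free, and ``freely generate'' reduces to the purely chain-level assertion that the tori $S(i)\times S(j)$ with $S(i)\cap S(j)=\emptyset$ form a $\mathbb Z$-basis of the group of $2$-cycles. First I would record that each such torus is indeed a $2$-cycle in $D_2\Gamma$: when $S(i)\cap S(j)=\emptyset$ the two moving points can never coincide, so the product map $S(i)\times S(j)\to\Gamma\times\Gamma$ factors through $D_2\Gamma$ and is supported on cells $e\times e'$ with $\overline e\cap\overline{e'}=\emptyset$ (and, the configuration space being ordered, $(i,j)$ and $(j,i)$ give distinct classes, so one works with ordered disjoint pairs).

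The next step is to identify the ambient group of cycles. Since a $2$-chain of $D_2\Gamma$ is a $2$-chain of $\Gamma\times\Gamma$ supported off the diagonal, and the boundary of an off-diagonal $2$-cell $e\times e'$ again has disjoint supports, the boundary map on $D_2\Gamma$ is the restriction of the one on $\Gamma\times\Gamma$; hence $Z_2(D_2\Gamma)=Z_2(\Gamma\times\Gamma)\cap C_2(D_2\Gamma)$, i.e. exactly those product $2$-cycles whose coefficient on every cell $e\times e'$ with $\overline e\cap\overline{e'}\neq\emptyset$ vanishes. By the cellular K\"unneth theorem $Z_2(\Gamma\times\Gamma)\cong H_2(\Gamma\times\Gamma)\cong H_1(\Gamma)\otimes H_1(\Gamma)$ (as $\Gamma\times\Gamma$ is $2$-dimensional and $H_2(\Gamma)=0$), and since $\Gamma$ is $1$-dimensional we have $H_1(\Gamma)=Z_1(\Gamma)$, generated by the face boundary cycles $S(0),\dots,S(r)$. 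Thus the problem becomes a concrete statement in $Z_1(\Gamma)\otimes Z_1(\Gamma)$: the off-diagonal face-tori $S(i)\times S(j)$ span, and are independent in, the subspace cut out by the vanishing-on-shared-cells conditions.

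The step I expect to be the main obstacle is reconciling the single global relation $\sum_i[S(i)]=0$ among the face cycles with the disjointness (support) constraint. Because the $S(i)$ are dependent in $H_1(\Gamma)$, one can rewrite $S(0)=-\sum_{k\ge1}S(k)$, and it is not immediate that the off-diagonal face-tori remain independent and still span the off-diagonal part without producing spurious relations supported near the diagonal. I would attack this by writing a general off-diagonal product $2$-cycle in the basis $\{S(a)\times S(b)\}_{1\le a,b\le r}$ and imposing, for each pair of edges $e,e'$ with $\overline e\cap\overline{e'}\neq\emptyset$, the linear equation that the coefficient of $e\times e'$ be zero; here planarity is what makes the system tractable, since each edge lies on the boundary of exactly two faces (and, in the triconnected situation relevant to the hypotheses, two faces meet in at most a single vertex or edge), so each such equation couples only a bounded number of the $n_{ab}$. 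Solving this system should show that its solution space is precisely the span of the off-diagonal face-tori, simultaneously giving spanning and independence. As a cross-check on the count, I would compare $\operatorname{rk} H_2(D_2\Gamma)$ computed from the Euler characteristic $\chi(D_2\Gamma)=1-\beta_1(D_2\Gamma)+\beta_2(D_2\Gamma)$, using the value of $\beta_1$ from Theorem~\ref{thm:H1PB2}, against the number of ordered disjoint pairs of faces; agreement of the two numbers, together with the spanning property, forces the tori to be a basis.
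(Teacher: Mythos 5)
The paper does not actually prove this lemma: it is imported from Barnett--Farber \cite{BF}, and the text only records the starting point of their argument, namely the exact sequence
$$0\to H_2(D_2\Gamma)\to H_2(\Gamma\times\Gamma)\to H_2(\Gamma\times\Gamma,D_2\Gamma).$$
Your chain-level reduction is precisely the same thing in disguise: since neither $D_2\Gamma$ nor $\Gamma\times\Gamma$ has $3$-cells, $H_2(D_2\Gamma)=Z_2(D_2\Gamma)=Z_2(\Gamma\times\Gamma)\cap C_2(D_2\Gamma)$, and the kernel of $H_2(\Gamma\times\Gamma)\to H_2(\Gamma\times\Gamma,D_2\Gamma)$ is exactly the group of $2$-cycles supported on off-diagonal cells. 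Your identification $Z_2(\Gamma\times\Gamma)=Z_1(\Gamma)\otimes Z_1(\Gamma)\cong H_1(\Gamma)\otimes H_1(\Gamma)$ is also correct. So up to this point your argument is sound, and it coincides with the route of \cite{BF} that the paper gestures at.

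The genuine gap is that everything after this reduction --- which is the entire mathematical content of the lemma --- is announced rather than proved. You write that imposing the vanishing conditions on cells $e\times e'$ with $\overline e\cap\overline{e'}\neq\emptyset$ and ``solving this system should show that its solution space is precisely the span of the off-diagonal face-tori.'' That assertion \emph{is} the Barnett--Farber theorem; the steps you carried out are formal consequences of $2$-dimensionality and K\"unneth. The system is not obviously tractable for the graphs the lemma covers: the statement is for arbitrary planar graphs (and Theorem~\ref{thm:presentationforn2} applies it as such), so your parenthetical appeal to the triconnected case is unavailable; face boundaries $S(i)$ need not be simple or even connected cycles, two faces can share many vertices and edges, bridges lie on only one face, and one must show that the relation $\sum_i S(i)=0$ does not generate off-diagonal cycles outside the span of the disjoint tori --- precisely the obstacle you name and then defer. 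The Euler-characteristic cross-check does not close the gap either: it presupposes the spanning property (which is what is missing), and it additionally requires the unproven combinatorial identity that the number of ordered pairs of disjoint faces equals $\beta_2(D_2\Gamma)$ as computed from Theorem~\ref{thm:H1PB2}. As it stands, the proposal reduces the lemma to the hard step and stops there.
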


To show the lemma, Barnett and Farber noticed the exact sequence
$$0\to H_2(D_2\Gamma)\to H_2(\Gamma\times\Gamma)\to H_2(\Gamma\times\Gamma,D_2\Gamma).$$
There is a corresponding exact sequence
$$0\to H_2(UD_2\Gamma)\to H_2(\Gamma\times\Gamma/\sim)\to H_2(\Gamma\times\Gamma/\sim,UD_2\Gamma)$$
where $\Gamma\times\Gamma/\sim$ is the quotient space obtained by $(x,y)\sim(y,x)$.
Under this equivalence relation, the homology classes $[S(i)\times S(j)]$ and $[S(j)\times S(i)]$ are identified and give a homology class $\{S(i)\times S(j)\}$ in $H_2(UD_2\Gamma)$.
Lemma~\ref{lem:BF} implies that homology classes $\{S(i)\times S(j)\}$ with $S(i)\cap S(j)=\emptyset$ freely generate $H_2(UD_2\Gamma)$. In fact, $\beta_2(D_2\Gamma)=2\beta_2(UD_2\Gamma)$ by the formulae for second Betti numbers in Sec~\ref{ss32:H1} and Sec~\ref{ss33:H1PB2}. We are now ready for the last theorem.

\begin{thm}\label{thm:presentationforn2}
For a planar graph $\Gamma$, both $B_2\Gamma$ and $P_2\Gamma$ are simple-commutator-related and relators are commutators of two disjoint circuits on $\Gamma$. In fact, there is a presentation of $B_2\Gamma$ ($P_2\Gamma$, respectively) over $\beta_1$ generators such that it has $\beta_2$ relators that are all commutators where $\beta_1$ and $\beta_2$ are the first and second Betti numbers of
$B_2\Gamma$ ($P_2\Gamma$, respectively).
\end{thm}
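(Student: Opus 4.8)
The plan is to manufacture an \emph{efficient} presentation of $B_2\Gamma$ for planar $\Gamma$, that is, one with exactly $\beta_1=\beta_1(B_2\Gamma)$ generators and $\beta_2=\beta_2(B_2\Gamma)$ relators, and then to recognize its relators as commutators of disjoint circuits by means of the Barnett--Farber description of $H_2$. The pure case $P_2\Gamma$ runs in complete parallel through the splitting $\pi_1(M_2\Gamma/\sim)\cong P_2\Gamma*\mathbb Z$: one applies the $D_2\Gamma$-versions of the machinery (Theorem~\ref{thm:matrix-D2} in place of Theorem~\ref{thm:matrix}, and the ordered form of Lemma~\ref{lem:BF}) and kills one critical $1$-cell $A_k(\vec\delta_\ell)_{\rm id}$ joining the two critical $0$-cells at the very end. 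I will describe only $B_2\Gamma$.

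First I would fix the two counts. Since $\Gamma$ is planar, $H_1(B_2\Gamma)\cong\mathbb Z^{\beta_1}$ is torsion-free by Corollary~\ref{thm:torsionfree}, so the rank of $H_1(UD_2\Gamma;\mathbb Z_2)$ equals $\beta_1$ and Lemma~\ref{lem:Ngenerators} supplies a presentation over exactly $\beta_1$ generators, whose relators lie in $[F,F]$ by Theorem~\ref{thm:crg}. To pin the number of relators I would argue by Euler characteristic: the Morse complex $UM_2\Gamma$ is a finite $2$-complex with a single $0$-cell that is homotopy equivalent to the aspherical space $UD_2\Gamma$, so $\chi(UM_2\Gamma)=1-\beta_1+\beta_2$; since each elimination in Lemma~\ref{lem:Ng-nonpivotal} and Lemma~\ref{lem:Ngenerators} discards one pivotal (resp.\ separating) $1$-cell \emph{together with} the boundary word of one $2$-cell, every such Tietze move preserves $\chi$ while lowering the generator count by one. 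Hence after reducing to $\beta_1$ generators precisely $\beta_2$ relators survive. Moreover a sequence of Tietze moves realizes a simple-homotopy equivalence of presentation $2$-complexes, so the resulting complex $K$ is simple-homotopy equivalent to $UM_2\Gamma\simeq UD_2\Gamma$ and is therefore \emph{aspherical}.

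The heart of the proof is upgrading ``commutator-related'' to ``simple-commutator-related with disjoint-circuit relators.'' Asphericity of $K$ gives $\pi_2(K)=0$, so the relation module $R/[R,R]$ is $\mathbb Z[B_2\Gamma]$-free of rank $\beta_2$ on the classes of the surviving relators, and, the relators lying in $[F,F]$, $H_2(B_2\Gamma)\cong H_2(K)\cong\mathbb Z^{\beta_2}$ is spanned by those same classes. On the other side, the unordered form of Lemma~\ref{lem:BF} produces a distinguished basis of $H_2(UD_2\Gamma)$ given by the tori $\{S(i)\times S(j)\}$ over pairs of disjoint face boundaries of the planar embedding, and—because two particles tracing disjoint circuits move independently—each such torus is represented by a genuine commutator $[S(i),S(j)]\in[F,F]$ of the circuit words. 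The plan is then to show that these $\beta_2$ disjoint-circuit commutators can serve as the defining relators. Concretely, I would identify each surviving relator with a critical $2$-cell $d\cup d'$ whose cube-complex boundary word is intrinsically of commutator shape $[w_d,w_{d'}]$ (exactly as in Example~\ref{ex:T4n3}), and use the planar tree and order of Lemma~\ref{lem:treeandorderforplanar}—in particular the opposite-sign structure of Theorem~\ref{thm:matrix} forced by Condition (T4)—to check that the pairs that survive the eliminations are precisely the disjoint faces, with Lemma~\ref{lem:rewriting} controlling that the intervening rewriting preserves the commutator form up to conjugation and inversion.

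The main obstacle is this final identification. Matching the two bases of $H_2$ only shows that the disjoint-circuit commutators generate the \emph{coinvariants} $\bigl(R/[R,R]\bigr)\otimes_{\mathbb Z[B_2\Gamma]}\mathbb Z$; over the non-local ring $\mathbb Z[B_2\Gamma]$ this does \emph{not} via Nakayama force them to generate the free relation module, since invertibility of the change-of-basis matrix after augmentation is strictly weaker than invertibility over $\mathbb Z[B_2\Gamma]$. Thus an abstract ``swap of relators'' is not available, and the gap must be closed geometrically rather than homologically: I would track each critical $2$-cell $d\cup d'$ through the reduction, use Lemma~\ref{lem:boundary-2} to see that the cells with nonzero boundary are exactly those consumed in eliminating pivotal and separating $1$-cells, and conclude that the $\beta_2$ leftover cells are the disjoint-pair tori whose boundary words are the sought commutators $[S(i),S(j)]$. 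Once the relators are known to be these commutators, the generator and relator counts established above, together with Proposition~\ref{pro:CRG}, complete the argument for both $B_2\Gamma$ and $P_2\Gamma$.
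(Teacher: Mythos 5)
Your outline agrees with the paper up through the counting step: both routes use Lemma~\ref{lem:Ng-nonpivotal} and Lemma~\ref{lem:Ngenerators} to reach $\beta_1$ generators, fix the number of surviving relators at $\beta_2$ by an Euler-characteristic count of the collapsed complex, and invoke Lemma~\ref{lem:BF} to bring in the disjoint-circuit tori. You also correctly diagnose the central difficulty: matching relator classes against the Barnett--Farber basis of $H_2$ only controls coinvariants of the relation module, so no purely homological swap of relators is available. The gap lies in your proposed repair. Tracking critical 2-cells with Lemma~\ref{lem:boundary-2} is an \emph{abelianized} computation: a vanishing boundary image only places the rewritten boundary word in $[F,F]$, which is exactly the commutator-related conclusion of Theorem~\ref{thm:crg} and falls short of a \emph{single} commutator of two circuit words. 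Worse, your concrete mechanism --- that each surviving critical 2-cell $d\cup d'$ has a boundary word ``intrinsically of commutator shape,'' with Lemma~\ref{lem:rewriting} preserving that shape through substitutions --- is not true: the Morse-complex boundary word is $\tilde r$ applied to four sides, and opposite sides need not rewrite to the same word. Even in Example~\ref{ex:T4n3}, which you cite, the raw boundary words such as $A_2(1,0,1)\cdot d_1\cdot A_2(1,0,0)^{-1}\cdot d_1^{-1}$ are visibly \emph{not} commutators (the two $A_2$-letters differ); commutator form there emerges only after further Tietze moves, and the final relator is a product of three commutators, not one. Nothing in Lemma~\ref{lem:boundary-2} or Lemma~\ref{lem:rewriting} controls non-abelian word shape through the eliminations performed in Lemma~\ref{lem:Ng-nonpivotal} and Lemma~\ref{lem:Ngenerators}.

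What is missing is the paper's geometric persistence argument, which is precisely what closes the gap you identified. Each torus $S(i)\times S(j)$ with $S(i)\cap S(j)=\emptyset$ is \emph{embedded} in $D_2\Gamma$; since every circuit contains a deleted edge, hence contributes a critical 1-cell, this torus survives the Morse collapse as an \emph{immersed} torus in the Morse complex, and it survives again each collapse of a canceling (2-cell, 1-cell) pair realizing the Tietze moves, yielding an immersed torus $T'_{ij}$ in the final complex $M'_2\Gamma$. Because these tori freely generate $H_2$ and the Euler-characteristic count shows $M'_2\Gamma$ has exactly as many 2-cells as there are tori, each remaining 2-cell must be swept out by exactly one immersed torus; it is this covering of the 2-cell by a torus --- not any boundary formula --- that forces its attaching word to be a commutator of the two disjoint circuit words. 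Your proposal never introduces these tori as maps into the collapsed complex (you use them only as homology classes), so the final identification of relators with disjoint-circuit commutators remains unsupported.
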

\begin{proof}
There is no difference between $B_2\Gamma$ and $P_2\Gamma$ in our argument. We discuss only $P_2\Gamma$. Each torus $S(i)\times S(j)$ in Lemma~\ref{lem:BF} is embedded in the discrete configuration space $D_2\Gamma$. Since each  circuit in $\Gamma$ contains at least a deleted edge, so does each $S(i)$. So the embedded torus $S(i)\times S(j)$ remains as an immersed torus $T_{ij}$ in a Morse complex $M_2\Gamma$ since deleted edges gives critical 1-cells.
The immersed tori may intersect each other but are never identified since they generate $H_2(M_2\Gamma)$.

Each Tietze transformation performed in the proofs of Lemma~\ref{lem:Ng-nonpivotal} and Lemma~\ref{lem:Ngenerators} is an elimination of a pair of a generator and a relation. In the cell complex $M_2\Gamma$, this corresponds to collapsing of a canceling pair of a 1-cell and a 2-cell. Let $M'_2\Gamma$ denote the cell complex obtained from $M_2(\Gamma)$ by collapsing all canceling pairs corresponding to Tietze transformations performed in the proofs of the two lemmas. Each immersed torus $T_{ij}$ in $M_2\Gamma$ remains as an immersed  torus $T'_{ij}$ in $M'_2\Gamma$ after collapsing even though it may become complicated.

By Lemma~\ref{lem:Ngenerators}, the cell complex $M'_2\Gamma$ has two 0-cells and $(m-1)$ 1-cells for the rank $m$ of $H_1(D_2\Gamma/\sim)$ since the identification space $D_2\Gamma/~$ can also be obtained by adding a 1-cell between two base vertices which remains in $M'_2\Gamma$. By consideration of Euler characteristics, the number of 2-cells in $M'_2\Gamma$ is equal to the rank of $H_2(D_2\Gamma)$ and so equal to the number of (ordered) tori $S(i)\times S(j)$. Therefore each 2-cell must form an immersed torus $T'_{ij}$ and produces a relator that must be a commutator.
\end{proof}

Note that Theorem~\ref{thm:presentationforn2} is false for braid index $n\ge3$. For example, $B_3\Theta_4$ and $P_3\Theta_4$ are surface groups (see Example~\ref{ex:T4n3}). One can show that $B_3\Theta_4/(B_3\Theta_4)_{3}$ is isomorphic to $B_n\Theta_4/(B_n\Theta_4)_{3}$ for $n\ge4$ where $G_3$ denotes the third lower central subgroup of a group $G$. Thus $B_n\Theta_4$ is not simple-commutator-related for $n\ge3$. If $\Gamma$ contains a subgraph $\Theta_4$, $B_n\Gamma$ ($P_3\Gamma$, respectively) has a subgroup that is not simple-commutator-related since there is a local isometric embedding from $UD_n\Theta_4$ ($P_3\Theta_4$, respectively) to $UD_n\Gamma$ ($P_3\Gamma$, respectively). Thus it seems reasonable to propose the following conjecture:

\begin{con}
For a planar graph $\Gamma$, $B_n\Gamma$ for $n\ge3$ and $P_3\Gamma$ are simple-commutator-related if and only if $\Gamma$ does not contain a subgraph $\Theta_4$.
\end{con}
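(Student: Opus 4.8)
The plan is to prove the two implications separately, using for necessity a lower-central-series obstruction and for sufficiency the presentation machinery of \S\ref{s:four} together with the decomposition lemmas of \S\ref{ss32:H1}. For the necessity direction I would isolate a computable obstruction to being simple-commutator-related. For a finitely generated group $G$ with lower central series $G\supseteq G_2=[G,G]\supseteq G_3$, the commutator map induces a natural surjection $\lambda_G\colon \wedge^2 H_1(G)\twoheadrightarrow G_2/G_3$, and I set $\mathcal K(G)=\ker\lambda_G$. Since $\Gamma$ is planar, $H_1(B_n\Gamma)$ is torsion-free by Corollary~\ref{thm:torsionfree}, so any simple-commutator presentation of $G$ (its relators lying in $[F,F]$) must use exactly $\rk H_1(G)$ generators; reducing modulo $G_3$ then shows that $\mathcal K(G)$ is generated by the \emph{decomposable} classes $\bar u_i\wedge\bar v_i$ coming from the relators $[u_i,v_i]$. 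Thus ``$\mathcal K(G)$ is spanned by decomposable elements of $\wedge^2 H_1(G)$'' is a necessary condition. For the orientable surface group of genus $g\ge2$ one has $\mathcal K=\mathbb Z\cdot\omega$ with $\omega$ the symplectic form, which is indecomposable, so the only decomposable element of $\mathbb Z\cdot\omega$ is $0$ and the condition fails. By Example~\ref{ex:T4n3} this rules out $B_3\Theta_4$ and $P_3\Theta_4$, and $B_3\Theta_4/(B_3\Theta_4)_3\cong B_n\Theta_4/(B_n\Theta_4)_3$ rules out $B_n\Theta_4$ for all $n\ge3$.

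To promote this from $B_n\Theta_4$ to an arbitrary $\Gamma$ containing $\Theta_4$, I would show that the condition on $\mathcal K$ descends to retracts: if $r\colon G\to H$ is a retraction then naturality of $\lambda$ gives $\mathcal K(H)=\wedge^2 r_*\bigl(\mathcal K(G)\bigr)$, and $\wedge^2 r_*$ carries decomposables to decomposables, so $\mathcal K(G)$ spanned by decomposables forces the same for $\mathcal K(H)$. It therefore suffices to upgrade the local isometric embedding $UD_n\Theta_4\hookrightarrow UD_n\Gamma$ (and its ordered analogue for $P_3$) from a $\pi_1$-injection to a \emph{retraction} of braid groups; alternatively one can bypass retracts by computing $\mathcal K(B_n\Gamma)$ directly from the commutator-word presentation produced by Theorem~\ref{thm:crg} and exhibiting an indecomposable class supported on the $\Theta_4$-subgraph. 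Establishing the retraction (or carrying out the direct computation) is the first genuine difficulty.

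For the sufficiency direction I would build an explicit simple-commutator presentation when $\Gamma$ is $\Theta_4$-free. First, the decomposition Lemmas~\ref{lem:cutvertex}, \ref{lem:biconnected} and~\ref{lem:bidecomp} reduce the problem, up to free abelian factors that contribute only commuting generators, to topologically triconnected components, which are themselves $\Theta_4$-free; one must check that the corresponding splittings of the \emph{groups} over $\mathbb Z$ and $\mathbb Z^2$ preserve simple-commutator-relatedness. On a triconnected $\Theta_4$-free piece I would run the optimized presentation of Lemmas~\ref{lem:Ng-nonpivotal} and~\ref{lem:Ngenerators}, so that the generators are free and separating $1$-cells and each relator is the boundary word of a critical $2$-cell, and then prove that every such relator can be made a single commutator. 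The geometric mechanism, generalizing Theorem~\ref{thm:presentationforn2} and Lemma~\ref{lem:BF}, is that each relation $2$-cell of the reduced Morse complex bounds an immersed surface, and that $\Theta_4$-freeness forces every such surface to be a torus, exactly as Example~\ref{ex:T4n3} shows $\Theta_4$ to be the minimal configuration producing a genus-$3$ relation.

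Proving that $\Theta_4$ is the \emph{sole} obstruction to all relation $2$-cells being tori, for every $n\ge3$, is where I expect the main work to lie: one needs a precise combinatorial criterion, phrased in terms of the types of critical $2$-cells allowed by a $\Theta_4$-free graph, guaranteeing that after the Tietze reductions each surviving relator has the form $[u,v]$. A further prerequisite obstacle is that the present paper develops the ordered machinery only for $D_2\Gamma$ and explicitly flags the failure of Lemma~\ref{lem:naturalmove-2} for $D_n\Gamma$ with $n\ge3$; so the $P_3$ case additionally requires first extending the Morse-theoretic presentation technology of \S\ref{ss33:H1PB2} to $D_3\Gamma$ before the torus argument can even be set up.
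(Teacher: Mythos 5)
Your proposal is not comparable to a proof in the paper for the simple reason that the paper contains none: this statement is stated as a \emph{conjecture}, and the authors offer only evidence, namely Example~\ref{ex:T4n3} (that $B_3\Theta_4$ and $P_3\Theta_4$ are surface groups of genus $3$ and $13$), the claim $B_3\Theta_4/(B_3\Theta_4)_{3}\cong B_n\Theta_4/(B_n\Theta_4)_{3}$ for $n\ge 4$, and the remark that a graph containing $\Theta_4$ yields a locally isometrically embedded subgroup that is not simple-commutator-related, before explicitly saying it only ``seems reasonable to propose'' the conjecture. Your treatment of the model case $\Gamma=\Theta_4$ is sound and is essentially the paper's evidence made precise: the kernel of $\wedge^2 H_1(G)\to G_2/G_3$ must be spanned by decomposables when $G$ is simple-commutator-related (the generator count being forced because relators lie in $[F,F]$), while for a genus-$g\ge 2$ surface group this kernel is generated by the indecomposable symplectic class; this is exactly the Matei--Suciu style obstruction, consistent with the paper's citation \cite{MS}. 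But beyond that model case, both of your directions have genuine gaps, most of which you flag yourself.

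Concretely: for necessity, your obstruction does not pass to subgroups, and the local isometric embedding $UD_n\Theta_4\to UD_n\Gamma$ yields only $\pi_1$-injectivity; no retraction $B_n\Gamma\to B_n\Theta_4$ is constructed (or known), so ``$\Gamma\supseteq\Theta_4$ implies $B_n\Gamma$ is not simple-commutator-related'' remains unproved --- this is precisely why the paper stops at a conjecture. For sufficiency, the decomposition results you invoke (Lemmas~\ref{lem:cutvertex}, \ref{lem:biconnected}, \ref{lem:bidecomp}) are isomorphisms of \emph{first homology groups}, not of groups --- Lemma~\ref{lem:biconnected} says $H_1(B_n\Gamma)\cong H_1(B_2\Gamma)$, emphatically not $B_n\Gamma\cong B_2\Gamma$ --- so they cannot be ``run'' at the level of presentations without substantial new input. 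Moreover the torus mechanism behind Theorem~\ref{thm:presentationforn2} rests on Lemma~\ref{lem:BF}, which is intrinsically a two-particle statement ($H_2(D_2\Gamma)$ freely generated by product tori $S(i)\times S(j)$, plus an Euler-characteristic count valid because $UD_2\Gamma$ is $2$-dimensional); for $n\ge 3$ the configuration spaces have cells in dimension $\ge 3$ and no analogue is available. Finally, for $P_3\Gamma$ the required ordered Morse machinery does not exist in the paper: Lemma~\ref{lem:naturalmove-2} is shown to fail for $D_n\Gamma$ with $n\ge 3$. So what you have is a reasonable research program --- with the correct obstruction for the base case --- but not a proof of either implication.
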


For instance, Farley and Sabalka showed in \cite{FS} that every tree braid group is simple-commutator-related.

\bibliographystyle{amsplain}

\end{document}